\documentclass{amsart}

\ifx\undefined\headeralreadyincluded
\let\headeralreadyincluded\relax

\usepackage{times}
\usepackage{graphicx}
\usepackage{amsmath}
\usepackage{amssymb,amsthm}
\usepackage{mathtools, amsmath, amsfonts, bbm}
\usepackage{yhmath}
%% Additional packages for special math fonts. 
\usepackage{dsfont, dutchcal}
\usepackage{xspace}
\usepackage{mathrsfs}
\usepackage[linesnumbered]{algorithm2e}
\usepackage{algorithmic}
\usepackage{float}
\usepackage{listings}
\lstset{escapeinside={(*@}{@*)}}          % if you want to add LaTeX within your code
\usepackage{subfigure}
\usepackage{tikz}
\usetikzlibrary{shapes.arrows, arrows.meta, fadings, calc}
\usepackage{pgfplots}
\pgfplotsset{compat=1.6}
\usepackage{enumitem}
\usepackage{color}
\usepackage{wrapfig}  
\usepackage{marginnote}
\usepackage{graphpap}
\usepackage{stmaryrd}
\usepackage{import}
\usepackage{yhmath}

\usepackage{hyperref}
\usepackage{cleveref}
\usepackage{tabularx}
\usepackage{longtable}
\usepackage{todonotes}

\usepackage[numbers]{natbib}

%\makeatletter
%\def\input@path{{./}{./images/ConstrGeom/}{./CAGD/}}  % otherwise the location of tikz_styles.tikz is not found
%\makeatother

\setcounter{secnumdepth}{4}
\setcounter{tocdepth}{4}

\numberwithin{equation}{section}
\makeatletter
\newcommand{\LeftEqNo}{\let\veqno\@@leqno}
\makeatother

\newfloat{listing}{htbp}{lop}
\floatname{listing}{Listing}

\def\XXint#1#2#3{{\setbox0=\hbox{$#1{#2#3}{\int}$}
\vcenter{\hbox{$#2#3$}}\kern-.5\wd0}}

\newcommand{\beq}{\begin{equation*}}
\newcommand{\eeq}{\end{equation*}}
\newcommand{\beqn}{\begin{equation}}
\newcommand{\eeqn}{\end{equation}}
\newcommand{\beqa}{\begin{eqnarray*}}
\newcommand{\eeqa}{\end{eqnarray*}}
\newcommand{\beqan}{\begin{eqnarray}}
\newcommand{\eeqan}{\end{eqnarray}}

\newcommand{\notinclude}[1]{}

% \newcommand{\comment}[1]{}
% \newcommand{\missing}[1]{}
% \newcommand{\copyblock}[1]{}

% geometric notation / shape space notation

 %manifold
 %manifold

 % real part
 % imaginary part

\newcommand{\x}{{u}} %point/curve on the manifold

%{{\mathbf{V}}}

  %tangentvector

\newcommand{\g}[3]{{g}_{#1}\left(#2, #3\right)} % Riemannian metric
\newcommand{\Dg}[4]{\left({D}_{#1}g_{#1}\right)\left( #2 \right)\left( #3, #4 \right)} % derivative of Riemannian metric (in footpoint #1) in direction #2

  % cont. length
\newcommand{\energy}{{\mathcal{W}}}  % cont. energy
 %cont. geodesic interpolation
 %discrete geodesic interpolation

 % discrete length
 % discrete energy
 
\newcommand{\Log}{\mathrm{Log}}
\newcommand{\Exp}{\mathrm{Exp}}

\newcommand{\parTp}{{\mathrm{P}}} % parallel transport
\newcommand{\ParTp}{{\mathbf{P}}} % discrete parallel transport

\newcommand{\sphere}{\mathrm S}

\newcommand{\yz}{\mathbf{z}}
\newcommand{\yc}{\mathbf{s}}
\newcommand{\myArg}[2]{[#1, #2]}

\newcommand{\w}{\mathbf{w}}
\newcommand{\cpath}{\mathbf{c}}
\newcommand{\riemann}{\mathrm{R}}
\newcommand{\Riemann}{\mathbf{R}}
\newcommand{\cov}{{\frac{D}{\d t}}} %covariant derivative
\newcommand{\covdir}[2][t]{{\frac{D_{{#2}}}{\d #1}}} %covariant derivative in direcion #1
\newcommand{\covt}{{\frac{D}{\d t}}} %covariant derivative
\newcommand{\covs}{{\frac{D}{\d s}}} %covariant derivative
 %discrete covariant t derivative
\newcommand{\Covtdir}[3][t]{{\frac{\mathbf{{D}}^{{#2}}_{{#3}}}{\d #1}}} %discrete covariant  in direction #2
 %discrete covariant s derivative
 %discrete covariant t derivative
\newcommand{\Christoffeloperator}{\Gamma} % symbol of Christoffel operator
\newcommand{\Christoffel}[3]{\Christoffeloperator_{#1}\!\left( #2, #3 \right)} % (continous) Christoffel operator

%viscous objects

% metamorphosis

\newcommand{\W}{\energy}
\newcommand{\WEps}[1][\epsilon]{{\W_{\mathrm{reg}}^{#1}}}
\newcommand{\WEpsFree}{\W_{\mathrm{rat}}}
\newcommand{\WGalerkin}{\W_{\mathrm{lin}}}

\newcommand{\pathenergy}{\boldsymbol{\mathcal{E}}}

\newcommand{\Pathenergy}{\boldsymbol{\mathbf{E}}}
\newcommand{\EEps}[2]{\boldsymbol{\mathbf{E}}_{\mathrm{reg}}^{#1,#2}}
\newcommand{\EEpsFree}[1]{\boldsymbol{\mathbf{E}}_{\mathrm{rat}}^{#1}}
\newcommand{\EGalerkin}[1]{\boldsymbol{\mathbf{E}}_{\mathrm{lin}}^{#1}}

\newcommand{\out}{{\mathrm{out}}}
\newcommand{\inn}{{\mathrm{in}}}

\newcommand\restr[2]{{\left.\kern-\nulldelimiterspace #1 \vphantom{\big|}\right|_{#2}}}

\newcommand{\R}{{\mathds{R}}}
\newcommand{\N}{{\mathds{N}}}

\newcommand{\Z}{\mathds{Z}}

\renewcommand{\d}{{\mathrm{d}}}

\newcommand{\dist}{{\mathrm{dist}}}

\newcommand{\Id}{{\mathrm{Id}}}

 % support

%\renewcommand{\dist}{{\color{red} \mathrm{dist}}}
%\newcommand{\Hd}{{\color{red} \mathbf{d}}}

% regularity theory

% defined general Riemannian geometry
\newcommand{\y}{y}

%Hadamard metamorphosis

%\newcommand{\domain}{\Omega}

%\newcommand{\image}{I}

%\newcommand{\C}{\mathbb{C}}
%\newcommand{\E}{\mathbb{E}}
%\newcommand{\N}{\mathbb{N}}
%\newcommand{\Q}{\mathbb{Q}}
%\newcommand{\R}{\mathbb{R}}

%\newcommand{\Z}{\mathbb{Z}}
%\newcommand{\Id}{\mathrm{Id}}

%\renewcommand{\norm}[2]{\left\Vert #1 \right\Vert_{#2}}

%\DeclareMathOperator*{\argmin}{argmin}

%\DeclareMathOperator{\tr}{tr} 

% constructive geometry 

\newcommand{\interpolation}{\mathcal{I}} %cont. geodesic interpolation

%Sobolev metric

\newcommand{\immersion}{\operatorname{Imm}}

\newcommand{\diffs}{\partial_s}
\newcommand{\dtheta}{\partial_\theta}
\newcommand{\lowerLength}[2]{{L^{-,\epsilon}[{#1},{#2}]}}
\newcommand{\upperLength}[2]{{L^{+,\epsilon}[{#1},{#2}]}}

\newcommand{\Sone}{{\sphere^1}}

\newcommand{\Geps}{{G_{\mathrm{reg}}^\epsilon}}
\newcommand{\GepsFree}{G_{\mathrm{rat}}}
\newcommand{\GMetric}{\widehat G}

\newcommand{\Feps}{F}

\newcommand{\llangle}{\langle\!\langle}
\newcommand{\rrangle}{\rangle\!\rangle}
\newcommand{\tripleArgCommut}[4]{#1\llangle#2,#3,#4\rrangle}

% colors
\definecolor{blue}{rgb}{0.137255,0,0.862745}
\definecolor{red}{rgb}{0.862745,0.137255,0}
\definecolor{myBlue}{HTML}{377EB8}
\definecolor{myOrange}{RGB}{255, 169, 87 }
\definecolor{myGreen}{RGB}{180, 255, 162  }
\definecolor{myGrey}{RGB}{187, 187, 187  }
\definecolor{myDarkGrey}{RGB}{100, 100, 100  }

\makeatletter
\DeclareRobustCommand\onedot{\futurelet\@let@token\@onedot}
\def\@onedot{\ifx\@let@token.\else.\null\fi\xspace}

\def\eg{\emph{e.g}\onedot} 
\def\ie{\emph{i.e}\onedot} 
\def\cf{\emph{cf}\onedot}

\def\etal{\emph{et al}\onedot}
\makeatother

\graphicspath{{./images/}}

\fi

\usepackage{pgfplots} 
\newtheorem{theorem}{Theorem}[section]
\newtheorem{lemma}[theorem]{Lemma}

\theoremstyle{definition}
\newtheorem{definition}[theorem]{Definition}
\newtheorem{example}[theorem]{Example}
\newtheorem{proposition}[theorem]{Proposition}
\newtheorem{corollary}[theorem]{Corollary}

\theoremstyle{remark}
\newtheorem{remark}[theorem]{Remark}
\numberwithin{equation}{section}

\lstdefinelanguage{Maple}%
{morekeywords={with, assume, Multiply, int, Matrix, log, arctan, sqrt, simplify}, sensitive=true,%
morecomment=[l]\#
}[keywords,comments]%
\usepackage{accsupp}
\newcommand{\noncopynumber}[1]{%
\BeginAccSupp{method=escape,ActualText={}}%
#1%
\EndAccSupp{}%
}
\lstset{
language={Maple},
basicstyle=\small\ttfamily, columns=fullflexible, breaklines=true, frame=trbl, frameround=tttt, framesep=3pt, numbers=left, numberstyle=\tiny\ttfamily\noncopynumber, commentstyle=\color{cyan}, keywordstyle=\color{purple}, }

\overfullrule=0mm
\begin{document}

\title{Discrete Geodesic Calculus in the Space of Sobolev Curves}

\author{Sascha Beutler}
\address{Sascha Beutler, Institute for Computational and Applied Mathematics, University of Münster, Einsteinstr. 62, 48149 Münster}
\curraddr{}
\email{}
\thanks{}

\author{Florine Hartwig}
\address{Florine Hartwig, Institute for Numerical Simulation, University of Bonn, Endenicher Allee 60, 53115 Bonn}
\curraddr{}
\email{}
\thanks{}

\author{Martin Rumpf}
\address{Martin Rumpf, Institute for Numerical Simulation, University of Bonn, Endenicher Allee 60, 53115 Bonn}
\curraddr{}
\email{}
\thanks{}
\author{Benedikt Wirth}
\address{Benedikt Wirth, Institute for Computational and Applied Mathematics, University of Münster, Einsteinstr. 62, 48149 Münster}
\curraddr{}
\email{}
\thanks{}

\date{}

\dedicatory{}

\begin{abstract}
The Riemannian manifold of curves with a Sobolev metric is an important and frequently studied model in the theory of shape spaces. 
	Various numerical approaches have been proposed to compute geodesics, but so far elude a rigorous convergence theory.
	By a slick modification of a temporal Galerkin discretization we manage to preserve coercivity and compactness properties of the continuous model
	and thereby are able to prove convergence for the geodesic boundary value problem.
	Likewise, for the numerical analysis of the geodesic initial value problem
	we are able to exploit the geodesic completeness of the underlying continuous model
	for the error control of a time-stepping approximation.
	In fact, we develop a convergent discretization of a comprehensive Riemannian calculus
	that in addition includes parallel transport, covariant differentiation, the Riemann curvature tensor, and sectional curvature,
	all important tools to explore the geometry of the space of curves.	
	Selected numerical examples confirm the theoretical findings and show the 
	qualitative behaviour. To this end,  a low-dimensional submanifold of
	Sobolev curves with explicit formulas for ground truth covariant derivatives and curvatures
	are considered.
\end{abstract}
\maketitle

\section{Introduction}
The manifold of closed, parametric immersed 
curves can be equipped with a Sobolev metric of order $m\in \N$. 
This Riemannian metric represents the Sobolev inner product on the
space of $2\pi$ periodic vector-valued functions of the real line,
but with derivatives computed for an arc-length parameterization. 
The second-order Sobolev metric is particularly interesting because this is the lowest order for which the corresponding
space of curves is known to be geodesically and metrically complete. 

These spaces of curves have been studied extensively in the literature.
For an earlier overview on spaces of curves using the Hamiltonian approach, see \cite{MiMu07}.
A more recent overview of the theory of the Riemannian manifold of parametrized and non-parametrized curves 
equipped with a Sobolev metric and its various extensions is given in \cite{bauer2021intrinsic}.

We derive a consistent discretization of the Riemannian calculus on this space of curves. The proposed discretization leads to a fully practical scheme allowing the computation of discrete geodesics and discrete geometric operators such as the exponential map, covariant derivatives, parallel transport, and the curvature tensor for Sobolev metrics of order $m \geq 2$.

Our results build on the general concept of the variational time discretization of Riemannian calculus from \cite{RuWi15} and the discrete covariant differentiation as well as the curvature tensor approximation from \cite{EfHeRu22}. This existing work applies to a general class of Riemannian manifolds modelled over a Hilbert space, but comes with the restriction of
the Riemannian metric being smooth in its footpoint \emph{with respect to a weaker normed space}, a Banach space into which the Hilbert space compactly embeds. In the context of Sobolev curves, this condition is not satisfied,
so one has to work with the same Sobolev Hilbert space throughout.
The associated difficulties are reflected by the fact that in the continuous setting as well as in the design of an appropriate discretization, one has to ensure certain coercivity properties of the Riemannian path energy,
which in essence translates as the avoidance of degenerate curve parameterizations (that are not immersions).

\subsection{Outline}
In \cref{sec:background} we introduce the space of Sobolev immersions of $\Sone$ and fix notation. \Cref{sec:review} contains a collection of existing results for the analytical treatment of this space of curves based on \cite{Br15}. \Cref{subsec:existenceGeodesics} recapitulates the existence of shortest geodesics, where compared to \cite{Br15} we avoid using fractional Sobolev spaces and treat the case of reparameterization invariance differently. The proof later serves as a foundation to establish the existence results in the time-discrete setting.
In \cref{sec:timeDiscrete} we introduce the variational time discretization and the space discretization and show the existence of minimizers and Mosco convergence of the discrete to the continuous path energy, which implies convergence of discrete geodesics to continuous
ones. In fact, we propose and analyse two different discrete approximations of the Riemannian metric (resulting in two different time discretizations), both with their distinct advantages.
The space discretization is based on a truncated Fourier series.
In \cref{sec:discreteGeodCalc}, finally, we introduce discrete approximations of geometric operators and derive their convergence rates,
in particular, a discrete exponential map in \cref{sec:expMap} with discrete shot geodesics coinciding with discrete variational geodesics,
a discrete covariant derivative computed via covariant difference quotients and discrete parallel transport in \cref{sec:PTCovDeriv},
and discrete approximations of the Riemann curvature tensor in \cref{sec:consistencyCurvature}.
The technically involved proofs of the well-posedness and convergence of the discrete exponential map, covariant derivative, parallel transport, and Riemann curvature approximation are collected in \cref{sec:appendix},
which also contains explicit calculations of covariant derivatives and curvature at a perfectly circular curve, to be used for numerical verification of the obtained convergence rates.

\subsection{Related Work} \label{sec:related}
The space of planar Sobolev immersions of order $2$ and higher is shown in \cite{BrMiMu14} 
to be geodesically complete, \ie the initial value problem of the geodesic flow is
well-posed and long-time existence is ensured.
Furthermore, lower bounds for the geodesic distance depending on the curvature and its derivatives are given.
Sobolev metrics with non-constant coefficients are considered in \cite{MeYeSu08}.
The existence of minimizing geodesics in higher space dimensions is proved in \cite{Br15} 
in the space of pa\-ra\-metrized curves with non-degenerate length element and
Sobolev metric of order at least $2$. Here, the verification
of the weak convergence of arclength derivatives requires a compensated compactness argument.
In addition, it is shown that the space of Sobolev immersions is metrically and geodesically complete. 
Furthermore,  the space of unparametrized curves, identified with the quotient space of Sobolev
immersions under diffeomorphic reparameterizations, is shown to be a geodesic space.
In \cite{bauer2018fractional} a 
fractional Sobolev metric of non-integer order $s>\tfrac32$ is discussed and the existence of geodesic paths is demonstrated.
Furthermore, the connection to right-invariant metrics on the associated diffeomorphism group is explored.
In \cite{bauer2023sobolev} 
completeness properties of the space of non-parametrized immersed curves in a complete, smooth, bounded manifold are discussed,
following the outline given in \cite{Br15} for flat image manifold
metric, and geodesic completeness is shown for a broad range of classes of Sobolev metrics with order at least $2$.
Incompleteness due to collapse is observed in the case of the space of open curves.
The space of planar curves equipped with a $BV^2$-Finsler metric is investigated in \cite{nardi2016geodesics}, where
the $BV^2$-norm is defined as the sum over the $W^{1,1}$-norm and the total variation of the second variation considered as a Radon measure
on $S^1$. A finite norm is compatible with curves that are piecewise smooth with jumping velocities.
The existence of shortest paths is shown, and a gradient descent method for a finite element discretization is used to
compute approximations of geodesic paths.
The difference to the Sobolev metric of order $2$ is shown based on selected numerical examples.

A Riemannian distance in the space of curves can also be based on the square root velocity approach.
To this end one defines for a curve $c$ the square root velocity function $q[c]=\tfrac{c'}{\sqrt{|c'|}}$; the distance between two curves
$b$ and $c$ is then defined as $\left\Vert\tfrac{b'}{\sqrt{|b'|}}-\tfrac{c'}{\sqrt{|c'|}}\right\Vert_{L^2}$.
In this context, the quotient space of unparametrized, absolutely continuous curves is studied analytically and topologically.
In \cite{bruveris2016optimal} it is shown that the square root velocity transform 
mapping $c$ to $q[c]$ is a homeomorphism and that the reparameterization 
group action is continuous. Furthermore, the existence of an optimal reparameterization realizing the minimal distance is 
demonstrated.
The square root velocity transform is generalized
in \cite{needham2020simplifying}, and the proposed modified transform allows for computing any first-order Sobolev metric controlled by two constant
parameters as the pullback of the $L^2$ metric under this transform.
A further extension allows to apply the transform in the case of piecewise affine curves.
A slightly more general class of transforms compared to the square root velocity transform
is discussed in \cite{sukurdeep2019inexact}.
In \cite{bauer2019inexact}, the square root normal fields for the efficient computation of elastic distances between parametrized curves
is combined with the concept of varifold distances to relax the terminal constraint in an elastic matching approach.

Reparameterization invariant Sobolev metrics on spaces of regular curves are introduced and motivated by applications in
shape analysis in \cite{bauer2016use}. On this background, the authors advocate the development of robust
numerical methods for higher-order Sobolev metrics.
A tensor product B-spline discretization in space and time with arc length pa\-ra\-me\-trization in space is used in \cite{bauer2015second} to compute solutions of the initial and boundary value problem of 
geodesics in the space of Sobolev immersions.
Furthermore, it is applied to compute the Fr\'echet mean and a principal component analysis on ensembles of curves.
In \cite{BaBrHa17}, a class of algorithms is proposed to numerically solve the
geodesic initial and boundary value problem on the space of Sobolev immersions using
tensor product B-splines. The corresponding approximate Galerkin discretization 
is explained in detail. For the initial value problem, 
the time-discretization approach from \cite{RuWi15} is used.
A particular focus in \cite{bauer2019metric} is on the computation of solutions of the geodesic boundary value problem. 
A Lagrangian formulation of this variational problem is taken into account with the 
end point boundary condition considered as a constraint.
This problem is then relaxed using so-called chordal distances, and the resulting approximation is solved using an optimal control approach.
In \cite{sukurdeep2022new}, an extension of the Sobolev metric to structures composed of curve segments 
and the matching of topologically different structures is discussed. 
The matching is defined variationally, and the existence of optimal matches as
well as their numerical approximation is investigated.
The finite-dimensional space of piecewise affine, reparameterization invariant curves equipped with a discrete Sobolev metric is analyzed
in \cite{cerqueira2024sobolev}.
In particular, metric and geodesic completeness are demonstrated. The computed Gaussian curvature on this space sheds light on the global geometry of the space.

Michor and Mumford \cite{MiMu04} studied the Riemann curvature tensor\index{Riemann curvature tensor}
on the space of smooth planar curves equipped with the metric
$
g_c(v,v) \coloneqq \int_{\Sone} (1+A \kappa_c^2) \vert v \vert^2 \,\d s
$
for $\kappa_c$, the curvature of the curve.
They gave an explicit formula for the sectional curvature and 
could show that for large, smooth curves, with a suitable notion of large and smooth, 
all sectional curvatures turn out to be nonnegative, 
whereas for curves with high frequency characteristics, sectional curvatures are nonpositive.
Younes \etal 
\cite{YoMiSh08} investigated a metric on the space of plane curves derived as the limit case of 
a scale invariant metric of Sobolev order $1$ from \cite{MiMu07} which allows the explicit computation of geodesics
and sectional curvature.
Micheli \etal 
\cite{MiMiMu12} showed how to compute sectional curvature on the space of 
landmarks with a metric induced by the flow of diffeomorphisms.
They evaluated the sectional curvature for pairs of tangent directions to special geodesics along which only two landmarks move.
A formula for the derivatives of the inverse of the metric is at the core of this approach. 
In \cite{MiMiMu13}, the formula for the sectional curvature on the landmark space was generalized to special infinite-dimensional weak Riemannian manifolds.
A brief overview of the geometry of shape spaces with a particular emphasis on sectional curvature in different shape spaces 
can be found in the contribution by Mumford \cite{Mu12}.

\section{Review of the space of Sobolev immersions} \label{sec:background}
We consider closed curves in $\R^d$, $d\geq2$, \ie continuous maps from the unit circle $\Sone\subset\R^2$ into $\R^d$,
$$c:\Sone \to \R^d,\qquad \theta \mapsto c(\theta).$$
Note that $\Sone$ can be identified with the real line modulo
a $2\pi$ periodicity, \ie we identify all points $\theta + 2\pi j$ for $j\in \Z$.
Thus, a curve $c$ defined on $\Sone$ can be considered as a 
map $c:\R \to \R^d$ with $c(\theta)=c(\theta+2\pi)$ for all $\theta\in \R$.

More specifically, we will work with \emph{immersions} of $\Sone$ into $\R^d$ of different regularity.
Let us first introduce the space of smooth immersions
\begin{equation*}
	\immersion\coloneqq\{c \in C^{\infty}(\Sone,\R^d)\,|\,c'(\theta)\neq 0\ \forall \theta \in \Sone\},
\end{equation*}
which is an open subset of the space of smooth functions $C^{\infty}(\Sone,\R^d)$.
Above, differentiation of the curve with respect to the parameter $\theta$ is denoted by $c'=\dtheta c$.
The space of \emph{Sobolev-immersions} is of lower regularity; for $m\geq2$ it is given by
\begin{equation}\label{eq:SobImm}
	\immersion^m \coloneqq \{c \in W^{m,2}(\Sone,\R^d)\,|\, c'(\theta)\neq 0\ \forall \theta \in \Sone\}
	\supset\immersion,
\end{equation}
where $m\geq2$ is required so that $c'$ is defined for all $\theta$ due to Sobolev embedding.
Here, $W^{m,2}(\Sone,\R^n)$ denotes the usual Sobolev space of functions ($n=1$)
or vector fields $u$ on $S^1$ with norm and inner product (we will use the notation $W^m_\theta$ for $W^{m,2}(\Sone,\R^d)$)
\begin{align}
	\Vert u \Vert^2_{W^m_\theta}\coloneqq& \int_{\Sone}|u|^2 + |\dtheta^m u|^2 \,\d \theta, \nonumber\\
	(u,w)_{W^m_\theta}\coloneqq&\int_{\Sone} u \cdot w + \partial^m_\theta u \cdot \partial^m_\theta w \,\d \theta,
\end{align}
where ``$\cdot$'' indicates the Euclidean inner product in $\R^d$.
Due to Poincar\'{e}'s inequality and the observation that $\int_{\Sone} \dtheta^k u \,\d \theta=0$ for $k\geq 1$,
the norm is equivalent to the norm $\sqrt{\int_{\Sone} \sum_{j=0}^m |\partial^j_\theta u|^2 \,\d \theta}$.

Geometrically, it is more natural to consider arclength differentiation and integration along curves $c \in \immersion$.
Defining the arclength function
$$s(\theta) = \int_0^{\theta} |c'(\vartheta)| \,\d \vartheta,$$
we obtain for the associated differentials $\d s = |c'(\theta)| \d \theta$ and correspondingly for differentiation with respect to the arclength parameter $\diffs=|c'(\theta)|^{-1}\dtheta$.
Arclength differentiation of the curve $c: \Sone \to \R^d$ itself leads to the unit speed velocity vector $v=\diffs c = \frac{c'}{|c'|}$ tangent to the curve.
Correspondingly, we  define an arclength Sobolev space $W^m_s$ with norm and inner product
\begin{align*}
	\Vert u \Vert^2_{W^m_s}\coloneqq& \int_{\Sone}|u|^2 + |\partial_s^m u|^2 \,\d s,\\
	(u,w)_{W^m_s}\coloneqq&\int_{\Sone} u \cdot w + \partial^m_s u \cdot \partial^m_s w \,\d s,
\end{align*}
which implicitly depend on the underlying curve $c$.

We will identify $L^2_\theta$, $L^2_s$ with $W^0_\theta$,  $W^0_s$, respectively.

The space of Sobolev immersions $\immersion^m$ is an open subset of $W^{m,2} (\Sone,\R^d)$.
By equipping it with a Riemannian metric, it can be turned into a Riemannian manifold.

\begin{definition}[Sobolev metric on $\immersion^m$]\label{def:SobolevMetricCurves}
	The \emph{Sobolev metric}\index{Sobolev metric} of order $m\geq2$ on $\immersion^m$ is defined
	for a curve $c\in \immersion$ as footpoint and two infinitesimal variations $\xi,\zeta\in W^{m,2}(\Sone,\R^d)$ of $c$ as
	\begin{align*}
		g_c(\xi,\zeta)\coloneqq \int_{\Sone}\sum_{j=0}^m a_j \diffs^j \xi \cdot \diffs ^j \zeta \,\d s
	\end{align*}
	with weights $a_0,a_m>0$, $a_1,\ldots,a_{m-1}\geq0$.
	For ease of presentation, we set $a_0=\ldots=a_m = 1$ unless explicitly stated otherwise.
\end{definition}

\begin{remark}[Sobolev metric is smooth and strong]\label{rem:smoothStrongMetric}
	This metric is clearly well-posed at any $c\in\immersion$,
	but it is even well-posed and smooth on $\immersion^m$
	due to the smoothness of the map $(c,\xi)\mapsto\diffs\xi$ from $\immersion^m\times W^{k,2}(\Sone,\R^d)$ into $W^{k-1,2}(\Sone,\R^d)$ for $1\leq k\leq m$ \cite[Lemma 3.3]{BrMiMu14}.
	Moreover, this map is linear in $\xi$, its kernel being the constant functions.
	As a direct consequence, $g_c$ is coercive (\ie $g_c(\xi,\xi)=\|\xi\|_{W^m_s}^2\geq C\|\xi\|_{W^m_\theta}^2$ for some $C>0$ depending on $c\in\immersion^m$)
	and thus a strong Riemannian metric.
\end{remark}

For $m=2$, we exemplarily expand the metric and obtain
\begin{align}
	g_c(\xi,\zeta)&=\int_{\Sone} \xi \cdot \zeta |c'|+\frac{1}{|c'|}\xi'\cdot \zeta' + \frac{1}{|c'|}\left(\dfrac{\xi'}{|c'|}\right)'\left(\dfrac{\zeta'}{|c'|}\right)'\,\d \theta \nonumber \\
	&=\int_{\Sone} \xi \cdot \zeta |c'|+\frac{1}{|c'|}\xi'\cdot \zeta' + \frac{1}{|c'|^3}\xi'' \cdot \zeta'' + \dfrac{(c'\cdot c'')^2}{|c'|^7}\xi' \cdot \zeta'  \label{eq:SobolevMetricTwo} \\
	& \qquad   - \dfrac{(c'\cdot c'')}{|c'|^5}(\xi'' \cdot \zeta' + \xi' \cdot \zeta'') \,\d \theta ,  \nonumber
\end{align}
using $\diffs^2 \xi=\frac{1}{|c'|}\left(\frac{\xi'}{|c'|}\right)'
=\frac{1}{|c'|^2} \xi'' - \frac{c' \cdot c''}{|c'|^4}\xi'$.
To study the Riemannian manifold $(\immersion^m,g_c)$ it is indispensable to control $|c'|$ pointwise from below and above.

We denote paths of curves $c \in \immersion^m$ by boldface $t \mapsto \cpath(t, \cdot)$. For a path $\cpath$ we define the associated \emph{path energy} \begin{align} \label{eq:pathenergySobolev}
	\pathenergy[\cpath]&=\int_0^1 g_{\cpath(t)} (\dot{\cpath}(t),\dot{\cpath}(t)) \,\d t 	\end{align}
with path velocities $\dot\cpath=\partial_t\cpath\in L^2((0,1),W^{m,2}(\Sone, \R^d))$.
We define geodesics as minimizers of the path energy for fixed end points.
The Riemannian distance is defined as the square root of the infimum of the path energy over 
regular paths connecting $c_1$ and $c_2$, \ie 
\begin{align}
	\dist (c_1,c_2) = \inf  \sqrt{\int_0^1 g_{\cpath(t)} (\dot{\cpath}(t),\dot{\cpath}(t)) \,\d t }
\end{align}
where the minimization is over all $\cpath \in W^{1,2}((0,1),\immersion^m)$ with $\cpath(0,\cdot) = c_1$, $\cpath(1,\cdot)=c_2$.
The Euler-Lagrange equation is given by
\begin{multline*}
	0 = \partial_\cpath \pathenergy[\cpath](\vartheta)
	=\left.\frac{\d}{\d \epsilon} \pathenergy[\cpath + \epsilon \vartheta]\right\vert_{\epsilon=0}
	= \int_0^1 (D_c g_{\cpath(t)})(\vartheta) (\dot \cpath(t),\dot \cpath(t)) + 2 g_{\cpath(t)}(\dot \cpath(t), \dot \vartheta) \,\d t\\
	= \int_0^1 (D_c g_{\cpath(t)})(\vartheta) (\dot \cpath(t),\dot \cpath(t)) - 2 (D_c g_{\cpath(t)})(\dot \cpath(t))(\dot \cpath(t), \vartheta) - 2 g_{\cpath(t)} (\ddot \cpath(t),  \vartheta)\,\d t
\end{multline*}
for all smooth vector fields $\vartheta$ with compact support.
Hence, geodesics also solve
\begin{equation} \label{eq:geodesicIVP}
	0 = -2 \tfrac{\mathrm{d}}{\mathrm{d}t} \left. \left(g_{\cpath(t)} (\dot \cpath(t), \vartheta)\right)\right\vert_{t=t_0} \!\!+ D_c\, g_{\cpath(t_0)} (\vartheta) (\dot \cpath(t_0),\dot \cpath(t_0))
	\qquad\forall\vartheta\in C^\infty(\Sone,\R^d)
\end{equation}
at every $t_0 \in (0,1)$. For details on the geodesic equation in this context, see, \eg, \cite{MiMu07}.
It turns out that $\immersion^m$ is the metric completion of $\immersion$ \cite[Thm.\,4.5]{Br15}.

\subsection{A priori estimates on metric balls} \label{sec:review}
Concerning the analytical treatment of this space of curves, we first collect some useful insights and technical tools mainly given in \cite{Br15,BrMiMu14,MiMu04} that we will exploit for our discrete theory:

We collect a priori estimates for curves and arclength derivatives in a metric ball $$B_r(c_0)=\{c\in\immersion^m\,|\, \dist (c,c_0)<r\}$$
in $\immersion^m$ around some immersion $c_0 \in \immersion^m$.

\begin{proposition}[{A priori estimates \citep[Prop.\,3.1 \& 3.4]{Br15}}] \label{prop:SobCurveUniform}
	Let $g$ be the Sobolev metric of order $m\geq 2$ and $\dist(\cdot,\cdot)$ the induced distance and let  $l_c \coloneqq \int_{\Sone} \,\d s$ denote the length of the curve $c$. Then the maps
	\begin{align*}
		\log |c'|&:(\immersion^m,\dist) \to L^{\infty}(\Sone,\R),\\
		l_c^{\frac12},l_c^{-\frac12} &: (\immersion^m,\dist) \to \R^{+},  \\
		\partial_s^k c&:(\immersion^m,\dist)\rightarrow L^p(\Sone,\R^d),\\
		\partial_s^{k-1}\vert c'\vert&:(\immersion^m,\dist)\rightarrow L^p(\Sone,\R)
	\end{align*}
	with $p=2$ for $k=m$ and $p=\infty$ for $1\leq k\leq m-1$
	are 	Lipschitz continuous on every metric ball $B_r(c_0)\subset\immersion^m$. In particular, $\Vert c'\Vert_{L^\infty}, \Vert |c'|^{-1} \Vert_{L^\infty}, l_c$ are bounded
	and the $L^2(\d \theta)$- and $L^2_s$-norms are uniformly equivalent on every metric ball, \ie $\exists C>0$ st. $\tfrac1C \|w\|_{L^2(\d \theta)} \leq \|w\|_{L^2_s} \leq C  \|w\|_{L^2(\d \theta)}$ for all $c \in B_r(c_0)$ and all $w \in L^2(\d \theta)$.
	Moreover, the following expressions are bounded on every metric ball:
	\begin{align*}
		&\Vert c\Vert_{L^\infty},\ldots,\;\Vert\diffs^{m-1}c\Vert_{L^\infty},\; \Vert\vert c'\vert\Vert_{L^\infty},\ldots,\;\Vert\diffs^{m-2}\vert c'\vert\Vert_{L^\infty}, \\&\Vert\diffs^mc\Vert_{L^2(\d \theta)},\;\Vert\diffs^mc\Vert_{L^2(\d  s)},\; \Vert c\Vert_{W^{m,2}(\d  s)},\; \Vert\diffs^{m-1}\vert c'\vert\Vert_{L^2(\d \theta)},\; \Vert\diffs^{m-1}\vert c'\vert\Vert_{L^2_s}.
	\end{align*}
\end{proposition}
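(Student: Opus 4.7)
The plan is to turn each claimed Lipschitz estimate into a differential inequality along smooth paths $\mathbf{c}:[0,1]\to\immersion^m$: for each quantity $Q[c]$ in the list we derive an estimate of the form
\[
\Big|\partial_t Q[\mathbf{c}(t)]\Big|_{\mathrm{target}}\leq C\sqrt{g_{\mathbf{c}(t)}(\dot{\mathbf{c}}(t),\dot{\mathbf{c}}(t))},
\]
where $C$ depends only on quantities that have already been bounded on $B_r(c_0)$. Integration in $t$ followed by infimisation over admissible paths then translates each such inequality into a Lipschitz bound with respect to $\dist(c_0,c_1)$. The argument is thus organised as an induction in the order of arclength derivatives, each step exploiting the bounds established in the previous ones.

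The base of the induction is the length. From $\partial_t l_c=\int_{\Sone}v\cdot\diffs\dot{\mathbf{c}}\,\d s$ with $v=\diffs\mathbf{c}$ and Cauchy--Schwarz, one deduces $|\partial_t\sqrt{l_c}|\leq\tfrac12\|\diffs\dot{\mathbf{c}}\|_{L^2_s}\leq\tfrac12\sqrt{g_{\mathbf{c}(t)}(\dot{\mathbf{c}}(t),\dot{\mathbf{c}}(t))}$, giving Lipschitz continuity of $\sqrt{l_c}^{\pm 1}$ and uniform bounds on $l_c$ and $l_c^{-1}$. Next, from $\partial_t\log|c'|=v\cdot\diffs\dot{\mathbf{c}}$ pointwise and the Poincar\'e--Sobolev embedding
\[
\|f\|_{L^\infty_s}\leq\tfrac1{\sqrt{l_c}}\|f\|_{L^2_s}+\sqrt{l_c}\|\diffs f\|_{L^2_s}\quad\forall f\in W^{1,2}_s(\Sone),
\]
one obtains $\|\diffs\dot{\mathbf{c}}\|_{L^\infty_s}\leq C\sqrt{g_{\mathbf{c}(t)}(\dot{\mathbf{c}}(t),\dot{\mathbf{c}}(t))}$ with $C$ depending only on the now-bounded $l_c$. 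Integrating yields Lipschitz continuity of $\log|c'|$ into $L^\infty$; this in turn provides uniform upper and lower bounds on $|c'|$, from which the equivalence of $L^2(\d\theta)$ and $L^2_s$ via $\d s=|c'|\d\theta$ follows immediately.

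For the higher derivatives, the commutator identity
\[
[\partial_t,\diffs]=-(v\cdot\diffs\dot{\mathbf{c}})\,\diffs
\]
allows one to compute inductively that $\partial_t\diffs^k\mathbf{c}$ equals $\diffs^k\dot{\mathbf{c}}$ plus a polynomial expression in the quantities $v$, $\diffs^j\mathbf{c}$, $\diffs^j\dot{\mathbf{c}}$ and $\diffs^{j-1}|c'|$ for $j\leq k$; an analogous formula governs $\partial_t\diffs^{k-1}|c'|$, obtained by differentiating $\partial_t|c'|=|c'|(v\cdot\diffs\dot{\mathbf{c}})$ in $s$ and repeatedly applying the commutator. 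Bounding each summand in $L^p_s$ ($p=\infty$ if $k\leq m-1$, $p=2$ if $k=m$) via H\"older and the inductive hypothesis, combined with $\|\diffs^j\dot{\mathbf{c}}\|_{L^2_s}\leq\sqrt{g_c(\dot{\mathbf{c}},\dot{\mathbf{c}})}$ for $j\leq m$ and the Poincar\'e--Sobolev estimate for $j\leq m-1$, yields the desired differential inequality, and integration closes the induction step.

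The main obstacle is the top-order case $k=m$: both the new factor $\diffs^m\dot{\mathbf{c}}$ and any occurrence of $\diffs^m\mathbf{c}$ in the polynomial only lie in $L^2_s$. The H\"older splitting must therefore be organised so that exactly one $L^2_s$-factor appears per summand while all others sit in $L^\infty_s$; this is possible precisely because $\diffs^j\mathbf{c}$, $\diffs^{j-1}|c'|$ and $\diffs^j\dot{\mathbf{c}}$ are already controlled in $L^\infty_s$ for $j\leq m-1$ by the preceding induction steps. After this bookkeeping one obtains an integrable bound on $\|\partial_t\diffs^m\mathbf{c}\|_{L^2_s}$ and, via the norm equivalence, also on $\|\partial_t\diffs^m\mathbf{c}\|_{L^2(\d\theta)}$. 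The uniform boundedness of the full $W^{m,2}$-norms of $c$ listed in the proposition then follows by collecting the individual derivative bounds.
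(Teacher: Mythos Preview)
The paper does not provide its own proof of this proposition: it is quoted verbatim from \cite[Prop.\,3.1 \& 3.4]{Br15}, with only a short remark explaining how the statement, originally formulated for metric balls in $\immersion$, extends to metric balls in $\immersion^m$ by density and continuity. Your sketch is therefore not competing with a proof in the paper but rather reconstructing the argument of the cited references, and it does so correctly: the strategy of deriving differential inequalities $|\partial_t Q[\mathbf{c}(t)]|\leq C\sqrt{g_{\mathbf{c}(t)}(\dot{\mathbf{c}},\dot{\mathbf{c}})}$ along paths, bootstrapping from $\sqrt{l_c}$ to $\log|c'|$ to the higher arclength derivatives via the commutator $[\partial_t,\diffs]=-(v\cdot\diffs\dot{\mathbf{c}})\diffs$, and handling the top order $k=m$ by ensuring at most one $L^2_s$ factor per summand, is precisely the argument in \cite{Br15,BrMiMu14}. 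One point you do not address is the passage from $\immersion$ to $\immersion^m$, which the paper handles in its subsequent remark by invoking the metric completion $\overline{\immersion}=\immersion^m$ and the smoothness of the relevant maps on $\immersion^m$ to transfer the Lipschitz constants.
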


\begin{remark}[Logarithm of length element]
	The boundedness of $\log |c'|$ in $L^\infty$ is the crucial observation. The uniform bounds for the
	length element $|c'|$  from above and below follow immediately from this, and these are the cornerstone of the subsequent analysis.
\end{remark}

\begin{remark}[Extension of estimates to $\immersion^m$]
	In \citep[Prop.\,3.1 \& 3.4]{Br15}, the proposition is actually only stated for metric balls in the space of smooth immersions
	(\ie every occurrence of $\immersion^m$ is replaced with $\immersion$),
	but our version above is an immediate consequence.
	Indeed, by the smoothness of $(c,\xi)\mapsto\diffs\xi$ stated in \cref{rem:smoothStrongMetric} and the smoothness of $\immersion^m\ni c\mapsto|c'|\in W^{m-1,2}(\Sone,\R)$,
	all considered maps are smooth on $\immersion^m$.
	Thus, for any $c_0\in\immersion^m$ and $c_1,c_2\in B_r(c_0)$
	we can find $\tilde c_0,\tilde c_1,\tilde c_2\in\immersion$ arbitrarily close to $c_0,c_1,c_2$,
	with respect to both the metric (since $\immersion^m$ is the metric completion of $\immersion$)
	and the $W^m_\theta$-topology (since the metric is smooth and strong, \cf \cite[VII Prop.\,6.1]{La99}),
	where the latter implies that the values of the considered (smooth) maps at $\tilde c_1,\tilde c_2$ are arbitrarily close to those of $c_1,c_2$.
	Hence, the Lipschitz constants on metric balls of smooth immersions remain valid for metric balls of Sobolev immersions.
	An analogous remark holds for the following result.
\end{remark}

\begin{lemma}[{Coercivity and boundedness of the metric \citep[Prop.\,3.5]{Br15}, \citep[Lemma 5.1]{BrMiMu14}}]\label{lem:coercivityBoundednessMetric}
	For any metric ball $B_r(c_0)\subset\immersion^m$ there exists $C>0$ such that
	for all $c\in B_r(c_0)$ and all $\xi\in W^m_\theta$ it holds
	$$\tfrac1C \Vert \xi\Vert^2_{W^m_\theta}\leq g_{c}(\xi,\xi)\leq C\Vert \xi\Vert^2_{W^m_\theta}.$$
\end{lemma}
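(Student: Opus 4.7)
The plan is to reduce both inequalities to the comparison of the purely parametric norm $\|\xi\|_{W^m_\theta}^2 = \|\xi\|_{L^2}^2 + \|\partial_\theta^m\xi\|_{L^2}^2$ with the pair $\|\xi\|_{L^2}^2 + \|\partial_s^m\xi\|_{L^2}^2$, since the remaining summands in $g_c(\xi,\xi)$ are non-negative for the lower bound and treated analogously for the upper bound. The key algebraic ingredient is the iterated chain rule: for $0\le j\le m$ one obtains by induction
\begin{equation*}
\partial_s^j \xi \;=\; \sum_{k=1}^{j} A_{j,k}\,\partial_\theta^k\xi,\qquad A_{j,j}=|c'|^{-j},
\end{equation*}
where each $A_{j,k}$ is a polynomial in $|c'|^{-1}$ and in $\partial_\theta^1|c'|,\dots,\partial_\theta^{j-k}|c'|$. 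By \cref{prop:SobCurveUniform}, on the metric ball $B_r(c_0)$ the factors $|c'|^{\pm 1}$ are uniformly bounded in $L^\infty$, and $\partial_s^l|c'|$ is bounded in $L^\infty$ for $0\le l\le m-2$ and in $L^2$ for $l=m-1$; inverting $\partial_s^l=|c'|^{-l}\partial_\theta^l+\ldots$ this transfers to the same bounds for $\partial_\theta^l|c'|$. Consequently every $A_{j,k}$ is uniformly bounded in $L^\infty(B_r(c_0))$ except for those involving the critical derivative order $m-1$, which appear only as coefficients of $\partial_\theta^1\xi$ (and only when $j=m$) and are uniformly bounded in $L^2$.

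For the upper bound I would estimate $\int|\partial_s^j\xi|^2\,\d s\le\|{|c'|}\|_{L^\infty}\sum_{k=1}^{j}\|A_{j,k}\partial_\theta^k\xi\|_{L^2}^2$. Terms with $A_{j,k}\in L^\infty$ are controlled directly by $\|\partial_\theta^k\xi\|_{L^2}^2\le\|\xi\|_{W^m_\theta}^2$; the singular term $\|A_{m,1}\partial_\theta\xi\|_{L^2}^2\le\|A_{m,1}\|_{L^2}^2\,\|\partial_\theta\xi\|_{L^\infty}^2$ is handled through the Sobolev embedding $W^{1,2}(\Sone)\hookrightarrow L^\infty(\Sone)$ (available because $m\ge 2$), which yields $\|\partial_\theta\xi\|_{L^\infty}\le C\|\xi\|_{W^m_\theta}$. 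Adding the $j=0$ term $\int|\xi|^2\,\d s\le C\|\xi\|_{L^2}^2$ finishes this direction.

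For the coercivity I drop all non-negative intermediate contributions and keep only $j=0$ and $j=m$ to get
\begin{equation*}
g_c(\xi,\xi)\;\ge\; c\bigl(\|\xi\|_{L^2}^2+\|\partial_s^m\xi\|_{L^2}^2\bigr),
\end{equation*}
using that $|c'|$ is bounded from above and below. Solving the chain rule relation for the top derivative yields $\partial_\theta^m\xi=|c'|^m\partial_s^m\xi-|c'|^m L$ with $L=\sum_{k=1}^{m-1}A_{m,k}\partial_\theta^k\xi$. Squaring and integrating gives
\begin{equation*}
\|\partial_\theta^m\xi\|_{L^2}^2\;\le\; C\|\partial_s^m\xi\|_{L^2}^2+C\|L\|_{L^2}^2,
\end{equation*}
and by the coefficient bounds above (using Sobolev embedding once more for the $A_{m,1}$ term) $\|L\|_{L^2}^2\le C\|\xi\|_{W^{m-1}_\theta}^2$. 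Ehrling's lemma on $\Sone$ provides, for any $\epsilon>0$, a constant $C_\epsilon$ with $\|\xi\|_{W^{m-1}_\theta}^2\le\epsilon\|\xi\|_{W^m_\theta}^2+C_\epsilon\|\xi\|_{L^2}^2$. Inserting and choosing $\epsilon$ small enough so that $C\epsilon<\tfrac12$ allows me to absorb the $\|\partial_\theta^m\xi\|_{L^2}^2$-contribution on the left, yielding $\|\xi\|_{W^m_\theta}^2\le C'\bigl(\|\xi\|_{L^2}^2+\|\partial_s^m\xi\|_{L^2}^2\bigr)\le C''g_c(\xi,\xi)$.

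The main obstacle I expect is the bookkeeping of the chain-rule coefficients $A_{j,k}$, in particular the single coefficient $A_{m,1}$ that is only $L^2$-bounded rather than $L^\infty$-bounded; this is precisely the reason the hypothesis $m\ge 2$ is needed, so that $W^{1,2}\hookrightarrow L^\infty$ can save the missing factor. Everything else is a routine interpolation argument whose parameters can be chosen uniformly on $B_r(c_0)$ thanks to \cref{prop:SobCurveUniform}.
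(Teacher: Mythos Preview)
The paper does not prove this lemma; it simply cites \cite[Prop.\,3.5]{Br15} and \cite[Lemma 5.1]{BrMiMu14}. Your approach via the iterated chain rule together with the uniform bounds of \cref{prop:SobCurveUniform} is the natural one and matches what those references do.

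There is one genuine slip in your coercivity argument, precisely at the borderline case $m=2$. You claim $\|L\|_{L^2}^2\le C\|\xi\|_{W^{m-1}_\theta}^2$, handling the $A_{m,1}$ contribution by $\|A_{m,1}\partial_\theta\xi\|_{L^2}\le\|A_{m,1}\|_{L^2}\|\partial_\theta\xi\|_{L^\infty}$ and then Sobolev embedding. But the embedding $W^{1,2}(\Sone)\hookrightarrow L^\infty(\Sone)$ gives $\|\partial_\theta\xi\|_{L^\infty}\le C\|\xi\|_{W^{2}_\theta}$, which for $m=2$ is the full $W^m_\theta$-norm, not $W^{m-1}_\theta$. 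Your subsequent Ehrling step then produces nothing absorbable. For $m\ge 3$ the argument is fine as written, since $2\le m-1$.

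The fix is easy: apply Ehrling directly to the compact embedding $W^{1,2}(\Sone)\hookrightarrow L^\infty(\Sone)\hookrightarrow L^2(\Sone)$ to get $\|\partial_\theta\xi\|_{L^\infty}\le\epsilon\|\xi\|_{W^2_\theta}+C_\epsilon\|\xi\|_{W^1_\theta}$ and absorb the $\epsilon$-term; equivalently, use Gagliardo--Nirenberg $\|\partial_\theta\xi\|_{L^\infty}\le C\|\partial_\theta\xi\|_{L^2}^{1/2}\|\partial_\theta^2\xi\|_{L^2}^{1/2}+C\|\partial_\theta\xi\|_{L^2}$ and then Young's inequality. With this adjustment your proof goes through for all $m\ge 2$.
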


\subsection{Existence of shortest geodesics} \label{subsec:existenceGeodesics}
The next lemma provides weak convergence results for arclength derivatives of vector fields.
This allows to show weak continuity of the arc\-length derivatives of the path velocity for paths of bounded path energy in the space of Sobolev immersions.
Below, we use the following shortcuts for Bochner spaces
\begin{gather*}
	W^n_t W^m_\theta = W^{n,2}((0,1),W^{m,2}(\Sone,\R^d)),~ W^1_t\immersion^m=W^{1,2}((0,1),\immersion^m),\\
	C^{n,\alpha}_t W^m_\theta=C^{n,\alpha}([0,1],W^{m,2}(\Sone,\R^d)),~C^{n,\alpha}_tC^k_\theta = C^{n,\alpha}([0,1],C^k(\Sone,\R^d)),
\end{gather*}
where we also write $W^0_t=L^2_t$ or $W^0_\theta=L^2_\theta$.
Further, we indicate by $\partial_ch$ the derivative $\partial_sh$ of some function $h:\Sone\to\R^n$ with respect to arclength of a curve $c\in\immersion^m$.

The following result is given in a slightly more general form in \cite[Lemma 5.9]{Br15} for $m>\frac{3}{2}$, however, we will only need integer orders $m\geq2$ to prove the existence result \cref{thm:SobolevExistence}. For its proof, one can follow \cite[Lemma 5.9]{Br15} using only integer orders.

\begin{lemma}[{Weak convergence of arclength derivatives\index{arclength derivative} \citep[Lemma 5.9]{Br15}}]\label{lem:weakcontSobolev}
	Let $m\geq 2$, $0\leq k \leq m$, and let $\cpath^j,\cpath \in W^1_t\immersion^m$ with $|(\cpath^j)'| \geq \delta >0$ on $\Sone$ and $\cpath^j \rightharpoonup \cpath$ in $W_t^1 W^m_\theta$. If $h^j \rightharpoonup h$ in $L^2_tW^k_\theta$,  then $\partial^k_{\cpath^j} h^j \rightharpoonup \partial^k_\cpath h$ in $L^2_tL^2_\theta$.
\end{lemma}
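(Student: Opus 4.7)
My approach is to combine compactness for $\cpath^j$ with an explicit expansion of the iterated arclength derivative and a careful integration by parts. First, by the Aubin-Lions lemma combined with the compact Sobolev embedding $W^{m,2}(\Sone) \hookrightarrow\hookrightarrow C^{m-1}(\Sone)$ (valid on the one-dimensional manifold $\Sone$ for integer $m \geq 2$), the weak convergence $\cpath^j \rightharpoonup \cpath$ in $W^1_t W^m_\theta$ improves to strong convergence $\cpath^j \to \cpath$ in $C^0_t C^{m-1}_\theta$. In particular, every derivative $(\cpath^j)^{(i)}$ with $i \leq m-1$ converges uniformly in $C^0_t C^0_\theta$, and together with the hypothesis $|(\cpath^j)'| \geq \delta$, any smooth expression in $1/|(\cpath^j)'|$ and these low-order derivatives also converges uniformly.

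Using $\partial_c = (1/|c'|)\partial_\theta$ and induction one derives an explicit decomposition
$$\partial_c^k h = \sum_{j=1}^{k} P_j^k(c)\,\partial_\theta^j h,$$
where each coefficient $P_j^k(c)$ is a polynomial in $1/|c'|$ and in the derivatives $c', \ldots, c^{(k-j+1)}$. When $k \leq m-1$, each $P_j^k(\cpath^j)$ depends only on derivatives of $\cpath^j$ up to order $m-1$ (uniform convergence), while $\partial_\theta^j h^j \rightharpoonup \partial_\theta^j h$ weakly in $L^2_t L^2_\theta$ by the hypothesis on $h^j$; a weak-strong product argument yields the claim. For $k = m$, the coefficients $P_j^m$ with $j \geq 2$ are handled identically, so the sole obstruction is the term $P_1^m(c)\,\partial_\theta h$, in which $c^{(m)}$ enters and converges only weakly in $L^2$.

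To handle this top-order term, observe that $c^{(m)}$ appears in $P_1^m$ exclusively through the scalar $c' \cdot c^{(m)}$ multiplied by a prefactor $R(|c'|)$ depending only on $|c'|$; indeed, the $c^{(m)}$-contribution arises from $\partial_\theta^{m-1}|c'|^p$ for some $p$, whose top term is $p\,|c'|^{p-2}\,(c' \cdot c^{(m)})$. Apply the identity
$$c' \cdot c^{(m)} = \partial_\theta(c' \cdot c^{(m-1)}) - c'' \cdot c^{(m-1)}$$
inside the weak formulation against a smooth test function $\varphi$ and integrate by parts once in $\theta$. The resulting integrands involve $c' \cdot c^{(m-1)}$, $c'' \cdot c^{(m-1)}$, $R(|c'|)$, and $\partial_\theta R(|c'|) = R'(|c'|)(c'\cdot c'')/|c'|$, all of which for $m \geq 3$ involve only derivatives up to order $m-1$ and thus converge uniformly, paired with the weakly $L^2$-convergent factors $\partial_\theta h^j, \partial_\theta^2 h^j$; passage to the limit is by weak-strong product convergence. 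The case $m = 2$ is special since $c^{(m-1)} = c'$, so $c''$ reappears in the non-IBP term and in $\partial_\theta R$; but then the equivalent identity $c' \cdot c'' = \tfrac12\partial_\theta|c'|^2$ followed by one integration by parts produces a self-referential equation for the problematic integral that can be solved in closed form in terms of (uniform)$\times$(weakly $L^2$-convergent) integrands. The hard part is organizing the integration by parts so that no derivative of $\cpath^j$ beyond order $m$ is ever produced; this structural observation on the dependence of $P_1^m$ on $c^{(m)}$ is what makes the approach in \cite[Lemma 5.9]{Br15} go through for integer $m \geq 2$ without recourse to fractional Sobolev spaces.
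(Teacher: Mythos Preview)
Your proposal is correct and follows precisely the route the paper indicates: the paper gives no proof of its own but simply refers to \cite[Lemma~5.9]{Br15}, remarking that the argument there specializes to integer $m\geq 2$; your sketch carries this out. The structural claim that $c^{(m)}$ enters $P_1^m$ only as $-|c'|^{-(m+2)}(c'\cdot c^{(m)})$ is correct (by the recursion $P_1^{k+1}=|c'|^{-1}\partial_\theta P_1^k$), and the self-referential device for $m=2$ works because the integration by parts returns the problematic integral with coefficient~$2$, not~$1$.
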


We next consider the existence of geodesic paths between an element $c_A\in\immersion^m$ and a set $B\subset\immersion^m$.
We denote the set of admissible paths by
\begin{equation}\label{eqn:admissiblePaths}
	\mathcal{P}_{c_A,B}\coloneqq\lbrace \cpath \in W^1_t\immersion^m\,\vert\, \cpath(0,\cdot)=c_A,\, \cpath(1,\cdot)\in B\rbrace,
\end{equation}
in which $\cpath(0,\cdot)$ and $\cpath(1,\cdot)$ represent the (temporal) trace of $\cpath\in W^1_t\immersion^m$.
In the proof, we avoid the use of fractional Sobolev spaces employed in \cite{Br15}.

\begin{theorem}[{Existence\index{existence} of geodesic paths\index{geodesic} \cite[Thm.\,5.2]{Br15}}]\label{thm:SobolevExistence}
	Let $g_c(\cdot,\cdot)$ be the Sobolev metric of order $m\geq 2$, $c_A\in\immersion^m$,  and $B\subset W^m_\theta$ a $W^m_\theta$-weakly sequentially closed set which intersects the connected component of $c_A$ in $\immersion^m$. 	Then there exists a minimizer $\cpath$ of the path energy $\pathenergy$ on $\mathcal{P}_{c_A,B}$.
\end{theorem}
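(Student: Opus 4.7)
The plan is the direct method of the calculus of variations. Choose a minimizing sequence $\cpath^j\in\mathcal{P}_{c_A,B}$, which exists because $B$ meets the connected component of $c_A$ in $\immersion^m$, so at least one admissible path of finite energy is available. From $\dist(\cpath^j(t),c_A)\leq\pathlength[\cpath^j\vert_{[0,t]}]\leq\sqrt{\pathenergy[\cpath^j]}$ (Cauchy--Schwarz in $t$), the whole sequence remains uniformly inside some metric ball $B_r(c_A)\subset\immersion^m$. The a priori estimates of \cref{prop:SobCurveUniform} then yield a uniform lower bound $\vert(\cpath^j)'\vert\geq\delta>0$ and uniform upper bounds on $\Vert\cpath^j(t)\Vert_{W^m_\theta}$, while the coercivity of the metric on $B_r(c_A)$ (\cref{lem:coercivityBoundednessMetric}) converts the energy bound into $\Vert\dot\cpath^j\Vert_{L^2_tW^m_\theta}^2\leq C\,\pathenergy[\cpath^j]$. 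Combined with the fixed initial trace $\cpath^j(0)=c_A$, this shows that $(\cpath^j)$ is bounded in $W^1_tW^m_\theta$.

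Pass to a weakly convergent subsequence $\cpath^j\rightharpoonup\cpath$ in $W^1_tW^m_\theta$. By a standard compactness argument (the compact Sobolev embedding $W^m_\theta\Subset C^{m-1}(\Sone,\R^d)$ for $m\geq2$, together with the temporal Sobolev embedding $W^{1,2}((0,1);W^m_\theta)\hookrightarrow C^{0,1/2}([0,1];W^m_\theta)$ and Arzela--Ascoli), a further subsequence converges strongly in $C^0([0,1],C^{m-1}(\Sone,\R^d))$; in particular $(\cpath^j)'\to\cpath'$ uniformly on $[0,1]\times\Sone$, so $\vert\cpath'\vert\geq\delta$ and $\cpath(t)\in\immersion^m$ for every $t$. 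Weak continuity of the temporal trace operators on $W^1_tW^m_\theta$ gives $\cpath(0)=c_A$ and weak convergence $\cpath^j(1)\rightharpoonup\cpath(1)$ in $W^m_\theta$; since $B$ is $W^m_\theta$-weakly sequentially closed, $\cpath(1)\in B$. Hence $\cpath\in\mathcal{P}_{c_A,B}$.

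It remains to establish weak lower semicontinuity of $\pathenergy$. Writing $\pathenergy[\cpath]=\sum_{k=0}^m\int_0^1\!\int_{\Sone}\vert\partial_{\cpath(t)}^k\dot\cpath(t)\vert^2\,\vert\cpath'(t)\vert\,\d\theta\,\d t$, apply \cref{lem:weakcontSobolev} with $h^j=\dot\cpath^j\rightharpoonup\dot\cpath$ in $L^2_tW^m_\theta$ to obtain $\partial_{\cpath^j}^k\dot\cpath^j\rightharpoonup\partial_\cpath^k\dot\cpath$ in $L^2_tL^2_\theta$ for every $0\leq k\leq m$. Since $\vert(\cpath^j)'\vert^{1/2}\to\vert\cpath'\vert^{1/2}$ uniformly (with uniform lower bound $\sqrt{\delta}$), the products $\vert(\cpath^j)'\vert^{1/2}\partial_{\cpath^j}^k\dot\cpath^j$ converge weakly in $L^2_tL^2_\theta$ to $\vert\cpath'\vert^{1/2}\partial_\cpath^k\dot\cpath$, and weak lower semicontinuity of the $L^2$-norm, summed over $k$, yields $\pathenergy[\cpath]\leq\liminf_j\pathenergy[\cpath^j]$. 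Thus $\cpath$ attains the infimum. The genuine obstacle is the nonlinear dependence of the integrand on $\cpath$ through both the arclength derivatives and the arclength measure $\vert c'\vert\,\d\theta$; this is precisely what \cref{lem:weakcontSobolev} resolves, exploiting that for $m\geq2$ the nonlinear factors in $\partial_s^k$ involve only lower-order information that converges strongly.
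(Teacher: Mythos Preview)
Your proof is correct and follows essentially the same approach as the paper: a direct-method argument using the metric-ball a~priori estimates (\cref{prop:SobCurveUniform}) and coercivity (\cref{lem:coercivityBoundednessMetric}) for compactness, the compact embedding $W^1_tW^m_\theta\hookrightarrow C^0_tC^{m-1}_\theta$ to upgrade convergence, and \cref{lem:weakcontSobolev} together with the uniform convergence of $\vert(\cpath^j)'\vert^{1/2}$ for weak lower semicontinuity. The paper additionally spells out why an admissible path of finite energy exists (connected open subsets of normed spaces are polygonally connected), but otherwise the arguments coincide.
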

\begin{proof}
	By \cref{rem:smoothStrongMetric} and \cite[VII Prop.\,6.1]{La99} $\dist$ induces the $W^m_\theta$-topology on $\immersion^m$
	so that the connected components of $\immersion^m$ with respect to $\dist$ and the $W^m_\theta$-norm coincide.
	Since $\immersion^m$ is open and locally connected in $W^m_\theta$, its connected components are open. 	Since connected open subsets of normed spaces are polygonally connected 
	and a polygon in $\immersion^m$ has finite path energy,
	any two points from the same connected component can be joined by a path of finite path energy,
	thus there exists a path $\cpath$ in $\mathcal{P}_{c_A,B}$ with $\pathenergy[\cpath]=\bar E<\infty$.
	
	Now consider a minimizing sequence $(\cpath^j)_{j=1,2,\ldots}$ of $\pathenergy[\cdot]$ satisfying $\cpath(0,\cdot)=c_A$ and $\cpath(1,\cdot)\in B$.
	Using \cref{lem:coercivityBoundednessMetric} and $\pathenergy[\cpath^j]\leq\bar E$
	we obtain that $\Vert\dot{\cpath}^j\Vert_{L^2_tW^m_\theta}\leq C$ for some $C>0$.
	This implies the existence of a subsequence, again denoted $(\cpath^j)_{j=1,\ldots}$, which converges weakly in $W_t^1W^{m}_\theta$ to a
	path $\cpath^*\in W_t^1W^{m}_\theta$. Due to the trace theorem in $W_t^{1}W_\theta^{m}$ with respect to the time variable $t$
	and the weak sequential closedness of $B$ the path $\cpath^*$ lies in $\mathcal{P}_{c_A,B}$.
	
	Next note that the embedding $W_t^1W^m_\theta\hookrightarrow C^{0}_tC^{m-1}_\theta$ is compact:
	Indeed, $W_t^1W^m_\theta\hookrightarrow C^{0,1/2}_tW^m_\theta$ is continuous by $\|\cpath(s)-\cpath(r)\|_{W^m_\theta}=\left\|\int_r^s\dot\cpath(t)\,\d t\right\|_{W^m_\theta}\leq\sqrt{s-r}\|\cpath\|_{W^1_tW^m_\theta}$,
	and $C^{0,1/2}_tW^m_\theta\hookrightarrow C^{0}_tC^{m-1}_\theta$ is compact by Arzel\`a--Ascoli \cite[rem.\ on p.\,382]{DuSc58}.
	
	Since $\cpath^j(t)\in B_{\sqrt{\bar E}}(c_A)$ for all $j$ and all $t\in[0,1]$,
	\cref{prop:SobCurveUniform} implies that $|\cpath^j(t)'|$ is bounded away from $0$ uniformly in $j$ and $t$.
	Thus, by the compact embedding $W_t^1W^m_\theta\hookrightarrow C^{0}_tC^1_\theta$, also $|\cpath^*(t)'|$ is bounded away from zero and thus $\cpath^*\in W_t^1\immersion^m$.
	Furthermore, we may apply \cref{lem:weakcontSobolev} for $h^j=\dot\cpath^j$ to obtain
	$$\partial_{\cpath^j}^k\dot{\cpath}^j \xrightharpoonup[]{L^2_tL_\theta^2}\partial_{\cpath^*}^k \dot \cpath^*
	\qquad\text{for }k=0,\ldots,m.$$
	Since additionally $\dtheta\cpath^j\to\dtheta\cpath^*$ in $C^{0}_tC^0_\theta$ we even have
	$$\partial_{\cpath^j}^k\dot{\cpath}^j\sqrt{\dtheta \cpath^j} \xrightharpoonup[]{L^2_tL_\theta^2}\partial_{\cpath^*}^k \dot \cpath^*\sqrt{\dtheta \cpath^*}
	\qquad\text{for }k=0,\ldots,m.$$
	
	The lower semicontinuity of the path energy is now a direct consequence of the weak lower semicontinuity of the $L^2$-norm, which implies
	\begin{multline*}
		\pathenergy[\cpath^*]
				=\sum_{k=0}^m\left\Vert\partial_{\cpath^*}^k\dot{\cpath}^*\sqrt{\vert\dtheta \cpath^*\vert}\right\Vert^2_{L^2_tL^2_\theta}
		\leq\liminf_{j\rightarrow\infty} \sum_{k=0}^m \left\Vert\partial_{\cpath^j}^k\dot{\cpath}^j\sqrt{\vert\dtheta \cpath^j\vert}\right\Vert_{L_t^2L_\theta^2}^2\\
		=\liminf_{j\rightarrow\infty} \pathenergy[\cpath^j]
		=\inf\pathenergy,
	\end{multline*}
	which proves the claim.
\end{proof}

\begin{remark}[Reachable curves]
	There are examples where $c_A$ and $B$ lie in different connected components of $\immersion^m$.
	For instance, in the space of planar curves ($d=2$), curves with different winding numbers lie in different connected components and thus cannot be connected by a continuous path.
\end{remark}

So far, we focused on paths $\cpath$ in the space of \emph{parametric} curves $\immersion^m$,
\ie points $\cpath(0,\theta)$ are mapped along the path to points $\cpath(1,\theta)$ for all $\theta\in\Sone$.
If we do not enforce this point-to-point correspondence,
but instead ask for a shortest connecting path between two curves $c_A,c_B\in\immersion^m$ modulo reparameterization,
then one needs to solve the minimization problem from \cref{thm:SobolevExistence} for $B=C_B$, the equivalence class
$$C_B \coloneqq \left\{c\!\in\!\immersion^m \,\middle| \, c\!=\!c_B\!\circ\!\phi \text{ with } \phi\!\in\!C^0(\Sone,\Sone)\text{ bijective \& orientation-preserving}\right\}.$$
Unfortunately, $C_B$ is not even strongly closed in $W^m_\theta$,
as can be seen from $c_n=c_B\circ\phi_n\to_{n\to\infty}c\circ\phi\notin B$ for diffeomorphisms $\phi_n\in C^m(\Sone,\Sone)$ that converge uniformly to $0$ on $[0,\frac12]$ (identifying $\Sone$ with $[0,2\pi)$).
Thus, \cref{thm:SobolevExistence} does not apply.
However, the following result states that one can extend $B$ to a weakly sequentially closed set without changing the minimization problem, thus shortest geodesics between $c_A$ and $B$ exist.

\begin{theorem}[Reparameterization orbits]\label{thm:reparamOrbit}
	The closure and the weak sequential closure of $C_B$ coincide and are given by
	\begin{multline*}
		\overline{C_B}=\{c:\Sone\to\R^d\,|\,c=c_B\circ\phi\text{ with }\\\phi\in W^{m,2}(\Sone,\Sone)\text{ orientation-preserving of winding number }1\}.
	\end{multline*}
	Moreover, $\overline{C_B}\cap\immersion^m=C_B$ so that \cref{thm:SobolevExistence} even holds for $B=C_B$.
\end{theorem}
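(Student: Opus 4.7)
My plan is to prove the chain of inclusions $A\subseteq\overline{C_B}\subseteq\overline{C_B}^{\mathrm{w}}\subseteq A$ in $W^m_\theta$, with $A$ denoting the set on the right-hand side of the claimed identity and $\overline{C_B}$, $\overline{C_B}^{\mathrm{w}}$ its strong and weak sequential closures; this forces all three sets to agree. The auxiliary claim $\overline{C_B}\cap\immersion^m=C_B$ will then be immediate: for $c=c_B\circ\phi\in A$, being an immersion amounts to $(c_B'\circ\phi)\phi'$ being nowhere vanishing, which together with $|c_B'|>0$ and $\phi'\geq 0$ forces $\phi'>0$ pointwise, so $\phi$ is a $C^{m-1}$-diffeomorphism of $\Sone$ of winding one and hence $c\in C_B$.

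\textbf{Inclusion $A\subseteq\overline{C_B}$.} I approximate a given $\phi$ from $A$ by smooth diffeomorphisms. Lifting $\phi$ to $\tilde\phi:[0,2\pi]\to\R$ with $\tilde\phi'\geq 0$ and $\int_0^{2\pi}\tilde\phi'=2\pi$, I set $\phi_n':=\tfrac{n}{n+1}(\rho_{1/n}*\tilde\phi' + \tfrac{1}{n})$ with $\rho_\epsilon$ a standard mollifier. This yields $\phi_n'\in C^\infty(\Sone)$ strictly positive with $\int\phi_n'=2\pi$; its primitive $\phi_n$ descends to a smooth diffeomorphism of $\Sone$ of winding one, with $\phi_n\to\phi$ in $W^{m,2}$. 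A Fa\`a-di-Bruno chain-rule estimate, combined with the Banach-algebra property of $W^{m-1,2}(\Sone)$ and the Sobolev embedding $c_B\in C^{m-1}$, then yields $c_B\circ\phi_n\to c_B\circ\phi$ in $W^m_\theta$; as $\phi_n$ is a smooth diffeomorphism, $c_B\circ\phi_n\in C_B$.

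\textbf{Inclusion $\overline{C_B}^{\mathrm{w}}\subseteq A$.} Consider $c_n=c_B\circ\phi_n\in C_B$ with $c_n\rightharpoonup c$ in $W^m_\theta$. To establish a uniform $W^{m,2}$-bound on $\phi_n$, I exploit the arclength factorization $c_B=\gamma\circ\sigma$, where $\sigma(\theta):=\int_0^\theta|c_B'|$ is the arclength function and $\gamma$ the unit-speed parameterization (so $|\gamma'|\equiv 1$); setting $\tilde\tau_n:=\sigma\circ\phi_n$ gives $c_n=\gamma\circ\tilde\tau_n$ and $\tilde\tau_n'=|c_n'|\in L^\infty$ uniformly bounded. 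Taking the inner product of the Fa\`a-di-Bruno expansion of $c_n^{(k)}$ with the unit vector $\gamma'\circ\tilde\tau_n$ isolates $\tilde\tau_n^{(k)}$ as a linear combination of $c_n^{(k)}\cdot(\gamma'\circ\tilde\tau_n)$ and terms of the form $[\gamma^{(l)}(\tilde\tau_n)\cdot\gamma'(\tilde\tau_n)]\cdot B_{k,l}(\tilde\tau_n',\ldots,\tilde\tau_n^{(k-l+1)})$; the factors $\gamma^{(l)}(\tilde\tau_n)\cdot\gamma'(\tilde\tau_n)$ are $L^\infty$-bounded for $l\leq m-1$ via $\gamma^{(l)}\in W^{1,2}\hookrightarrow L^\infty$, while the sole top-order contribution $\gamma^{(m)}(\tilde\tau_n)(\tilde\tau_n')^m$ is $L^2$-bounded by the change-of-variables estimate $\|\gamma^{(m)}(\tilde\tau_n)(\tilde\tau_n')^m\|_{L^2}^2\leq\|c_n'\|_{L^\infty}^{2m-1}\|\gamma^{(m)}\|_{L^2}^2$. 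An induction on $k=1,\ldots,m$, using $W^{1,2}(\Sone)\hookrightarrow L^\infty$ at each step, then gives $\|\tilde\tau_n\|_{W^{m,2}}\leq C$; since $\sigma'=|c_B'|$ is uniformly bounded below, also $\phi_n=\sigma^{-1}\circ\tilde\tau_n$ is bounded in $W^{m,2}$. After extraction of a subsequence, $\phi_n\rightharpoonup\phi$ in $W^{m,2}$ and $\phi_n\to\phi$ in $C^{m-1}$; the properties $\phi'\geq 0$ and winding one pass to the limit (the former via weak $L^2$-preservation of nonnegativity, the latter under uniform convergence), and $c=\lim c_B\circ\phi_n=c_B\circ\phi\in A$.

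\textbf{Main obstacle.} The crux is the uniform $W^{m,2}$-bound on $\phi_n$. A naive chain-rule inversion $c_n^{(k)}=(c_B'\circ\phi_n)\phi_n^{(k)}+\ldots$ would require dividing by $|c_B'\circ\phi_n|\cdot|\phi_n'|$, which may degenerate along the sequence whenever $|\phi_n'|$ approaches zero. The substitution $c_B=\gamma\circ\sigma$ replaces the problematic factor by the unit vector $\gamma'\circ\tilde\tau_n$, so that the potentially vanishing factor $|\phi_n'|$ only enters through $\tilde\tau_n'=|c_n'|$ with a favourable sign in the change-of-variables estimate; this is the structural trick that unlocks the inductive bound.
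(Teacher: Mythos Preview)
Your proof is correct, and its overall architecture (the chain $A\subseteq\overline{C_B}\subseteq\overline{C_B}^{\mathrm w}\subseteq A$ together with the separate argument for $\overline{C_B}\cap\immersion^m=C_B$) matches the paper's. The mollification step for $A\subseteq\overline{C_B}$ and the final immersion argument are essentially identical to the paper's.

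Where you diverge is in the inclusion $\overline{C_B}^{\mathrm w}\subseteq A$. The paper works directly with $c_B$ and $\phi_n$: from the Leibniz expansion $\partial_\theta^k c_n=(c_B'\circ\phi_n)\,\partial_\theta^k\phi_n+\sum_{j=1}^{k-1}\binom{k-1}{j}\partial_\theta^j(c_B'\circ\phi_n)\,\partial_\theta^{k-j}\phi_n$ it isolates $\partial_\theta^k\phi_n$ by taking the inner product with $c_B'\circ\phi_n$ and dividing by $|c_B'\circ\phi_n|^2$, which is uniformly bounded below simply because $c_B\in\immersion^m$. No factor of $|\phi_n'|$ appears in the denominator; the only place $\phi_n'$ enters is in the top-order remainder $(c_B^{(m)}\circ\phi_n)(\phi_n')^m$, and there it sits in the \emph{numerator} of a change-of-variables estimate, exactly as in your own argument. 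In other words, the ``main obstacle'' you identify---having to divide by $|c_B'\circ\phi_n|\cdot|\phi_n'|$---does not actually arise in the naive approach, and your arclength detour through $\gamma\circ\sigma$ is not needed. Your unit-speed trick does work (projecting onto the unit vector $\gamma'\circ\tilde\tau_n$ is formally the same as projecting onto $c_B'\circ\phi_n/|c_B'\circ\phi_n|$), but it adds a layer of reparametrization without buying anything beyond what the direct argument already gives.

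A secondary difference: the paper proves actual \emph{convergence} $\phi_n\to\phi$ in $W^{m,2}$ (strong or weak, matching the mode for $c_n$), whereas you only establish uniform $W^{m,2}$-bounds and then pass to a subsequence. Both suffice for the stated conclusion, but the paper's version is a bit sharper and avoids the subsequence extraction.
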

\begin{proof}
	Let $c_n\to c$ strongly (or weakly) in $W^m_\theta$ as $n\to\infty$ for $c_n=c_B\circ\phi_n$ and orientation-preserving homeomorphisms $\phi_n$ of $\Sone$.
	From $c_n\to c$ in $C^0(\Sone,\R^d)$ and the injectivity of $c_B$ it follows $\phi_n\to\phi$ in $C^0(\Sone,\Sone)$, which implies that $\phi$ is orientation-preserving and of winding number $1$.
	
	We will next show $\phi_n\to\phi$ strongly (or weakly) in $W^{m,2}(\Sone,\Sone)$.
	By induction it first follows that $\phi_n\to\phi$ in $C^k(\Sone,\Sone)$ and $c_B'\circ\phi_n\to c_B'\circ\phi$ in $C^{k-1}(\Sone,\R^d)$ for $k=1,\ldots,m-1$.
	Indeed, for $k=1$ we have $c_B'\circ\phi_n\to c_B'\circ\phi$ in $C^0(\Sone,\R^d)$ due to $c_B\in C^{m-1}(\Sone,\R^d)$
	as well as $c_B'\circ\phi_n\,\phi_n'=c_n'\to c'=c_B'\circ\phi\,\phi'$ in $C^0(\Sone,\R^d)$, which together with the other limit and the boundedness of $c_B'$ away from zero yields the uniform convergence $\phi_n'\to\phi'$.
	For general $k\leq m-1$, $c_B'\circ\phi_n$ converges to $c_B'\circ\phi$ in $C^{k-1}(\Sone,\R^d)$ as the composition of a function $c_B'$ in $C^{k-1}(\Sone,\R^d)$
	with a sequence $\phi_n$ that converges to $\phi$ in $C^{k-1}(\Sone,\R^d)$ by the induction hypothesis.
	Thus, using again the induction hypothesis, also
	\begin{multline*}
		\textstyle
		c_B'\circ\phi_n\,\dtheta^{k}\phi_n
		=\dtheta^kc_n-\sum_{j=1}^{k-1}{k-1\choose j}\dtheta^{j}(c_B'\circ\phi_n)\dtheta^{k-j}\phi_n\\
		\textstyle
		\to\dtheta^kc-\sum_{j=1}^{k-1}{k-1\choose j}\dtheta^{j}(c_B'\circ\phi)\dtheta^{k-j}\phi
		=c_B'\circ\phi\,\dtheta^{k}\phi
	\end{multline*}
	uniformly,
	which in turn implies $\dtheta^k\phi_n\to\dtheta^k\phi$ uniformly.
	Finally,
	\begin{multline*}
		\textstyle
		c_B'\circ\phi_n\,\dtheta^{m}\phi_n
		=\dtheta^mc_n-\sum_{j=1}^{m-1}{m-1\choose j}\dtheta^{j}(c_B'\circ\phi_n)\dtheta^{m-j}\phi_n\\
		\textstyle
		\to\dtheta^mc-\sum_{j=1}^{m-1}{m-1\choose j}\dtheta^{j}(c_B'\circ\phi)\dtheta^{m-j}\phi
		=c_B'\circ\phi\,\dtheta^{m}\phi
	\end{multline*}
	strongly (or weakly) in $L^2_\theta$
	(where we used $\dtheta^{m-1}(c_B'\circ\phi_n)\to\dtheta^{m-1}(c_B'\circ\phi)$ strongly in $L^2_\theta$ since composition of functions is continuous on $W^{m-1,2}$).
	Consequently, $\dtheta^m\phi_n\to\dtheta^m\phi$ strongly (or weakly) in $L^2_\theta$.
	
	This shows that the closure is a subset of our expression.
	That it is also a superset follows by approximating any orientation-preserving $\phi\in W^{m,2}(\Sone,\Sone)$ with winding number $1$ by a smooth homeomorphism in $W^{m,2}(\Sone,\Sone)$.
	
	It holds $C_B\subset\overline{C_B}\cap\immersion^m$ by definition of the closure.
	For the reverse inclusion, let $c=c_B\circ\phi\in\overline{C_B}\cap\immersion^m$.
	From $0<|c'|=|c_B'\circ\phi||\phi'|$ it follows that $|\phi'|\neq 0$, which together with the continuity of $\phi$, its orientation preservation and its winding number $1$ implies that $\phi$ is a homeomorphism of $\Sone$,
	proving $c\in C_B$.
	
	Finally, for any path $\cpath$ of finite energy and with $\cpath(0,\cdot)=c_A$, the end point $\cpath(1,\cdot)$ lies in $\immersion^m$
	so that shortest geodesics between $c_A$ and $\overline C_B$ (which exist by \cref{thm:SobolevExistence}) actually end in $C_B$.
\end{proof}

\section{Variational time discretization and fully discrete geodesics} \label{sec:timeDiscrete}
We propose a variational time discretization\index{variational time discretization} based on a
conforming, piecewise affine Galerkin discretization\index{Galerkin discretization} in time 
for the path energy:
Following the paradigm of \cite{RuWi15}, the path energy $\pathenergy[\cpath]$ is approximated by the \emph{discrete path energy}
\begin{equation}\label{eqn:discretePathEnergy}
	\Pathenergy^K[(c_0,c_1,\ldots, c_K)] \coloneqq K \sum_{k=1}^K \W[c_{k-1},c_k]
\end{equation}
with $c_k=\cpath(k/K)$ and $\W$ a second order approximation (still to be chosen) of the squared Riemannian distance $\dist^2$, thus satisfying
\begin{gather}
	\label{eqn:symmetry}
	\W[c,c]\!=\!0,\quad
	\partial_1\W[c,c]\!=\!\partial_2\W[c,c]\!=\!0,\quad
	\partial_1^2\W[c,c]\!=\!\partial_2^2\W[c,c]\!=\!-\partial_1\partial_2\W[c,c],\\
	\label{eqn:consistency}
	\partial_1^2\W[c,c]=2g_c.
\end{gather}
Minimizers of $\Pathenergy^K$ for fixed $c_0,c_K$ are called \emph{discrete geodesics}.
\cite{RuWi15} provides sufficient conditions on the Riemannian manifold and on $\W$ under which $\Pathenergy^K$ indeed $\Gamma$-converges to $\pathenergy$ as $K\to\infty$,
however, the setting of $\immersion^m$ is not covered and thus developed afresh here.

For two curves $\hat c,\check c$ consider the linear interpolation $\cpath_{\hat c,\check c}(t, \theta)=(1-t)\hat c(\theta)+t\check c(\theta)$ as a connecting path. As a first attempt, we choose
\begin{equation} \label{eq:GalerkinSobCurve}
	\W[\hat c,\check c] = \WGalerkin[\hat c,\check c]\coloneqq \pathenergy[\cpath_{\hat c,\check c}]
\end{equation}
and straightforwardly verify that
\begin{equation}\label{eqn:GalerkinDiscretization}
	\Pathenergy^K[(c_0,c_1,\ldots, c_K)] = \EGalerkin{K}[(c_0,c_1,\ldots, c_K)] \coloneqq \pathenergy[\interpolation^K[\cpath]],
\end{equation}
where $\interpolation^K[\cpath]$ is the piecewise affine interpolant of the path $\cpath$ with respect to the (temporal) nodes $\tfrac{k}{K}$.
Hence, the minimization of $\Pathenergy^K[(c_0,c_1,\ldots, c_K)]$ subject to the boundary constraints $c_0=c_A$ and $c_K=c_B$
for two curves $c_A,c_B\in\immersion^m)$
is a classical Galerkin discretization (a linear finite element discretization) of minimizing the path energy $\pathenergy[\cpath]$ subject to $\cpath(0,\cdot)=c_A$
and $\cpath(1,\cdot)=c_B$.
We have
\begin{align*} 
	\dot \cpath_{\hat c,\check c}(t, \theta) &= \check c(\theta)-\hat c(\theta),\\
	\diffs \dot \cpath_{\hat c,\check c}(t, \theta)  &= \frac1{\vert \cpath_{\hat c,\check c}'(t, \theta)\vert}  (\check c'(\theta)-\hat c'(\theta)),\\
	\diffs^2 \dot \cpath_{\hat c,\check c}(t, \theta)  &= \frac1{\vert \cpath_{\hat c,\check c}'(t, \theta)\vert^2} \dot \cpath_{\hat c,\check c}''(t, \theta) - \frac1{\vert \cpath_{\hat c,\check c}'(t, \theta)\vert^4} (\cpath_{\hat c,\check c}'(t, \theta)\cdot \cpath_{\hat c,\check c}''(t, \theta))  \dot \cpath_{\hat c,\check c}'(t, \theta) \\
	&=  \frac{\vert \cpath_{\hat c,\check c}'(t, \theta)\vert^2 (\check c''(\theta) -\hat c''(\theta)) - (\cpath_{\hat c,\check c}'(t, \theta)\cdot \cpath_{\hat c,\check c}''(t, \theta))  (\check c'(\theta) -\hat c'(\theta))}{\vert \cpath_{\hat c,\check c}'(t, \theta)\vert^4}
\end{align*}
so that for $m=2$
\begin{align*}
	\W[\hat c,\check c]
	=&\int_0^1\int_{\Sone} 
	\vert \cpath_{\hat c,\check c}'(t, \theta) \vert \left( \vert\dot \cpath_{\hat c,\check c}(t, \theta)\vert^2  + \vert \diffs \dot \cpath_{\hat c,\check c}(t, \theta)\vert^2 + \vert \diffs^2 \dot \cpath_{\hat c,\check c}(t, \theta) \vert^2
	\right) \d \theta\,\d t \\
	= &\int_{\Sone} \int_0^1 \vert \cpath_{\hat c,\check c}'(t, \theta) \vert  \vert \check c(\theta)-\hat c(\theta)\vert^2
	+   \vert \cpath_{\hat c,\check c}'(t, \theta) \vert^{-1}\vert \check c'(\theta)-\hat c'(\theta) \vert^2  \\
	&   +  \frac{\left\vert \vert \cpath_{\hat c,\check c}'(t, \theta)\vert^2 (\check c''(\theta) -\hat c''(\theta)) - (\cpath_{\hat c,\check c}'(t, \theta)\cdot \cpath_{\hat c,\check c}''(t, \theta))  (\check c'(\theta) -\hat c'(\theta))  \right\vert ^2}{\vert \cpath_{\hat c,\check c}'(t, \theta)\vert^{7} }
	\,\d t \,\d \theta.
\end{align*}
Due to the occurrence of the length element $\vert \cpath'(t, \theta)\vert$ an explicit evaluation of the time integration appears impractical.
To turn this ansatz into a fully practical numerical approach, we proceed as follows:
\begin{itemize}
	\item 
	We approximate the length element $|\cpath_{\hat c,\check c}'|$ and its negative powers
	such that the time integration can be performed explicitly. 
	We design the approximation in such a way that the resulting sequence
	of approximating functionals $\Gamma$-converges in the suitable topology to the path energy $\pathenergy$.
	\item
	To finally implement the resulting approach,
	one has to choose a suitable discretization in the spatial variable $\theta$.
	We suggest a spectral approach and consider the Fourier coefficients of $c_1,\ldots,c_{K-1}$ as the degrees of freedom.
\end{itemize}
We first focus on the approximation of $|\cpath_{\hat c,\check c}'|$, resulting in two alternative variational time discretizations for which we prove $\Gamma$-convergence.
The $\Gamma$-convergence of the fully discretized energy, including the spatial discretization, will be discussed subsequently.

\subsection{Upper and lower bounds on length element and resulting $\epsilon$-regularized path energy}
\label{sec:SobolevMetricTimeDiscretization}
\begin{figure}
	\centering
	\begin{tikzpicture}[scale=1.5]
		\coordinate (p) at (sqrt{(3/4)},0.5);
		\coordinate (u) at ($3/2*(p)$); 
		\coordinate (t) at ($2.5*(p)$);
		
		\coordinate (q) at (-sqrt{(1/2)},sqrt{(1/2)});
		\coordinate (r) at ($3/2*(q)$);
		
		\coordinate (m) at ($1/2*(p)+1/2*(q)$);
		\coordinate (w) at ($3/4*(p)+3/4*(q)$);
		
		\coordinate (s) at ($(t)!(0,0)!(r)$);

				\coordinate (M) at ($(p)!1/2!(q)$);
	
		\draw let \p1 = (t), \p2 = ($(r)-(t)$),
		\n1 = {1/scalar(veclen(\p2))}, 		\p3 = ($\n1*(\p2)$), 		\n2 = {scalar(\x1*\x3+\y1*\y3)} 		in coordinate (S) at ($(t) - \n2*(\p3)$);       

		\draw (0,0) circle (1);
		\draw (0,0) -- (p);
		\draw (0,0) -- (q);
		\draw (0,0) -- (w);
		\draw[dotted] (p) -- (q);
		\draw[-latex] (0,0) -- (r);
		\draw[-latex] (0,0) -- (t);
		\draw[dotted] (r) -- (t);
		
		\fill (p) circle[radius=0.02cm];
		\fill (q) circle[radius=0.02cm];
		\fill (m) circle[radius=0.02cm] node[anchor=north west] {$p$};
		\fill (s) circle[radius=0.02cm] node[anchor=south] {$r$};
		\fill (w) circle[radius=0.02cm] node[anchor=north west] {$q$};
		\node[anchor=west] at (1.0,-0.1)
		{$p = \frac12 \left( \frac {\hat c'(\theta)}{\vert \hat c'(\theta)\vert} + \frac {\check c'(\theta)}{\vert \check c'(\theta)\vert}\right)$};
		\node[anchor=west] at (1.0,-0.5)
		{$q = \frac12 \left( \frac {\hat c'(\theta)}{\vert \hat c'(\theta)\vert} + \frac {\check c'(\theta)}{\vert \check c'(\theta)\vert}\right)  \min \left(\vert \hat c'(\theta)\vert, \vert \check c'(\theta)\vert \right) $};
		\node[anchor=west] at (1.0,-0.9)
		{$r = \cpath_{\hat c,\check c}'(t,\theta)$ with $t= \mathrm{argmin}_{t\in [0,1]} \vert \cpath_{\hat c,\check c}'(t,\theta)\vert$};
		\node[anchor=north] at (0,0) {$0$};
		\node[anchor=west] at (t) {$\hat c'$};
		\node[anchor=north] at (p) {$\frac{\hat c'}{\vert \hat c'\vert}\;$};
		\node[anchor=south east] at (r) {$\check c'$};
		\node[anchor=north] at (q) {$\frac{\check c'}{\vert \check c'\vert}$};
		\fill (t) circle[radius=0.02cm];
		\fill (r) circle[radius=0.02cm];
		\fill (0,0) circle[radius=0.02cm];
		\draw[dotted] (r) -- (u);
		\draw[-latex] (0,0) -- (t);
	\end{tikzpicture}
	\caption[A sketch of the selection of a lower bound for $\vert \cpath_{\hat c,\check c}'(t)\vert$]{Illustration of the lower bound $|q|$
		for $\vert \cpath_{\hat c,\check c}'(t)\vert$. }\label{fig:SobolevSketch}
\end{figure}
To turn the previous $\W$ into a more practical expression
we consider suitable upper and lower bounds for the length element $\vert \cpath_{\hat c,\check c}'(t, \theta)\vert$, \begin{align*}
	\max_{t\in [0,1]}  \vert \cpath_{\hat c,\check c}'(t, \theta)\vert &= \max \left(\vert \hat c'(\theta)\vert, \vert \check c'(\theta)\vert \right),\\
	\min_{t\in [0,1]}  \vert \cpath_{\hat c,\check c}'(t, \theta)\vert &= \min_{t\in [0,1]}  \vert (1-t) \hat c'(\theta)+ t \check c'(\theta)\vert \\
	&\geq \frac12 \left\vert \frac {\hat c'(\theta)}{\vert \hat c'(\theta)\vert} + \frac {\check c'(\theta)}{\vert \check c'(\theta)\vert}\right\vert \min \left(\vert \hat c'(\theta)\vert, \vert \check c'(\theta)\vert \right).
\end{align*}
The first estimate is straightforward; the geometric construction behind the second is depicted in \cref{fig:SobolevSketch}.

The above approximations are not differentiable in $\hat c$ or $\check c$, which will be inconvenient for minimization of $\Pathenergy^K$.
Therefore, abbreviating $\gamma^\pm= \tfrac{\gamma\pm\vert \gamma \vert}{2}$, we regularize
$	\min (\alpha, \beta) = \alpha + (\beta-\alpha)^-$ and $\max (\alpha, \beta) = \alpha + (\beta-\alpha)^+$
by replacing $\vert \gamma \vert$ with
$\vert \gamma \vert_\epsilon \coloneqq \sqrt{\vert \gamma \vert^2 +\epsilon^2}\geq \vert \gamma \vert$, resulting in
\begin{equation}
	{\max}^\epsilon (\alpha, \beta) \coloneqq \alpha +  \tfrac{(\beta-\alpha) + \sqrt{(\beta-\alpha)^2+\epsilon^2}}{2},\
	{\min}^\epsilon (\alpha, \beta) \coloneqq \alpha +  \tfrac{(\beta-\alpha) - \sqrt{(\beta-\alpha)^2+\epsilon^2}}{2}. \label{eq:maxmin}
\end{equation}
Setting $\underline{\min}^\epsilon(\alpha,\beta)=\max(0,\min^\epsilon(\alpha,\beta))$, we now define our upper and lower bounds on $\vert \cpath_{\hat c,\check c}'(t, \theta)\vert$ as
\begin{align*}
	\upperLength{\hat c'}{\check c'} (\theta) &\coloneqq
	{\max}^\epsilon\left(\vert \hat c'(\theta)\vert, \vert \check c'(\theta)\vert \right) &\geq \vert \cpath_{\hat c,\check c}'(t, \theta)\vert \enspace \forall t \in [0,1],\\
	\lowerLength{\hat c'}{\check c'} (\theta) &\coloneqq
	\frac12 \left\vert\frac {\hat c'(\theta)}{\vert \hat c'(\theta)\vert} + \frac {\check c'(\theta)}{\vert \check c'(\theta)\vert}\right\vert
	\underline{\min}^\epsilon \left(\vert \hat c'(\theta)\vert, \vert \check c'(\theta)\vert \right) &\leq \vert \cpath_{\hat c,\check c}'(t, \theta)\vert \enspace \forall t \in [0,1].
\end{align*}
Replacing $\vert \cpath_{\hat c,\check c}'(t, \theta)\vert$ in $\W[\hat c,\check c]$ by these bounds yields an upper approximation $\WEps{}[\hat c,\check c]$ of $\W[\hat c,\check c]$, which for $m=2$ reads
\begin{multline*}
	\WEps{}[\hat c,\check c]
	\coloneqq \int_{\Sone} \upperLength{\hat c'}{\check c'} (\theta)  \vert \check c(\theta)-\hat c(\theta)\vert^2
	+\frac{|\check c'(\theta)-\hat c'(\theta)|^2}{\lowerLength{\hat c'}{\check c'} (\theta)} \\
	+ \frac{\int_0^1 \left| \vert \cpath_{\hat c,\check c}'(t, \theta)\vert^2 (\check c''(\theta)\!-\!\hat c''(\theta)) - (\cpath_{\hat c,\check c}'(t, \theta)\!\cdot\! \cpath_{\hat c,\check c}''(t, \theta))  (\check c'(\theta) \!-\!\hat c'(\theta))\right|^2 \,\d t}{\lowerLength{\hat c'}{\check c'}^7 (\theta)}
	\,\d \theta.
\end{multline*}

\begin{remark}[Differentiability of $\WEps$]
	In contrast to $\min^\epsilon$, the function $\underline\min^\epsilon$ is not differentiable; however, for an appropriate choice of $\epsilon$ this will become unnoticeable:
	In the following analysis, as well as the numerics, we will see that we always (more or less by construction) stay within some metric ball $B_R$ of radius $R$.
	From \cref{prop:SobCurveUniform} 
	we know that on $B_R$ the length element $|c'(\theta)|$ is bounded from below by a constant $\kappa(R)>0$ for all curves $c$.
	If we now choose $\epsilon< 2\kappa(R)$, then for all $\alpha$, $\beta \geq \kappa(R)$ we have $4 \alpha \beta > \epsilon^2$ and thus
	${\min}^\epsilon (\alpha, \beta) >0$.
	Hence, as long as $\epsilon< 2 \kappa(R)$ we observe that
	$0<\lowerLength{\hat c'}{\check c'} (\theta)\leq \min (\vert \hat c'(\theta) \vert , \vert \check c'(\theta) \vert)$ and thus $\WEps{}[\hat c,\check c]$ is well-defined and differentiable.
	As an alternative to this argumentation one could also replace $\underline\min^\epsilon(\alpha,\beta)$ with a different nonnegative lower approximation of $\max(0,\min(\alpha,\beta))$,
	namely one that is smooth right from the start (such as a mollification of $(\alpha,\beta)\mapsto\max(0,\min(\alpha-\epsilon,\beta-\epsilon))$ with a smooth mollifier of support in the $\epsilon$-ball).
	However, this will be more complicated to evaluate numerically.
\end{remark}

The integrand in the definition of $\WEps{}[\hat c,\check c]$ is now polynomial in $t$ and thus can be explicitly integrated in $t$,
leaving $\WEps{}[\hat c,\check c]$ as an integral in $\theta$ only, whose integrand depends solely on $\hat c(\theta)$, $\check c(\theta)$ and their first $m$ derivatives.
For $m=2$ the polynomial in $t$ is quartic, and we obtain (dropping the argument $\theta$ for notational simplicity)
\begingroup
\allowdisplaybreaks
\begin{align*}
						&\int_0^1  \left|\vert \cpath_{\hat c,\check c}'(t)\vert^2 (\check c'' \!-\!\hat c'') \!-\! (\cpath_{\hat c,\check c}'(t)\!\cdot\! \cpath_{\hat c,\check c}''(t))  (\check c' \!-\!\hat c')\right|^2 \,\d t\\
	&= \int_0^1\vert \cpath_{\hat c,\check c}'(t)\vert^4 \,\d t \; \vert \check c'' \!-\!\hat c'' \vert^2
	\!-\! 2 \int_0^1 \vert \cpath_{\hat c,\check c}'(t)\vert^2  (\cpath_{\hat c,\check c}'(t)\!\cdot\! \cpath_{\hat c,\check c}''(t)) \,\d t \;  \left[(\check c'' \!-\!\hat c'')\!\cdot\! (\check c' \!-\!\hat c')\right] \\
	& \quad + \int_0^1  (\cpath_{\hat c,\check c}'(t)\!\cdot\! \cpath_{\hat c,\check c}''(t))^2 \,\d t\; \vert \check c' \!-\!\hat c'\vert^2 \\%\displaybreak \\
		&= \vert \check c'' \!-\!\hat c''\vert^2 \! \left[\omega_0 \vert \hat c'\vert^4
	\!+\!4 \omega_1 \vert \hat c'\vert^2 (\hat c'\!\cdot\! \check c')
	\!+\! \omega_2\! \left(4(\hat c'\!\cdot\! \check c')^2 \!+\! 2 \vert \hat c'\vert^2\vert \check c'\vert^2 \right)\right. \\
	&\qquad \qquad \quad \;\,\left.
	\!+ 4 \omega_3 (\hat c'\!\cdot\! \check c') \vert \check c'\vert^2 \!+\! \omega_4 \vert \check c'\vert^4\right] \\
			& \quad \!-\! 2 (\check c'' \!-\!\hat c'')\!\cdot\! (\check c' \!-\!\hat c') \left[ \omega_0 \vert \hat c'\vert^2 (\hat c'\!\cdot\! \hat c'')  \right. \\
	& \qquad \qquad \qquad \qquad \quad \;\; \left. + \omega_1 \left(\vert \hat c'\vert^2 (\hat c'\!\cdot\! \check c'' + \hat c'' \!\cdot\! \check c')  + 2 (\hat c'\!\cdot\! \check c')(\hat c'\!\cdot\! \hat c'')\right)\right.\\
		&\qquad \qquad \qquad \qquad \quad \;\; \left.  +\omega_2 \left(  \vert \hat c'\vert^2 (\check c'\!\cdot\! \check c'')
	+2 (\hat c'\!\cdot\! \check c')(\hat c'\!\cdot\! \check c'' + \hat c''\!\cdot\! \check c') \right. +  \vert \check c'\vert^2 (\hat c'\!\cdot\! \hat c'') \right) \\
		&\qquad \qquad \qquad \qquad  \quad \;\;\left.
	+ \omega_3 \left( 2 (\hat c'\!\cdot\! \check c')(\check c'\!\cdot\! \check c'') + \vert \check c'\vert^2 (\hat c'\!\cdot\! \check c'' + \check c'\!\cdot\! \hat c'')\right) \right.  \\
	&\qquad \qquad \qquad \qquad  \quad \;\;\left. + \omega_4 \vert \check c'\vert^2 (\check c'\!\cdot\! \check c'')  \right]\\
			& \quad +\vert \check c' \!-\!\hat c'\vert^2 \left[ \omega_0  (\hat c'\!\cdot\! \hat c'')^2
	+2 \omega_1 (\hat c'\!\cdot\! \hat c'')(\hat c'\!\cdot\! \check c'' + \hat c''\!\cdot\! \check c') \right.\\
	&\qquad \qquad  \quad  \;\;\left.
	+\omega_2  \left(2(\hat c'\!\cdot\! \hat c'') (\check c'\!\cdot\! \check c'')+(\hat c'\!\cdot\! \check c''+\check c'\!\cdot\! \hat c'')^2 \right) \right. \\
	&\qquad \qquad  \quad  \;\;\left.
	+2 \omega_3 (\hat c'\!\cdot\! \check c'' + \hat c''\!\cdot\! \check c')(\check c'\!\cdot\! \check c'')
	+ \omega_4 (\check c'\!\cdot\! \check c'')^2 \right]  ,
\end{align*}
\endgroup
where we evaluated the integrals 
$\omega_j \coloneqq \int_0^1 (1-t)^{4-j} t^j \,\d t = \frac{j!(4-j)!}{5!}$ for $j=0,\ldots, 4$ resulting in
$\omega_0=\tfrac1{5}$, $\omega_1=\tfrac1{20}$, $\omega_2=\tfrac1{30}$, $\omega_3=\tfrac1{20}$, and $\omega_4=\tfrac1{5}$.
For $m>2$ we proceed analogously.
We observe that for $\cpath=\cpath_{\hat c,\check c}(t, \theta)$ and $j\geq 1$ with the notation $v^{(i)} =  \dtheta^i v$ for a vector field $v=v(t,\theta)$
\begin{equation}\label{eqn:arclengthDerivative}
	\diffs^{j} \dot \cpath = \frac1{\vert \cpath'\vert^{3j-2}} P_j(\cpath', \cpath'',\ldots,\cpath^{(j)};\dot \cpath',\dot \cpath'',\ldots, \dot \cpath^{(j)})  ,
\end{equation}
where $(X,Y) \mapsto P_j(X;Y)$ is a polynomial in $X$ and $Y$, which has degree $2j-2$ in $X$ for fixed $Y$,
is linear in $Y$ for fixed $X$ and linear in $\cpath^{(j)}$ for fixed $Y$.
Furthermore, there appears to be no product of the highest order terms 
$\cpath^{(j)}$ and $\dot\cpath^{(j)}$.
$P_j(X;Y)$ is recursively defined via
\begin{align}\label{eq:Polone}
	P_1(\cpath';\dot \cpath') &=\dot \cpath',\\
	P_{j+1}(\cpath', \ldots, \cpath^{(j+1)};\dot \cpath',\ldots, \dot \cpath^{(j+1)})
	&= \vert \cpath'\vert^2 \; \dtheta \left( P_j(\cpath',\ldots,\cpath^{(j)};\dot \cpath',\ldots, \dot \cpath^{(j)})\right)\label{eq:Poljpone} \\
	& -\!(3j\!-\!2) (\cpath'\!\!\cdot\! \cpath'') P_{\!j}(\cpath',\ldots,\cpath^{(j)};\dot \cpath',\ldots, \dot \cpath^{(j)}), \nonumber
\end{align}
since
\begin{align*}
	\diffs^{j+1} \dot \cpath =\frac1{\vert \cpath'\vert} \dtheta \left(   \diffs^j  \dot \cpath \right)
	&= \frac1{\vert \cpath'\vert} \dtheta \left( \frac1{\vert \cpath'\vert^{3j-2}} P_j(\cpath',\ldots,\cpath^{(j)};\dot \cpath',\ldots, \dot \cpath^{(j)})  \right)\\
	&= \frac1{\vert \cpath'\vert} \left( \frac1{\vert \cpath'\vert^{3j-2}} \dtheta \left(P_j(\cpath',\ldots,\cpath^{(j)};\dot \cpath',\ldots, \dot \cpath^{(j)})\right) \right.\\
	& \qquad \qquad   \left. - \frac{(3j\!-\!2) (\cpath'\!\cdot\! \cpath'')}{\vert \cpath'\vert^{3j}} P_j(\cpath',\ldots,\cpath^{(j)};\dot \cpath',\ldots, \dot \cpath^{(j)}) \right).
\end{align*}
The resulting upper approximating for $\W[\hat c,\check c]$ then reads
\begin{multline}\label{eqn:curveSpaceW}
	\WEps{}[\hat c,\check c]
	=\int_{\Sone}  \upperLength{\hat c'}{\check c'} (\theta)  \vert \check c(\theta)-\hat c(\theta)\vert^2\\
	+ \int_0^1 \sum_{j=1}^m 
	\frac{ \left\vert P_j\left((\cpath_{\hat c,\check c}',\ldots,\cpath_{\hat c,\check c}^{(j)})(t,\theta);(\check c'-\hat c',\ldots,\hat c^{(j)}-\check c^{(j)})(\theta)\right)\right\vert^2}
	{\lowerLength{\hat c'}{\check c'} (\theta)^{6j-5}}
	\,\d t\,  \,\d \theta.
\end{multline}
Again, the time integration can be performed explicitly, involving the integral evaluations
$$
\int_0^1 (1-t)^{4m-4-j} t^j \,\d t = \frac{j!(4m-4-j)!}{(4m-3)!}
$$ 
for $j=0,\ldots, 4m-4$. 
Note that, abbreviating the nonpolynomial coefficients $\upperLength{\hat c'}{\check c'}$ or $\lowerLength{\hat c'}{\check c'}^{5-6j}$ by $\ell^\epsilon[\hat c',\check c']$, these satisfy
\begin{equation}\label{eqn:derivativeBound}
	\|\partial_1^\alpha\partial_2^\beta\ell^\epsilon[\hat c,\check c]\|_{L^\infty}
	\leq C\epsilon^{1-\alpha-\beta}
\end{equation}
for a constant $C$ depending only on $\|\hat c'\|_{L^\infty},\|\check c'\|_{L^\infty},\|\hat c'-\check c'\|_{L^\infty}$ and $\alpha,\beta>0$.

Based on the definition of $\WEps$, we obtain the discrete path energy
\begin{equation}\label{eq:SobolevDiscretePathEnergy}
	\EEps{\epsilon}{K}[(c_0,c_1,\ldots, c_K)] \coloneqq K \sum_{k=1}^K \WEps{}[c_{k-1},c_k]
\end{equation}
for $c_j \in \immersion^m$.
By construction, we have
\begin{equation}\label{eqn:boundWEps}
	\WEps{}[\hat c,\check c]\geq\WGalerkin[\hat c,\check c]
	\qquad\text{and}\qquad
	\EEps{\epsilon}{K}[(\cpath(\tfrac0K),\cpath(\tfrac1K),\ldots,\cpath(\tfrac KK))]\geq\pathenergy[\interpolation^K[\cpath]].
\end{equation}
Here, $\WEps{}[\hat c,\check c]$ differs from $\WGalerkin[\hat c,\check c]$ only in 
the replacement of the length element by $\upperLength{\hat c'}{\check c'}$ and $\lowerLength{\hat c'}{\check c'}$, from which we get
\begin{equation}\label{eqn:coercivityWEps}
	\partial_i^2\WGalerkin[c,c]\leq\partial_i^2\WEps{}[c,c]\leq(1-\tfrac\epsilon{2\min_\theta|c'(\theta)|})^{5-6m}\partial_i^2\WGalerkin[c,c],\qquad i=1,2,
\end{equation}
in the sense of quadratic forms (note that all Hessian terms involving derivatives of the length element vanish at $\hat c=\check c$),
implying consistency \eqref{eqn:consistency} up to order $\epsilon$ (while \eqref{eqn:symmetry} holds by construction).

Note that the above notation also allows us to rewrite the continuous path energy as
\begin{align}
	\pathenergy[\cpath] 
	&=\int_0^1\int_{\Sone}  |\cpath'(t, \theta)|  \vert \dot \cpath(t, \theta)\vert^2 \label{eq:SobolevpathenergyRewritten} \\
	& \qquad \quad 
	+ \sum_{j=1}^m 
	\frac{\left\vert P_j\left((\cpath',\ldots,\cpath^{(j)})(t,\theta);(\dot \cpath',\ldots, \dot \cpath^{(j)})(t,\theta)\right)\right\vert^2}
	{|\cpath'(t, \theta)|^{6j-5}} \d \theta\, \d t. \nonumber
\end{align}

\subsection{Time discretization for $m=2$ based on $\epsilon$-free upper bounds of the length element}\label{sec:SobolevMetricTimeDiscretization2}
The approximation of the previous paragraph can readily be applied to Sobolev metrics of any order $m\geq2$.
For the special case $m=2$ we present here an alternative approximation
that can manage without an $\epsilon$-regularization.
The principle of this alternative approximation is independent of $m$.
However, the larger $m$, the more difficult it will be to implement a numerically stable version of it.

Recall the Galerkin ansatz for $m=2$
\begin{equation*}
	\WGalerkin[\hat c,\check c]
	=\int_0^1\int_{\Sone} 
	\vert \cpath_{\hat c,\check c}'(t, \theta) \vert \left( \vert\dot \cpath_{\hat c,\check c}(t, \theta)\vert^2  + \vert \diffs \dot \cpath_{\hat c,\check c}(t, \theta)\vert^2 + \vert \diffs^2 \dot \cpath_{\hat c,\check c}(t, \theta) \vert^2
	\right) \,\d \theta\,\,\d t
\end{equation*}
for $\cpath_{\hat c,\check c}(t, \theta)=(1-t)\hat c(\theta)+t\check c(\theta)$.
Exploiting the triangle inequality
\begin{equation}\label{eqn:L}
	|\cpath_{\hat c,\check c}'(t, \theta)|\leq L[\hat c,\check c](t,\theta)
	\qquad\text{with }L[\hat c,\check c](t,\theta)=(1-t)|\hat c'(\theta)|+t|\check c'(\theta)|  ,
\end{equation}
we can bound this energy from above by
\begingroup
\allowdisplaybreaks
\begin{flalign*}\label{eqn:GalerkinApproxSobolevMetric}
	\addtocounter{equation}{1}\tag{\theequation}
	&\bar\W[\hat c,\check c]\\
	&=\int_0^1\int_{\Sone} 
	L[\hat c,\check c](t,\theta) \left( \vert\dot \cpath_{\hat c,\check c}(t, \theta)\vert^2  + \vert \diffs \dot \cpath_{\hat c,\check c}(t, \theta)\vert^2 + \vert \diffs^2 \dot \cpath_{\hat c,\check c}(t, \theta) \vert^2
	\right) \,\d \theta\,\,\d t\\
		&= \int_{\Sone} \int_0^1 L[\hat c,\check c](t,\theta) \vert \check c(\theta)-\hat c(\theta)\vert^2
	+ \frac{L[\hat c,\check c](t,\theta)}{\vert \cpath_{\hat c,\check c}'(t, \theta) \vert^{2}}\vert \check c'(\theta)-\hat c'(\theta) \vert^2  \\
	&\qquad  \quad +  \frac{L[\hat c,\check c](t,\theta)}{\vert \cpath_{\hat c,\check c}'(t, \theta)\vert^{8}} \left\vert \vert \cpath_{\hat c,\check c}'(t, \theta)\vert^2 (\check c''(\theta) -\hat c''(\theta)) \right.\\
	&\qquad \qquad \qquad \qquad \quad \left.- (\cpath_{\hat c,\check c}'(t, \theta)\cdot \cpath_{\hat c,\check c}''(t, \theta))  (\check c'(\theta) -\hat c'(\theta))  \right\vert ^2
	\,\d t \,\d \theta\\
		&= \int_{\Sone} \int_0^1 L[\hat c,\check c](t,\theta) \vert \check c(\theta)-\hat c(\theta)\vert^2
	+   \frac{L[\hat c,\check c](t,\theta)}{\vert \cpath_{\hat c,\check c}'(t, \theta) \vert^{2}}\vert \check c'(\theta)-\hat c'(\theta) \vert^2 \\
	&\qquad \qquad +  \frac{L[\hat c,\check c](t,\theta)}{\vert \cpath_{\hat c,\check c}'(t, \theta)\vert^{4}} \left\vert \check c''(\theta) -\hat c''(\theta)\right\vert ^2  \\
	&\qquad  \qquad -2\frac{L[\hat c,\check c](t,\theta)}{\vert \cpath_{\hat c,\check c}'(t, \theta)\vert^{6}} (\cpath_{\hat c,\check c}'(t, \theta)\!\cdot\! \cpath_{\hat c,\check c}''(t, \theta))  ((\check c''(\theta) \!-\!\hat c''(\theta)) \!\cdot\! (\check c'(\theta) \!-\!\hat c'(\theta)))\\
	&\qquad  \qquad +\frac{L[\hat c,\check c](t,\theta)}{\vert \cpath_{\hat c,\check c}'(t, \theta)\vert^{8}} \left\vert (\cpath_{\hat c,\check c}'(t, \theta)\cdot \cpath_{\hat c,\check c}''(t, \theta))  (\check c'(\theta) -\hat c'(\theta))  \right\vert ^2
	\,\d t \,\d \theta\\
		&= \int_{\Sone}\!\!\int_0^1 \!\!w_0(t,\theta)\vert \check c(\theta)\!-\!\hat c(\theta)\vert^2  \!+\! w_1(t,\theta)\vert \check c'(\theta)\!-\!\hat c'(\theta) \vert^2 \!+\! w_{2\text a}(t,\theta) \left\vert \check c''(\theta) \!-\!\hat c''(\theta)\right\vert ^2  \\
	&\qquad  \quad -2w_{2\text b}(t,\!\theta) ((\check c''(\theta) \!-\!\hat c''(\theta)) \!\cdot\! (\check c'(\theta) \!-\!\hat c'(\theta))) \!+\!w_{2\text c}(t,\!\theta)  \left\vert \check c'(\theta) \!-\!\hat c'(\theta) \right\vert^2
	\d t \,\d \theta ,
\end{flalign*}

where we introduce the abbreviations
\begin{equation}\label{eqn:abbreviations}
	\begin{gathered}
		r=|\hat c'(\theta)|,\quad
		p=|\check c'(\theta)|,\quad
		q=\hat c'(\theta)\cdot\check c'(\theta),\\
		\rho(1-t)^2+\sigma t^2+2\tau t(1-t)=\cpath_{\hat c,\check c}'(t, \theta)\cdot \cpath_{\hat c,\check c}''(t,\theta),
	\end{gathered}
\end{equation}
\begin{equation}\label{eqn:timeIntegrals}
	\begin{aligned}
		&w_0(t,\theta) = ((1-t)r+tp), \\
		& w_1(t,\theta)  =\frac{(1-t)r+tp}{(1-t)^2r^2+2(1-t)tq+t^2p^2}, \\
		& w_{2\text a}(t,\theta) = \frac{(1-t)r+tp}{((1-t)^2r^2+2(1-t)tq+t^2p^2)^2}, \\
		&w_{2\text b}(t,\theta) = \frac{((1-t)r+tp)(\rho(1-t)^2+\sigma t^2+2\tau t(1-t))  }{((1-t)^2r^2+2(1-t)tq+t^2p^2)^3}, \\
		&w_{2\text c}(t,\theta) = \frac{((1-t)r+tp)(\rho(1-t)^2+\sigma t^2+2\tau t(1-t))^2}{((1-t)^2r^2+2(1-t)tq+t^2p^2)^4} \,.
	\end{aligned}
\end{equation}
Here and in the following, we drop the explicit dependence on $\theta$ for simplicity.
The advantage of our above approximation is that it turns the integrand into a rational function in $t$ that can explicitly be integrated.
Abbreviating further
\begin{equation}\label{eqn:abbreviationsII}
	v=\frac q{rp}
	\qquad\text{and}\qquad
	V=\frac{\arctan(\frac{r^2-q}u)+\arctan(\frac{p^2-q}u)}{u/q}
\end{equation}
for $u=\sqrt{r^2p^2-q^2}$
as well as
\begin{equation}\label{eqn:timeIntegralsII}
	\begin{aligned}
		\Phi_1&=\left(\begin{smallmatrix}1\\\frac{V-1}{1-v^2}\end{smallmatrix}\right),\;
		\Xi_1=\frac{\left(\begin{smallmatrix}3+2v&1\\3&1-2v\end{smallmatrix}\right)}{8v(1+v)},\;
		\Theta_1=\frac{\left(\begin{smallmatrix}\sigma r^3+\rho p^3\\rp((\sigma+2\tau)r+(\rho+2\tau)p)\end{smallmatrix}\right)}{(rp)^4}, \\
		\Phi_2&=\left(\begin{smallmatrix}1\\\frac{V-1+\frac{1-v^2}{3v^2}}{(1-v^2)^2}\end{smallmatrix}\right),\;
		\Xi_2=\frac{\left(\begin{smallmatrix}8v^3+10v^2-5&2v^2+4v-1&2v-1\\15v^2&-12v^3+3v^2&6v^4-6v^3+3v^2\end{smallmatrix}\right)}{48v^3(1+v)},\;\\
		\Theta_2&=\frac{\left(\begin{smallmatrix}\sigma^2 r^5+\rho^2 p^5\\r p (\sigma (\sigma+4 \tau) r^3+\rho (\rho+4 \tau) p^3)\\
				2 (r p)^2 ((\rho \sigma+2 \tau^2) (r+p)+2 \tau (\sigma r+\rho p))\end{smallmatrix}\right)}{(rp)^6},
	\end{aligned}
\end{equation}
it is tedious but straightforward to check (for instance, using the Maple computer algebra code given in \cref{sec:mapleCode}) that
\begin{align}\label{eqn:barW}
	\bar\W[\hat c,\check c]
	&=\int_{\Sone} \frac{r+p}2\vert \check c-\hat c\vert^2
	+\frac{(\frac1v-1)(r+p)V+(r-p)\log(r/p)}{r^2+p^2-2q}\vert \check c'-\hat c' \vert^2\\
	&\quad + \frac12\frac{\frac{r+p}qV+\frac1r+\frac1p}{rp+q}\left\vert \check c'' -\hat c''\right\vert ^2
	-2\Phi_1^T\Xi_1\Theta_1 ((\check c'' -\hat c'') \cdot  (\check c' -\hat c'))\nonumber\\
	&\quad +\Phi_2^T\Xi_2\Theta_2\left\vert \check c' -\hat c' \right\vert^2
	\,\d \theta\nonumber\\
	&=\int_{\Sone} \frac{r+p}2\vert \check c-\hat c\vert^2
	+(\tfrac1v-1)(r+p)V+(r-p)\log(r/p)\nonumber\\
	&\quad + \frac12\frac{\frac{r+p}qV+\frac1r+\frac1p}{rp+q}\left\vert \check c'' -\hat c''\right\vert ^2
	-2\Phi_1^T\Xi_1\Theta_1 (\rho+\sigma-2\tau) \nonumber\\
	&\quad +\Phi_2^T\Xi_2\Theta_2(r^2+p^2-2q)
	\,\d \theta.\nonumber
\end{align}
\endgroup
Next, note that $\arctan$ satisfies the following addition theorem,
\begin{equation*}
	\arctan a+\arctan b
	=\arctan\frac{a+b}{1-ab}+\begin{cases}
		\pi&\text{if }a,b,ab-1>0,\\
		0&\text{if }ab<1,\\
		-\pi&\text{if }a,b,1-ab<0,
	\end{cases}
\end{equation*}
which we would like to apply to $V$, thus $a=\frac{r^2-q}u$ and $b=\frac{p^2-q}u$.
Due to $|q|\leq rp$ at least one of $a$ or $b$ is nonnegative so we are either in the first or second case.
Now it is straightforward to check
\begin{equation*}
	ab<1
	\quad\Leftrightarrow\quad
	q((r-p)^2+2(rp-q))>0
\end{equation*}
so that for positive $q$ (which we will later enforce) we may set
\begin{equation*}
	V=\frac{\arctan(\frac{u}q)}{u/q}=\frac{\arctan\sqrt{v^{-2}-1}}{\sqrt{v^{-2}-1}}=\frac{\arccos v}{\sqrt{v^{-2}-1}}=\frac v{\mathop{\mathrm{sinc}}(\arccos v)}
\end{equation*}
with $\mathop{\mathrm{sinc}}(x)=\sin(x)/x$.
For positive $q$ we further have $v>0$, which together with $v\leq1$ implies $V\leq1$.
Note that for sufficiently many discrete steps along a discrete geodesic, we have $\hat c'=c_k'\approx c_{k+1}'=\check c'$ so that indeed $q>0$.
Moreover, in that case we even have $v\approx1$ so that replacing $V$ with its zeroth order Taylor expansion $1$ at $v=1$ is not too bad an approximation.
Doing so in $\bar\W$ will increase the energy even more to
\begin{equation}\label{eqn:curveSpaceW2}
	\WEpsFree[\hat c,\check c]
	=\begin{cases}\int_{\Sone} \frac{r+p}2\vert \check c-\hat c\vert^2
		+(\tfrac1v-1)(r+p)+(r-p)\log(r/p)\\
		\quad+ \frac12\left(\frac1{rq}+\frac1{pq}\right)\left\vert \check c'' -\hat c''\right\vert ^2\\
		\quad -2\Phi_1^T\Xi_1\Theta_1 (\rho\!+\!\sigma\!-\!2\tau)
		\!+\!\Phi_2^T\Xi_2\Theta_2(r^2\!+\!p^2\!-\!2q)
		\,\d \theta
		&\text{if }q>0,\\
		\infty&\text{else.}
	\end{cases}
\end{equation}
Note that the representation of the last two integrand terms with the particular choice of $\Phi_1$ and $\Phi_2$ turns out to be important for numerical stability:
$\Phi_1$ and $\Phi_2$ are chosen such that they stay bounded as $v$ approaches $1$
(which is the case if $\hat c$ approaches $\check c$ and thus if more and more points would be introduced along a discrete geodesic).
The corresponding discrete path energy reads
\begin{equation}\label{eqn:SobolevDiscretePathEnergy2}
	\EEpsFree{K}[(c_0,\ldots, c_K)] = K \sum_{k=1}^K \WEpsFree[c_{k-1},c_k].
\end{equation}
Again, by construction, we have
\begin{equation}\label{eqn:boundWEps2}
	\WEpsFree[\hat c,\check c]\geq\WGalerkin[\hat c,\check c]
	\qquad\text{and}\qquad
	\EEpsFree{K}[(\cpath(\tfrac0K),\cpath(\tfrac1K),\ldots,\cpath(\tfrac KK))]\geq\pathenergy[\interpolation^K[\cpath]].
\end{equation}

$\WEpsFree$ differs from $\WGalerkin$ by the approximations $|\cpath_{\hat c,\check c}'(t, \theta)|\approx L[\hat c,\check c](t,\theta)$ and $V\approx1$.
Since these approximations are exact for $\hat c=\check c$,
the first and second derivatives of $\WEpsFree[\hat c,\check c]$ at $\check c=\hat c$ are the same as of $\WGalerkin[\hat c,\check c]$
(note that at $\check c=\hat c$ the terms involving derivatives of $L$ or $V$ vanish),
\begin{equation}\label{eqn:coercivityWEpsFree}
	\partial_i\WGalerkin[\hat c,\hat c]\!=\!\partial_i\WEpsFree[\hat c,\hat c],\qquad
	\partial_i\partial_j\WGalerkin[\hat c,\hat c]\!=\!\partial_i\partial_j\WEpsFree[\hat c,\hat c]
	\qquad\text{for }i,j\!=\!1,2.
\end{equation}
Therefore $\WEpsFree$ is consistent in the sense of \eqref{eqn:symmetry}-\eqref{eqn:consistency}.

\subsection{Existence and Mosco convergence\index{Mosco convergence} of the time discretization} \label{sec:MoscoCurves}
We first prove the existence of discrete geodesics and subsequently analyse their convergence, treating all three variants $\EGalerkin{K},\EEps{\epsilon}{K},\EEpsFree{K}$ of the discrete path energy
(where the first, despite being impractical, serves as a preparation for the other two).

\begin{theorem}[Existence\index{existence} of discrete geodesics]\label{thm:ExistenceDiscreteMinimizersSobolev}
	Let $g_c(\cdot,\cdot)$ be the Sobolev metric of order $m\geq 2$, $c_A\in\immersion^m$,
	and $B\subset W^m_\theta$ either a $W^m_\theta$-weakly sequentially closed set or the equivalence class of some $c_B\in\immersion^m$ with respect to reparameterization.
	Let $\epsilon>0$, $K\in\N$ be fixed, and let $\Pathenergy^K\in\{\EGalerkin{K},\EEps{\epsilon}{K},\EEpsFree{K}\}$.
	If there exists a discrete path $(c_0,\ldots,c_K)$ with $c_0=c_A$ and $c_K\in B$ of finite discrete path energy $\Pathenergy^K$, then there also exists one of minimal discrete path energy.
\end{theorem}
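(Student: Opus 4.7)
The plan is to use the direct method of the calculus of variations. Take a minimizing sequence $(c_0^j,\ldots,c_K^j)_{j\in\N}$ with $c_0^j=c_A$, $c_K^j\in B$, and $\Pathenergy^K\leq\bar E$. From the comparisons $\W\geq\WGalerkin=\pathenergy[\cpath_{\hat c,\check c}]\geq\dist^2$ in \eqref{eqn:boundWEps}-\eqref{eqn:boundWEps2}, one obtains $\dist(c_{k-1}^j,c_k^j)^2\leq\bar E/K$, so by the triangle inequality every $c_k^j$ lies in a common metric ball $B_R(c_A)$. \cref{prop:SobCurveUniform} and \cref{lem:coercivityBoundednessMetric} then yield uniform $W^m_\theta$-bounds, and after a diagonal extraction $c_k^j\rightharpoonup c_k^*$ in $W^m_\theta$. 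The compact embedding $W^{m,2}(\Sone,\R^d)\hookrightarrow C^{m-1}(\Sone,\R^d)$ upgrades this to strong convergence in $C^{m-1}_\theta$, and the uniform lower bound $|(c_k^j)'|\geq\kappa>0$ from \cref{prop:SobCurveUniform} transfers to the limit, placing $c_k^*\in\immersion^m$. The datum $c_0^*=c_A$ is automatic; $c_K^*\in B$ follows either from weak sequential closedness of $B$, or, in the reparameterization case, by \cref{thm:reparamOrbit} (replacing $B$ with its weak closure $\overline{C_B}$ and using $\overline{C_B}\cap\immersion^m=C_B$).

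The remaining task is weak lower semicontinuity of $\Pathenergy^K$, which I would treat for each variant separately. For $\EGalerkin{K}=\pathenergy\circ\interpolation^K$, the piecewise affine interpolants $\interpolation^K[\cpath^j]$ converge weakly to $\interpolation^K[\cpath^*]$ in $W^1_tW^m_\theta$ (their time derivatives are piecewise constant secants $K(c_k^j-c_{k-1}^j)$, each converging weakly in $W^m_\theta$), so the lower semicontinuity argument of \cref{thm:SobolevExistence}---built on \cref{lem:weakcontSobolev} and weak lower semicontinuity of the $L^2$-norm---applies verbatim. For $\EEps{\epsilon}{K}$ and $\EEpsFree{K}$, I would dissect the integrands in \eqref{eqn:curveSpaceW} and \eqref{eqn:curveSpaceW2}. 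All nonpolynomial factors ($\upperLength{\hat c'}{\check c'}$, $\lowerLength{\hat c'}{\check c'}^{5-6j}$, and the time-integrated rational weights of \eqref{eqn:timeIntegrals}-\eqref{eqn:timeIntegralsII}) depend only on derivatives of order at most $m-1$ and therefore converge uniformly along the subsequence. The highest-order derivatives $\hat c^{(m)},\check c^{(m)}$ enter only through $|P_m|^2$, which by \eqref{eq:Polone}-\eqref{eq:Poljpone}, the linearity of $P_m$ in $Y$ and in $\cpath^{(m)}$, and the absence of any product between $\cpath^{(m)}$ and $\dot\cpath^{(m)}$, is a pointwise nonnegative quadratic form in $(\hat c^{(m)},\check c^{(m)})$ with strongly converging coefficients. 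Classical weak lower semicontinuity of such quadratic integrals then completes the step.

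The main obstacle is ensuring uniform boundedness of these nonpolynomial coefficients on $B_R(c_A)$. For $\EEps{\epsilon}{K}$ this is automatic: by the remark following \eqref{eq:maxmin} together with \cref{prop:SobCurveUniform}, choosing $\epsilon<2\kappa$ keeps $\lowerLength{\hat c'}{\check c'}$ bounded away from zero. For $\EEpsFree{K}$ the situation is more delicate, since the denominators in \eqref{eqn:curveSpaceW2} involve $q=\hat c'\cdot\check c'$, which can degenerate even when $|\hat c'|,|\check c'|$ are individually bounded below. A uniform lower bound $q_j\geq q_0>0$ along the minimizing sequence is not an immediate consequence of the energy bound; one expects to secure it by combining the $C^{m-1}_\theta$-convergence of lower-order derivatives with a Fatou-type/convexity argument showing that a pointwise degeneration $q(\theta_0)\to 0$ would force the $\frac{1}{rq}|\check c''-\hat c''|^2$ term and the $\Phi_1^T\Xi_1\Theta_1,\Phi_2^T\Xi_2\Theta_2$ contributions to blow up along a subsequence, contradicting $\Pathenergy^K\leq\bar E$. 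Once these uniform bounds are in place, the structural decomposition above yields lower semicontinuity and hence existence of a minimizer.
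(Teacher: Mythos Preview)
Your strategy matches the paper's: direct method, the comparison $\W\geq\WGalerkin$ to get a uniform path-energy bound on the piecewise affine interpolants, metric-ball containment via \cref{prop:SobCurveUniform} and \cref{lem:coercivityBoundednessMetric}, weak compactness, and weak lower semicontinuity. For $\EGalerkin{K}$ both proofs literally reduce to \cref{thm:SobolevExistence}. For $\EEps{\epsilon}{K}$ the paper packages the lower-semicontinuity step slightly differently: rather than dissecting $|P_m|^2$ as a quadratic form in $(\hat c^{(m)},\check c^{(m)})$ with strongly convergent coefficients, it rewrites $\WEps$ as a sum of squared $L^2$-norms of quantities $p_{j,k}^i=\lowerLength{\cdot}{\cdot}^{5/2-3j}P_j(\ldots)=\lowerLength{\cdot}{\cdot}^{5/2-3j}|\cpath'|^{3j-2}\diffs^j\dot\cpath$ and invokes \cref{lem:weakcontSobolev} to get $p_{j,k}^i\rightharpoonup p_{j,k}$ in $L^2$, after which weak lower semicontinuity of the norm finishes. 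Your structural decomposition and the paper's black-box use of \cref{lem:weakcontSobolev} are interchangeable. One small point worth making explicit in your write-up: the bound $\Pathenergy^K\geq\pathenergy[\interpolation^K\cpath^j]$ places the \emph{entire} piecewise affine interpolant in the metric ball (not just the nodes $c_k^j$), which is what forces $|(\cpath^K)'(t,\theta)|\geq\kappa$ for every $t$ and in particular rules out $\hat c'/|\hat c'|=-\check c'/|\check c'|$ at the limit.

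Your observation about $\EEpsFree{K}$ is well taken. The paper's proof for this case is a single sentence (``essentially analogous; exploit the strong $L^\infty$-convergence of $L[c_{k-1}^i,c_k^i]$''), which applies cleanly to the intermediate form $\bar\W$ of \eqref{eqn:GalerkinApproxSobolevMetric} but not obviously to $\WEpsFree$ itself, where the coefficients after explicit $t$-integration carry factors like $1/q$ and $1/v$. The lower bound $|\cpath'|\geq\kappa$ does \emph{not} by itself prevent $q_*=\hat c'\cdot\check c'$ from vanishing at a point (orthogonal tangents are compatible with $\cpath\in W^1_t\immersion^m$), so your Fatou/blow-up argument is the right completion: if $q_*(\theta_0)=0$ then the strongly convergent term $(\tfrac1v-1)(r+p)=\tfrac{rp-q}{q}(r+p)$ already forces the integrand to diverge near $\theta_0$, contradicting the energy bound. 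This is a genuine (if routine) step that the paper leaves implicit and that you have correctly isolated.
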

\begin{proof}
	By \cref{thm:reparamOrbit}, it suffices to consider the case of sequentially weakly closed $B$.
	
	$\EGalerkin{K}$ coincides with the restriction of $\pathenergy$ to piecewise affine paths in time
	so that existence of minimizers for $\Pathenergy^K=\EGalerkin{K}$ is a straightforward adaption of the proof of \cref{thm:SobolevExistence}.
	
	Now let $\Pathenergy^K=\EEps{\epsilon}{K}$.
	Consider a minimizing sequence $(c^i_0,c^i_1,\ldots, c^i_K)$, $i=1,2,\ldots$, with $c^i_0=c_A$ and $c^i_K\in B$,
	where, without loss of generality, we may assume the path energy $\Pathenergy^K[(c^i_0,c^i_1,\ldots, c^i_K)]$ to be monotonically decreasing and bounded by some $\bar E>0$.
	Denote by $\cpath^{i,K}$ the piecewise affine interpolation of $(c^i_0,c^i_1,\ldots, c^i_K)$ with $c^i_k=\cpath^{i,K}(\tfrac{k}{K})$.
	By \eqref{eqn:boundWEps} we have
	$$\bar E\geq\EEps{\epsilon}{K}[(c^i_0,c^i_1,\ldots, c^i_K)] \geq \pathenergy[\cpath^{i,K}]$$
	so that $\pathenergy[\cpath^{i,K}]$ is bounded uniformly in $i$.
	In particular, the $\cpath^{i,K}$ lie within a metric ball around $c_A$ of finite radius.
	\Cref{lem:coercivityBoundednessMetric} thus implies $\|\cpath^{i,K}\|_{W_t^1W_\theta^m}\leq C\pathenergy[\cpath^{i,K}]\leq C\bar E$,
	from which we obtain, potentially after passing to a subsequence, the weak convergence $\cpath^{i,K}\rightharpoonup\cpath^K$ in $W_t^1W_\theta^m$ to some piecewise affine path $\cpath^K$.
	In particular, abbreviating $c_k\coloneqq \cpath^{K}(\tfrac{k}{K})$, we have $c_0=c_A$ and $c_K\in B$
	as well as $|(\cpath^{i,K})'|\to|(\cpath^{K})'|$ strongly in $C^{0}_tC^0_\theta$ (see proof of \cref{thm:SobolevExistence}).
	Thus also $\upperLength{(c^i_{k-1})'}{(c^i_{k})'}\to\upperLength{(c_{k-1})'}{(c_{k})'}$ and $\upperLength{(c^i_{k-1})'}{(c^i_{k})'}\to\upperLength{(c_{k-1})'}{(c_{k})'}$ strongly in $C^{0}(\Sone,\R)$.
	Using \cref{lem:weakcontSobolev} this implies that 
	\begin{align*}
		p_{j,k}^i&\coloneqq
		\frac{P_j\left((\cpath^{i,K})',\ldots,(\cpath^{i,K})^{(j)};(\dot\cpath^{i,K})',\ldots, (\dot\cpath^{i,K})^{(j)}\right)}{\lowerLength{(c^{i}_{k-1})'}{(c^{i}_{k})'}^{3j-5/2}}
						= \frac{{\vert (\cpath^{i,K})'\vert^{3j-5/2}}\diffs^{j} \dot\cpath^{i,K}}{\lowerLength{(c^{i}_{k-1})'}{(c^{i}_{k})'}^{3j-5/2}}\\
		&\rightharpoonup  \frac{{\vert (\cpath^{K})'\vert^{3j-5/2}}\diffs^{j} \dot c^{K}}{\lowerLength{(c_{k-1})'}{(c_{k})'}^{3j-5/2}}
		= \frac{P_j\left((\cpath^{K})',\ldots,(\cpath^{K})^{(j)};(\dot\cpath^{K})',\ldots, (\dot\cpath^{K})^{(j)}\right)}{\lowerLength{(c_{k-1})'}{(c_{k})'}^{3j-5/2}}
	\end{align*}
	weakly in $L^2((\frac{k-1}{K},\frac{k}{K}) \times \Sone,\R^d)$.
	The convergence of the term
	$$p_{0,k}^i\coloneq\sqrt{\upperLength{(c^{i}_{k-1})'}{(c^{i}_{k})'}}  (c^{i}_{k}-c^{i}_{k-1})$$
	in $L^2(\Sone,\R^d)$ follows in analogy. Since
	$$\textstyle\EEps{\epsilon}{K}[(c^i_0,c^i_1,\ldots, c^i_K)]=\sum_{k=1}^K\left[K\|p_{0,k}^i\|_{L^2(\Sone,\R^d)}^2+\sum_{j=1}^m\|p_{j,k}^i\|_{L^2((\frac{k-1}K,\frac kK)\times\Sone,\R^d)}^2\right]$$
	and the norms are weakly lower semicontinuous,
	\begin{equation*}
		\inf\EEps{\epsilon}{K}=\lim_{i\to\infty} \EEps{\epsilon}{K}[(c^i_0,c^i_1,\ldots, c^i_K)] \geq \EEps{\epsilon}{K}[(c_0,c_1,\ldots, c_K)].
	\end{equation*}
	
	The argument for $\Pathenergy^K=\EEpsFree{K}$ is essentially analogous;
	here we need to exploit the strong $L^\infty$-convergence of $L[c^i_{k-1},c^i_k](t,\theta)$ along the minimizing sequence (rather than of $\upperLength{c^i_{k-1}}{c^i_k}$ and $\lowerLength{c^i_{k-1}}{c^i_k}$).
\end{proof}
Next, we study the $\Gamma$-convergence of our discrete path energies to the continuous one; in fact, we will prove the slightly stronger Mosco convergence.
We will again restrict the energy to the set $\mathcal{P}_{c_A,B}$ from \eqref{eqn:admissiblePaths} such that minimizers are shortest geodesics between $c_A$ and $B$,
\ie\ we set
\begin{equation}\label{eq:refDefEnergySobCurv}
		\pathenergy_{c_A,B}[\cpath] \coloneqq \begin{cases}\pathenergy[\cpath] & \text{if } c \in  \mathcal{P}_{c_A,B}, \\
			\infty& \text{else}.\end{cases}
	\end{equation}
	Furthermore, we extend our discrete path energies to nondiscrete paths by infinity and set
	\begin{equation}\label{eqn:extPathEnergy}
		\pathenergy^K_{c_A,B}[\cpath] \coloneqq \begin{cases}\Pathenergy^K[(\cpath(\frac0K),\cpath(\frac1K),\ldots,\cpath(\frac kK))] & \text{if } \cpath=\interpolation^K[\cpath] \in  \mathcal{P}_{c_A,B}, \\
			\infty& \text{else}\end{cases}
	\end{equation}
	for $\Pathenergy^K\in\{\EGalerkin{K},\EEps{\epsilon}{K},\EEpsFree{K}\}$ and the Lagrangian interpolation operator $\interpolation^K$.
	
	\begin{theorem}[Mosco convergence of the discrete path energy]\label{thm:SobolevMoscoConv}
		Let us assume that $\Pathenergy^K\in\{\EGalerkin{K},\EEps{\epsilon_K}{K},\EEpsFree{K}\}$ in \eqref{eqn:extPathEnergy}, and let $m=2$ in the last case and $\epsilon_K\to0$ as $K\to\infty$ in the second.
		Under the assumptions of \cref{thm:SobolevExistence}, $\pathenergy_{c_A,B}^K$
		Mosco converges on $W_t^1W_\theta^m$ to $\pathenergy_{c_A,B}$ from \eqref{eq:refDefEnergySobCurv}.
	\end{theorem}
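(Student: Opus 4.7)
The plan is to prove both halves of Mosco convergence -- the liminf inequality along weakly convergent sequences and the existence of a strongly convergent recovery sequence -- simultaneously for all three discretizations by exploiting the pointwise bounds \eqref{eqn:boundWEps} and \eqref{eqn:boundWEps2} that dominate $\WGalerkin$ by $\WEps{}$ and $\WEpsFree$, together with the machinery developed in the proof of \cref{thm:SobolevExistence}.

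For the liminf, take $\cpath^K \rightharpoonup \cpath$ in $W_t^1 W_\theta^m$. If $\liminf \pathenergy^K_{c_A,B}[\cpath^K] = \infty$ there is nothing to show, so after passing to a subsequence realizing the liminf, each $\cpath^K$ is piecewise affine in time, admissible, and, by \eqref{eqn:boundWEps}/\eqref{eqn:boundWEps2} combined with the identity $\EGalerkin{K}[(\cpath^K(k/K))_k] = \pathenergy[\cpath^K]$, we obtain $\pathenergy[\cpath^K] \leq \pathenergy^K_{c_A,B}[\cpath^K]$ in all three cases. From this uniform bound I would follow the existence proof verbatim: each $\cpath^K$ lies in a fixed metric ball around $c_A$, so \cref{prop:SobCurveUniform} bounds $|(\cpath^K)'|$ uniformly from below, the compact embedding $W_t^1 W_\theta^m \hookrightarrow C_t^0 C_\theta^1$ transports this bound to the limit $\cpath \in W_t^1\immersion^m$, the trace theorem and sequential weak closedness of $B$ (respectively $\overline{C_B}$ via \cref{thm:reparamOrbit}) give $\cpath \in \mathcal{P}_{c_A,B}$, and \cref{lem:weakcontSobolev} applied to $h^K = \dot\cpath^K$ together with weak lower semicontinuity of the $L^2$ norm yields $\pathenergy[\cpath] \leq \liminf \pathenergy[\cpath^K] \leq \liminf \pathenergy^K_{c_A,B}[\cpath^K]$.

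For the recovery sequence, only the case $\cpath \in \mathcal{P}_{c_A,B}$ requires work since infinite-energy targets admit any convergent sequence. I would take $\cpath^K = \interpolation^K[\cpath]$; the standard Bochner interpolation estimate gives $\cpath^K \to \cpath$ strongly in $W_t^1 W_\theta^m$, and since $|\cpath'|$ is bounded below uniformly on $[0,1]\times\Sone$ while $\cpath^K \to \cpath$ in $C_t^0 C_\theta^1$, the interpolant lies in $\mathcal{P}_{c_A,B}$ for $K$ large. It then remains to show $\pathenergy^K_{c_A,B}[\cpath^K] \to \pathenergy[\cpath]$. For $\EGalerkin{K}$ this is immediate from $\EGalerkin{K}[(\cpath^K(k/K))_k] = \pathenergy[\cpath^K]$ and the continuity of $\pathenergy$ on its domain of immersions (via \cref{lem:coercivityBoundednessMetric} and the smoothness of the metric from \cref{rem:smoothStrongMetric}). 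For the other two variants I would estimate the difference $\pathenergy^K_{c_A,B}[\cpath^K] - \pathenergy[\cpath^K]$ directly and show it vanishes.

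The main technical obstacle is precisely this last step. For $\EEps{\epsilon_K}{K}$, the surrogates $\upperLength{(c^K_{k-1})'}{(c^K_{k})'}$ and $\lowerLength{(c^K_{k-1})'}{(c^K_{k})'}$ differ from $|\cpath'_{c^K_{k-1},c^K_k}(t,\cdot)|$ by terms controlled uniformly in $k,t$ by $\|(c^K_k - c^K_{k-1})'\|_{L^\infty} \to 0$ (via $C_t^0 C_\theta^1$ convergence) plus the regularization error $\epsilon_K \to 0$; inserted into the rational integrand \eqref{eqn:curveSpaceW} and combined with the uniform lower bound on $|\cpath'|$ and the derivative control \eqref{eqn:derivativeBound}, this gives uniform convergence of the integrands, hence $|\pathenergy^K_{c_A,B}[\cpath^K] - \pathenergy[\cpath^K]| \to 0$ and thus the desired convergence once strong convergence of $\cpath^K$ is used once more. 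For $\EEpsFree{K}$ one argues analogously, additionally observing that along the interpolant $v = \hat c'\cdot\check c'/(|\hat c'||\check c'|) \to 1$ uniformly, so $V \to 1$ uniformly and the positivity constraint $q > 0$ is automatic for large $K$; here the particular bounded representation \eqref{eqn:timeIntegralsII}, which remains regular as $v \to 1$, is what makes the limit transition tractable and constitutes the subtle point of the proof.
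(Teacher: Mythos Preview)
Your proposal is correct and follows essentially the same route as the paper: the liminf argument is identical (reduce to $\pathenergy[\cpath^K]$ via \eqref{eqn:boundWEps}/\eqref{eqn:boundWEps2} and invoke the machinery of \cref{thm:SobolevExistence}), and for the limsup both you and the paper use $\cpath^K=\interpolation^K[\cpath]$. The only organizational difference is that the paper shows $\pathenergy^K_{c_A,B}[\cpath^K]\to\pathenergy[\cpath]$ in one step, writing the discrete energy in the integral form $\int_0^1\int_{\Sone}\ell^+|\dot\cpath^K|^2+\sum_j|p_j^K|^2/(\ell^-)^{6j-5}\,\d\theta\,\d t$ and showing $\ell^\pm\to|\cpath'|$ uniformly together with $p_j^K\to p_j$ in $L^2_{t,\theta}$ (using the structure of $P_j$), whereas you split $\pathenergy^K[\cpath^K]-\pathenergy[\cpath]=(\pathenergy^K[\cpath^K]-\pathenergy[\cpath^K])+(\pathenergy[\cpath^K]-\pathenergy[\cpath])$ and handle the second piece by continuity of $\pathenergy$. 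Both decompositions rely on the same ingredients.

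One point of language to tighten: the phrase ``uniform convergence of the integrands'' is inaccurate, since the integrands contain the factors $|P_j(\ldots;(\dot\cpath^K)',\ldots,(\dot\cpath^K)^{(j)})|^2$, which involve $(\dot\cpath^K)^{(m)}\in L^2_tL^2_\theta$ and $(\cpath^K)^{(m)}\in C^0_tL^2_\theta$ and are therefore only controlled in $L^1_{t,\theta}$. What you actually need (and what you have) is uniform convergence of the \emph{coefficient} functions $\ell^\pm$ to $|\cpath'|$, combined with a uniform $L^1$ bound on the remaining factors; the product then converges in $L^1$. Relatedly, the reference to \eqref{eqn:derivativeBound} is superfluous here, since only value-level (not derivative) convergence of the surrogates enters. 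Your treatment of $\EEpsFree{K}$ via $v\to1$, $V\to1$, and the regularity of the representation \eqref{eqn:timeIntegralsII} at $v=1$ is in fact more explicit than the paper's one-line ``analogous'' remark.
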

	\begin{proof}
		For the $\liminf$-inequality we consider a sequence $(\cpath^K)_{K=1,\ldots}$ which converges weakly to $\cpath^*$ in $W_t^1W_\theta^m$
		and assume without loss of generality that $\pathenergy^K_{c_A,B}[\cpath^K]$ is uniformly bounded, thus $\cpath^K=\interpolation^K[\cpath^K]\in\mathcal{P}_{c_A,B}$.
		As in the proof of \cref{thm:SobolevExistence} we thus observe that $\cpath^*\in\mathcal{P}_{c_A,B}$ and
		$$\liminf_{K\to \infty} \pathenergy^K_{c_A,B}[\cpath^K] \geq\liminf_{K\to \infty} \pathenergy[\cpath^K] \ge \liminf_{K\to \infty} \pathenergy[\cpath^K] \ge \pathenergy[\cpath^*]=\pathenergy_{c_A,B}[\cpath^*],$$
		where the first inequality is actually an equality in case of $\Pathenergy^K=\EGalerkin{K}$ and in the other cases holds due to \eqref{eqn:boundWEps} and \eqref{eqn:boundWEps2}.
		
		For the $\limsup$-inequality consider a path $\cpath\in W_t^1W_\theta^m$ with $\pathenergy_{c_A,B}[\cpath]<\infty$, thus $\cpath\in W_t^1\immersion^m$.
		As a recovery sequence $(\cpath^K)_{K=1,2,\ldots}$ we choose
		$\cpath^K\coloneqq \interpolation^K[\cpath]$.
		Let us first recall that $\cpath^K\to\cpath$ strongly in $W_t^1W_\theta^m$ as $K\to \infty$. Indeed,
		we may choose a family of approximations $\cpath^\delta\in C_t^2W_\theta^m$ with
		$\Vert \cpath - \cpath^\delta\Vert_{W^1_t W^m_\theta} \to 0 \text{ for } \delta \to 0$ and obtain
		\begin{align*}
			\Vert \cpath- \cpath^K\Vert_{W^1_t W^m_\theta} &
			\leq \Vert \cpath- \cpath^\delta \Vert_{W^1_t W^m_\theta}
			+ \Vert \cpath^\delta - \interpolation^K[\cpath^\delta]\Vert_{W^1_t W^m_\theta}
			+\Vert \interpolation^K[\cpath^\delta- \cpath]\Vert_{W^1_t W^m_\theta} .
		\end{align*}
		Using classical approximation theory, the second term can be estimated up to a constant factor by $K^{-1} \Vert \cpath^\delta \Vert_{C^2_t W^m_\theta}$,
		and the third term converges to $0$ for $\delta\to 0$ due to the boundedness of the Lagrangian interpolation operator $\interpolation^K$ uniformly in $K$.
		
		From the continuous embedding $W_t^1W_\theta^m\hookrightarrow C_t^0C_\theta^1$ (see proof of \cref{thm:SobolevExistence}) and
		$\tfrac1C \leq \Vert \vert \cpath '\vert \Vert_{L^\infty}, \Vert \vert \cpath'\vert^{-1} \Vert_{L^\infty} \leq C$ for some $C>0$ by \cref{prop:SobCurveUniform} we deduce
		$$
		\tfrac1{2C} \leq \Vert \vert (\cpath^K)'\vert \Vert_{L^\infty}, \Vert \vert (\cpath^K)'\vert^{-1} \Vert_{L^\infty} \leq 2 C
		$$
		for sufficiently large $K$. Furthermore, we have
		\begin{align*}
			(\cpath^K)^{(m)} &\rightarrow \cpath^{(m)} &&\text{ in } C_t^{0}L_\theta^2, \\
			(\dot \cpath^K)^{(m)} &\rightarrow \dot \cpath^{(m)} &&\text{ in } L_t^2L_\theta^2, \\
			(\cpath^K)^{(j)} &\rightarrow \cpath^{(j)} &&\text{ in } C_t^{0}C_\theta^{0} \text{ for } j < m, \\
			(\dot \cpath^K)^{(j)} &\rightarrow \dot \cpath^{(j)} &&\text{ in } L_t^2C_\theta^{0}  \text{ for } j < m .
		\end{align*}
		Thus, taking into account the properties of $P_j$ defined in \eqref{eq:Polone}, \eqref{eq:Poljpone}
		(in particular that it is affine in $\dot \cpath^{(j)}$  and in $\cpath^{(j)}$ and
		that there appears no product of $\cpath^{(j)}$ and $\dot \cpath^{(j)}$)
		we have $p_j^K\coloneq P_j((\cpath^K)', \ldots,(\cpath^K)^{(j)};(\dot\cpath^K)',\ldots, (\dot\cpath^K)^{(j)})
		\to P_j(\cpath', \ldots,\cpath^{(j)};\dot \cpath',\dot \ldots, \dot \cpath^{(j)})=:p_j$
		in $L^2((0,1),\R^d)$ as $K\to\infty$ for $j=1,\ldots,m$.
		Now we use
		\begin{equation*}
			\pathenergy_{c_A,B}^K[\cpath^K]
			=\int_0^1\int_{\Sone} \ell^+[(\cpath^K)'] |\dot\cpath^K|^2
			+\sum_{j=1}^m \frac{|p_j^K|^2}{\ell^-[(\cpath^K)']^{6j-5}} \,\d \theta\, \d t,
		\end{equation*}
		where for $\Pathenergy^K=\EGalerkin{K}$ we have $\ell^+[(\cpath^K)']=\ell^-[(\cpath^K)']=(\cpath^K)'$
		and for $\Pathenergy^K=\EEps{\epsilon_K}{K}$ we have $\ell^\pm[(\cpath^K)']=L^{\pm,\epsilon_K}[(\cpath^K)'(\frac{k-1}K),(\cpath^K)'(\frac{k}K)]$ on $(\frac{k-1}K,\frac kK)$.
		Due to the uniform convergence of $(\cpath^K)'$ we have $\ell^\pm[(\cpath^K)']\to\cpath'$ in $C^0([0,1])$ as $K\to\infty$
		so that altogether the integrand of $\pathenergy_{c_A,B}^K[\cpath^K]$ converges strongly in $L^1((0,1))$ to that of \eqref{eq:SobolevpathenergyRewritten}.
		Consequently, $\pathenergy_{c_A,B}^K[\cpath^K]\to\pathenergy[\cpath]=\pathenergy_{c_A,B}[\cpath]$ for $K\to\infty$, as desired.
		
		The argument for $\Pathenergy^K=\EEpsFree{K}$ is essentially analogous; here we need to exploit the strong uniform convergence $L[(\cpath^K)'(\frac{k-1}K),(\cpath^K)'(\frac{k}K)]\to\cpath'$ on $(\frac{k-1}K,\frac kK)$.
	\end{proof}
	
		\begin{corollary}[Convergence\index{convergence} of discrete geodesic\index{discrete geodesic} paths]
		Under the assumptions of \cref{thm:SobolevExistence,thm:SobolevMoscoConv},
		given a sequence of minimizers of $\pathenergy_{c_A,B}^K$ (\ie\ of discrete geodesics) indexed by $K$, any subsequence contains a converging subsequence, and its limit minimizes $\pathenergy_{c_A,B}$ (\ie\ is a shortest geodesic).
	\end{corollary}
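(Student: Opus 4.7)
The plan is to deduce the corollary from Mosco convergence in the standard manner: obtain a uniform energy bound on the sequence of discrete minimizers, extract a weakly convergent subsequence using the coercivity of the metric, and use the $\liminf$-inequality to identify the limit as a minimizer of the continuous problem.

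First, by \cref{thm:SobolevExistence} there exists a shortest geodesic $\cpath^* \in \mathcal{P}_{c_A,B}$ with $\pathenergy_{c_A,B}[\cpath^*] = \inf \pathenergy_{c_A,B} < \infty$. Let $\hat \cpath^K \coloneqq \interpolation^K[\cpath^*]$ be the piecewise affine interpolant in time, which was shown in the proof of \cref{thm:SobolevMoscoConv} to be an admissible recovery sequence for $\cpath^*$, so
\[
\pathenergy_{c_A,B}^K[\hat \cpath^K] \xrightarrow{K\to\infty} \pathenergy_{c_A,B}[\cpath^*].
\]
In particular, this quantity is uniformly bounded in $K$. Now let $(\cpath^K)_{K}$ be any sequence of discrete minimizers of $\pathenergy_{c_A,B}^K$, and let $(\cpath^{K_j})_{j}$ be an arbitrary subsequence. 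By minimality, $\pathenergy_{c_A,B}^{K_j}[\cpath^{K_j}] \leq \pathenergy_{c_A,B}^{K_j}[\hat \cpath^{K_j}] \leq C$ uniformly in $j$, where I use the bounds $\pathenergy^K_{c_A,B}\geq \pathenergy[\cpath^K]$ from \eqref{eqn:boundWEps} and \eqref{eqn:boundWEps2} to also bound the continuous energy $\pathenergy[\cpath^{K_j}]\leq C$.

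Next, the uniform energy bound places every $\cpath^{K_j}$ in a fixed metric ball around $c_A$, on which \cref{lem:coercivityBoundednessMetric} gives $\|\dot \cpath^{K_j}\|_{L^2_tW^m_\theta}^2 \leq C'\pathenergy[\cpath^{K_j}] \leq C'C$. Hence the sequence is bounded in $W^1_tW^m_\theta$, and we extract a further subsequence (not relabelled) with $\cpath^{K_j} \rightharpoonup \bar \cpath$ in $W^1_tW^m_\theta$. By the temporal trace theorem and the weak sequential closedness of $B$ (\cref{thm:reparamOrbit} reduces the reparameterization case to this setting), $\bar \cpath \in \mathcal{P}_{c_A,B}$; the uniform positive lower bound on $|(\cpath^{K_j})'|$ carries over to $|\bar \cpath'|$ by the compact embedding $W^1_tW^m_\theta \hookrightarrow C^0_tC^1_\theta$ used in the proof of \cref{thm:SobolevExistence}, so indeed $\bar \cpath \in W^1_t\immersion^m$.

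Finally, applying the $\liminf$-inequality of \cref{thm:SobolevMoscoConv} along the subsequence,
\[
\pathenergy_{c_A,B}[\bar \cpath] \leq \liminf_{j\to\infty} \pathenergy_{c_A,B}^{K_j}[\cpath^{K_j}] \leq \lim_{j\to\infty} \pathenergy_{c_A,B}^{K_j}[\hat \cpath^{K_j}] = \pathenergy_{c_A,B}[\cpath^*] = \inf \pathenergy_{c_A,B},
\]
so $\bar \cpath$ is itself a minimizer. The least subtle step is the coercive extraction, which relies entirely on the already-established \cref{lem:coercivityBoundednessMetric}; the main conceptual point is simply that the two bounds \eqref{eqn:boundWEps} and \eqref{eqn:boundWEps2} make the continuous path energy dominated by the discrete one, ensuring the a priori estimate transfers from the continuous recovery sequence to the discrete minimizers.
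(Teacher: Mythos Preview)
Your proof is correct and follows essentially the same approach as the paper: both use the recovery sequence for a continuous minimizer to obtain a uniform bound on the discrete energies of the minimizers, transfer this to the continuous energy via \eqref{eqn:boundWEps}/\eqref{eqn:boundWEps2}, and then invoke \cref{lem:coercivityBoundednessMetric} for weak precompactness in $W^1_tW^m_\theta$. The paper packages the final step as ``equi-coercivity plus Mosco convergence implies convergence of minimizers'' with a reference to \cite[Thm.\,1.21]{Br02}, whereas you spell out the $\liminf$-argument explicitly, but the substance is identical.
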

	
	\begin{proof}
		This is a standard consequence of the Mosco convergence if one can show that the energies $\pathenergy_{c_A,B}^K$ are equi-coercive \cite[Thm.\,1.21]{Br02}.
		Now let $\cpath$ be a minimizer of $\pathenergy_{c_A,B}$ (which exists by \cref{thm:SobolevExistence}).
		By the Mosco convergence there exists a sequence $\cpath^K$ with $\pathenergy^K_{c_A,B}[\cpath^K]\to\pathenergy_{c_A,B}[\cpath]<\infty$ as $K\to\infty$,
		thus $\pathenergy^K_{c_A,B}[\cpath^K]$ is uniformly bounded for $K$ large enough. Consequently there exists $C>0$ with
		\begin{equation*}
			C\geq\pathenergy^K_{c_A,B}[\cpath^K]\geq\pathenergy[\cpath^K]
		\end{equation*}
		for $K$ sufficiently large.
		Using \cref{lem:coercivityBoundednessMetric} we obtain that $\|\dot\cpath^K\|_{L^2_tW^m_\theta}$ is uniformly bounded,
		which implies the existence of a subsequence weakly converging in $W_t^1W^{m}_\theta$, \ie\ the desired equicoercivity.
	\end{proof}
	
	\begin{remark}[Convergence of discrete geodesics in the space of nonparametric curves]
		By \cref{thm:reparamOrbit}, the previous results also hold if $B$ is replaced with the reparameterization orbit $C_B$ of a curve $c_B$.
	\end{remark}
	
	\subsection{Space discretization} \label{sec:SpaceDiscretization}
	For a fully practical scheme, we still require discretization in the $\theta$-variable.
	We represent closed curves by the Fourier coefficients of their coordinate functions up to a maximum absolute frequency
	(which corresponds to the reciprocal of the spatial discretization scale).
	In more detail, a spatially discretized curve $c:\Sone\to\R^d$ is parametrized by
	\begin{equation}\label{eqn:FourierRepresentationCurve}
		c(\theta)=\sum_{j=0}^Na_j\cos(j\theta)+b_j\sin(j\theta)
	\end{equation}
	for coefficients $a_j,b_j\in\R^d$, where as before $\theta\in\R$ parametrizes $\Sone$ by arc-length.
	The spatial integration to approximate $\WEps{}[\hat c,\check c]$ from \eqref{eqn:curveSpaceW} or $\WEpsFree[\hat c,\check c]$ from \eqref{eqn:curveSpaceW2} is then performed via trapezium rule quadrature with $M$ equispaced points on $\Sone$.
	By the Euler--Maclaurin summation formula \cite[Sec.\,7.1]{BrPe11}
	\begin{multline}\label{eqn:EulerMaclaurin}
		\int_0^{2\pi}f(x)\,\d x
		-\frac{2\pi}M\left[\frac{f(0)+f(2\pi)}2+\sum_{j=1}^{M-1}f\left(\frac{2\pi j}M\right)\right]\\
		=-\sum_{l=1}^p\left(\frac{2\pi}M\right)^{l}\frac{B_l}{l!}\left(f^{(l-1)}(2\pi)-f^{(l-1)}(0)\right)+R
	\end{multline}
	with $|R|\leq\frac C{M^p}\int_0^{2\pi}|f^{(p)}(x)|\,\d x$ for some constant $C>0$,
	this quadrature rule exhibits rapid convergence for periodic functions $f$.
	
	Concerning the implementation, note that the derivatives of $\WEps{}[\hat c,\check c]$ and $\WEpsFree[\hat c,\check c]$ with respect to $\hat c$ and $\check c$ are of a similar form as $\WEps$ and $\WEpsFree$ and thus are evaluated via the same quadrature.
	The spatially discretized energy $\WEps{}[\hat c,\check c]$ and $\WEpsFree[\hat c,\check c]$ is expressed as a function of the Fourier coefficients of the discretized curves $\hat c,\check c$,
	as are its derivatives with respect to these Fourier coefficients.
	An off-the-shelf optimization routine can then be used to minimize the geodesic energy.
	
	The fully discrete energy $\Gamma$-converges to the true path energy as the number of time steps, Fourier coefficients, and quadrature points tends to infinity.
	Also, its minimizers will converge to minimizers of the continuous path energy.
	To show this let
	\begin{equation*}\textstyle
		C^N=\left\{c:\Sone\to\R^d\,\middle|\,c(\theta)=\sum_{j=0}^Na_j\cos(j\theta)+b_j\sin(j\theta)\,a_j,b_j\in\R^d,\,j=0,\ldots,N\right\}
	\end{equation*}
	denote the (vector) space of curves of the form \eqref{eqn:FourierRepresentationCurve}, let
	\begin{equation}\label{eqn:quadrature}
		\W^M[\hat c,\check c]
		\text{ be the trapezium rule quadrature of }\W[\hat c,\check c]
		\text{ for }\W\in\{\WEps,\WEpsFree\}
	\end{equation}
	with $M$ equispaced quadrature points on $\Sone$,
	and set $$\Pathenergy^{K,M}[(c_0,\ldots,c_K)]=K\sum_{k=1}^K\W^M[c_{k-1},c_k].$$
	Also, denote the Fourier truncation operator by
	\begin{equation*}
		T_N:W^{m,2}(\Sone,\R^d)\to C^N,\quad
		\sum_{j=0}^\infty a_j\cos(j\theta)+b_j\sin(j\theta)\mapsto\sum_{j=0}^Na_j\cos(j\theta)+b_j\sin(j\theta),
	\end{equation*}
	and abbreviate $c^N=T_Nc$ for $c\in W^{m,2}(\Sone,\R^d)$ as well as $B^N=\{c^N\,|\,c\in B\}$ for $B\subset W^{m,2}(\Sone,\R^d)$.
	Further, for a path $\cpath\in W_t^1W_\theta^m$ we write $T_N\cpath$ for the Fourier truncation applied pointwise at each time.
	The fully discrete path functional can then be defined as
	\begin{equation*}
		\pathenergy_{c_A,B}^{K,N,M}[\cpath]
		\coloneqq\begin{cases}
			\Pathenergy^{K,M}[(\cpath(\frac0K),\ldots,\cpath(\frac KK))]
			&\text{if }\cpath=\interpolation^K[\cpath]=T_{N}\cpath\in\mathcal{P}_{c_A^N,B^N},\\
			\infty&\text{else.}
		\end{cases}
	\end{equation*}
	Before stating the convergence result, let us note that it poses no restriction to the previous theory and in particular to \crefrange{thm:SobolevExistence}{thm:SobolevMoscoConv}
	if one assumes the set $B$ to be bounded in $W^{m,2}(\Sone,\R^d)$.
	Indeed, paths $\cpath$ of finite energy, say $\pathenergy[\cpath]<C<\infty$, can connect $c_A$ only to a bounded subset of $B$ anyway by \cref{lem:coercivityBoundednessMetric}.
	Therefore, we may in the following additionally assume boundedness of $B$, which becomes important in the context of spatial discretization due to aliasing effects:
	For instance, if $c_A(\theta)=(\cos\theta,\sin\theta,0)$ and $B=\{c_A+c\,|\,c(\theta)=(0,0,\sin(n\theta)),\,n\in\N\}$, then we have $c_A^N=c_A$ and $c_A\in B^N$ for all $N$, but $c_A\notin B$.
	Thu,s the shortest geodesic between $c_A^N$ and $B^N$ is the constant one (with zero velocity) for any spatial discretization $N$, while the true continuous geodesic between $c_A$ and $B$ is completely different.
	
	\begin{theorem}[Mosco convergence\index{Mosco convergence} of fully discretized path energy\index{fully discretized path energy}]\label{thm:MoscoConvFullyDiscrete}
		Let the assumptions of \cref{thm:SobolevExistence} hold with $B$ bounded,
		let $\W\in\{\WEps[\epsilon_K],\WEpsFree\}$ in \eqref{eqn:quadrature} (in the former case $\epsilon_K\to0$ while in the latter we set $m=2$ and $\epsilon_K=1$),
		and let $K,N_K,M_K\to\infty$.
		If $\frac{K^{2/p}N_K}{\epsilon_KM_K}\to0$ for some $p>0$,
		then $\pathenergy_{c_A,B}^{K,N_K,M_K}$ Mosco converges on $W_t^1W_\theta^m$ to $\pathenergy_{c_A,B}$.
	\end{theorem}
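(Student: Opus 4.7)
The plan is to reduce the statement to \cref{thm:SobolevMoscoConv} by controlling the additional errors from Fourier truncation and from trapezoidal quadrature on top of the already established temporal $\Gamma$-convergence.

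\medskip

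\emph{Limsup-inequality (recovery sequence).} Given $\cpath$ with $\pathenergy_{c_A,B}[\cpath]<\infty$, I would take the fully discrete path $\cpath^K\coloneqq\interpolation^K[T_{N_K}\cpath]$, i.e., the Lagrangian interpolant in time of the Fourier truncation at each time. Because Fourier truncation commutes with temporal interpolation and because $T_{N_K}$ acts pointwise, the admissibility $\cpath^K(0,\cdot)=c_A^{N_K}$ and $\cpath^K(1,\cdot)=T_{N_K}\cpath(1,\cdot)\in B^{N_K}$ is immediate. Strong convergence $\cpath^K\to\cpath$ in $W^1_tW^m_\theta$ follows from the argument in the proof of \cref{thm:SobolevMoscoConv} (a three-term split into a mollification, its interpolation error, and the interpolation operator applied to the mollification residual) combined with the standard fact $\|(I-T_N)\varphi\|_{W^m_\theta}\to0$ for $\varphi\in W^m_\theta$ and the uniform boundedness of $T_N$. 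In particular, the convergences of derivatives stated in the previous proof continue to hold, so that $\pathenergy^K_{c_A,B}[\cpath^K]\to\pathenergy_{c_A,B}[\cpath]$ by \cref{thm:SobolevMoscoConv}. It then remains to add the quadrature error estimate described below to conclude $\pathenergy^{K,N_K,M_K}_{c_A,B}[\cpath^K]\to\pathenergy_{c_A,B}[\cpath]$.

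\medskip

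\emph{Liminf-inequality.} Consider $\cpath^K\rightharpoonup\cpath^*$ weakly in $W^1_tW^m_\theta$ with $\pathenergy^{K,N_K,M_K}_{c_A,B}[\cpath^K]$ bounded. Admissibility $\cpath^*(0,\cdot)=c_A$ follows from the trace theorem and $T_{N_K}c_A\to c_A$ strongly. For the terminal constraint, write $\cpath^K(1,\cdot)=T_{N_K}\tilde c^K$ with $\tilde c^K\in B$. The uniform bound on $\cpath^K$ in $W^1_tW^m_\theta$ together with \cref{lem:coercivityBoundednessMetric} shows that $\tilde c^K$ is bounded in $W^m_\theta$; passing to a subsequence, $\tilde c^K\rightharpoonup\tilde c^*\in B$ by the weak sequential closedness of $B$, and testing against any $\varphi\in W^m_\theta$ gives $\langle(I-T_{N_K})\tilde c^K,\varphi\rangle=\langle\tilde c^K,(I-T_{N_K})\varphi\rangle\to0$, so $\cpath^*(1,\cdot)=\tilde c^*\in B$. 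Once admissibility is secured, I would combine the quadrature error bound (below) with the liminf part of \cref{thm:SobolevMoscoConv}, using that $\pathenergy^K_{c_A,B}[\cpath^K]\le\pathenergy^{K,N_K,M_K}_{c_A,B}[\cpath^K]+o(1)$ to descend to the time-discrete energy on fully discrete paths and then apply the previously established liminf inequality.

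\medskip

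\emph{Main obstacle: uniform quadrature error.} The crux is to estimate
$|\pathenergy^{K,N_K,M_K}_{c_A,B}[\cpath^K]-\pathenergy^K_{c_A,B}[\cpath^K]|$
uniformly along sequences with bounded fully discrete energy. I would apply the Euler--Maclaurin formula \eqref{eqn:EulerMaclaurin} of order $p$ to each contribution $\W^{M_K}[c_{k-1},c_k]-\W[c_{k-1},c_k]$. The integrand is, up to the nonpolynomial factors $\ell^\epsilon[\hat c',\check c']$, polynomial in the first $m$ derivatives of $c_{k-1},c_k$. Because $c_{k-1},c_k\in C^{N_K}$, any additional $\theta$-derivative costs at most a factor $N_K$; by \eqref{eqn:derivativeBound} each additional derivative of $\ell^{\epsilon_K}$ costs a factor $\epsilon_K^{-1}$. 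The remaining factors (values and low-order derivatives of $c_{k-1},c_k$) are controlled uniformly via \cref{prop:SobCurveUniform} since all discrete paths stay in a common metric ball. Summing over $K$ time steps and multiplying by the outer factor $K$ in \eqref{eq:SobolevDiscretePathEnergy} yields a total error of order $K^2(N_K/(\epsilon_KM_K))^p$, which vanishes exactly under the hypothesis $K^{2/p}N_K/(\epsilon_KM_K)\to0$. For $\WEpsFree$ (with $\epsilon_K=1$) the $\ell$-factors are smooth away from $q\le 0$, and the same mechanism yields a bound of the form $K^2(N_K/M_K)^p$. The hard technical point is organising these derivative bounds so that the constants depend only on the metric-ball radius and not on the sequence; once done, the Mosco inequalities follow immediately from \cref{thm:SobolevMoscoConv}.
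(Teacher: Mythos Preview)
Your approach matches the paper's: reduce to \cref{thm:SobolevMoscoConv} with endpoints $c_A^{N_K},B^{N_K}$ and control the quadrature error via Euler--Maclaurin, Bernstein's inequality for trigonometric polynomials, and \eqref{eqn:derivativeBound}. The recovery sequence, the endpoint argument (self-adjointness of $T_{N_K}$ together with boundedness of $B$), and the quadrature mechanism are all the same.

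There is one genuine gap. Your quadrature estimate needs $|(\cpath^K)'|$ uniformly bounded away from zero and the lower-order derivatives of $c_{k-1},c_k$ uniformly bounded. You justify this by claiming ``all discrete paths stay in a common metric ball'' and invoking \cref{prop:SobCurveUniform}, but that is circular: a metric-ball inclusion would require a bound on the \emph{exact} path energy $\pathenergy[\cpath^K]$, whereas you only control its quadrature, and the discussion preceding the theorem explains precisely why equicoercivity of the quadrature is not available. The correct source of these bounds is the given weak convergence $\cpath^K\rightharpoonup\cpath^*$: it yields boundedness in $W^1_tW^m_\theta\hookrightarrow C^0_tW^m_\theta$ and, by compact embedding, strong convergence in $C^0_tC^{m-1}_\theta$; if $\cpath^*\in W^1_t\immersion^m$, this also gives $|(\cpath^K)'|\to|(\cpath^*)'|>0$ uniformly. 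But this leaves the case $\cpath^*\notin W^1_t\immersion^m$ entirely unaddressed, and there your argument breaks down because $|(\cpath^K)'|$ need not stay away from zero and the integrand bounds blow up. The paper treats this case separately: it restricts the energies to the interval $[0,r]$ for $r$ strictly before the first degeneracy time $s$, applies the quadrature estimate and the liminf inequality on $[0,r]$ (where the limit \emph{is} an immersion), and then lets $r\nearrow s$ to force $\liminf_K\pathenergy^{K,N_K,M_K}[\cpath^K]=\infty=\pathenergy_{c_A,B}[\cpath^*]$.

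Two minor points: throughout you should compare $\pathenergy^{K,N_K,M_K}_{c_A,B}$ with $\pathenergy^K_{c_A^{N_K},B^{N_K}}$ rather than with $\pathenergy^K_{c_A,B}$, since your discrete paths satisfy the truncated boundary data (as written, $\pathenergy^K_{c_A,B}[\cpath^K]=\infty$); and boundedness of $\tilde c^K$ comes directly from the assumption that $B$ is bounded, not from \cref{lem:coercivityBoundednessMetric} (knowing $T_{N_K}\tilde c^K$ bounded does not bound $\tilde c^K$).
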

	\begin{proof}
		We first argue that by slight modifications in the proof of \cref{thm:SobolevMoscoConv} we have Mosco convergence also of $\pathenergy_{c_A^{N_K},B^{N_K}}^K$ to $\pathenergy_{c_A,B}$.
		Indeed, in the $\liminf$-inequality one now needs to consider a weakly converging sequence $\cpath^K\rightharpoonup\cpath^*$ with $\cpath^K\in\mathcal{P}_{c_A^{N_K},B^{N_K}}$,
		and this again implies $\cpath^*\in\mathcal P_{c_A,B}$ (while the remainder of the argument stays unchanged):
		In detail, we have $\cpath^*(0)=c_A$ since both $\cpath^K(0)\rightharpoonup\cpath^*(0)$ and $\cpath^K(0)=c_A^{N_K}\to c_A$.
		Furthermore, $\cpath^K(1)=c_K^{N_K}$ for some sequence $c_K\in B$,
		which by the boundedness of $B$ converges weakly (up to a subsequence) to some $c\in B$. Consequently,
		\begin{multline*}
			(\cpath^K(1),\phi)_{W^m_\theta}
			=(c_K,\phi)_{W^m_\theta}-((\mathrm{id}-T_{N_K})c_K,\phi)_{W^m_\theta}\\
			=(c_K,\phi)_{W^m_\theta}-(c_K,(\mathrm{id}-T_{N_K})\phi)_{W^m_\theta}
			\to(c,\phi)
		\end{multline*}
		for any $\phi\in W^{m,2}(\Sone,\R^d)$ (due to $(\mathrm{id}-T_{N_K})\phi\to0$)
		so that $\cpath^*(1)=c\in B$ as desired.
		In the $\limsup$-inequality one just has to replace the employed recovery sequence $\cpath^K$ by $T_{N_K}\cpath^K$,
		which (by the same argument, only replacing $\interpolation^K$ with $T_{N_K}\circ\interpolation^K$) again strongly converges to $\cpath$ (while the remainder of the argument stays unchanged).
		
		After these preliminaries
		it is sufficient to show that for any sequence $(\cpath^K)_{K=1,2,\ldots}$ converging weakly in $W_t^1W_\theta^m$ with finite energy $\pathenergy_{c_A,B}^{K,N_K,M_K}[\cpath^K]$ to some $\cpath\in W_t^1\immersion^m$ we have
		\begin{equation*}
			\left|\pathenergy_{c_A,B}^{K,N_K,M_K}[\cpath^K]-\pathenergy_{c_A^{N_K},B^{N_K}}^K[\cpath^K]\right|\to0
			\qquad\text{as }K\to\infty.
		\end{equation*}
		Indeed, then the liminf-inequality for the Mosco convergence of $\pathenergy_{c_A^{N_K},B^{N_K}}^K$
		directly implies the one for $\pathenergy_{c_A,B}^{K,N_K,M_K}$ (except in case $\cpath\notin W_t^1\immersion^m$, for which we will provide an extra argument at the end),
		and the above-provided recovery sequence for the Mosco convergence of $\pathenergy_{c_A^{N_K},B^{N_K}}^K$
		then automatically also is a recovery sequence for $\pathenergy_{c_A,B}^{K,N_K,M_K}$.
		
		Hence, let $\cpath^K=\interpolation^K[\cpath^K]$ converge weakly in $W_t^1W_\theta^m$ to some $\cpath$ with $\pathenergy_{c_A,B}[\cpath]<\infty$,
		and assume $c_k^K\coloneqq \cpath^K(k/K)\in C^{N_K}$ for $k=0,\ldots,K$.
		By compact embedding we have $\cpath^K\to\cpath$ weakly in $C_t^0W_\theta^m$ and strongly in $C_t^0C_\theta^{m-1}$ (see proof of \cref{thm:SobolevExistence}).
		Due to $\cpath\in W_t^1\immersion^m$, $|\cpath'|$ and thus also $|(\cpath^K)'|$ are uniformly bounded away from zero.
		We will denote the integrand of $\W[\hat c,\check c]$ by $I_{\hat c,\check c}$.
		By the classical estimate \eqref{eqn:EulerMaclaurin} for the trapezium rule on periodic functions, we obtain
		\begin{equation*}
			\left|\W^{M_K}[c_{k-1}^K,c_k^K]-\W[c_{k-1}^K,c_k^K]\right|
			\leq\frac C{M_K^p}\int_{\Sone}|I_{c_{k-1}^K,c_k^K}^{(p)}(\theta)|\,\,\d\theta,
		\end{equation*}
		where $C>0$ is independent of $M_K$ or $p$.
		Now by \eqref{eqn:curveSpaceW} or \eqref{eqn:curveSpaceW2}, $I_{c_{k-1},c_k}$ is a polynomial in $c_i^{(j)}$, $i=k-1,k$ and $j=0,2,\ldots,m$,
		whose coefficients depend smoothly (recall that $|(\cpath^K)'|$ is bounded away from zero) but nonlinearly on $c_{k-1}',c_k'$.
		More explicitly, $I_{c_{k-1},c_k}$ has the form
		\begin{equation*}
			I_{c_{k-1},c_k}
			=\sum_{j=1}^Jf_j(c_{k-1}',c_k')\prod_{l=1}^{L_j}c_{i_{jl}}^{(n_{jl})}
		\end{equation*}
		for some $J\geq1$, smooth coefficient functions $f_j$, polynomial degrees $L_j\geq2$, indices $i_{jl}\in\{k-1,k\}$,
		and derivative orders $0\leq n_{j1}\leq n_{j2}\leq\ldots\leq n_{jL_j}\leq m$ (ordered by size) with $n_{j,L_j-2}\leq m-1$ (since the polynomial is at most quadratic in the $m$th derivatives).
		Thus, abbreviating $L$-dimensional multiindices of order $p$ by $M_{L,p}=\{\alpha\in\N_0^L\,|\,\alpha_0+\ldots+\alpha_L=p\}$,
		\begin{align*}
			I_{c_{k-1},c_k}^{(p)}
			&=\sum_{j=1}^J\sum_{\alpha\in M_{L_j,p}}f_j(c_{k-1}',c_k')^{(\alpha_0)}\prod_{l=1}^{L_j}c_{i_{jl}}^{(n_{jl}+\alpha_{l})}\qquad\text{ and }\\
			\|I_{c_{k-1},c_k}^{(p)}\|_{L^1}\!
			&\leq\sum_{j=1}^J\sum_{\alpha\in M_{L_j,p}}\!\!\|f_j(c_{k-1}',c_k')^{(\alpha_0)}\|_{L^\infty}\!\!\!
			\prod_{l=1}^{L_j-2}\!\!\|c_{i_{jl}}^{(n_{jl}+\alpha_{l})}\|_{L^\infty}\!\!\!\!
			\prod_{l=L_j\!-\!1}^{L_j}\!\!\!\!\|c_{i_{jl}}^{(n_{jl}+\alpha_{l})}\|_{L^2}.
		\end{align*}
		Now for trigonometric polynomials of order $N_K$ it holds
		\begin{equation*}
			\|c_{i_{jl}}^{(n_{jl}+\alpha_{l})}\|_{L^\infty}
			\leq N_K^{\alpha_{l}}\|c_{i_{jl}}^{(n_{jl})}\|_{L^\infty}
			\qquad\text{and}\qquad
			\|c_{i_{jl}}^{(n_{jl}+\alpha_{l})}\|_{L^2}
			\leq N_K^{\alpha_{l}}\|c_{i_{jl}}^{(n_{jl})}\|_{L^2}
		\end{equation*}
		by Bernstein's inequality for trigonometric polynomials and by Parseval's theorem.
		Thus we obtain
		\begin{align*}
			\|I_{c_{k-1},c_k}^{(p)}\|_{L^1}\!\!
			&\leq\sum_{j=1}^J\sum_{\alpha\in M_{L_j,p}}\!\!\!\!\!\!N_K^{\alpha_1+\ldots+\alpha_{L_j}}\|f_j(c_{k-1}',c_k')^{(\alpha_0)}\|_{L^{\!\infty}}\!\!\!\!
			\prod_{l=1}^{L_j-2}\!\!\|c_{i_{jl}}^{(n_{jl})}\|_{L^{\!\infty}}\!\!\!\!
			\prod_{l=L_j-1}^{L_j}\!\!\!\!\|c_{i_{jl}}^{(n_{jl})}\|_{L^2}\\
			&\lesssim\sum_{j=1}^J\sum_{\alpha_0=0}^pN_K^{p-\alpha_0}\|f_j(c_{k-1}',c_k')^{(\alpha_0)}\|_{L^\infty}
		\end{align*}
		(where $A\lesssim B$ means $A\leq CB$ for some constant $C>0$ depending only on $p$ and the sequence $\cpath^K$),
		since the norms in the products are uniformly bounded (recall that $\cpath^K$ is uniformly bounded in $C_t^0W_\theta^m$ and $C_t^0C_\theta^{m-1}$ and that $n_{j,L_j}\leq m$ and $n_{j,L_j-2}\leq m-1$).
		Now $f_j(c_{k-1}',c_k')^{(\alpha_0)}$ is a finite sum of terms
		$$f_j^{(\alpha)}(c_{k-1}',c_k')\left(\left(c_{k-1}^{(1+\beta_1)},c_k^{(1+\beta_1)}\right),\ldots,\left(c_{k-1}^{(1+\beta_\alpha)},c_k^{(1+\beta_\alpha)}\right)\right)$$
		with $0<\alpha\leq\alpha_0$ and $\beta_1+\ldots+\beta_\alpha=\alpha_0$, each of which is again bounded in $L^\infty$ by
		\begin{multline*}
			\|f_j^{(\alpha)}(c_{k-1}',c_k')\|_{L^\infty}\prod_{l=1}^\alpha\left\|\left(c_{k-1}^{(1+\beta_l)},c_k^{(1+\beta_l)}\right)\right\|_{L^\infty}\\
			\leq\|f_j^{(\alpha)}(c_{k-1}',c_k')\|_{L^\infty}N_K^{\alpha_0}\|(c_{k-1}',c_k')\|_{L^\infty}^\alpha
			\lesssim\|f_j^{(\alpha)}(c_{k-1}',c_k')\|_{L^\infty}N_K^{\alpha_0}.
		\end{multline*}
		In case of $\W=\WEpsFree$ (in which we set $\epsilon_K=1$) the norm of $f_j^{(\alpha)}$ is bounded independent of $K$,
		in case of $\W=\WEps[\epsilon_K]$ it is bounded up to a constant factor by $\epsilon_K^{1-\alpha}$, see \eqref{eqn:derivativeBound}.
		Thus, summarizing, we obtain
		\begin{equation*}
			\|I_{c_{k-1},c_k}^{(p)}\|_{L^1}
			\lesssim\epsilon_K^{1-p}N_K^p.
		\end{equation*}
		Consequently, we have
		\begin{equation*}
			\left|\pathenergy_{c_A,B}^{K,N_K,M_K}[\cpath^K]\!-\!\pathenergy_{c_A^{N_K},B^{N_K}}^K[\cpath^K]\right|
			\!\leq\!K\!\sum_{k=1}^K\left|\W^{M_K}[c_{k\!-\!1}^K,c_k^K]\!-\!\W[c_{k\!-\!1}^K,c_k^K]\right|
			\!\lesssim\!\frac{K^2N_K^{p}}{\epsilon_K^{p\!-\!1}M_K^p},
		\end{equation*}
		which for large enough $p$ tends to zero as desired.
		
		We still need to consider the case $\cpath\in W_t^1W_\theta^m\setminus W_t^1\immersion^m$, in which $\pathenergy_{c_A,B}[\cpath]=\infty$.
		More specifically, we need to show that then $\pathenergy_{c_A,B}^{K,N_K,M_K}[\cpath^K]$ diverges (so that the liminf inequality is satisfied).
		To this end, let $S\subset[0,1]$ be the set of times $t$ at which $\cpath'(t,\cdot)$ is not bounded away from zero and set $s=\min S$
		(note that $S$ is compact and $s>0$ since the initial curve $\cpath(0,\cdot)$ is an immersion).
		Then,
		\begin{equation*}
			\lim_{r\nearrow s}\int_0^rg_{\cpath(t)} (\dot{\cpath}(t),\dot{\cpath}(t)) \,\d t=\infty  ,
		\end{equation*}
		since a finite limit would contradict \cref{prop:SobCurveUniform}.
		Now consider our fully discrete energy, only restricted to the time interval $[0,r]$,
		that is, the sum of the time discretization only contains time steps up to time $r$ (if $r$ is not a discrete time point, the last summand only contributes partially).
		Let us denote this restricted discrete energy by $\pathenergy^{K,N_K,M_K}_{[0,r]}$.
		By the previous argument, for every $r<s$ we know that $\pathenergy^{K,N_K,M_K}_{[0,r]}$ Mosco converges to the restriction of the path energy to the same interval, thus
		\begin{equation*}
			\liminf_{K\to\infty}\pathenergy^{K,N_K,M_K}[\cpath^K]
			\geq\liminf_{K\to\infty}\pathenergy^{K,N_K,M_K}_{[0,r]}[\cpath^K]
			\geq\int_0^rg_{\cpath(t)} (\dot{\cpath}(t),\dot{\cpath}(t)) \,\d t ,
		\end{equation*}
		and letting $r\to s$ leads to the desired result.
	\end{proof}
	
	\begin{remark}[Quadrature points versus Fourier modes]
		If one picks $N_K=K$, then it obviously suffices that $M_K=K^{\beta}$ for any $\beta>1$ (and $\epsilon_K$ decreasing sufficiently slowly in case of $\W=\WEps[\epsilon_K]$),
		which implies a mild computational cost increase.
		Had we instead discretized the curves via splines of degree $r$ with $N_K$ equispaced knots,
		then in the above proof we could only form the $(r-m-1)$th derivative of the integrand $I_{c_{k-1},c_k}$ and thus by an analogous argument would have to choose $p=r-m-1$.
		If one picks, for instance, $m=2$ and quartic B-splines (thus $r=4$) and $N_K\sim K$, then one requires $M_K$ to grow faster than $K^3$.
		It is here that the spectral discretization shows its great advantage.
	\end{remark}
	
	As before, the $\Gamma$- or Mosco convergence implies convergence of minimizers.
	However, to this end, we will restrict the path energy to some norm ball
	$$B_R(0)=\{\cpath\in W_t^1W_\theta^m\,|\,\|\cpath\|_{W_t^1W_\theta^m}\leq R\}.$$
	This is no limitation since we may simply choose $R$ larger than the norm of a geodesic between $c_A$ and $B$
	in order to obtain convergence of our numerically computed discrete geodesics to a shortest continuous geodesic.
	Whether $R$ was indeed chosen large enough can be checked a posteriori
	(recall that by \cref{lem:coercivityBoundednessMetric} any computed upper bound on the Riemannian distance between $c_A$ and $B$ implies a feasible radius $R$)
	or even a priori if one can provide a finite energy path from $c_A$ to $B$.
	Such a restriction becomes necessary since the equicoercivity of the spatially discretized path energy without this restriction is not easily obtained
	and does not seem worth the effort of a proof in view of the above discussion.
	Equicoercivity may in fact hold true; for instance, one can show that the numerical quadrature of the lowest order term in $\WEps{}$ or $\WEpsFree$ scales like the exact integral
	by exploiting that the piecewise linear interpolation of a nonnegative $ M$th-order trigonometric polynomial $P$ between $N\gg M$ equispaced quadrature points is smaller than $P$ by at most a constant factor.
	However, there are inherent difficulties with the higher-order terms, which can be illustrated as follows:
	Imagine that the Riemannian metric \cref{def:SobolevMetricCurves} only contains the $m$th order term
	(in fact, the other terms can be replaced by a Poincar\'e inequality if, for instance, the curve barycenter is fixed).
	Further consider a (discrete, \ie\ piecewise linear and trigonometric) path $\cpath^K$ with $(\cpath^K)'(t,\theta)=0$,
	thus $\W[\cpath^K(\frac{k-1}K),\cpath^K(\frac kK)]=\infty$ for some $k$.
	The numerical quadrature will be finite, though, $\W^{M_K}[\cpath^K(\frac{k-1}K),\cpath^K(\frac kK)]\leq C<\infty$.
	Even if the number of quadrature points is increased to $M\gg M_K$ one will have $\W^{M}[\cpath^K(\frac{k-1}K),\cpath^K(\frac kK)]<\infty$,
	and by rescaling the path $\cpath$ with a sufficiently large factor $\lambda$ one will even obtain $\W^{M}[\lambda\cpath^K(\frac{k-1}K),\lambda\cpath^K(\frac kK)]\leq C$,
	since the $m$th order term of the path energy is homogeneous of the negative order $3-2m$.
	
	\begin{corollary}[Convergence\index{convergence} of fully discrete geodesic\index{discrete geodesic} paths]
		Under the assumptions of \cref{thm:SobolevExistence,thm:MoscoConvFullyDiscrete} and for some closed norm ball $B_R(0)$,
		given a sequence of minimizers of $\pathenergy_{c_A,B}^{K,N_K,M_K}$ on $B_R(0)$ (\ie\ of spatiotemporally discretized geodesics for $\W \in \{\WEps,\WEpsFree\}$), any subsequence contains a weakly converging subsequence, and its limit minimizes $\pathenergy_{c_A,B}$ on $B_R(0)$ (\ie\ is a shortest geodesic).
	\end{corollary}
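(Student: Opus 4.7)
The plan is to deduce this result from \cref{thm:MoscoConvFullyDiscrete} exactly as in the proof of the analogous corollary following \cref{thm:SobolevMoscoConv}, namely via the standard fact that Mosco convergence together with equi-coercivity implies convergence of minimizers \cite[Thm.\,1.21]{Br02}. The essential observation is that restriction to the closed norm ball $B_R(0)$ supplies the equi-coercivity that was the only nontrivial ingredient missing in the fully discrete setting.

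First I would check that $\pathenergy_{c_A,B}^{K,N_K,M_K}+\chi_{B_R(0)}$ still Mosco converges on $W_t^1W_\theta^m$ to $\pathenergy_{c_A,B}+\chi_{B_R(0)}$, where $\chi_{B_R(0)}$ denotes the indicator of $B_R(0)$ ($0$ on $B_R(0)$, $+\infty$ else). Since $B_R(0)$ is convex and norm-closed in the Hilbert space $W_t^1W_\theta^m$, it is weakly sequentially closed, so $\chi_{B_R(0)}$ is weakly sequentially lower semicontinuous. This immediately transfers the $\liminf$-inequality from \cref{thm:MoscoConvFullyDiscrete}. For the $\limsup$-inequality, note that the recovery sequence $\cpath^K=T_{N_K}\interpolation^K[\cpath]$ constructed in the proof of \cref{thm:MoscoConvFullyDiscrete} converges strongly to $\cpath$ in $W_t^1W_\theta^m$, so whenever $\cpath\in B_R(0)$ (interpreted via the convention that one may assume the interior case since $R$ can be enlarged; alternatively rescale by a factor $\tfrac{R-\delta_K}{R}\to1$) the sequence $\cpath^K$ eventually lies in $B_R(0)$, yielding a valid recovery sequence.

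Next, equi-coercivity of $\pathenergy_{c_A,B}^{K,N_K,M_K}+\chi_{B_R(0)}$ is immediate: any sequence with uniformly bounded values of the functional is contained in $B_R(0)$, hence has a weakly convergent subsequence in the reflexive Hilbert space $W_t^1W_\theta^m$ by the Banach--Alaoglu theorem. In contrast to the purely time-discrete setting treated in the corollary after \cref{thm:SobolevMoscoConv}, here one cannot obtain equi-coercivity from \cref{lem:coercivityBoundednessMetric} since fully discretized paths may have vanishing length element at a quadrature point while still registering finite discrete energy (as already noted in the discussion preceding the corollary). This is precisely why the a priori restriction to $B_R(0)$ is needed.

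Combining these two ingredients, the standard theorem on convergence of minimizers for equi-coercive Mosco-converging sequences yields the claim: any subsequence of minimizers of $\pathenergy_{c_A,B}^{K,N_K,M_K}$ on $B_R(0)$ possesses a further subsequence weakly converging in $W_t^1W_\theta^m$ to some $\cpath^*\in B_R(0)$ with $\pathenergy_{c_A,B}[\cpath^*]=\min_{B_R(0)}\pathenergy_{c_A,B}$. The main technical obstacle, namely handling recovery sequences that may slightly leave $B_R(0)$, is mild and resolved by the strong convergence in $W_t^1W_\theta^m$ of the chosen recovery sequence, possibly combined with a tiny homogeneous rescaling.
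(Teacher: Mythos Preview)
Your proposal is correct and follows essentially the same approach as the paper's proof: restrict to $B_R(0)$, check that Mosco convergence survives the restriction, and observe that equicoercivity is automatic since $B_R(0)$ is weakly (sequentially) compact, so that \cite[Thm.\,1.21]{Br02} applies. The paper's proof is much terser (two sentences), but your more explicit treatment---noting that the recovery sequence $T_{N_K}\interpolation^K[\cpath]$ converges strongly and hence (modulo a small rescaling) stays in $B_R(0)$, and recalling why the restriction is necessary in the first place---fills in exactly the details the paper sweeps under ``it is straightforward to check''. One caution: your proposed rescaling $\tfrac{R-\delta_K}{R}$ would perturb the endpoints and thus potentially violate $\cpath^K(0)=c_A^{N_K}$, $\cpath^K(1)\in B^{N_K}$; the clean way out (implicit in the paper's preceding discussion) is that $R$ is chosen strictly larger than the norm of a continuous minimizer, so the relevant recovery sequence eventually lies in the interior of $B_R(0)$ without any rescaling.
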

	\begin{proof}
		Following the previous proof, it is straightforward to check that the restriction of $\pathenergy_{c_A,B}^{K,N_K,M_K}$ to $B_R(0)$ still Mosco converges to the restriction of $\pathenergy_{c_A,B}$ to $B_R(0)$.
		The restrictions are equicoercive since $B_R(0)$ is precompact with respect to weak convergence, so that convergence of minimizers follows from \cite[Thm.\,1.21]{Br02}.
	\end{proof}

	\Cref{fig:convergenceOfGeodesics} shows the convergence of discrete geodesic paths for increasing $K$ with $\W \in \{\WEps,\WEpsFree\}$, $m=2$ and weight vector $(a_0,a_1,a_2)=(10^{-4}, 1, 10^{-2})$ in \cref{def:SobolevMetricCurves}.
	For the $\epsilon$-regularized time discretization we choose $\epsilon_K = \frac{1}{K}$.
	Start and end shapes are chosen as in \cref{fig:ExampleOfParallelTransport} bottom.
	As a surrogate for the continuous geodesic, we use a discrete geodesic $\cpath$ calculated using the $\epsilon$-free approach with $K = 2^{11}$. For the space discretization, $N = 80$ Fourier modes are used.
	We observe quadratic convergence in $\tau=\frac1K$ for $\W=\WEpsFree$
	(analogous to standard piecewise affine finite element discretizations for minimizers of the Dirichlet energy with fixed boundary data)
	and linear convergence for $\W=\WEps$
	(we validated experimentally that the error scales like $\frac1K+\epsilon$).
	As already exploited in the convergence proof and observed in our experiments, the error decreases superpolynomially in $M^{-1}$.
	Note that we pick $M\geq4N$ to sufficiently resolve the highest occurring Fourier frequencies.
	The number $N$ of Fourier modes is chosen larger than required to resolve the end shapes $\cpath(0)$ and $\cpath(1)$.
	For our choice of $N$, the error is apparently dominated by the time discretization error and the $\epsilon$-regularization in the case of $\W=\WEps$.
	For qualitative results of discrete geodesics, see \cref{fig:curveSpaceGeodesicWeights,fig:curveSpaceGeodesic}.
		\begin{figure}
				% This file was created by matlab2tikz.
%
%The latest updates can be retrieved from
%  http://www.mathworks.com/matlabcentral/fileexchange/22022-matlab2tikz-matlab2tikz
%where you can also make suggestions and rate matlab2tikz.
%
\begin{tikzpicture}
	
\begin{axis}[%
	width=0.36\linewidth,
	height=0.185\linewidth,
	at={(0\linewidth,0\linewidth)},
	scale only axis,
	xmode=log,
	xmin=4,
	xmax=512,
	xminorticks=true,
	xlabel style={font=\color{white!15!black}},
	xlabel={$K$},
	ymode=log,
	ymin=1e-07,
	ymax=0.1,
	yminorticks=true,
	ylabel style={font=\color{white!15!black}},
	ylabel={$\mathrm{err}$},
	axis background/.style={fill=white},
	legend style={legend cell align=left, align=left, draw=white!15!black, at={(0.0,0.0)}, anchor=south west, nodes={scale=0.7, transform shape}}
]
\addplot [color=black, dotted, line width=1.0pt]
  table[row sep=crcr]{%
4	0.00729836492668258\\
8	0.0020333767032737\\
16	0.000532480679165481\\
32	0.000135722857991566\\
64	3.41852855166103e-05\\
128	8.535128659255e-06\\
256	2.09253805112432e-06\\
512	4.84375919808601e-07\\
};
\addlegendentry{$\|\cdot\|_{L_{\theta}^{2}}$}

\addplot [color=black, dashed, line width=1.0pt]
  table[row sep=crcr]{%
4	0.0147872566075112\\
8	0.00412601602310762\\
16	0.00107948352909786\\
32	0.000275053148651443\\
64	6.92687228971723e-05\\
128	1.72898451525794e-05\\
256	4.23517534824718e-06\\
512	9.79251752270393e-07\\
};
\addlegendentry{$\|\cdot\|_{W_{\theta}^{1}}$}

\addplot [color=black, line width=1.0pt]
  table[row sep=crcr]{%
4	0.0513604788405696\\
8	0.0142502533327262\\
16	0.00371407267675322\\
32	0.000945403368324906\\
64	0.000238157560061044\\
128	5.95898281467599e-05\\
256	1.47776931075243e-05\\
512	3.72650420554712e-06\\
};
\addlegendentry{$\|\cdot\|_{W_{\theta}^{2}}$}

\addplot [color=black, line width=0.5pt, forget plot]
table[row sep=crcr]{%
32	0.0025\\
256	3.90625e-05\\
256	0.0025\\
32	0.0025\\
};
\node[right, align=left, inner sep=0]
at (axis cs:90,0.008) {1};
\node[right, align=left, inner sep=0]
at (axis cs:275,0.0004) {2};
\end{axis}
\end{tikzpicture}% 
		% This file was created by matlab2tikz.
%
%The latest updates can be retrieved from
%  http://www.mathworks.com/matlabcentral/fileexchange/22022-matlab2tikz-matlab2tikz
%where you can also make suggestions and rate matlab2tikz.
%
\begin{tikzpicture}

\begin{axis}[%
width=0.36\linewidth,
height=0.185\linewidth,
at={(0\linewidth,0\linewidth)},
scale only axis,
xmode=log,
xmin=4,
xmax=512,
xminorticks=true,
xlabel style={font=\color{white!15!black}},
xlabel={$K$},
ymode=log,
ymin=0.0003,
ymax=0.5,
yminorticks=true,
ylabel style={font=\color{white!15!black}},
ylabel={$\mathrm{err}$},
axis background/.style={fill=white}
]
\addplot [color=black, dotted, line width=1.0pt, forget plot]
  table[row sep=crcr]{%
4	0.0952114612472326\\
8	0.040417382278872\\
16	0.0189977836850702\\
32	0.0092572220679775\\
64	0.00457512041687294\\
128	0.00227503269590808\\
256	0.00113450689350047\\
512	0.00056653564437074\\
};
\addplot [color=black, dashed, line width=1.0pt, forget plot]
  table[row sep=crcr]{%
4	0.152489295165835\\
8	0.0645120057954012\\
16	0.0302096426453452\\
32	0.0146918333582336\\
64	0.00725420966635802\\
128	0.00360560175785133\\
256	0.00179762889220594\\
512	0.000897575903310195\\
};
\addplot [color=black, line width=1.0pt, forget plot]
  table[row sep=crcr]{%
4	0.432351955625333\\
8	0.186099890847151\\
16	0.0856222407142038\\
32	0.0410508549266945\\
64	0.0201017148190532\\
128	0.00994706992603247\\
256	0.00494774372048869\\
512	0.00246731959190064\\
};
\addplot [color=black, line width=0.5pt, forget plot]
  table[row sep=crcr]{%
32	0.06\\
256	0.0075\\
256	0.06\\
32	0.06\\
};
\node[right, align=left, inner sep=0]
at (axis cs:90,0.105) {1};
\node[right, align=left, inner sep=0]
at (axis cs:280,0.021) {1};
\end{axis}
\end{tikzpicture}%
		\caption[Convergence of discrete geodesic paths]{Error between a discrete geodesic $\cpath^K$ ($\W=\WEpsFree$ on the left, $\W=\WEps$ on the right) computed for increasing $K$ and a reference geodesic $\cpath$ (computed with $N = 80, M = 640, K = 2^{11}$). We use $\mathrm{err}=\big(\frac{1}{K+1} \sum_{k=0}^K \| \cpath^K(\tfrac kK) - \cpath(\tfrac kK)\|_{W_\theta^r}^2\big)^{1/2}$ as a time-discrete $L_t^2W_\theta^r$-norm of the error with $r=0$ (dotted), $r=1$ (dashed), and $r=2$ (solid).}
		\label{fig:convergenceOfGeodesics}
	\end{figure}
			\section{Discrete exponential and differentials}\label{sec:discreteGeodCalc}
	The concept of discrete geodesics as minimizers of the discrete path energy \eqref{eqn:discretePathEnergy} can be developed further
	to define discrete versions of the exponential map, the covariant derivative, parallel transport, or curvature.
	Again, the convergence analysis from \cite{RuWi15} needs to be adapted to the setting of Sobolev curves, which is done in the following.
	
	\subsection{Discrete exponential map} \label{sec:expMap}
	If the discrete geodesic path $(c_0, \ldots, c_K)$ is the \emph{unique} minimizer of \eqref{eqn:discretePathEnergy} for fixed end points $c_0=c_A$ and $c_K=c_B$,
	we denote the discrete ($K$-step) logarithm of $c_B$ with respect to $c_A$ by
	\begin{equation*}
		\Log^{K}_{c_A}(c_B)\coloneqq\frac{c_1-c_0}{1/K},
	\end{equation*}
	an approximation of the Riemannian logarithm $\log_{c_A}(c_B)$, which is defined as the initial velocity of the connecting geodesic.
	Its inverse, if it exists, we denote as the discrete exponential map
	\begin{equation*}
		\Exp^{K}_{c_A}(1,v) \coloneqq [\Log^{K}_{c_A}]^{-1}(v)=c_K,
	\end{equation*}
	which approximates the Riemannian exponential $\exp_{c_A}(1\cdot v)$ with $K$ steps.
	It can be computed noting that any discrete geodesic must satisfy the Euler-Lagrange equations for minimizing \eqref{eqn:discretePathEnergy}, which read
	\begin{equation}
		\label{eq:ELSobolev}
		\W_{,2}[c_{k-1},c_{k}] + \W_{,1}[c_{k},c_{k+1}] =0
		\qquad
		\text{for $k=1,\ldots, K-1$.}
	\end{equation}
	Thus, given an initial curve $c_0=c_A \in \immersion^m$, a curve variation
	$v \in W^m_\theta$ and a time step size $\tau=\tfrac1K$, we set $c_1\coloneqq c_0 + \tfrac1K v$ and then iteratively define $c_{k+1}$ as the solution of \eqref{eq:ELSobolev} for $k = 1, \ldots, K-1$ to finally arrive at $c_K=\Exp^{K}_{c_{0}}(1,v)$.
	
	We begin with a study of the first step of this iterative scheme,
	\ie of $\Exp^{2}_{c_0}(1, v)$, defined as the solution $c_2$ of
	\begin{align}
		\label{eq:ELSobolevFirst}
		\W_{,2}[c_{0},c_{1}] + \W_{,1}[c_{1},c_{2}] =0
		\qquad\text{for }c_1=c_0+v/2.
	\end{align}
	If both $\Exp^{2}_{c_0}(1,v)$ and $\Log^{2}_{c_0}(c_2)$ are well-defined and in particular unique for sufficiently small $v$ and $c_2$ close to $c_0$, then
	\begin{align*}
		c_2 = \Exp^{2}_{c_0}(1, \Log^{2}_{c_0}(c_2))\text{ and  } v = \Log^{2}_{c_0}(\Exp^{2}_{c_0}(1, v)).
	\end{align*}
	The following lemma states the well-posedness and provides an approximation result for the discrete exponential map $\Exp^2$ and the discrete logarithm $\Log^2$.
	
	\begin{lemma}[Local existence and approximation properties of $\Exp^2$ and $\Log^2$]\label{thm:existenceExp2Sobolev}
		Let $g_c(\cdot,\cdot)$ be the Sobolev metric of order $m\geq 2$, let $\mathfrak K\subset\immersion^m$ be bounded and closed, and let $\W\in\{\WEps,\WEpsFree\}$
		(set $m=2$ and $\epsilon=1$ in the latter case, while we assume $\epsilon\leq\min_{c_0\in\mathfrak K,\theta\in\Sone}|c_0'(\theta)|$ in the former).
		Then there exist constants $C,\mathcal C>0$ depending on $\mathfrak K$ such that $\Exp^2_{c_0}(1,v)$ and $\Log^2_{c_0}(c_2)$ are well-defined for all $c_0\in\mathfrak K$ whenever $\|v\|_{W^m_\theta},\|c_2-c_0\|_{W^m_\theta}\leq C\sqrt\epsilon$,
		and
		\begin{align*}
			\|\Exp^2_{c_0}(1,v)-c_0-v\|_{W^m_\theta} &\leq\mathcal C(\Vert v\Vert_{W^m_\theta}^2+\Vert v\Vert_{W^m_\theta}^4/\epsilon^2) ,\\
			\|\Log^2_{c_0}(c_2)-(c_2-c_0)\|_{W^m_\theta} &\leq\mathcal C(\Vert c_2-c_0\Vert_{W^m_\theta}^2+\Vert c_2-c_0\Vert_{W^m_\theta}^4/\epsilon^2).
		\end{align*}
		Moreover, the derivatives satisfy
		\begin{align*}
			\|\partial_v\Exp^2_{c_0}(1,v)-\Id\|_{W^m_\theta} &\leq\mathcal C(\Vert v\Vert_{W^m_\theta}+\Vert v\Vert_{W^m_\theta}^3/\epsilon^2) ,\\
			\|\partial_{c_2}\Log^2_{c_0}(c_2)-\Id\|_{W^m_\theta} &\leq\mathcal C(\Vert c_2-c_0\Vert_{W^m_\theta}+\Vert c_2-c_0\Vert_{W^m_\theta}^3/\epsilon^2).
		\end{align*}
	\end{lemma}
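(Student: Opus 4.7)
The plan is to treat the first-step Euler--Lagrange equation as an implicit equation
\[
F(v, c_2) \coloneqq \partial_2\W[c_0, c_0+v/2] + \partial_1\W[c_0+v/2, c_2] = 0
\]
for $F\colon W^m_\theta \times W^m_\theta \to (W^m_\theta)^*$, and to apply a quantitative implicit function theorem. At the base point $(v, c_2) = (0, c_0)$, the symmetry conditions \eqref{eqn:symmetry} give $F(0, c_0) = 0$, and combined with the consistency \eqref{eqn:consistency} a direct calculation yields
\[
\partial_v F\big|_{(0,c_0)} = \tfrac12\big(\partial_2^2\W + \partial_1^2\W\big)[c_0,c_0] = 2g_{c_0}, \qquad \partial_{c_2} F\big|_{(0,c_0)} = \partial_1\partial_2\W[c_0,c_0] = -2g_{c_0}.
\]
By \cref{lem:coercivityBoundednessMetric}, $g_{c_0}$ is uniformly coercive on $W^m_\theta$ for $c_0 \in \mathfrak K$, so $\partial_{c_2}F|_0 \colon W^m_\theta \to (W^m_\theta)^*$ is a topological isomorphism with inverse operator norm bounded uniformly on $\mathfrak K$.

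Next I would set up the Banach fixed-point iteration $\Phi(c_2) \coloneqq c_2 + \tfrac12 g_{c_0}^{-1} F(v, c_2)$ on a ball $B_\rho(c_0 + v) \subset W^m_\theta$ of radius $\rho$ to be fixed. Its Lipschitz constant on $B_\rho$ is controlled by $\sup_{B_\rho} \|\partial_{c_2}F - \partial_{c_2}F|_0\|$, hence by $\sup \|D^2 F\|\cdot(\rho + \|v\|)$. The main technical obstacle is bounding $D^2 F$ uniformly in $\epsilon$: its entries are third derivatives of $\W$, and by \eqref{eqn:derivativeBound} combined with the explicit structure in \eqref{eqn:curveSpaceW} (resp.\ \eqref{eqn:curveSpaceW2} in the $\WEpsFree$-case), the derivatives of the regularized length elements $\upperLength$ and $\lowerLength$ grow at worst like $\epsilon^{-2}$. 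Balancing the contraction estimate then forces $\rho, \|v\|_{W^m_\theta} \lesssim \sqrt{\epsilon}$, producing the admissible regime in the statement; for $\W = \WEpsFree$ (where $\epsilon=1$ plays no regularizing role) this reduces to the classical implicit function theorem on a bounded neighborhood.

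The approximation estimates exploit the cancellation
\[
\left.\tfrac{d}{dv} F(v, c_0+v)\right|_{v=0} = \partial_v F|_0 + \partial_{c_2} F|_0 = 2g_{c_0} - 2g_{c_0} = 0,
\]
so that $F(v, c_0+v) = O(\|v\|^2)$. A closer inspection shows that its leading quadratic coefficient is a combination of third derivatives of $\W$ at the diagonal which, after differentiating the symmetry identities $\partial_i\W[c,c] \equiv 0$ and $\partial_1^2\W[c,c] = 2g_c$, collapses to an expression proportional to $Dg_{c_0}$, hence bounded independently of $\epsilon$; the $\epsilon^{-2}$ scaling only enters through higher-order Taylor remainders. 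Writing $u \coloneqq c_2 - c_0 - v$ and expanding
\[
F(v, c_2) = F(v, c_0+v) + \partial_{c_2}F(v, c_0+v)\cdot u + O\big(\|D^2 F\|\cdot\|u\|^2\big)
\]
at $F(v, c_2) = 0$, I then solve for $u$ using the uniform invertibility of $\partial_{c_2}F(v, c_0+v)$ (which remains close to $-2g_{c_0}$ in the admissible regime) to obtain $\|u\|_{W^m_\theta} \leq \mathcal{C}(\|v\|^2 + \|v\|^4/\epsilon^2)$.

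The statement for $\Log^2$ follows from the same argument with the roles of $v/2$ and $(c_2-c_0)/2$ interchanged in the definition of $F$, exploiting the symmetry between $\partial_1$ and $\partial_2$ granted by \eqref{eqn:symmetry}. The bounds on $\partial_v\Exp^2 - \Id$ and $\partial_{c_2}\Log^2 - \Id$ are obtained by differentiating the implicit relation $F(v, c_2(v)) = 0$, yielding $\partial_v c_2 = -(\partial_{c_2}F)^{-1}\partial_v F$, which at the base point equals $\Id$; Taylor-expanding $\partial_v F$ and $\partial_{c_2} F$ around $(0, c_0)$ and again using the same $\epsilon$-dependent derivative estimates produces the claimed bounds.
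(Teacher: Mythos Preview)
Your overall strategy---set up $F$ as an implicit equation, invoke a quantitative Banach fixed-point argument, track the $\epsilon$-dependence of derivatives---is exactly the one the paper uses, and the identification $\partial_vF|_0+\partial_{c_2}F|_0=0$ is the right cancellation. However, the step ``$\|D^2F\|\lesssim\epsilon^{-2}$, hence $\rho,\|v\|\lesssim\sqrt\epsilon$'' does not follow as written: a uniform bound $\|D^2F\|\lesssim\epsilon^{-2}$ inserted into your estimate $\mathrm{Lip}(\Phi)\leq\sup\|D^2F\|\cdot(\rho+\|v\|)$ would only yield the much smaller radius $\rho+\|v\|\lesssim\epsilon^2$, which in turn would spoil the claimed approximation orders.

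The missing ingredient is the quadratic-form factorization $\W[\hat c,\check c]=G[\hat c,\check c](\check c-\hat c,\check c-\hat c)$ (see \eqref{eq:defGeps}--\eqref{eq:defGepsFree}). Writing $F$ in terms of $G$ rather than $\W$ reveals that third derivatives of $\W$ at a point $(c_0+x,c_0+y)$ are \emph{not} uniformly $O(\epsilon^{-2})$ but rather $O(1+\epsilon^{-1}(\|x\|+\|y\|)+\epsilon^{-2}(\|x\|+\|y\|)^2)$: each derivative either differentiates $G$ (picking up one factor $\epsilon^{-1}$) or removes a factor of $\check c-\hat c$, and the latter always happens at least once before the former can happen twice. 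More directly, the paper computes $\partial_yH_x[y]$ explicitly via $G$ and finds it bounded by $\zeta(\epsilon^{-1}\|y-x\|^2+\|y-x\|+\|x\|+\|y\|)$; it is this $\epsilon^{-1}\|\cdot\|^2$ structure (not $\epsilon^{-2}\|\cdot\|$) that produces the $\sqrt\epsilon$ radius. The same mechanism gives the refined bound $\|\partial_x^2y[x]\|\leq\mathcal C(1+\|x\|^2/\epsilon^2)$ responsible for the $\|v\|^2+\|v\|^4/\epsilon^2$ approximation error. A minor related point: for $\W=\WEps$ one has $\partial_{c_2}F|_0=-2G[c_0,c_0]$, which equals $-2g_{c_0}$ only up to $O(\epsilon)$ (cf.\ \eqref{eqn:coercivityWEps}); this does not affect invertibility but means your fixed-point map $\Phi$ should use $G[c_0,c_0]^{-1}$ rather than $g_{c_0}^{-1}$.
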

	The proof relies on the implicit function theorem and is given in \cref{sec:appExp2}.
	Next, we consider the convergence of the discrete exponential to the continuous exponential.  
	\begin{theorem}[Convergence\index{convergence} of $\Exp^{K}$]\label{thm:ExpKSobolev}
		Let $g_c(\cdot,\cdot)$ be the Sobolev metric of order $m\geq 2$, let $\cpath:[0,1]\to\immersion^m$ be a smooth geodesic, and let $\W\in\{\WEps,\WEpsFree\}$
		(set $m=2$ in the latter case, while in the former we assume $\epsilon$ sufficiently small depending on $\cpath$).
		There exists a constant $\kappa>0$ depending on $\cpath$ such that the discrete $K$-exponential $\Exp^{K}_{\cpath(0)}(1, \dot\cpath(0))$ is well-defined for $K>\kappa/\epsilon$, and
		$$\Vert \cpath(1)-\Exp^{K}_{\cpath(0)}(1, \dot \cpath(0)) \Vert_{W^m_\theta} \leq\kappa\cdot\begin{cases}
			\frac1K&\text{if }\W=\WEpsFree,\\
			\epsilon+\frac1{\epsilon K}&\text{if }\W=\WEps{}.
		\end{cases}$$
	\end{theorem}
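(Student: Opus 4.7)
The approach is to view the iteration $c_{k+1} = F(c_{k-1}, c_k)$, with $F$ the map implicitly defined by the discrete Euler--Lagrange equation \eqref{eq:ELSobolev}, as a two-step time-stepping scheme for the continuous geodesic ODE \eqref{eq:geodesicIVP}, and to run a standard local-truncation-error plus stability analysis, tracking carefully the $\epsilon$-dependence in the $\WEps$-case. Set $\tau = 1/K$, $C_k = \cpath(k\tau)$ and $e_k = c_k - C_k$; then $e_0 = 0$ and $e_1 = \tau\dot\cpath(0) - (\cpath(\tau) - \cpath(0)) = O(\tau^2)$ in $W^m_\theta$ by Taylor expansion of the smooth $\cpath$.

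For local consistency, expand $\W_{,1}[C_k, C_{k+1}] + \W_{,2}[C_{k-1}, C_k]$ around $(C_k, C_k)$ in powers of $\tau$. The symmetry and consistency identities \eqref{eqn:symmetry}--\eqref{eqn:consistency} cancel the $O(1)$ and $O(\tau)$ contributions and identify the $O(\tau^2)$ contribution with $\tau^2$ times the left-hand side of the weak geodesic equation \eqref{eq:geodesicIVP}, which vanishes along $\cpath$. The remainder is $O(\tau^3)$ in the $\WEpsFree$-case; in the $\WEps$-case the metric inconsistency \eqref{eqn:coercivityWEps} contributes an extra $O(\tau^2\epsilon)$ term, and higher-order $\W$-derivatives pick up factors $\epsilon^{-1}$ via \eqref{eqn:derivativeBound}, producing an additional $O(\tau^3/\epsilon)$ piece.

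For stability, coercivity of $\partial_2^2\W[c,\cdot]$ (\cref{lem:coercivityBoundednessMetric} together with \eqref{eqn:coercivityWEps}--\eqref{eqn:coercivityWEpsFree}) makes the implicit function theorem argument of \cref{thm:existenceExp2Sobolev} applicable uniformly in $k$ on a $W^m_\theta$-neighborhood of the sampled geodesic, of radius $O(\sqrt\epsilon)$ for $\WEps$ and $O(1)$ for $\WEpsFree$. Differentiating the EL equation and using $\partial_1^2\W[c,c] = -\partial_1\partial_2\W[c,c]$ yields $\partial_1 F = -\Id + O(\tau)$ and $\partial_2 F = 2\,\Id + O(\tau)$ at $(C_{k-1}, C_k)$, the $O(\tau)$ remainders involving Christoffel-type terms. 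Combining consistency and stability, the errors obey the linear two-step recurrence
\begin{equation*}
e_{k+1} - 2\,e_k + e_{k-1} = \tau\,(A_k e_k + B_k e_{k-1}) + \rho_k
\end{equation*}
with operators $A_k, B_k$ bounded on $W^m_\theta$ uniformly in $k$ and defect $\rho_k$ of the size stated above. A twofold discrete summation (Duhamel/Gronwall) produces
\begin{equation*}
\|e_K\|_{W^m_\theta} \lesssim K\,\|e_1\|_{W^m_\theta} + K^2 \max_{1\leq k\leq K-1}\|\rho_k\|_{W^m_\theta},
\end{equation*}
which evaluates to $O(\tau) = O(1/K)$ in the $\WEpsFree$-case and to $O\bigl(K^2(\tau^3/\epsilon + \tau^2\epsilon)\bigr) = O(\epsilon + 1/(\epsilon K))$ in the $\WEps$-case. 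The hypothesis $K > \kappa/\epsilon$, i.e.\ $\tau < \epsilon$ up to constants, keeps the iterates inside the $\sqrt\epsilon$-ball on which \cref{thm:existenceExp2Sobolev} applies; this is closed bootstrap-style by checking inductively that $\|e_k\|_{W^m_\theta}$ remains controlled by the displayed bound.

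The main obstacle is the $\epsilon$-bookkeeping in the $\WEps$-case: both the consistency defect and the derivatives of the step map acquire negative powers of $\epsilon$ from \eqref{eqn:derivativeBound}, and the precise scaling $\epsilon + 1/(\epsilon K)$ emerges only once these are balanced under the assumption $\tau\leq\epsilon$; otherwise a crude Gronwall estimate would blow up as $\epsilon\to 0$. A secondary technicality is the a priori confinement of all iterates to a fixed tubular neighborhood of $\cpath$, closed bootstrap-style from the error estimate itself.
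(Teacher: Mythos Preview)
Your overall architecture matches the paper's: compare the discrete Euler--Lagrange equations against the continuous geodesic equation sampled at $t_k=k\tau$, extract a local defect, run a Gronwall-type accumulation, and close by an induction keeping the iterates in a tube around $\cpath$. Your bookkeeping of the defect sizes---$O(\tau^3)$ for $\WEpsFree$ and $O(\tau^2\epsilon+\tau^3/\epsilon)$ for $\WEps$---is also in line with what the paper obtains via the constants $\bar C_0,\bar C_1,C_2$ in \eqref{eqn:constantsBounds}.

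The gap is in your stability step. From $\partial_1F=-\Id+O(\tau)$ and $\partial_2F=2\,\Id+O(\tau)$ you write down
\[
e_{k+1}-2e_k+e_{k-1}=\tau(A_ke_k+B_ke_{k-1})+\rho_k
\]
with $A_k,B_k$ merely bounded, and then claim that twofold summation yields $\|e_K\|\lesssim K\|e_1\|+K^2\max_k\|\rho_k\|$. This inference fails: the unperturbed characteristic polynomial $(\lambda-1)^2$ has a double root, and an $O(\tau)$ perturbation can split it into $1\pm C\sqrt\tau$, so the amplification over $K=1/\tau$ steps is $e^{O(\sqrt K)}$, not $O(K)$. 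The scalar model $e_{k+1}-2e_k+e_{k-1}=\tau a\,e_k$ with $a>0$ already exhibits this. The bound you state is only valid if the coupling to the \emph{position} error is $O(\tau^2)$ while the $O(\tau)$ coupling acts on the \emph{increment} $e_k-e_{k-1}$; equivalently, $A_k+B_k=O(\tau)$. This extra order of cancellation does not follow from the crude derivative estimates of $F$.

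The paper obtains precisely this structure by working in weak form with the pair $(e_k,e_k^v)$, $e_k^v=(e_k-e_{k-1})/\tau$, and subtracting the discrete EL equation \eqref{eq:ELExpDiscrete} from the reformulated continuous one \eqref{eq:ELExpCont}. The resulting decomposition I--X in \eqref{eq:EXPdiff} isolates the principal term $2G[c_k,c_k]\bigl((e_{k+1}^v-e_k^v)/\tau,\psi_k\bigr)$ and bounds every remaining piece by $O(1)(\|e_k\|+\|e_k^v\|)+\text{defect}$. This gives a genuine first-order recursion $\mathrm{Err}_{k+1}\le(1+C_S\tau)\mathrm{Err}_k+C_A\tau$ for $\mathrm{Err}_k=\|e_k\|+\|e_k^v\|$, to which the discrete Gronwall lemma applies with a bounded exponential factor. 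Your argument needs this passage to the velocity variable (or an equivalent demonstration that $A_k+B_k=O(\tau)$); without it the accumulation step does not close.
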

	For $\W=\WEps{}$ the optimal choice of $\epsilon$ is $\tfrac{1}{\sqrt{K}}$.
	The theorem is an analog of \cite[Thm.\,5.9]{RuWi15}, and we follow the proof with some adaptations, given in \cref{sec:appConvExp}.
		
	\Cref{fig:curveSpaceGeodesicWeights,fig:curveSpaceGeodesic} show examples of numerically computed discrete geodesics and extrapolation with the discrete exponential map.
	In \cref{fig:curveSpaceGeodesicWeights}, a comparison between different choices of weights from \cref{def:SobolevMetricCurves} is given.
	Computations are performed using the $\epsilon$-regularized time discretization $\W=\WEps$.
	The grey shapes are taken from \cite{BaBrHa17}.
	In \cref{fig:curveSpaceGeodesic}, the time discretization uses $\W=\WEps$ in the first two rows and $\W=\WEpsFree$ in the others.
	The grey shapes of the middle two rows are taken from the MPEG-7 Core Experiment CE-Shape-1, and
	the grey corpus callosum shapes of the fourth row stem from \cite{KuAlSo15}.
	The results with the respective alternative time discretization appear qualitatively identical.
	However, the benefit of the $\epsilon$-regularized time discretization is that it allows to consider Sobolev metrics with $m>2$.
	In \cref{fig:curveSpaceGeodesic3rdOrder} we show qualitative results for $m=3$ again with different weights.
	All experiments use $\epsilon = 10^{-2}$, $K = 128$, and $N = 50$, $M=200$ for the space discretization.
	
	\Cref{fig:convergenceExpMap} experimentally validates the convergence rate for the discrete exponential map proven in \cref{thm:ExpKSobolev}. For this experiment the discrete exponential map is computed at the unit circle $c = (\cos, \sin)$ in direction $v=(-\frac{\cos}{2}, \sin)$ using $\W=\WEps$ with $\epsilon = 1/\sqrt{K}$ (solid) and with $\W=\WEpsFree$ (dashed).
	The numerical approximation $\Exp^{K}_{c} (1, v)$ with $K=2^{13}$ and $\W=\WEpsFree$ serves as a surrogate for the ground truth.
	Weights, Fourier modes and quadrature points are chosen as $(a_0,a_1,a_2)=(10^{-4}, 1, 10^{-2})$, $N = 30$, and $M=120$.
	
	\begin{figure}\centering
		\includegraphics[width=1\linewidth]{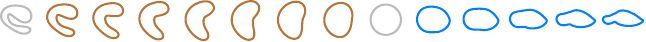}
		\includegraphics[width=1\linewidth]{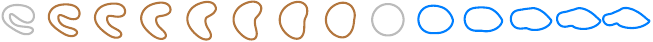}
		\includegraphics[width=1\linewidth]{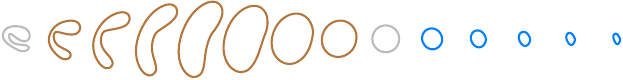}
		
		\caption[Discrete geodesics with different weights]{Discrete geodesics (orange) between the two grey curves, extrapolated beyond the second curve via the discrete exponential map (blue), computed with the weights
			$(a_0,a_1,a_2)=(10^{-4}, 1, 10^{-2})$, $(1, 1, 10^{-2})$, and $(10^{-4}, 1, 1)$ in \cref{def:SobolevMetricCurves} (top to bottom).}
		\label{fig:curveSpaceGeodesicWeights}
	\end{figure}
	
	\begin{figure}\centering
		\includegraphics[width=1\linewidth]{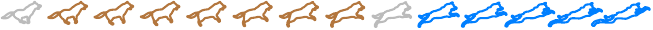}
		\includegraphics[width=1\linewidth]{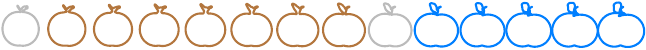}
		\includegraphics[width=1\linewidth]{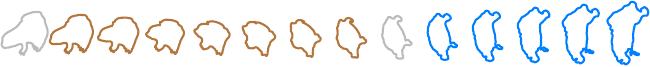}
		\includegraphics[width=1\linewidth]{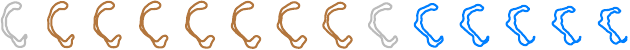}		\caption[Discrete geodesics between different shapes]{Discrete geodesics (orange) between the two grey curves, extrapolated beyond the second curve via the discrete exponential map (blue),
			using $(a_0,a_1,a_2)=(10^{-4}, 1, 10^{-2})$ in \cref{def:SobolevMetricCurves}.}
		\label{fig:curveSpaceGeodesic}
			\end{figure}
	
	\begin{figure}
		\includegraphics[width = \linewidth]{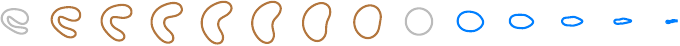}
		\includegraphics[width = \linewidth]{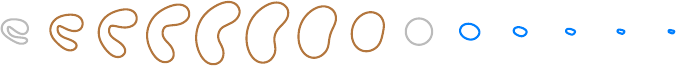}
		\includegraphics[width = \linewidth]{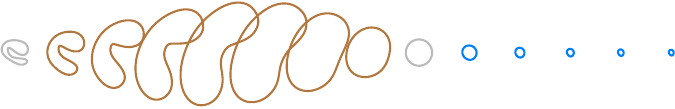}
		\caption[Discrete geodesics for the Sobolev metric of order 3]{Discrete geodesics (orange) between the two grey curves, extrapolated beyond the second curve via the discrete exponential map (blue) using the Sobolev metric of order $m=3$ with weights
			$(a_0,a_1,a_2,a_3)=(10^{-4}, 1, 10^{-2}, 10^{-4})$, $(10^{-4}, 1, 10^{-2}, 10^{-3})$, and $(10^{-4}, 1, 10^{-2}, 10^{-2})$ in \cref{def:SobolevMetricCurves} (top to bottom).}
		\label{fig:curveSpaceGeodesic3rdOrder}
	\end{figure}

	\begin{figure}
				% This file was created by matlab2tikz.
%
%The latest updates can be retrieved from
%  http://www.mathworks.com/matlabcentral/fileexchange/22022-matlab2tikz-matlab2tikz
%where you can also make suggestions and rate matlab2tikz.
%
\begin{tikzpicture}

\begin{axis}[%
width=0.8\linewidth,
height=0.4\linewidth,
at={(0\linewidth,0\linewidth)},
scale only axis,
xmode=log,
xmin=2,
xmax=2048,
xminorticks=true,
xlabel style={font=\color{white!15!black}},
xlabel={$K$},
ymode=log,
ymin=0.0001,
ymax=1,
yminorticks=true,
ylabel style={font=\color{white!15!black}},
ylabel={$\| \Exp^K_{c}(1,v) - \exp_c(v) \|_{W_{\theta}^{2,2}}$},
axis background/.style={fill=white}
]
\addplot [color=black, line width=1.0pt, forget plot]
  table[row sep=crcr]{%
2	0.615597093532558\\
4	0.33073909489575\\
8	0.204907426919112\\
16	0.149444651850588\\
32	0.11418102478941\\
64	0.0863701733098337\\
128	0.0641826777462328\\
256	0.0470188919077982\\
512	0.0341041455815237\\
1024	0.0245756805125043\\
2048	0.0176387650717385\\
};
\addplot [color=black, dashed, line width=1.0pt, forget plot]
  table[row sep=crcr]{%
2	0.712198709356086\\
4	0.398583140712982\\
8	0.213953765022756\\
16	0.111311296900181\\
32	0.0567708075074305\\
64	0.0285951767590589\\
128	0.0142667892714466\\
256	0.00704055734124486\\
512	0.00341168714252447\\
1024	0.00159328239338484\\
2048	0.000683115884956324\\
};
\addplot [color=black, line width=1.0pt, forget plot]
  table[row sep=crcr]{%
512	0.0262339581396336\\
128	0.0524679162792672\\
128	0.0262339581396336\\
512	0.0262339581396336\\
};
\node[right, align=left, inner sep=0]
at (axis cs:256,0.018) {1};
\node[right, align=left, inner sep=0]
at (axis cs:100,0.037) {$\frac{1}{2}$};
\addplot [color=black, dashed, line width=1.0pt, forget plot]
  table[row sep=crcr]{%
512	0.00262437472501882\\
128	0.0104974989000753\\
128	0.00262437472501882\\
512	0.00262437472501882\\
};
\node[right, align=left, inner sep=0]
at (axis cs:256,0.0017) {1};
\node[right, align=left, inner sep=0]
at (axis cs:100,0.005) {1};
\end{axis}

\begin{axis}[%
width=1.227\linewidth,
height=0.615\linewidth,
at={(-0.16\linewidth,-0.068\linewidth)},
scale only axis,
xmin=0,
xmax=1,
ymin=0,
ymax=1,
axis line style={draw=none},
ticks=none,
axis x line*=bottom,
axis y line*=left
]
\end{axis}
\end{tikzpicture}%
		\caption[Convergence of the discrete exponential map.]{ \label{fig:convergenceExpMap} Convergence of the discrete exponential map for $\W=\WEps$ with $\epsilon = 1/\sqrt{K}$ (solid) and $\W=\WEpsFree$ (dashed).}
	\end{figure}
	
					\subsection{Discrete parallel transport and covariant derivative} \label{sec:PTCovDeriv}
	A  vector field $w \colon \immersion^m \to W^{m}_\theta $ is parallel along $\cpath:\R\to\immersion^m$ if it solves the differential equation
	\begin{align}\label{eq:paralleltransport}
		\tfrac\d{\d t}(w\circ\cpath)(t) = - \Gamma_{\cpath(t)}(\w\circ\cpath(t),\dot \cpath(t))
	\end{align}
	with initial data $w\circ\cpath(0) \in W^{m}_\theta$. Here, $\Christoffeloperator$ is the Christoffel operator $\Christoffeloperator: \immersion^m \times W^{m}_\theta \times W^{m}_\theta\to W^{m}_\theta, \, (c, v, w) \mapsto \Christoffel{c}{v}{w}$ defined by
	\begin{align} \label{eq:ChristoffelOperator}
		\g{c}{\Christoffel{c}{v}{w}}{z} = \frac{\Dg{c}{w}{v}{z} - \Dg{c}{z}{v}{w} + \Dg{c}{v}{w}{z}}{2}.
	\end{align}
	The associated parallel transport\index{parallel transport} map $\parTp_{\cpath(\tau \leftarrow 0)}$ from time $0$ to $\tau\in\R$, defined as
	\begin{align} \label{eq:Ptransdef}
		\parTp_{\cpath(\tau \leftarrow 0)} w\circ\cpath(0) \coloneqq w\circ\cpath(\tau),
	\end{align}
	maps initial data $w\circ\cpath(0) \in W^{m}_\theta$ to the vector $w\circ\cpath(\tau)\in W^{m}_\theta$ and is a linear isomorphism from $W^{m}_\theta$ to $W^{m}_\theta$.
	In fact, the inverse of $\parTp_{\cpath(\tau \leftarrow 0)}$ is given by the parallel transport map $\parTp_{\cpath(0 \leftarrow \tau)}$ solving the same differential equation with initial data $w\circ\cpath(\tau)$, thus $\parTp_{\cpath(\tau  \leftarrow 0)}^{-1} = \parTp_{\cpath(0 \leftarrow \tau)}$. Here, we use the notation $\cpath(r \leftarrow s)$ for the path segment $[0,1]\ni t \mapsto \cpath((1-t)s+tr)$.
	
	More generally, the covariant derivative $\cov (\w (t))$ of a vector field $\w=w\circ\cpath$ along the curve $\cpath$ is defined by
	\beqn\label{eq:cov2}
	\cov \w(t) = \dot \w(t) + \Christoffel{\cpath(t)}{\w(t)}{\dot \cpath(t)}.
	\eeqn 
	By definition $\cov \w(t) = 0$ if $\w$ is parallel along $\cpath$.
	If it is not clear from the context, we indicate the direction $v$ of the covariant differentiation, here $v=\dot c$,
	explicitly by $\covdir{v}$.
	If we consider a smooth vector field $w \colon \immersion^m \to W^{m}_\theta$ we denote the covariant derivative of $w$ at $c \in \immersion^m$ in direction $v$ by $ \left(\covdir{v} w \right)(c) \coloneqq \cov \w(0) $ with $\w(t)= w \circ \cpath(t)$ the vector field evaluated along $\cpath(t) = c + t v$. To further simplify the notation, we omit the parentheses and write $\covdir{v} w (c)$. The covariant derivative $\covdir{v} w (c)$ can also be written as the limit of difference quotients using the inverse parallel transport along $\cpath(t) = c + tv$,
	\beqn \label{eq:covdiffquot}
	\covdir{v} w(c) = \lim_{\tau \to 0} \frac{\parTp_{\cpath(\tau \leftarrow 0)} w(\cpath(\tau)) - w(\cpath(0))}{\tau}.
	\eeqn
	
	There is a well-known first-order approximation of parallel transport called \emph{Schild's ladder}\index{Schild's ladder} (\cite{EhPiSc72,KhMiNe00}) which is based on the construction of a sequence of geodesic parallelograms. This scheme can be easily transferred to discrete paths based on the discrete logarithm and the discrete exponential introduced before.
	We set $\ParTp_{c,c + \tau v} \tau w = z - c - \tau v$ where $z=\Exp^2_c(1,2(s-c))$ is the fourth corner of the discrete geodesic parallelogram $(c, c + \tau v, c + \tau w, z)$  with center $s=\Log^2_{c+\tau w}(c+\tau v)/2$, \ie $z$ and $s$ solve
	\begin{align}\label{eq:ELeqTranspCurves}
		\begin{split}
			\W_{,2}[c + \tau w, s] + \W_{,1}[s,c+\tau v] &=0,\\
			\W_{,2}[c,s] + \W_{,1}[s,z] &=0.
		\end{split}
	\end{align}
	The discrete parallel transport $\ParTp^K_{c_0,\ldots,c_K} w_0$ of $w_0$ along the discrete path $(c_0,c_1, \ldots, c_K )$ can then be iteratively defined as
	\begin{align*}
		w_{k+1} &= \tfrac{1}{\tau}(\ParTp_{c_k,c_k + \tau v_k} \tau w_{k})
	\end{align*}
	where $\tau=1/K$ and $v_k = \tfrac{c_{k+1} - c_{k}}{\tau}$. We then set $\ParTp^K_{c_0,\ldots,c_K} w_0 \coloneqq w_K$.
	
	\Cref{eq:covdiffquot} enables the definition of a discrete covariant derivative using Schild's ladder approximation of inverse parallel transport. We have $\ParTp_{c,c + \tau v}^{-1} \tau w(c + \tau v) \coloneqq \yz(\tau) - c$ 
	where $\yz(\tau)$ is the fourth corner of the discrete geodesic parallelogram $(c, c+ \tau v,c + \tau v+\tau w(c + \tau v),\yz(\tau))$ with center $\yc(\tau)$, which is illustrated in \cref{fig:CovariantSetup}.  Here $\yz(\tau)$ and $\yc(\tau)$ satisfy
	\begin{align}\label{eq:ELeqInverseTranspCurves}
		\begin{split}
			\W_{,2}[\yz(\tau),\yc(\tau)] + \W_{,1}[\yc(\tau),c+\tau v] &=0,\\
			\W_{,2}[c,\yc(\tau)] + \W_{,1}[\yc(\tau),c + \tau v+ \tau w(c + \tau v)] &=0.
		\end{split}
	\end{align}
	Given the inverse parallel transport, we approximate the covariant derivative of a vector field $w$ in direction $v$ via a one-sided discrete difference quotient
	\begin{align} \label{eq:CovPlus}
		\Covtdir{\tau}{v} w(c)\coloneqq\frac{ \ParTp_{c,c+\tau v}^{-1}  \tau w(c+\tau v) -  \tau w(c)}{\tau^2}
	\end{align}
	or via a central covariant difference quotient
	\begin{align}  \label{eq:CovPlusMinus}
		\Covtdir{\pm\tau}{v} w(c)\coloneqq\frac{ \ParTp_{c,c+\tau v}^{-1}  \tau w(c+\tau v) +  \ParTp_{c,c-\tau v}^{-1}\left( -\tau w(c-\tau v)\right)}{2 \tau^2}. 
	\end{align}
	We next show the consistency of this discrete covariant derivative by adapting and generalizing \citep[Thm.\,4.1]{EfHeRu22}:
	Apart from transferring the approach to the space of Sobolev curves and treating the additional $\epsilon$-regularization,
	we also allow small deviations in the direction of the covariant derivative,
	since this will be useful to subsequently prove the convergence of the discrete parallel transport.
	
	\begin{theorem}[Consistency of covariant difference quotients\index{consistency covariant difference quotients}]
		\label{lemma:consistencyCurveApproxDir}
		Let $g_c(\cdot,\cdot)$ be the Sobolev metric of order $m\geq 2$, $w\in C^3(\immersion^m,W^{m}_\theta)$, and let $\mathfrak K\subset\immersion^m$ be bounded and closed.
		Let $\W\in\{\WEps,\WEpsFree\}$ (with $m=2$, $\epsilon=1$ in the latter case, while we assume $\epsilon\leq\min_{c\in\mathfrak K,\theta\in\Sone}|c'(\theta)|$ in the former).
		Then there exist constants $C,\mathcal C>0$ depending on $\mathfrak K,w$ such that $\Covtdir{\tau}{v}w(c)$ is well-defined for all $c\in\mathfrak K$ whenever $\|v\|_{W^m_\theta},\tau/\sqrt\epsilon\leq C$,
		and
		\begin{equation*}
			\left\|\Covtdir{\tau}{v}  w (c) - \covdir{u} w (c) \right\|_{W^m_\theta}
			\leq \mathcal C\begin{cases}
				\tau + \Vert u-v \Vert_{W^m_\theta}&\text{if }\W=\WEpsFree,\\
				\tau + \Vert u-v \Vert_{W^m_\theta} + \epsilon&\text{if }\W=\WEps,
			\end{cases}
		\end{equation*}
		for all $u\in W^m_\theta$ as well as, if $w$ is even four times differentiable,
		\begin{equation*}
			\left\| \Covtdir{\pm \tau}{v}  w (c) - \covdir{u} w (c) \right\|_{W^m_\theta}
			\leq \mathcal C\begin{cases}
				\tau^2 + \Vert u-v \Vert_{W^m_\theta}&\text{if }\W=\WEpsFree,\\
				\tau^2 + \Vert u-v \Vert_{W^m_\theta} +\epsilon&\text{if }\W=\WEps.
			\end{cases}
		\end{equation*}
	\end{theorem}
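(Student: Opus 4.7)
The plan is to apply the implicit function theorem to the Schild-ladder system \eqref{eq:ELeqInverseTranspCurves} to obtain smooth dependence of the auxiliary vertices $(\yz(\tau),\yc(\tau))$ on $\tau$, and then to Taylor-expand in $\tau$ and identify the second-order coefficient of $\yz(\tau)-c$ with $\covdir{v} w(c)$. The passage from the direction $v$ to a nearby $u$ and the $\epsilon$-regularization error are handled separately at the end.

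For the well-posedness step, I would observe that at $\tau=0$ the system is solved by $\yz(0)=\yc(0)=c$, which follows from the symmetry identities \eqref{eqn:symmetry}. Linearizing in $(\yz,\yc)$ at this base point and invoking the consistency \eqref{eqn:consistency} one obtains a block upper-triangular Jacobian whose diagonal blocks equal $-2g_c$ and $4g_c$ respectively, invertible by the coercivity of $g_c$ in \cref{lem:coercivityBoundednessMetric}. In the $\WEps$ case the $O(\epsilon)$ perturbation controlled by \eqref{eqn:coercivityWEps} and \eqref{eqn:derivativeBound} preserves invertibility provided $\epsilon$ is small enough. The implicit function theorem then produces smooth, uniformly bounded solutions $(c,v,\tau)\mapsto(\yz(\tau),\yc(\tau))$ on $\mathfrak K$, with the scaling $\tau/\sqrt\epsilon\lesssim C$ arising exactly as in \cref{thm:existenceExp2Sobolev} from the bound \eqref{eqn:derivativeBound} on higher derivatives of $\WEps$ near the diagonal.

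Next I would write $\yz(\tau)=c+\tau z_1+\tau^2 z_2+O(\tau^3)$ and $\yc(\tau)=c+\tau s_1+\tau^2 s_2+O(\tau^3)$, substitute into \eqref{eq:ELeqInverseTranspCurves}, and match powers of $\tau$ using $w(c+\tau v)=w(c)+\tau\partial_v w(c)+O(\tau^2)$ together with \eqref{eqn:symmetry}-\eqref{eqn:consistency}. The order-$\tau$ identities immediately give $s_1=\tfrac12(v+w(c))$ and $z_1=w(c)$. At order $\tau^2$, third $(\hat c,\check c)$-derivatives of $\W$ at the diagonal enter; by differentiating the identity $\partial_1^2\W[c,c]=2g_c$ once more in the footpoint one expresses these in terms of the derivatives $D_cg_c$ that appear in the definition \eqref{eq:ChristoffelOperator} of the Christoffel operator. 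A careful bookkeeping shows that the symmetrized combination arising from both Euler--Lagrange equations reassembles precisely to $z_2=\partial_v w(c)+\Christoffel{c}{w(c)}{v}=\covdir{v} w(c)$, mirroring the calculation of \citep[Thm.\,4.1]{EfHeRu22}. Dividing $\yz(\tau)-c-\tau w(c)$ by $\tau^2$ then yields $\covdir{v} w(c)+O(\tau)$, together with an additional $O(\epsilon)$ contribution for $\WEps$ coming from \eqref{eqn:coercivityWEps} and \eqref{eqn:derivativeBound}. The passage from direction $v$ to $u$ is handled by the Lipschitz estimate $\|\covdir{u} w(c)-\covdir{v} w(c)\|_{W^m_\theta}\lesssim\|u-v\|_{W^m_\theta}$, which follows from the bilinearity and boundedness of $\Christoffeloperator$ encoded in \eqref{eq:ChristoffelOperator}.

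For the central difference \eqref{eq:CovPlusMinus} I would repeat the construction with $v$ replaced by $-v$. The symmetry of \eqref{eq:ELeqInverseTranspCurves} under this sign flip implies that all odd-order Taylor coefficients of $\yz(\tau)-c$ change sign while even-order coefficients are preserved, so averaging the two one-sided quotients cancels the $O(\tau)$ remainder and leaves only the $O(\tau^3)$ third-order Taylor contributions; after division by $2\tau^2$ these produce the claimed $O(\tau^2)$ rate, provided $w\in C^4$ so that the fourth-order Taylor remainders can be controlled. The main technical obstacle is the order-$\tau^2$ matching: one must simultaneously expand the third $(\hat c,\check c)$-derivatives of $\W$ from both equations, control the $\epsilon$-regularization error via \eqref{eqn:derivativeBound}, and recognize the resulting combination as the symmetrized Christoffel operator, all while maintaining estimates in the $W^m_\theta$-norm rather than pointwise, which in turn rests on the uniform bounds on $|c'|$ and $1/|c'|$ on metric balls provided by \cref{prop:SobCurveUniform}.
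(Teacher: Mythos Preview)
Your proposal is correct and matches the paper's approach: the paper likewise applies the implicit function theorem to \eqref{eq:ELeqInverseTranspCurves}, differentiates twice in $\tau$ to obtain $\dot\yz(0)=w(c)$ and an explicit formula for $\ddot\yz(0)$ that is identified with $2\covdir{v}w(c)$ up to $O(\bar C_0+\bar C_1+\|u-v\|)$, and then reads off the estimate from the Taylor expansion of $\yz(\tau)$; the order-$\tau^2$ bookkeeping is organized via the factorization $\W[\hat c,\check c]=G[\hat c,\check c](\check c-\hat c,\check c-\hat c)$, which makes the third derivatives of $\W$ at the diagonal more tractable than differentiating the consistency identity directly. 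One correction for the central case: do not replace $v$ by $-v$ (this would produce $\ParTp_{c,c-\tau v}^{-1}(\tau w(c-\tau v))$, which is not the second summand in \eqref{eq:CovPlusMinus}); instead simply evaluate the \emph{same} function $\yz$ at $-\tau$, since $\yz(-\tau)-c=\ParTp_{c,c-\tau v}^{-1}(-\tau w(c-\tau v))$, and then cancellation of the odd Taylor coefficients in $\tfrac12(\yz(\tau)+\yz(-\tau))-c$ is automatic without any separate symmetry argument.
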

	
	The proof is given in \cref{sec:appCovDeriv}.
	A consequence is the consistency of discrete parallel transport as a discretization of the differential equation of continuous parallel transport,
	which together with the stability of the discrete evolution operator $ \ParTp_{c_k,c_k + \tau v_k} $ implies the convergence of the discrete parallel transport, proved in \cref{sec:appPT}.
	
	\begin{theorem}[Convergence\index{convergence}\index{discrete parallel transport} of $\ParTp^K_{c_0,\ldots,c_K}$]\label{thm:PT}
		Let $\cpath:[0,1]\to \immersion^m$ be a smooth path,
		and let $\w:[0,1]\to  W^{m}_\theta$ be a parallel vector field along $\cpath$.
		Let $K\in\N$, $\tau=\frac1K$, and $c_k=\cpath(k\tau)$ for $k=1,\ldots,K$.
		For $\W\in\{\WEps,\WEpsFree\}$ (with $m=2$, $\epsilon=1$ in the latter case, while in the former we assume $\epsilon$ sufficiently small depending on $\cpath,\w$)
		there exists a constant $\kappa>0$ depending on $\cpath,\w$ such that the discrete parallel transport $\ParTp^K_{c_0,\ldots,c_K} \w(0)$ is well-defined for $K>\kappa/\epsilon$, and
		$$
		\|\ParTp^K_{c_0,\ldots,c_K} \w(0)-\w(1)\|_{W_{\theta}^{m,2}}\leq\kappa\cdot\begin{cases}\tau&\text{if }\W=\WEpsFree,\\\tau+\epsilon&\text{if }\W=\WEps.\end{cases}
		$$
	\end{theorem}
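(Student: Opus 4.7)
The plan is to combine a per-step consistency estimate with a stability estimate for the Schild's ladder update, in the spirit of the classical Lax-type convergence proof for one-step methods, and close by a discrete Gronwall argument.

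\textbf{Consistency.} I would first extend the parallel vector field $\w$ to a smooth vector field $\tilde w\in C^3(\immersion^m,W^m_\theta)$ with $\tilde w(\cpath(t))=\w(t)$ for all $t\in[0,1]$; this is possible via a tubular neighbourhood construction around the smooth embedded curve $\cpath$. Parallelism of $\w$ then reads $\covdir{\dot\cpath(t_k)}\tilde w(c_k)=0$ at each grid time $t_k=k\tau$. Writing $v_k\coloneqq(c_{k+1}-c_k)/\tau=\dot\cpath(t_k)+O(\tau)$, \cref{lemma:consistencyCurveApproxDir} applied at $c_k$ with base direction $u=\dot\cpath(t_k)$ gives
\[
\|\Covtdir{\tau}{v_k}\tilde w(c_k)\|_{W^m_\theta}\leq C(\tau+\|v_k-\dot\cpath(t_k)\|_{W^m_\theta})
\]
in the $\WEpsFree$ case (and with an additive $\epsilon$ in the $\WEps$ case). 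Unwinding the definition \eqref{eq:CovPlus} of $\Covtdir{\tau}{v_k}$ and applying $\ParTp_{c_k,c_{k+1}}$ yields the local truncation error
\[
\|\tau^{-1}\ParTp_{c_k,c_{k+1}}(\tau\w(t_k))-\w(t_{k+1})\|_{W^m_\theta}\leq C(\tau^2+\tau\epsilon),
\]
where the $\tau\epsilon$ contribution is present only in the $\WEps$ case.

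\textbf{Stability.} Next I would establish that, for $w_1,w_2$ in a fixed ball,
\[
\|\tau^{-1}\ParTp_{c_k,c_{k+1}}(\tau w_1)-\tau^{-1}\ParTp_{c_k,c_{k+1}}(\tau w_2)\|_{W^m_\theta}\leq(1+C\tau)\|w_1-w_2\|_{W^m_\theta}.
\]
This is obtained by applying the implicit function theorem to the system \eqref{eq:ELeqTranspCurves} defining $\ParTp_{c_k,c_k+\tau v_k}$, noting that at $\tau=0$ its linearisation reduces to the identity on $W^m_\theta$ thanks to the symmetry/consistency relations \eqref{eqn:symmetry}--\eqref{eqn:consistency} together with the coercivity of $\partial_1^2\W[c,c]=2g_c$ provided by \cref{lem:coercivityBoundednessMetric}. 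The regime in which the implicit function theorem applies is exactly the one supplied by \cref{thm:existenceExp2Sobolev}.

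\textbf{Gronwall and induction.} Setting $e_k\coloneqq w_k-\w(t_k)$ with $e_0=0$, the two estimates combine into $\|e_{k+1}\|_{W^m_\theta}\leq(1+C\tau)\|e_k\|_{W^m_\theta}+C(\tau^2+\tau\epsilon)$, and iterating together with $K\tau=1$ yields $\|e_K\|_{W^m_\theta}\leq\kappa(\tau+\epsilon)$ (with $\epsilon=0$ in the $\WEpsFree$ case). The induction simultaneously verifies, via \cref{thm:existenceExp2Sobolev}, that each $w_k$ remains in a fixed ball so that the next Schild step is well-defined, which is ensured by the hypothesis $K>\kappa/\epsilon$. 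The main obstacle will be the stability step: while consistency is essentially a direct application of \cref{lemma:consistencyCurveApproxDir}, obtaining the $(1+C\tau)$ Lipschitz constant uniformly along the sequence requires a careful second-order expansion of \eqref{eq:ELeqTranspCurves} and exploitation of the relation $\partial_1\partial_2\W[c,c]=-2g_c$ to show that the worst-case amplification is only $1+O(\tau)$ rather than an $O(1)$ factor that would blow up over $K=1/\tau$ steps.
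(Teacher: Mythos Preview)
Your proposal is correct and follows the same consistency--stability--Gronwall structure as the paper. The paper simplifies your consistency step by exploiting the symmetry $\ParTp_{c_k,c_{k+1}}=\ParTp_{c_{k+1},c_k}^{-1}$ (valid since $\W$ is symmetric), so that the truncation error equals $\tau\,\Covtdir{\tau}{-v_k}\w(t_{k+1})$ directly without your intermediate application of the nonlinear map $\ParTp$; for stability it writes $\ParTp$ explicitly as a composition of $\tfrac12\Log^2$ and $\Exp^2$, so that the derivative bounds from \cref{thm:existenceExp2Sobolev} give the $(1+\lambda\tau)$-Lipschitz constant immediately rather than via a fresh implicit-function argument on \eqref{eq:ELeqTranspCurves}.
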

	
		\begin{figure}
		\centering
		\begin{tikzpicture}[scale=0.5, >={Triangle[length=2pt 9,width=1pt 3]}]
						\tikzstyle{every node}=[minimum size=0pt,inner sep=0pt];
			
						\node (y1) at (2,4) {};
			\node (y2) at (7,5) {};
			\node[left] at (y2.west) {$c$\ \ };
			\node[label=south:$c+\tau v$] (y3) at (12,4.5) {};
			\node (y4) at (15,5) {};
			
						\draw[solid, line width=0.5mm] (y2)-- (y3);
			\draw[fill] (y2) circle [radius=0.1];
			\draw[fill] (y3) circle [radius=0.1];
			
						\node (w2) at (6.7,9.7) {};
			\node[label=east:\ $c + \tau v + \tau w( c + \tau v)$] (w3) at (13,9) {};
			\node[black] at (15.2,8.5) {};
			\draw[->, solid, line width=0.4mm, black] (y3) -- (w3);
			\node (Pw1a) at (6,9) {};
			\node (Pw3) at (8,9.5) {};
			\node[black] at (9.37,9.8) {$\yz(\tau) = c + \ParTp_{c, c+ \tau v}^{-1} \tau w(c + \tau v )$};
						\draw[fill, black!50] (Pw3) circle [radius=0.1];
			\draw[->, solid, line width=0.4mm, black!50] (y2) -- (Pw3);
						\node (c2) at (10.5,7) {};
			\node[black] at (10.6,7.5) {$\yc(\tau)$};
			\draw[fill, black] (c2) circle [radius=0.1];
			\draw[dashed, line width=0.5mm, black!50, opacity=0.3] (y2) -- (c2) -- (w3);
			\draw[dashed, line width=0.5mm, black!50, opacity=0.3] (Pw3) -- (c2) -- (y3);
			\draw[fill] (y2) circle [radius=0.1];
			\draw[fill] (y3) circle [radius=0.1];
			\draw[fill] (w3) circle [radius=.1];
			
		\end{tikzpicture}
		\caption{A sketch of the configuration considered in \cref{lemma:consistencyCurveApproxDir} for $\ParTp_{c,c + \tau v}^{-1}  \tau w( c + \tau v)$.}
		\label{fig:CovariantSetup}
	\end{figure}
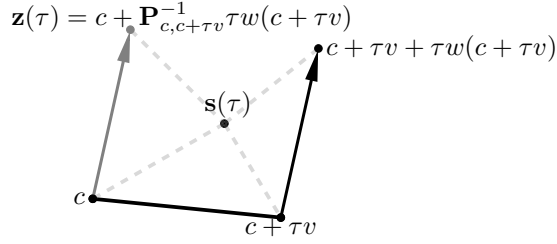
		
	We experimentally validate the convergence rate proven in \cref{lemma:consistencyCurveApproxDir} for $\W=\WEps$ and different choices of $\epsilon$. For this we compute the covariant derivative of the constant vector field $w = (\cos, -\frac{\sin}{2})$ at $c = (\cos, \sin)$ in direction $v = (-\frac{\cos}{2}, \sin)$.  In this case $\frac{D_v}{dt} w(c) = \Christoffel{c}{w}{v}$, and we can compute an analytic solution as ground truth, see \cref{example:ChristoffelWeighted} in \cref{subsec:explicit}. We use weights $(a_0,a_1,a_2)=(10^{-4}, 1, 10^{-2})$ in \cref{def:SobolevMetricCurves} and $N=20$ Fourier modes as well as $M=80$ quadrature points for the space discretization. The result is shown in \cref{fig:convergenceofcovdivcentral}. As expected, for $\epsilon= \tau$ (dotted) we obtain first order convergence while for $\epsilon = \tau^{1/2}$ (solid) the convergence rate is $O(\tau^{1/2})$. For $\epsilon= 64 \tau^{3/2}$ the $O(\epsilon) = O(\tau^{3/2})$ term dominates in the beginning while for small $\tau$ we observe the $O(\tau)$ convergence rate.
	For the central covariant derivative, we observe the same convergence rates as the $O(\epsilon)$ error dominates the convergence.
	
	We show two examples for discrete parallel transport along a discrete geodesic in \cref{fig:ExampleOfParallelTransport}. The geodesics are computed using $\W=\WEpsFree$, weights $(a_0,a_1,a_2)=(10^{-4}, 1, 10^{-2})$, $N=30$ Fourier modes, $M=120$ quadrature points, and $K=2^{10}$ time steps. For the actual discrete parallel transport
	we used $\W=\WEps$ with $\epsilon = \frac{1}{K}$.
	To visualize the parallel transport, we show every 256th curve of the path and represent the transported vector $w_k$ with
	blue arrows and $\Exp^{K}_{c_k}(1, w_k)$ as blue curve.
	For parallel transport along a geodesic path, the inner product between the transported vector and the path velocity is constant. Indeed, this is approximately the case for our computed discrete parallel transport, as is shown on the right. We compute the inner product using the discrete approximation of the metric  $\alpha_k =  \frac{1}{2} \WEps_{,11}[c_k,c_k]\left(K(c_{k+1} - c_k), w_k \right)$ for $0 \leq k \leq K$, where $K(c_{k+1} - c_k)$ is the discrete path velocity. The graphs show the absolute value of the difference quotient $|K(\alpha_{k+1}-\alpha_k )|$ along the path. Indeed, we see that it is approximately zero, and hence the inner product stays constant along the path.
	In \cref{fig:ExampleOfParallelTransport} top we transport the vector
	$w_0(\theta) = \frac{1}{5} \sin(5 \theta) v(\theta)$ with $v$ the outer normal of the unit circle; in \cref{fig:ExampleOfParallelTransport} bottom we transport $w_0(\theta) = \frac{1}{5} \tilde{v}(\theta)$ where $\tilde{v}$ is the 90 degree counter-clockwise rotation of the derivative of the parameterization of the initial shape $c_0$. 	
	In \cref{fig:ConvergenceOfParallelTransport} we show quantitative convergence results of discrete parallel transport for $\W=\WEps$ with different choices of $\epsilon$. The observed convergence rates support the theoretical result of \cref{thm:PT}. Space discretization and weights are chosen as before, start and end shape are chosen as in \cref{fig:ExampleOfParallelTransport} bottom, and $\w(0)(\theta)=\sin(5 \theta) v(\theta)$. As a reference approximation we compute $\w(1)$ using $\W=\WEpsFree$ with $K= 2^{13}$.
	
	\begin{figure}
		% This file was created by matlab2tikz.
%
%The latest updates can be retrieved from
%  http://www.mathworks.com/matlabcentral/fileexchange/22022-matlab2tikz-matlab2tikz
%where you can also make suggestions and rate matlab2tikz.
%
\begin{tikzpicture}

\begin{axis}[%
width=0.8\linewidth,
height=0.393\linewidth,
at={(0\linewidth,0\linewidth)},
scale only axis,
xmode=log,
xmin=2,
xmax=65535,
xminorticks=true,
xlabel style={font=\color{white!15!black}},
xlabel={$K = \tau^{-1}$},
ymode=log,
ymin=8e-06,
ymax=10,
yminorticks=true,
ylabel style={font=\color{white!15!black}},
ylabel={$\left\|\Covtdir{\tau}{v}  w (c) - \covdir{v} w (c) \right\|_{W_\theta^{2,2}}$},
axis background/.style={fill=white},
legend style={legend cell align=left, align=left, draw=white!15!black, anchor = south west},
legend pos= south west
]
\addplot [color=black, line width=1.0pt]
  table[row sep=crcr]{%
65536	0.0058307514777766\\
46340.9500118416	0.00693618332515116\\
32768	0.00825205126078443\\
23170.4750059208	0.00981832544057771\\
16384	0.0116835496874425\\
11585.2375029604	0.0139046110009027\\
8192	0.0165504592243096\\
5792.6187514802	0.0197032726093665\\
4096	0.023461786626561\\
2896.3093757401	0.0279445291041583\\
2048	0.0332943092816984\\
1448.15468787005	0.0396835647008946\\
1024	0.0473212575755682\\
724.077343935025	0.0564617199053384\\
512	0.0674162003312385\\
362.038671967512	0.0805682982887318\\
256	0.0963950663245298\\
181.019335983756	0.115496731410404\\
128	0.138639984079384\\
90.5096679918781	0.166823485420677\\
64	0.201381274902706\\
45.254833995939	0.244153651039224\\
32	0.297783612633789\\
22.6274169979695	0.36625811302631\\
16	0.455951535279577\\
11.3137084989848	0.577762429578254\\
8	0.751819765585639\\
5.65685424949238	1.0189270241147\\
4	1.47279192399992\\
2.82842712474619	2.37365303736209\\
2	4.74325052436554\\
};
\addlegendentry{$\epsilon = \sqrt{\tau}$}

\addplot [color=black, line width=1.0pt, forget plot]
  table[row sep=crcr]{%
1024	0.0709818863633523\\
16384	0.0177454715908381\\
16384	0.0709818863633523\\
1024	0.0709818863633523\\
};

\node[right, align=left, inner sep=0]
at (axis cs:4096,0.12) {1};
\node[right, align=left, inner sep=0]
at (axis cs:18154.017,0.035) {$\frac{1}{2}$};
\addplot [color=black, dotted, line width=1.0pt]
  table[row sep=crcr]{%
65536	2.74590180807701e-05\\
46340.9500118416	4.08114374627823e-05\\
32768	5.6965737855782e-05\\
23170.4750059208	8.03926509282382e-05\\
16384	0.00011362099305372\\
11585.2375029604	0.000160666543939992\\
8192	0.00022718822122959\\
5792.6187514802	0.00032133493325483\\
4096	0.000454470861616828\\
2896.3093757401	0.000642783023834341\\
2048	0.000909169612723494\\
1448.15468787005	0.00128603243260417\\
1024	0.00181920445202369\\
724.077343935025	0.00257379460423537\\
512	0.00364200193825718\\
362.038671967512	0.00515479030651985\\
256	0.00729844466952612\\
181.019335983756	0.0103385753508756\\
128	0.0146551997828223\\
90.5096679918781	0.020794687261009\\
64	0.029548095006655\\
45.254833995939	0.0420723160548297\\
32	0.0600839802514862\\
22.6274169979695	0.0861844791650625\\
16	0.124439250812141\\
11.3137084989848	0.181496492392231\\
8	0.268984247544874\\
5.65685424949238	0.409387319248773\\
4	0.653260200334924\\
2.82842712474619	1.14369772488226\\
2	2.47582025557971\\
};
\addlegendentry{$\epsilon = \tau$}

\addplot [color=black, dotted, line width=1.0pt, forget plot]
  table[row sep=crcr]{%
2896.3093757401	0.000357101679907967\\
181.019335983756	0.00571362687852748\\
181.019335983756	0.000357101679907967\\
2896.3093757401	0.000357101679907967\\
};

\node[right, align=left, inner sep=0]
at (axis cs:724,0.0002) {1};
\node[right, align=left, inner sep=0]
at (axis cs:135,0.0015) {1};
\addplot [color=black, dashed, line width=1.0pt]
  table[row sep=crcr]{%
65536	2.59665882749693e-05\\
46340.9500118416	3.5995794874153e-05\\
32768	5.16995881547307e-05\\
23170.4750059208	7.24839437048846e-05\\
16384	0.000102474656236146\\
11585.2375029604	0.000145760237411035\\
8192	0.000209433076018296\\
5792.6187514802	0.000305519397122606\\
4096	0.000454470861616828\\
2896.3093757401	0.000691798109282771\\
2048	0.0010790020344368\\
1448.15468787005	0.00172201673250525\\
1024	0.00280235030064563\\
724.077343935025	0.0046304497832449\\
512	0.00773732430483599\\
362.038671967512	0.0130350676445676\\
256	0.0221030776012521\\
181.019335983756	0.0377168828196258\\
128	0.0648813979181993\\
90.5096679918781	0.113032848343987\\
64	0.201381274902706\\
45.254833995939	0.374559374316668\\
32	0.76261036090172\\
22.6274169979695	1.91966688270718\\
16	9.05430733550547\\
};
\addlegendentry{$\epsilon = 64 \tau^{3/2}$}

\addplot [color=black, dashed, line width=1.0pt, forget plot]
  table[row sep=crcr]{%
362.038671967512	0.021011640041872\\
2896.3093757401	0.000928592072341154\\
2896.3093757401	0.021011640041872\\
362.038671967512	0.021011640041872\\
};

\node[right, align=left, inner sep=0]
at (axis cs:1050,0.012) {1};
\node[right, align=left, inner sep=0]
at (axis cs:3209.207,0.004) {$\frac{3}{2}$};
\end{axis}
\end{tikzpicture}%
				\caption[Convergence of the discrete covariant derivative]{Convergence of the discrete one-sided covariant derivative for $\W=\WEps$. The error is computed with respect to an analytically computed ground truth.
			The result for $\epsilon = \sqrt{\tau}$, $\epsilon = \tau$, and $\epsilon = 64\tau^{3/2}$ is shown as the solid, dotted, and dashed line respectively. \label{fig:convergenceofcovdivcentral}}
	\end{figure}
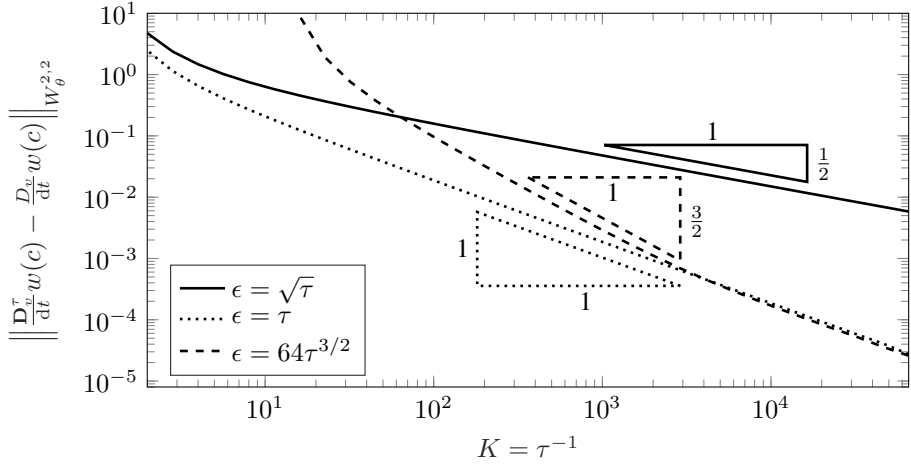
	
	\begin{figure}
		\raisebox{1\height}{ 			\includegraphics[width=0.5\linewidth]{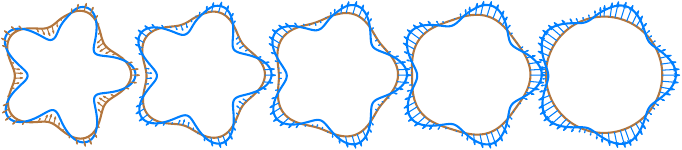} 
		}
		\hfill
				\input{figures/ParallelTransportExample=3_AngleChanges_W1only.tex}\\
		\raisebox{1\height}{ 			\includegraphics[width=0.5\linewidth]{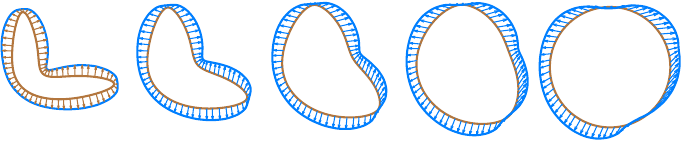} 
		}
		\hfill
		\input{figures/ParallelTransportExample=4_AngleChanges_W1only.tex}
		
		\caption[Numerically computed discrete parallel transport]{
			Two examples of discrete parallel transport along a discrete geodesic $c_0,\ldots,c_K$ (brown curves),
			showing for given initial vector $w_0$ (brown) the transported vector field $w_k$ (blue) and an exponential shooting in direction $w_k$ as the blue curves. The right plots show the absolute change of the (approximate) inner product $\alpha_k =  \frac{1}{2} \WEps_{,11}[c_k,c_k]\left(K(c_{k+1} - c_k), w_k \right)$ between the transported vector and the discrete path velocity, where a change of sign appears as a peak towards zero. As expected, the inner product is approximately constant.
		}
		\label{fig:ExampleOfParallelTransport}
	\end{figure}

	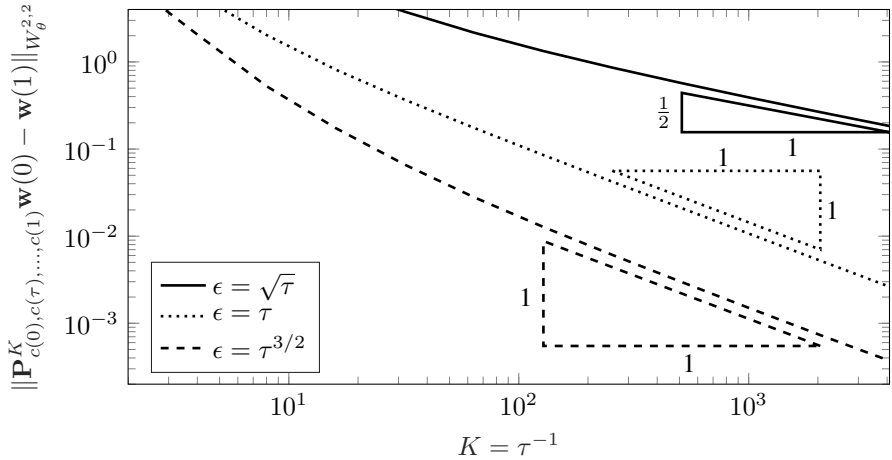
\begin{figure}
		\centering
		% This file was created by matlab2tikz.
%
%The latest updates can be retrieved from
%  http://www.mathworks.com/matlabcentral/fileexchange/22022-matlab2tikz-matlab2tikz
%where you can also make suggestions and rate matlab2tikz.
%
\begin{tikzpicture}

\begin{axis}[%
width=0.8\linewidth,
height=0.394\linewidth,
at={(0\linewidth,0\linewidth)},
scale only axis,
xmode=log,
xmin=2,
xmax=4096,
xminorticks=true,
xlabel style={font=\color{white!15!black}},
xlabel={$K  = \tau^{-1}$},
ymode=log,
ymin=0.0002,
ymax=4,
yminorticks=true,
ylabel style={font=\color{white!15!black}},
ylabel={$ \|\ParTp^K_{c(0), c(\tau), \ldots, c(1)} \w(0)-\w(1)\|_{W_{\theta}^{2,2}} $},
axis background/.style={fill=white},
legend style={legend cell align=left, align=left, draw=white!15!black, anchor = south west},
legend pos= south west
]

\addplot [color=black, line width=1.0pt]
table[row sep=crcr]{%
	16	6.87296597314287\\
	32	3.76714706134867\\
	64	2.16538162167805\\
	128	1.33294446621998\\
	256	0.859182255452376\\
	512	0.570585470528765\\
	1024	0.38646597213701\\
	2048	0.265252419656208\\
	4096	0.183713654571719\\
};
\addlegendentry{$\epsilon = \sqrt{\tau}$}

\addplot [color=black, line width=1.0pt, forget plot]
table[row sep=crcr]{%
	4096	0.156156606385961\\
	512	0.441677581210366\\
	512	0.156156606385961\\
	4096	0.156156606385961\\
};

\node[right, align=left, inner sep=0]
at (axis cs:1448.155,0.105) {1};
\node[right, align=left, inner sep=0]
at (axis cs:390,0.263) {$\frac{1}{2}$};

\addplot [color=black, dotted, line width=1.0pt]
  table[row sep=crcr]{%
4	5.94797075992928\\
8	2.0371149907697\\
16	0.815543007974292\\
32	0.367437958319331\\
64	0.174739414769685\\
128	0.0852582199156647\\
256	0.0421259676955327\\
512	0.0209491569714527\\
1024	0.0104566639788469\\
2048	0.00523452261300437\\
4096	0.00263004650181772\\
};
\addlegendentry{$\epsilon = \tau$}

\addplot [color=black, dotted, line width=1.0pt, forget plot]
  table[row sep=crcr]{%
256	0.0561679569273769\\
2048	0.00702099461592211\\
2048	0.0561679569273769\\
256	0.0561679569273769\\
};

\node[right, align=left, inner sep=0]
at (axis cs:724.077,0.08) {1};
\node[right, align=left, inner sep=0]
at (axis cs:2155.789,0.02) {1};

\addplot [color=black, dashed, line width=1.0pt]
  table[row sep=crcr]{%
2	8.41190135829959\\
4	2.06397492938934\\
8	0.526723976293073\\
16	0.17387033308212\\
32	0.0660568896798822\\
64	0.0279374854639981\\
128	0.0127403596849719\\
256	0.00607702571712992\\
512	0.00296881894299119\\
1024	0.00146805635984289\\
2048	0.000731932831627133\\
4096	0.000371377233966241\\
};
\addlegendentry{$\epsilon = \tau^{3/2}$}

\addplot [color=black, dashed, line width=1.0pt, forget plot]
  table[row sep=crcr]{%
2048	0.00054894962372035\\
128	0.00878319397952559\\
128	0.00054894962372035\\
2048	0.00054894962372035\\
};

\node[right, align=left, inner sep=0]
at (axis cs:512,0.00035) {1};
\node[right, align=left, inner sep=0]
at (axis cs:100,0.002) {1};
\end{axis}
\end{tikzpicture}%
		\caption[Convergence of discrete parallel transport]{Convergence of discrete parallel transport for $\W=\WEps$ and different choices of $\epsilon$.
		}
		\label{fig:ConvergenceOfParallelTransport}
	\end{figure}

					\subsection{Consistency of the discrete Riemann curvature tensor}\label{sec:consistencyCurvature}
	To discretize the Riemann curvature tensor $\riemann_c(v,w)$ for $c \in \immersion^m$
	and a pair of tangent vectors $v,w \in W_\theta^{m}$ we use the representation
	\begin{equation}\label{eqn:RiemannCurvature}
		\riemann_c(v,w) z = \left(  \covdir{v} \covdir{w} z -  \covdir{w} \covdir{v} z \right)(c)
	\end{equation}
	for all vector fields $z:\immersion^m\to W_\theta^m$.
	Since the Riemann curvature is independent of the particular choice of the vector field,
	we simply choose it to be constant.
	
	\begin{figure}
		% This file was created by matlab2tikz.
%
%The latest updates can be retrieved from
%  http://www.mathworks.com/matlabcentral/fileexchange/22022-matlab2tikz-matlab2tikz
%where you can also make suggestions and rate matlab2tikz.
%
\begin{tikzpicture}

\begin{axis}[%
	width=0.37\linewidth,
	height=0.185\linewidth,
	at={(0\linewidth,0\linewidth)},
	scale only axis,
	xmode=log,
	xmin=4,
	xmax=512,
	xminorticks=true,
	xlabel style={font=\color{white!15!black}},
	xlabel={$K = \tau^{-1}$},
	ymode=log,
	ymin=2e-06,
	ymax=0.1,
	yminorticks=true,
	ylabel style={font=\color{white!15!black}},
	ylabel={$|\kappa^\tau_c(v,w) - \kappa_c(v,w)|$},
	axis background/.style={fill=white}
	]
\addplot [color=black, line width=0.5pt, forget plot]
  table[row sep=crcr]{%
4	0.0263979226998822\\
8	0.0190845586780693\\
16	0.0114566621639978\\
32	0.00630746162876716\\
64	0.00335672627056145\\
128	0.0017763483498841\\
256	0.00097198317751665\\
512	0.00104370840879467\\
};
\addplot [color=black, dashed, line width=0.5pt, forget plot]
  table[row sep=crcr]{%
4	0.0323556987389642\\
8	0.0201303543337879\\
16	0.0113742925540564\\
32	0.00607044056459811\\
64	0.00313958818098911\\
128	0.00159741557024927\\
256	0.000889789399964855\\
512	0.000754372475433182\\
};
\addplot [color=black, line width=0.5pt, forget plot]
  table[row sep=crcr]{%
128	0.00119806167768695\\
16	0.00958449342149562\\
16	0.00119806167768695\\
128	0.00119806167768695\\
};
\node[right, align=left, inner sep=0]
at (axis cs:40,0.0005) {1};
\node[right, align=left, inner sep=0]
at (axis cs:12,0.003) {1};
\end{axis}
\end{tikzpicture}%
		% This file was created by matlab2tikz.
%
%The latest updates can be retrieved from
%  http://www.mathworks.com/matlabcentral/fileexchange/22022-matlab2tikz-matlab2tikz
%where you can also make suggestions and rate matlab2tikz.
%
\begin{tikzpicture}

\begin{axis}[%
	width=0.37\linewidth,
	height=0.185\linewidth,
	at={(0\linewidth,0\linewidth)},
	scale only axis,
	xmode=log,
	xmin=4,
	xmax=512,
	xminorticks=true,
	xlabel style={font=\color{white!15!black}},
	xlabel={$K = \tau^{-1}$},
	ymode=log,
	ymin=2e-06,
	ymax=0.1,
	yminorticks=true,
	ylabel style={font=\color{white!15!black}},
	ylabel={$|\kappa^{\pm\tau}_c(v,w) - \kappa_c(v,w)|$},
	axis background/.style={fill=white}
	]
\addplot [color=black, line width=0.5pt, forget plot]
  table[row sep=crcr]{%
4	0.0220007436188657\\
8	0.00847633471384891\\
16	0.00205527034832376\\
32	0.000409022287152491\\
64	4.57328370380844e-05\\
128	0.000161915049810946\\
256	0.000187956710003351\\
512	0.000190427082803257\\
};
\addplot [color=black, dashed, line width=0.5pt, forget plot]
  table[row sep=crcr]{%
4	0.031563213556288\\
8	0.00706008084321899\\
16	0.00171142009973804\\
32	0.000426224098526329\\
64	0.000106690346440425\\
128	2.67105792723793e-05\\
256	6.74525911190105e-06\\
512	2.73830759357774e-06\\
};
\addplot [color=black, line width=0.5pt, forget plot]
  table[row sep=crcr]{%
8	0.0100858297760271\\
64	0.000157591090250424\\
64	0.0100858297760271\\
8	0.0100858297760271\\
};

\node[right, align=left, inner sep=0]
at (axis cs:22.627,0.023) {1};
\node[right, align=left, inner sep=0]
at (axis cs:70,0.0015) {2};
\end{axis}
\end{tikzpicture}%
		\caption[Convergence of the discrete sectional curvature]{Convergence of the discrete sectional curvature approximation based on one-sided (left) and central covariant difference quotients (right) for the $\W=\WEps$ (solid) and $\W=\WEpsFree$ (dashed).
		}
		\label{fig:ConvergenceOfSectionalCurvature}
	\end{figure}
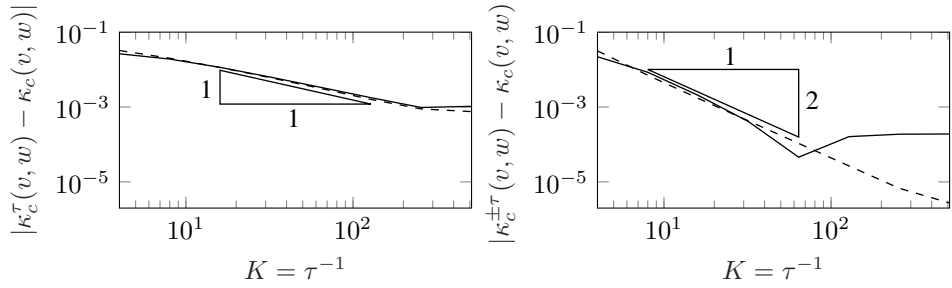
	The construction of a discrete counterpart on the Riemann curvature tensor and the consistency proof
	are analogous to \citep[Sec.\,5 \& Thm.\,5.1]{EfHeRu22},
	just adapted to the setting of Sobolev curves and potentially the $\epsilon$-regularization.
	We use nested covariant difference quotients, \ie 
	we approximate the Riemann curvature tensor based on the one-sided covariant difference quotient via
	\begin{align} \label{def:RiemannianCurvatureTensorOneSided}
		\Riemann^{\tau}_c(v,w)z \coloneqq \left(\Covtdir{\tau}{v} \Covtdir{\tau^\beta}{w} z - \Covtdir{\tau}{w} \Covtdir{\tau^\beta}{v} z \right)(c)
	\end{align}
	and based on the central covariant difference quotient via
	\begin{align} \label{def:RiemannianCurvatureTensorCentral}
		\Riemann^{\pm\tau}_c(v,w)z \coloneqq \left(\Covtdir{\pm\tau}{v} \Covtdir{\pm\tau^\beta}{w} z - \Covtdir{\pm\tau}{w} \Covtdir{\pm\tau^\beta}{v} z \right)(c)
	\end{align}
	for suitable $\beta>0$.
	In particular, we use a smaller step size for the inner difference quotient.
	If $\W=\WEps$, this will have some impact on the choice of $\epsilon$, and
	the first and second order approaches will use different values of $\epsilon$ depending on the step size $\tau$ that may differ between inner and outer discrete covariant differentiation.
	\begin{theorem}[Consistency of the discrete Riemann curvature tensor\index{consistency discrete curvature tensor}] \label{thm:consRiemCurvTensor}
				Let $c \in \immersion^m$, $g_c(\cdot,\cdot)$ be the Sobolev metric of order $m\geq 2$ and $\W\in\{\WEps,\WEpsFree\}$
		(set $m=2$ and $\epsilon=1$ in the latter case, while we assume $\epsilon$ sufficiently small in the former).
		There exist constants $C,\mathcal C>0$ such that for $\|v\|_{W_\theta^m},\|w\|_{W_\theta^m},\|z\|_{W_\theta^m},\tau/\sqrt{\epsilon_\out},\tau^\beta/\sqrt{\epsilon_\inn}\leq C$,
		\begin{align*}
			\left\|\Riemann^{\tau}_c(v,w)z-\riemann_c(v,w)z\right\|_{W_\theta^m} &\leq\mathcal C\cdot\begin{cases}\tau+\epsilon_\out+(\tau^\beta+\epsilon_\inn)/\tau&\text{if }\W=\WEps,\\\tau+\tau^{\beta-1}&\text{if }\W=\WEpsFree,\end{cases}\\
			\left\|\Riemann^{\pm\tau}_c(v,w)z-\riemann_c(v,w)z\right\|_{W_\theta^m} &\leq\mathcal C\cdot\begin{cases}\epsilon_\out+\epsilon_\inn/\tau&\text{if }\W=\WEps,\\\tau^2+\tau^{2\beta-1}&\text{if }\W=\WEpsFree,\end{cases}
		\end{align*}
		where $\epsilon_\inn$ is the value of $\epsilon$ in the inner and $\epsilon_\out$ in the outer covariant derivatives.
		For $\W=\WEps$, an optimal choice is $\beta=2$, $\epsilon_\out=\tau$, $\epsilon_\inn=\tau^2$ in the former case and $\beta=\frac32$, $\epsilon_\out=\tau^2/C$, $\epsilon_\inn=\tau^3/C$ in the latter.
	\end{theorem}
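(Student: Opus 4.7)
The plan is to adapt the strategy of \citep[Thm.\,5.1]{EfHeRu22} to the Sobolev curve setting and the $\epsilon$-regularization, reducing the consistency of the nested discrete covariant derivative to two successive applications of \cref{lemma:consistencyCurveApproxDir}, followed by antisymmetrization in $(v,w)$ through the representation \eqref{eqn:RiemannCurvature}.

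Writing $k(\tilde c)\coloneqq (\covdir{w}z)(\tilde c)$ and $h^{\tau^\beta}(\tilde c)\coloneqq \Covtdir{\tau^\beta}{w}z(\tilde c)$ as vector fields on a small $W^m_\theta$-neighborhood of $c$ that contains $c\pm\tau v$, I split
\[
\Covtdir{\tau}{v}h^{\tau^\beta}(c)-\covdir{v}k(c)
= \Covtdir{\tau}{v}(h^{\tau^\beta}-k)(c)
+ \bigl[\Covtdir{\tau}{v}k(c)-\covdir{v}k(c)\bigr].
\]
The bracketed term, estimated by \cref{lemma:consistencyCurveApproxDir} applied to the smooth vector field $k$ (whose smoothness in the footpoint follows from that of the Christoffel operator, see \cref{rem:smoothStrongMetric}), contributes $O(\tau+\epsilon_\out)$ in the one-sided case and $O(\tau^2+\epsilon_\out)$ in the central case. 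The first summand is, by \eqref{eq:CovPlus} or \eqref{eq:CovPlusMinus}, a linear combination of evaluations of $h^{\tau^\beta}-k$ at two or three footpoints divided by $\tau^2$. A second, now uniform-in-footpoint application of \cref{lemma:consistencyCurveApproxDir} gives $\|h^{\tau^\beta}(\tilde c)-k(\tilde c)\|_{W^m_\theta}\leq C(\tau^\beta+\epsilon_\inn)$ in the one-sided and $C(\tau^{2\beta}+\epsilon_\inn)$ in the central case, and the uniform $W^m_\theta$-operator bound on the Schild-ladder inverse parallel transport (built into the implicit-function-theorem arguments of \cref{thm:existenceExp2Sobolev}) then bounds the summand by $C(\tau^\beta+\epsilon_\inn)/\tau$ resp.\ $C(\tau^{2\beta}+\epsilon_\inn)/\tau$. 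Subtracting the analogous estimate with $(v,w)$ swapped and equilibrating inner and outer contributions yields the bounds stated in the theorem together with the optimal parameter choices, with the residual $\tau^2$ and $\tau^{2\beta-1}$ contributions of the central case absorbed into $\epsilon_\out$ and $\epsilon_\inn/\tau$ for those optimal choices.

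The principal obstacle is to upgrade \cref{lemma:consistencyCurveApproxDir} and \cref{thm:existenceExp2Sobolev} to hold \emph{uniformly} over a small neighborhood of $c$ (covering the two or three basepoints $c,c\pm\tau v$ needed by the outer difference quotient) with constants independent of $\tau$ and $\epsilon$, and to show that $h^{\tau^\beta}$ is sufficiently regular in its footpoint for the outer difference quotient to be well defined. Both points are straightforward but tedious adaptations of the implicit-function-theorem estimates in \cref{sec:appExp2,sec:appCovDeriv}, relegated to the appendix, and they are precisely what dictates the smallness conditions on $\tau/\sqrt{\epsilon_\out}$, $\tau^\beta/\sqrt{\epsilon_\inn}$, and on the $W^m_\theta$-norms of $v,w,z$ in the hypotheses.
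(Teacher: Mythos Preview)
Your proposal is correct and follows essentially the same route as the paper's proof in \cref{sec:appCT}: the identical splitting $\Covtdir{\tau}{v}h^{\tau^\beta}-\covdir{v}k=\Covtdir{\tau}{v}(h^{\tau^\beta}-k)+[\Covtdir{\tau}{v}k-\covdir{v}k]$, \cref{lemma:consistencyCurveApproxDir} for the bracketed term, and for the first term the explicit definition \eqref{eq:CovPlus} together with the stability bound \eqref{eq:PTstability} (with $\bar w=0$) on $\ParTp^{-1}$ to extract the factor $\frac{2+\lambda\tau}{\tau}\max_\sigma\|(h^{\tau^\beta}-k)(c+\sigma\tau v)\|$, after which the inner application of \cref{lemma:consistencyCurveApproxDir} closes the estimate. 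Your observation that the $\tau^2$ and $\tau^{2\beta-1}$ terms in the central $\WEps$ case are absorbed by $\epsilon_\out$ and $\epsilon_\inn/\tau$ via the hypotheses $\tau/\sqrt{\epsilon_\out},\tau^\beta/\sqrt{\epsilon_\inn}\leq C$ is exactly how the paper reconciles its proof with the stated bound.
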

		The proof is given in \cref{sec:appCT} and relies on the consistency result for the discrete covariant derivative and the stability of discrete parallel transport.
	
	The sectional curvature can be defined as
	\begin{align}
		\kappa_{c}\left(v,w\right) \coloneqq \frac{ g_{c}\left(v, \riemann_{c}\left(v,w\right)w \right) }{
			g_{c}\left(v,v\right) g_{c}\left(w,w\right) - g_{c}\left(v,w\right)^2
		},
	\end{align}
	and we can define a discrete approximation as
	\begin{align}
		\kappa^{\tau}_{c}\left(v,w\right) \coloneqq \frac{ g_c\left(v, \Riemann^{\tau}_{c}\left(v,w\right)w \right) }{
			g_c\left(v,v\right) g_c\left(w,w\right) - g_c\left(v,w\right)^2
		}
	\end{align}
	or likewise $\kappa^{\pm\tau}_{c}\left(v,w\right)$ using $\Riemann^{\pm\tau}_{c}$.
	Analogously to \citep[Cor.\,5.3]{EfHeRu22} we obtain the following result.
	\begin{corollary}[Consistency of the discrete sectional curvature\index{consistency discrete sectional curvature}] \label{cor:consSecCurv}
		Under the assumptions of \cref{thm:consRiemCurvTensor} and for the provided optimal choice of $\beta,\epsilon_\inn,\epsilon_\out$
		we have
		\begin{align*}
			\kappa^{\tau}_{c}\left(v,w\right) &= \kappa_{c}\left(v,w\right) + O(\tau),\\
			\kappa^{\pm \tau}_{c}\left(v,w\right) &= \kappa_{c}\left(v,w\right) + O(\tau^2).
		\end{align*}
	\end{corollary}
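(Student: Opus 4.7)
The plan is to reduce the claim directly to \cref{thm:consRiemCurvTensor} by expanding the difference of the fractions defining $\kappa_c$ and $\kappa^\tau_c$ (resp.\ $\kappa^{\pm\tau}_c$) and exploiting that the denominator is the same, fixed, and strictly positive.

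First, I would write
\begin{equation*}
\kappa^{\tau}_c(v,w)-\kappa_c(v,w)
=\frac{g_c\bigl(v,(\Riemann^{\tau}_c(v,w)-\riemann_c(v,w))w\bigr)}{g_c(v,v)g_c(w,w)-g_c(v,w)^2}
\end{equation*}
and analogously for the central variant. The denominator is the Gram determinant of $v,w$ with respect to $g_c$ and is independent of $\tau$; in the setting of \cref{cor:consSecCurv} the arguments $c,v,w$ are held fixed, and we may restrict attention to the nondegenerate case (where $v$ and $w$ are linearly independent), so that the denominator is a fixed positive constant. Hence it suffices to estimate the numerator in terms of $\tau$.

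For the numerator, I would apply \cref{lem:coercivityBoundednessMetric} to obtain
\begin{equation*}
\left|g_c\bigl(v,(\Riemann^{\tau}_c(v,w)-\riemann_c(v,w))w\bigr)\right|
\leq C\,\|v\|_{W^m_\theta}\,\|(\Riemann^{\tau}_c(v,w)-\riemann_c(v,w))w\|_{W^m_\theta},
\end{equation*}
and likewise for $\Riemann^{\pm\tau}_c$. Here $c$ lies in a fixed bounded closed set in $\immersion^m$, so the constant $C>0$ depends only on the fixed data. Now \cref{thm:consRiemCurvTensor} with the optimal choices stated there gives
\begin{equation*}
\|(\Riemann^{\tau}_c(v,w)-\riemann_c(v,w))w\|_{W^m_\theta}\leq \mathcal C\,\tau,\qquad
\|(\Riemann^{\pm\tau}_c(v,w)-\riemann_c(v,w))w\|_{W^m_\theta}\leq \mathcal C\,\tau^2,
\end{equation*}
since in both the $\WEps$ and $\WEpsFree$ cases the optimal choices yield exactly these rates.

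Combining the two estimates and dividing by the fixed positive denominator yields the two asserted rates. The argument is essentially a one-line computation once \cref{thm:consRiemCurvTensor} is in hand; the only mild subtlety is verifying that the optimal parameter choices for the Riemann tensor convergence are inherited verbatim (which they are, because the metric in the numerator is bounded uniformly on the bounded set containing $c$, and the denominator is $\tau$-independent), so no new obstacle arises.
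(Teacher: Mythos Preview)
Your proof is correct and follows precisely the standard route: the paper itself does not spell out a proof but only remarks that the result follows ``analogously to \cite[Cor.\,5.3]{EfHeRu22}'', and the argument there is exactly the one you give---bound the numerator via the metric boundedness and \cref{thm:consRiemCurvTensor}, and observe that the Gram determinant in the denominator is a fixed positive constant independent of~$\tau$.
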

	
	\Cref{fig:ConvergenceOfSectionalCurvature} experimentally validates the convergence rate from \cref{cor:consSecCurv} for the discrete approximation of the sectional curvature. 
	For this we numerically compute the sectional curvature $\kappa_c(v,w)$ at $c = (\cos, \sin)$ with $v = (\cos, 0)$, $w = (0, \cos)$ and compare to the exact value computed analytically in  \cref{example:SectionalCurvature}.
	Solid and dashed lines show the error for $\W=\WEps$ and $\W=\WEpsFree$, respectively, with $\beta,\epsilon$ chosen as in \cref{cor:consSecCurv}.
	We use weights $(a_0,a_1,a_2)=(1,1,1)$ as well as $N=20$ Fourier modes and $M=80$ quadrature points for the space discretization.
	Note that the error of the central difference quotient approximation for $\W=\WEps$ stagnates beyond $\tau\approx10^{-2}$.
	This is probably due to cancellation in terms such as $|\gamma|_\epsilon=\sqrt{\gamma^2+\epsilon^2}$ used in \eqref{eq:maxmin} for $\epsilon_{\inn}^2=\tau^6$.
	
		\appendix
	\section{Convergence proofs}\label{sec:appendix}
	Here we provide convergence proofs for the theorems stated in \cref{sec:discreteGeodCalc}.
	It will sometimes be useful to express the approximation $\W$ of the squared Riemannian distance as well as the metric $g_c$ as a quadratic form with nonlinear coefficients.
	To this end recall the formulation of the arclength derivative via a polynomial \eqref{eqn:arclengthDerivative} as well as the notation of \eqref{eqn:curveSpaceW2}. This implies
	\begin{equation*}
		\WEps{}[c,\tilde c] = \Geps[c,\tilde c](\tilde c-c,\tilde c-c)
		\qquad\text{and}\qquad
		\WEpsFree[c,\tilde c] = \GepsFree[c,\tilde c](\tilde c-c,\tilde c-c)
	\end{equation*}
	for the bounded, symmetric quadratic forms $\Geps[\hat c,\check c],\GepsFree[\hat c,\check c]: W^{m}_\theta \times W^{m}_\theta \to \R$ depending on $\hat c,\check c \in \immersion^m$ with
	\begin{multline}\label{eq:defGeps}
		\Geps[\hat c,\check c](u,v) \coloneqq \int_{\Sone}  \upperLength{\hat c'}{\check c'} (\theta)  u(\theta) \cdot v(\theta)
		+ \int_0^1 \sum_{j=1}^m  \left(\lowerLength{\hat c'}{\check c'} (\theta) \right)^{5-6j} \\
		\begin{split}
			& \cdot\! P_j\!\left(((1\!-\!t) \hat c'\!+\!t \check c',\ldots,(1\!-\!t) \hat c^{(j)}\!+\!t \check c^{(j)})(\theta);(u',\ldots, u^{(j)})(\theta)\right) \\
			& \cdot\! P_j\!\left(((1\!-\!t) \hat c'\!+\!t \check c',\ldots,(1\!-\!t) \hat c^{(j)}\!+\!t \check c^{(j)})(\theta);(v',\ldots, v^{(j)})(\theta)\right)
			\, \d t\, \d \theta,
		\end{split}
	\end{multline}
	\begin{multline}\label{eq:defGepsFree}
		\GepsFree[\hat c,\check c](u,v)
		\coloneqq \int_{\Sone} \tfrac{r+p}2u(\theta) \cdot v(\theta)\\
		+\tfrac{(\frac1v-1)(r+p)+(r-p)\log(\frac rp)}{r^2+p^2-2q}u'(\theta) \cdot v'(\theta)
		+ \tfrac12\left(\tfrac1{rq}+\tfrac1{pq}\right)u''(\theta) \cdot v''(\theta)\\
		-\Phi_1^T\Xi_1\Theta_1 (u'(\theta)\cdot v''(\theta)+u''(\theta)\cdot v'(\theta))
		\!+\!\Phi_2^T\Xi_2\Theta_2u'(\theta) \cdot v'(\theta)\, \d \theta.
	\end{multline}
	Similarly, the Riemannian metric can be expressed as
	\begin{equation}\label{eq:widehatGeps}
		g_c(u,v)=\GMetric[c,c](u,v)
		\qquad\text{with}\qquad
		\GMetric=\GepsFree
		\text{ or }
		\GMetric=\widehat\Geps,
	\end{equation}
	where $\widehat\Geps$ coincides with $\Geps$ except that
	$\upperLength{\hat c'}{\check c'} (\theta)$ is replaced with $\upperLength{\hat c'}{\check c'} (\theta)-\tfrac\epsilon2$
	and $\lowerLength{\hat c'}{\check c'} (\theta)$ with $\lowerLength{\hat c'}{\check c'} (\theta)+\tfrac\epsilon2$.
	Furthermore, we will denote by $L(X,Y)$ the normed space of linear maps between the normed spaces $X$ and $Y$.
	
	\subsection{Proof of well-posedness and approximation of $\Exp^2$, $\Log^2$ } \label{sec:appExp2}
	
	\begin{proof}[Proof of \cref{thm:existenceExp2Sobolev}]
		We begin with the discrete exponential map.
		Denoting $x=v/2=c_1-c_0$ and $y=c_2-c_0$ we define the nonlinear map
		$F: W^{m}_\theta \times W^{m}_\theta \to  (W^{m}_\theta)'$ with
		\begin{equation}
			\label{eq:FEL}
			F[x,y] \coloneqq \W_{,2}[c_{0},c_{0}+x] + \W_{,1}[c_{0}+x,c_{0}+y].
		\end{equation}
		Hence, solving \eqref{eq:ELSobolevFirst} for $c_2=\Exp_{c_0}^2(1,v)$ is equivalent to finding $y$
		for given $x$ such that $F[x,y]=0$, so we have phrased the computation of $\Exp_{c_0}^{2}$
		as a problem of implicit function theorem type.
		In detail, by definition of $\WEps$ and $\WEpsFree$ we know that $F[0,0]=0$, and we ask for a map $x\mapsto y[x]$ with $F[x,y[x]]=0$ in a neighbourhood of $x=0$.
		The implicit function theorem ensures the existence of such a map if $F$ is continuously differentiable
		and $\partial_y F[0,0]$ is invertible as a linear mapping in $L(W^{m}_\theta,(W^{m}_\theta)')$.
		Continuous differentiability, even smoothness of $F$ can readily be checked,
		as the integrands of both $\WEps{}[\hat c,\check c]$ and $\WEpsFree[\hat c,\check c]$ are smooth functions of all derivatives of $\hat c$ and $\check c$ up to order $m$,
		that in turn all lie in $L^\infty(\Sone,\R^d)$ except for $\hat c^{(m)},\check c^{(m)}\in L^2_\theta$ in which the integrands are quadratic.
		The invertibility of $\partial_y F[0,0]$ is also satisfied:
		For $\W=\WGalerkin$ it holds since $\WGalerkin$ satisfies \eqref{eqn:consistency} and the Riemannian metric $g_c$ is strong.
		For $\WEpsFree$ it then follows from \eqref{eqn:coercivityWEpsFree},
		and for $\WEps$ from \eqref{eqn:symmetry} and \eqref{eqn:coercivityWEps}.
		Thus we have proven that $\Exp_{c_0}^2(1,v)$ is well-posed for $v$ small enough.
		
		The implicit function theorem further implies that, if $F$ is twice continuously differentiable as in our case, then so is $y[x]$ with
		\begin{align*}
			\partial_xy[x](w)&=-\partial_yF[x,y]^{-1}\partial_xF[x,y](w),\\
			\partial_x^2y[x](w,\tilde w)&=-\partial_yF[x,y]^{-1}\big[\partial_x^2F[x,y](w,\tilde w)\\
			&\qquad+\partial_y\partial_xF[x,y](\partial_xy[x](w),\tilde w)+\partial_y\partial_xF[x,y](\partial_xy[x](\tilde w),w)\big].
		\end{align*}
		Consequently, abbreviating $S_x=\sup_{t\in[0,1]}\|\partial_x^2y[tx]\|_{L(W^m_\theta,(W^m_\theta)')}$, by Taylor's theorem we have
		\begin{align*}
			y[x]
			&=y[0]+\partial_xy[0](x)+\frac12\int_0^1\!\partial_x^2y[tx]((1-t)x,(1-t)x)\,\d t
			=\partial_xy[0](x)+e_x,\\
			\partial_xy[x]
			&=\partial_xy[0]+\int_0^1\!\partial_x^2y[tx]((1-t)x)\,\d t
			=\partial_xy[0]+\tilde e_x
		\end{align*}
		with remainders $\|e_x\|_{W^m_\theta}\leq S_x\|x\|_{W^m_\theta}^2/2$, $\|\tilde e_x\|_{W_\theta^m}\leq S_x\|x\|_{W^m_\theta}$.
		By \eqref{eqn:symmetry} it is straightforward to see $\partial_xF[0,0]=-2\partial_y F[0,0]$, thus $\partial_xy[0](w)=2w$ and
		\begin{equation*}
			\|y[x]-2x\|_{W^m_\theta}\!\leq\! S_x\|x\|_{W^m_\theta}^2
			\qquad\text{and}\qquad
			\|\partial_xy[x]-2\Id\|_{L(W^m_\theta,W^m_\theta)}\!\leq\! S_x\|x\|_{W_\theta^m}\\
		\end{equation*}
		or equivalently
		\begin{equation*}
			\|\Exp^2_{c_0}(1,v)-c_0-v\|_{W^m_\theta}\!\leq\!\tfrac{S_{v/2}}4\|v\|_{W^m_\theta}^2
			\quad\text{and}\quad
			\|\partial_v\Exp^2_{c_0}(1,v)-\Id\|_{W^m_\theta}\!\leq\!\tfrac{S_{v/2}}2\|v\|_{W^m_\theta}.
		\end{equation*}
		
		This almost finishes the proof, however, we still need to verify the existence of $C,\mathcal C>0$, independent of $c_0\in\mathfrak K$,
		such that $S_x\leq\mathcal C(1+\|x\|_{W^m_\theta}^2/\epsilon^2)$ and that the domain of $y[\cdot]$ contains the closed norm ball $B_r(0)$ of radius $r\coloneqq C\sqrt\epsilon/2$.
		To show the latter, consider the map $H_x: W^{m}_\theta \to  W^{m}_\theta$ with
		\begin{equation*}
			H_x[y] \coloneqq y- \partial_y F[0,0]^{-1} F[x,y],
		\end{equation*}
		which has $y[x]$ as a fixed-point.
		Assume we can show
		$\|H_x[0]\|_{W^m_\theta}\leq\kappa\|x\|_{W^m_\theta}$ for all $x\in B_r(0)$ and some $\kappa\geq1$
		as well as
		$\|\partial_yH_x[y]\|_{L(W^{m}_\theta,W^{m}_\theta)}\leq\frac12$ for all $x\in B_r(0)$ and $y\in B_{2\kappa r}(0)$ (the closed norm ball of radius $2\kappa r$), then
		\begin{equation*}
			\Vert H_x[y] - H_x[\tilde y] \Vert_{W^m_\theta} \leq
			\int_0^1 \Vert \partial_y H_x[\tilde y + t (y-\tilde y)] ( y-\tilde y)\Vert_{W^m_\theta} \, \d t
			\leq \Vert y-\tilde y\Vert_{W^m_\theta}/2
		\end{equation*}
		for all $y,\tilde y\in B_{2\kappa r}(0)$, thus $H_x$ is a contraction on $B_{2\kappa r}(0)$.
		Further,
		$$\|H_x[y]\|_{W^m_2}
		\leq\|H_x[0]\|_{W^m_\theta}+\|H_x[y]-H_x[0]\|_{W^m_\theta}
		\leq \kappa r+\|y\|_{W^m_\theta}/2
		\leq2\kappa r$$
		for all $y\in B_{2\kappa r}(0)$ so that $H_x$ maps the ball $B_{2\kappa r}(0)$ onto itself.
		Then Banach's fixed-point theorem implies the existence and uniqueness of $y[x]$ as desired.
		In fact, $H_x$ even maps the ball $B_{2\kappa\|x\|_{W^m_\theta}}$ onto itself so that we also know $\|y[x]\|_{W^m_\theta}\leq2\kappa\|x\|_{W^m_\theta}$.
		
		Now, we abbreviate $\ell\coloneqq\min_{c_0\in\mathfrak K,\theta\in\Sone}|c_0'(\theta)|$ and assume $R>0$ to be a radius such that $\min_\theta|c'(\theta)|>\ell/2$ for all $c_0\in\mathfrak K$ and $c\in B_R(c_0)$.
		To show the remaining estimates
		$$S_x\leq\mathcal C(1+\|x\|_{W^m_\theta}^2/\epsilon^2),\qquad \|H_x[0]\|_{W^m_\theta}\leq \kappa\|x\|_{W^m_\theta},\qquad \|\partial_yH_x[y]\|_{L(W^{m}_\theta,W^{m}_\theta)}\leq\tfrac12$$
		for all $c_0\in\mathfrak K$, $x\in B_r(0)$, $y\in B_{2\kappa r}(0)$,
		we assume, without loss of generality, that the sought constant $C$ is small enough
		such that $2\kappa r<R$ (in the case $\W=\WEps{}$ this should hold for the largest possible choice $\epsilon=\ell$ and thereby for all admissible $\epsilon$).
		This is possible since both $r$ and $\kappa$ are increasing in $C$ with $\kappa r\to0$ as $C\to0$.
		A direct consequence for the case $\W=\WEps{}$ is that ${\min}^\epsilon(|\hat c'(\theta)|,|\check c'(\theta)|)>{\min}^\epsilon(\ell/2,\ell/2)=\ell/2-\epsilon/2\geq0$ so that we may replace $\underline{\min}^\epsilon$ with ${\min}^\epsilon$ in $\WEps$.
		We further use that
		$\W{}[c,\tilde c] = G[c,\tilde c](\tilde c-c,\tilde c-c)$ with $G=\Geps$ or $G=\GepsFree$ defined in \eqref{eq:defGeps} and \eqref{eq:defGepsFree}.
		For $\tilde w\in W^{m}_\theta$ we thus have
		\begin{multline}\label{eq:ELinG}
			\W_{,2} [c_0, c_1](\tilde w) + \W_{,1} [c_1, c_2](\tilde w)\\
			= {G_{,2}}[c_0,c_1](\tilde w)(c_1-c_0,c_1-c_0)
			+ {G_{,1}}[c_1,c_2](\tilde w)(c_2-c_1,c_2-c_1)\\
			+ 2 G[c_0,c_1](c_1-c_0,\tilde w) - 2 G[c_1,c_2](c_2-c_1,\tilde w),
		\end{multline}
		which implies
		\begin{align*}
			F[x,y](\tilde w) &= G_{,2}[c_0,c_0\!+\!x](\tilde w)(x,x) \!+\! 2 G[c_0,c_0\!+\!x](x,\tilde w) \\
			&\quad +\! {G_{,1}}[c_0\!+\!x,c_0\!+\!y](\tilde w)(y\!-\!x,y\!-\!x) \!-\! 2 G[c_0\!+\!x,c_0\!+\!y](y\!-\!x,\tilde w).
		\end{align*}
		Differentiation with respect to $y$ in direction $w$ then gives
		\begin{align*}
			\partial_y F[x,y](\tilde w)(w)
			&= G_{,12}[c_0\!+\!x,c_0\!+\!y](\tilde w)(w)(y\!-\!x,y\!-\!x)  \\
			&\quad  +\! 2 {G_{,1}}[c_0\!+\!x,c_0\!+\!y](\tilde w)(y\!-\!x,w) \\
			&\quad  -\! 2 {G_{,2}}[c_0\!+\!x,c_0\!+\!y](w)(y\!-\!x,\tilde w) \!-\! 2 G[c_0\!+\!x,c_0\!+\!y](w,\tilde w),
		\end{align*}
		which evaluated at $(x,y)=(0,0)$ leads to $\partial_yF[0,0] = -2 G[c_0,c_0]$.
		Thus,
		\begin{align}
			H_x[0]&=(2G[c_0,c_0])^{-1}[G_{,2}[c_0,c_0+x](\cdot)(x,x) + 2 G[c_0,c_0+x](x,\cdot)\label{eqn:Hx}\\
			&\quad+ {G_{,1}}[c_0+x,c_0](\cdot)(x,x) + 2 G[c_0+x,c_0](x,\cdot)]\nonumber\\
			&=G[c_0,c_0]^{-1}[G_{,2}[c_0,c_0\!+\!x](\cdot)(x,x) \!+\! 2 G[c_0,c_0\!+\!x](x,\cdot)],\nonumber\\
			\partial_y H_x[y](w) &=  (2G[c_0,c_0])^{-1} \left(2G[c_0,c_0] + \partial_y F[x,y]\right)(\cdot)(w)  \label{eq:dyHeps} \\
			&= G[c_0,c_0]^{-1}
			\big(
			\tfrac12G_{,12}[c_0+x,c_0+y](\cdot)(w)(y-x,y-x)     \nonumber \\
			&\quad
			+ {G_{,1}}[c_0+x,c_0+y](\cdot)(y-x,w) \nonumber \\
			&\quad
			- {G_{,2}}[c_0+x,c_0+y](w)(y-x,\cdot)  \nonumber \\
			&\quad  + (G[c_0,c_0] -  G[c_0+x,c_0+y])(w,\cdot) \big) \nonumber .
		\end{align}
		In the case $\W=\WEps$ we further need to note that for ${\max}^\epsilon$ from \eqref{eq:maxmin} we have
		\begin{align*}
			\partial_\beta {\max}^\epsilon (\alpha, \beta) &=  \tfrac12 \left( 1 + \tfrac{\beta-\alpha}{\sqrt{(\beta-\alpha)^2+\epsilon^2}} \right)
			&&\in[0,1), \\
			\partial^2_\beta {\max}^\epsilon (\alpha, \beta) &=  \tfrac{1}{2\sqrt{(\beta-\alpha)^2+\epsilon^2}}
			\left(1-\tfrac{(\beta-\alpha)^2}{(\beta-\alpha)^2+\epsilon^2} \right)
			&&\in[0,\tfrac1{2\epsilon}),\\
			\partial^3_\beta {\max}^\epsilon (\alpha, \beta)
			&=\tfrac{-3(\beta-\alpha)}{2\sqrt{(\beta-\alpha)^2+\epsilon^2}^3}\left(1-\tfrac{3(\beta-\alpha)^2}{(\beta-\alpha)^2+\epsilon^2} \right)
			&&\in(-\tfrac2{\sqrt3\epsilon^2},\tfrac2{\sqrt3\epsilon^2}).
		\end{align*}
		Analogous estimates are obtained for $\underline{\min}^\epsilon$ instead of ${\max}^\epsilon$ and for derivatives with respect to $\alpha$ or mixed second and third derivatives.
		(Recall that we may assume $\underline{\min}^\epsilon={\min}^\epsilon$.)
		Due to $\min_\theta|c'(\theta)|\geq\ell/2$ on $B_R(c_0)$ there thus exists some constant $\gamma>0$, independent of $c_0\in\mathfrak K$, with
		\begin{equation}\label{eq:estimatesGeps}
			\left\{\!\!\!
			\begin{array}{r@{}l}
				\Vert G[c_0,c_0]^{-1} \Vert_{L(W^{m}_\theta,(W^{m}_\theta)')} &\leq\! \gamma, \\
				\Vert G[c_0+x,c_0+y] \Vert_{L(W^{m}_\theta,(W^{m}_\theta)')}
				&\leq\! \gamma, \\
				\Vert G_{,i}[c_0\!+\!x,c_0\!+\!y](\bar v) \Vert_{L(W^{m}_\theta,(W^{m}_\theta)')} &\leq\!
				\gamma \Vert \bar v\Vert_{W^m_\theta} , \\
				\Vert G_{,ij}[c_0\!+\!x,c_0\!+\!y](\bar v)(\hat v) \Vert_{L(W^{m}_\theta,(W^{m}_\theta)')} &\leq\!
				\gamma \epsilon^{-1} \Vert \bar v\Vert_{W^m_\theta}\! \Vert\hat v\Vert_{W^m_\theta},\\
				\Vert G_{\!,ijk}[c_0\!+\!x,c_0\!+\!y](\bar v)(\hat v)(\tilde v) \Vert_{L(W^{m}_\theta,(W^{m}_\theta)')} &\leq\!
				\gamma \epsilon^{-2} \Vert \bar v\Vert_{W^m_\theta}\! \Vert\hat v\Vert_{W^m_\theta}\! \Vert\tilde v\Vert_{W^m_\theta},\\
				\Vert G[c_0,c_0] \!-\!  G[c_0\!\!+\!x,c_0\!\!+\!y] \Vert_{L(W^{m}_\theta,(W^{m}_\theta)')}
				&\leq\!
				\gamma \left(\Vert x\Vert_{W^m_\theta} \!+\! \Vert y\Vert_{W^m_\theta} \right), \\
				\Vert G_{,i}[c_0,c_0 ](\bar v) \!-\!  G_{,i}[c_0\!\!+\!x,c_0\!\!+\!x](\bar v) \Vert_{L(W^{m}_\theta,(W^{m}_\theta)')} &\leq\!  \gamma (\Vert x\Vert_{W^m_\theta}\Vert \bar v \Vert_{W^m_\theta})
			\end{array}
			\right.\!
		\end{equation}
		for $i,j,k=1,2$ and $x,y\in B_R(0)$ (recall the convention $\epsilon=1$ if $\W=\WEpsFree$).
		Indeed, by \eqref{eqn:coercivityWEps} or \eqref{eqn:coercivityWEpsFree}, $G[c_0,c_0]$ coincides (up to a constant factor in the case $\W=\WEps$) with $g_{c_0}$,
		which is uniformly coercive on $\mathfrak K$ and thus implies the first estimate.
		The other estimates follow since $c_0+x,c_0+y$ lie in the bounded closed $R$-neighbourhood $B_R(\mathfrak K)$ of $\mathfrak K$
		and since the coefficients in $G[\hat c,\check c](\hat w,\check w)$ (and thus its derivatives) depend smoothly on the values and first $m-1$ derivatives of $\hat c$ and $\check c$,
		which are uniformly bounded on $B_R(\mathfrak K)$, as well as quadratically on $\hat c^{(m)},\check c^{(m)}$, which only multiply lower derivatives of $\hat w,\check w$.
		For the last estimate, we additionally use that $	\partial_\beta {\max}^\epsilon (\alpha, \beta)$ is independent of $\alpha$ and $\beta$ when evaluated at $\alpha = \beta$.
		Using these estimates in \eqref{eqn:Hx}-\eqref{eq:dyHeps},
		we achieve
		\begin{align*}
			\|H_x[0]\|_{W^m_\theta}
			&\leq\zeta(\|x\|_{W^m_\theta}^2+\|x\|_{W^m_\theta}),\\
			\Vert \partial_y H_x[y] \Vert_{L(W^{m}_\theta,W^{m}_\theta)}
			&\leq \zeta\Bigl( \epsilon^{-1} \Vert y-x\Vert_{W^m_\theta}^2
			+ \Vert y-x\Vert_{W^m_\theta}
			+ \Vert x\Vert_{W^m_\theta}
			+ \Vert y\Vert_{W^m_\theta}  \Bigr)
		\end{align*}
		for some $\zeta\geq1$.
		Hence, for the choice $\kappa=\zeta(R+1)$ and $C=1/(4\zeta(2\kappa+1))$ we obtain $r=\sqrt\epsilon/(8\zeta(2\kappa+1))$ as well as
		(assuming $2\kappa r<R$, otherwise reduce $C$ correspondingly)
		\begin{align*}
			\|H_x[0]\|_{W^m_\theta}
			&\leq\zeta(R\|x\|_{W^m_\theta}+\|x\|_{W^m_\theta})
			\leq\kappa\|x\|_{W^m_\theta},\\
			\Vert \partial_y H_x[y] \Vert_{L(W^{m}_\theta,W^{m}_\theta)}
			&\leq \zeta\left( \epsilon^{-1} (2\kappa r+r)^2 + (2\kappa r+r) + 2\kappa r+r\right)
			\leq\tfrac12
		\end{align*}
		for all $c_0\in\mathfrak K$, $x\in B_r(0)$, and $y\in B_{2\kappa r}(0)$, as desired.
		Analogously,
		\begin{align*}
			\partial_x F[x,y](\tilde w)(w)&=
			G_{,22}[c_0,c_0+x](\tilde w)(w)(x,x) + 2 {G_{,2}}[c_0,c_0+x](\tilde w)(x,w) \\
			& \quad + 2 {G_{,2}}[c_0,c_0+x](w)(x,\tilde w) + 2 G[c_0,c_0+x](\tilde w,w) \\
			& \quad + G_{,11}[c_0+x,c_0+y](\tilde w)(w)(y-x,y-x)   \\
			& \quad - 2{G_{,1}}[c_0+x,c_0+y](\tilde w)(w,y-x) \\
			& \quad -2 {G_{,1}}[c_0+x,c_0+y](w)(\tilde w,y-x)
						+ 2 G[c_0+x,c_0+y](w,\tilde w)
		\end{align*}
		implies $\|\partial_x F[x,y]\|_{L(W^m_\theta,(W^m_\theta)')}\leq\zeta(1+\epsilon^{-1}(\|x\|_{W^m_\theta}+\|y\|_{W^m_\theta})^2+\|x\|_{W^m_\theta}+\|y\|_{W^m_\theta})$
		and thus uniform boundedness of $\partial_x F[x,y]$ for $c_0\in\mathfrak K$, $x\in B_r(0)$, and $y\in B_{2\kappa r}(0)$, independent of $\epsilon$.
		Consequently, $\partial_xy[\cdot]$ is uniformly bounded on $B_r(0)$, independent of $\epsilon$ and $c_0\in\mathfrak K$.
		In the same way we can estimate $\partial_x^2F[x,y]$, $\partial_y^2F[x,y]$, and $\partial_x\partial_yF[x,y]$ to be bounded up to a constant factor
		by $1+\epsilon^{-2}(\|x\|_{W^m_\theta}+\|y\|_{W^m_\theta})^2+\epsilon^{-1}(\|x\|_{W^m_\theta}+\|y\|_{W^m_\theta})$.
		Therefore, exploiting that $\|y[x]\|_{W^m_\theta}\leq2\kappa\|x\|_{W^m_\theta}$,
		we get $\|\partial_x^2y[x]\|_{L(W^m_\theta,(W^m_\theta)')}\leq\mathcal C(1+\|x\|_{W^m_\theta}^2/\epsilon^2)$ for some $\mathcal C>0$ and thus the desired estimate for $S_x$.
		
		The arguments for the investigation of the discrete logarithm $\Log^2$ are completely analogous and
		again based on the implicit function theorem applied to
		the function $F[\cdot, \cdot]$.
		In fact, for the discrete logarithm we have
		$$
		\Log^2_{c_0}(c_2) = 2 (c_1-c_0) = 2x,
		$$
		where $x=x[y]$ solves $F[x,y] = 0$
		for $y=c_2- c_0$, so essentially the roles of $x$ and $y$ swap compared to the previous argument.
	\end{proof}
	
	\subsection{Proof of convergence of $\Exp^K$} \label{sec:appConvExp}
	\begin{proof}[Proof of \cref{thm:ExpKSobolev}.]
		Let $\cpath$ be the continuous and $(c_0,\ldots,c_K)$ the discrete geodesic with $\cpath(0)=c_0=c_A$ and $\cpath(1)=\exp_{c_A}(\dot\cpath(0))$ as well as $c_K=\Exp^K_{c_A}(1,\dot\cpath(0))$.
		Note that $\dot\cpath$ and $\ddot\cpath$ are bounded in $W^m_\theta$ by the smoothness of the geodesic.
		
		Let $N_r(\cpath)=\{c\in\immersion^m\,|\,\|c-\cpath(t)\|_{W^m_\theta}\leq r\text{ for some }t\in[0,1]\}$ denote the $r$-neighbourhood of $\cpath$
		and assume $r>0$ to be small enough such that $N_r(\cpath)$ is contained in a bounded Riemannian ball $B$ around $c_A$.
		We assume $K$ large enough such that $rK\geq\|\dot\cpath(t)\|_{W^m_\theta}$ for all $t\in[0,1]$.
		As a consequence, the linear interpolation between $\cpath(t)$ and $\cpath(\hat t)$ is contained in $N_r(\cpath)$ whenever $|t-\hat t|\leq1/K$.
		We further assume the discrete geodesic to lie in $N_{r/2}(\cpath)$ as well as $\|c_k-c_{k-1}\|_{W^m_\theta}<r/2$ for $k=1,\ldots,K$, both of which we will verify at the end of the proof for $K$ large enough.
		
		Recall from \eqref{eq:defGeps}-\eqref{eq:widehatGeps} that
		\begin{equation*}
			\W[\hat c,\check c]=G[\hat c,\check c](\check c-\hat c,\check c-\hat c)
			\quad\text{and}\quad
			g_c(v,w)=\GMetric[c,c](v,w)
		\end{equation*}
		for quadratic forms $G$ and $\GMetric$.
		Abbreviating $S=\{(\hat c,\check c)\in N_{r/2}(\cpath)\,|\,\|\hat c-\check c\|_{W^m_\theta}\leq r/2\}$,
		we will denote by $\bar C_0,\bar C_1,C_1,C_2,L_1$ bounds on $S$ for the difference between $G$ and $\GMetric$, for the first and second derivatives of $G$, and for the Lipschitz constant of the first derivative,
		\begin{equation}\label{eqn:constants}
			\left\{\begin{aligned}
				\bar C_0&=\sup_{(\hat c,\check c)\in S}\|(G-\GMetric)[\hat c,\check c]\|_{L(W^m_\theta,(W^m_\theta)')},\\
				\bar C_1&=\sup_{(\hat c,\check c)\in S,\,i\in\{1,2\}}\|(G_{,i}-\GMetric_{,i})[\hat c,\check c]\|_{L(W^m_\theta,L(W^m_\theta,(W^m_\theta)'))},\\
				C_1&=\sup_{(\hat c,\check c)\in S,\,i\in\{1,2\}}\|G_{,i}[\hat c,\check c]\|_{L(W^m_\theta,L(W^m_\theta,(W^m_\theta)'))},\\
				C_2&=\sup_{(\hat c,\check c)\in S,\,i,j\in\{1,2\}}\|G_{,ij}[\hat c,\check c]\|_{L(W^m_\theta,L(W^m_\theta,L(W^m_\theta,(W^m_\theta)')))},\\
				L_1&=\sup_{(\hat c,\check c)\in S,\,i,j\in\{1,2\}}\Vert G_{,i}[\hat c, \hat c ]- G_{,i}[ \check c , \check c ] \Vert_{L(W^m_\theta,L(W^m_\theta,(W^m_\theta)'))}/\Vert  \hat c - \check c \Vert_{W^m_\theta}.
			\end{aligned}\right.
		\end{equation}
		Exploiting \eqref{eq:estimatesGeps}, we will use that there exists a constant $\lambda>0$ depending on $\cpath$ and $r$ with
		\begin{equation}\label{eqn:constantsBounds}
			\left\{\begin{aligned}
				\bar C_0&=0,&\bar C_1&=0,&C_1&\leq\lambda,&C_2&\leq\lambda,&L_1&\leq\lambda&\text{if }\W=\WEpsFree,\\
				\bar C_0&\leq\lambda\epsilon,&\bar C_1&\leq\lambda\epsilon,&C_1&\leq\lambda,&C_2&\leq\lambda/\epsilon,&L_1&\leq\lambda&\text{if }\W=\WEps{}.
			\end{aligned}\right.
		\end{equation}

		In what follows, we denote $\tau = \tfrac1K$, $t_k=  k\tau$, $v_k = \tfrac{c_k-c_{k-1}}{\tau}$,  ${u_k = \tfrac{\cpath(t_k)-\cpath(t_{k-1})}{\tau}}$ and introduce the error quantities
		$e_k \coloneqq c_k-\cpath(t_k)$ for the position and $e^v_k\coloneqq \tfrac{e_k-e_{k-1}}{\tau} = v_{k}-u_{k}$ for the curve velocity.
		We proceed in three steps.
		
		\emph{Step 1 (reformulation of geodesic equation).} By \eqref{eq:ELSobolev} the discrete Euler--Lagrange equation reads
		\begin{align}\label{eq:ELExpDiscrete}
			0 &= \tfrac1{\tau^{2}}\left(\W_{,2}[c_{k-1},c_{k}](\psi_k) + \W_{,1}[c_{k},c_{k+1}](\psi_k)\right)\\
			&= \tfrac2{\tau} \left(G[c_{k-1},c_k] (v_k,\psi_k) - G[c_{k},c_{k+1}]  (v_{k+1},\psi_k) \right)\nonumber  \\
			& \quad + {G_{,2}}[c_{k-1},c_k](\psi_k) (v_k,v_k) + {G_{,1}}[c_{k},c_{k+1}](\psi_k)  (v_{k+1},v_{k+1})\nonumber
		\end{align}
		for all $\psi_k\in W^m_\theta$.
		Furthermore, in the remainder of this step, we will derive the following transformation of the continuous Euler--Lagrange equation \eqref{eq:geodesicIVP},
		\begin{align}\label{eq:ELExpCont}
			0 &= -2 \tfrac{\mathrm{d}}{\mathrm{d}t} \left. \left(g_{\cpath(t)} (\dot \cpath(t), \psi_k)\right)\right\vert_{t=t_k} + D_c\, g_{\cpath(t_k)} (\psi_k) (\dot \cpath(t_k),\dot \cpath(t_k)) 
			\\
			&= \!-\! \tfrac{2}{\tau} \!\left(g_{\cpath(t_{k})}(u_{k\!+\!1},\psi_k)\!-\!  g_{\cpath(t_{k\!-\!1})}(u_{k},\psi_k)\right)
									\!+\!  D_c g_{\cpath(t_{k})}(\psi_k) (u_{k},u_{k}) \!+\! O(\tau \Vert \psi_k\Vert_{W^m_\theta}) \nonumber \\
			&=- \tfrac{2}{\tau} \left(\GMetric[\cpath(t_{k}),\cpath(t_{k})](u_{k+1},\psi_k)-  \GMetric[\cpath(t_{k-1}),\cpath(t_{k-1})](u_{k},\psi_k)\right) \nonumber  \\
			&\quad +  \left({\GMetric_{,1}}[\cpath(t_{k}),\cpath(t_{k})]+{\GMetric_{,2}}[\cpath(t_{k}),\cpath(t_{k})]\right)(\psi_k) (u_{k},u_{k})
			+ O(\tau \Vert \psi_k\Vert_{W^m_\theta}) \nonumber \\
			&=- \tfrac{2}{\tau} \left(G[\cpath(t_{k}),\cpath(t_{k})](u_{k+1},\psi_k)-  G[\cpath(t_{k-1}),\cpath(t_{k-1})](u_{k},\psi_k)\right) \nonumber  \\
			&\quad +\!   \left(\!{G_{,1}}[\cpath(t_{k}),\cpath(t_{k})]\!+\!{G_{,2}}[\cpath(t_{k}),\cpath(t_{k})]\right)\!(\psi_k)\! (u_{k},u_{k})
			\!+\! O((\bar C_0\!+\!\bar C_1 \!+\! \tau) \Vert \psi_k\Vert_{W^m_\theta}\!) \nonumber \\
			&= - \tfrac{2}{\tau} \left(G[\cpath(t_{k}),\cpath(t_{k})](u_{k+1},\psi_k)-  G[\cpath(t_{k-1}),\cpath(t_{k-1})](u_{k},\psi_k)\right) \nonumber \\
			&\quad
			+  {G_{,1}}[\cpath(t_{k}),\cpath(t_{k})](\psi_k) (u_{k+1},u_{k+1}) + {G_{,2}}[\cpath(t_{k}),\cpath(t_{k})] (\psi_k)  (u_{k},u_{k}) \nonumber \\
			&\quad + O\left(\left(\bar C_0+\bar C_1+(1+C_1)\tau\right) \Vert \psi_k\Vert_{W^m_\theta}\right)
			. \nonumber
		\end{align}
		The second equality follows from the smoothness of the geodesic and the Riemannian metric, the third equality from the definition of $\GMetric$.
		The fourth equality uses the definition of $\bar C_1$ as well as the estimate
		\begin{align*}
			&\quad\tfrac1\tau\left[(\GMetric-G)[\cpath(t_{k}),\cpath(t_{k})](u_{k+1},\psi_k)-  (\GMetric-G)[\cpath(t_{k-1}),\cpath(t_{k-1})](u_{k},\psi_k)\right]\\
			&=(\GMetric\!-\!G)[\cpath(t_{k}),\cpath(t_{k})](\tfrac{u_{k+1}\!-\!u_k}\tau,\psi_k)
			+\tfrac{\left((\GMetric\!-\!G)[\cpath(t_{k}),\cpath(t_{k})]\!-\!(\GMetric\!-\!G)[\cpath(t_{k-1}),\cpath(t_{k-1})]\right)(u_k,\psi_k)}\tau\\
			&=O(\bar C_0\|\psi_k\|_{W^m_\theta})+O(\bar C_1\|\tfrac{\cpath(t_k)-\cpath(t_{k-1})}\tau\|_{W^m_\theta}\|\psi_k\|_{W^m_\theta})\\
			&=O((\bar C_0+\bar C_1)\|\psi_k\|_{W^m_\theta}),
		\end{align*}
		exploiting the boundedness of $(u_{k+1}-u_k)/\tau$ and $(\cpath(t_k)-\cpath(t_{k-1}))/\tau$ as well as that the Lipschitz constant of $\GMetric-G$ is $2\bar C_1$.
		The last equality finally uses the estimate
		\begin{align*}
			&\left|{G_{,1}}[\cpath(t_{k}),\cpath(t_{k})](\psi_k) (u_{k},u_{k})-{G_{,1}}[\cpath(t_{k}),\cpath(t_{k})](\psi_k) (u_{k+1},u_{k+1})\right|\\
			&=\tfrac\tau2\!\left|{G_{\!,1}}[\cpath(t_{k}),\cpath(t_{k})](\psi_k)(\tfrac{u_{k\!+\!1}\!-\!u_k}\tau,u_{k\!+\!1}\!\!+\!u_k\!)\!+\!{G_{\!,1}}[\cpath(t_{k}),\cpath(t_{k})](\psi_k)(u_{k\!+\!1}\!\!+\!u_k,\!\tfrac{u_{k\!+\!1}\!-\!u_k}\tau\!)\right|\\
			&=O(C_1\tau\|\psi_k\|_{W^m_\theta}).
		\end{align*}
																																		\emph{Step 2 (consistency).}
		Now we subtract \eqref{eq:ELExpDiscrete}  from \eqref{eq:ELExpCont} and obtain
		\begingroup
		\allowdisplaybreaks
		\begin{align}\label{eq:EXPdiff}
			0 &= {G_{,1}}[\cpath(t_{k}),\cpath(t_{k})](\psi_k) (u_{k+1},u_{k+1}) - {G_{,1}}[c_{k},c_{k+1}](\psi_k)  (v_{k+1},v_{k+1}) \\
			&\quad +  {G_{,2}}[\cpath(t_{k}),\cpath(t_{k})] (\psi_k)  (u_{k},u_{k}) -  {G_{,2}}[c_{k-1},c_k](\psi_k) (v_k,v_k)
			\nonumber \\
			&\quad - \tfrac{2}{\tau}  
			\big( \left( G[\cpath(t_{k}),\cpath(t_{k})](u_{k+1},\psi_k)-  G[c_{k},c_{k+1}]  (v_{k+1},\psi_k) \right) \nonumber \\
			&\qquad \quad \; -  \left(G[\cpath(t_{k-1}),\cpath(t_{k-1})](u_{k},\psi_k) - G[c_{k-1},c_k] (v_k,\psi_k) \right) \big)
			\nonumber \\
			&\quad+ O\left(\left(\bar C_0+\bar C_1+(1+C_1)\tau\right) \Vert \psi_k\Vert_{W^m_\theta}\right) \nonumber \\
						&= 
			{G_{,1}}[c_{k},c_{k+1}](\psi_k) (u_{k+1},u_{k+1}) - {G_{,1}}[c_{k},c_{k+1}] (\psi_k) (v_{k+1},v_{k+1}) ] \nonumber \\
						&\quad + \left({G_{,1}}[\cpath(t_{k}),\cpath(t_{k})]- {G_{,1}}[c_{k},c_{k}]\right) (\psi_k) (u_{k+1},u_{k+1}) \nonumber \\
			&\qquad \quad  + \left({G_{,1}}[c_{k},c_{k}]-{G_{,1}}[c_{k},c_{k+1}] \right)(\psi_k) (u_{k+1},u_{k+1}) \nonumber \\
						&\quad +  {G_{,2}}[c_{k-1},c_{k}] (\psi_k)  (u_{k},u_{k}) -  {G_{,2}}[c_{k-1},c_k] (\psi_k)(v_k,v_k) \nonumber \\
						&\quad + \left({G_{,2}}[\cpath(t_{k}),\cpath(t_{k})] - {G_{,2}}[c_{k},c_{k}] \right)(\psi_k)  (u_{k},u_{k}) \nonumber \\
			& \qquad \quad + \left({G_{,2}}[c_{k},c_{k}] - {G_{,2}}[c_{k-1},c_{k}] \right)(\psi_k)  (u_{k},u_{k}) \nonumber \\
						&\quad  + \tfrac{2}{\tau} \left(G[c_{k},c_{k}] (v_{k+1}-v_{k},\psi_k)-  G[\cpath(t_{k}),\cpath(t_{k})]  (u_{k+1}-u_{k},\psi_k)\right)  \nonumber \\
						&\quad + \tfrac2\tau(G[c_{k},c_{k+1}] -2G[c_{k},c_k]+G[c_{k},c_{k-1}])(v_k,\psi_k) \nonumber \\
						&\quad + \tfrac2\tau(G[c_{k},c_k]- G[c_{k-1},c_k]+G[c_{k-1},c_{k-1}]-G[c_{k},c_{k-1}])(v_k,\psi_k) \nonumber \\
						&\quad+\tfrac2\tau(G[c_{k},c_{k+1}]-G[c_{k},c_k])(v_{k+1}-v_k,\psi_k)\nonumber\\
						&\quad + \tfrac{2}{\tau} (G[\cpath(t_{k-1}),\cpath(t_{k-1})]-G[\cpath(t_{k}),\cpath(t_{k})]-G[c_{k-1},c_{k-1}]+G[c_{k},c_{k}])(v_k,\psi_k)]\nonumber \\
						&\quad - \tfrac{2}{\tau} [(G[\cpath(t_{k-1}),\cpath(t_{k-1})]-G[\cpath(t_{k}),\cpath(t_{k})])(e^v_k,\psi_k)\nonumber \\
			&\quad + O\left(\left(\bar C_0+\bar C_1+(1+C_1)\tau\right) \Vert \psi_k\Vert_{W^m_\theta}\right) \nonumber \\
			&= \mathrm{I}\!+\!\mathrm{II}\!+\!\mathrm{III}\!+\!\mathrm{IV}\!+\!\mathrm{V}\!+\! \mathrm{VI}\!+\!\mathrm{VII}\!+\!\mathrm{VIII}\!+\!\mathrm{IX}\!+\!\mathrm{X}\!+\!O\!\left(\left(\bar C_0\!+\!\bar C_1\!+\!(1\!+\!C_1\!)\tau\right)\! \Vert \psi_k\Vert_{W^m_\theta}\right).\nonumber
		\end{align}
		\endgroup
		Next, we estimate the different terms separately, again using the regularity of the continuous geodesic path
		and the abbreviations
		\begin{equation*}
			l^c_{k}(s) \coloneqq (1-s)\cpath(t_{k})+ s \cpath(t_{k+1}),\qquad l^d_{k}(s) \coloneqq (1-s) c_{k}+ s c_{k+1}.
		\end{equation*}
		Exploiting the relations
		\begin{align*}
			\|c_{k\!+\!1}\!-\!c_k\|_{W_\theta^m}&=\tau\|v_{k\!+\!1}\|_{W_\theta^m},\\
			\|v_{k}\|_{W_\theta^m}&\leq\|u_{k}\|_{W_\theta^m}\!+\!\|e^v_{k}\|_{W_\theta^m}=O(1\!+\!\|e^v_{k}\|_{W_\theta^m}),\\
			\|v_{k\!+\!1}\!-\!v_k\|_{W_\theta^m}&\leq\|u_{k\!+\!1}\!-\!u_k\|_{W_\theta^m}\!+\!\|e^v_{k\!+\!1}\!-\!e^v_{k}\|_{W_\theta^m}=O(\tau\!+\!\|e^v_{k\!+\!1}\|_{W_\theta^m}\!+\!\|e^v_{k}\|_{W_\theta^m}),
		\end{align*}
		we obtain
		\begingroup
		\allowdisplaybreaks
		\begin{align*}
						\vert \, \mathrm{I} \,\vert &=\vert
			{G_{,1}}[c_{k},c_{k\!+\!1}](\psi_k) (\!-\!e^v_{k\!+\!1},2u_{k\!+\!1}\!+\!e^v_{k\!+\!1})\!+\!{G_{,1}}[c_{k},c_{k\!+\!1}](\psi_k) (2u_{k\!+\!1}\!+\!e^v_{k\!+\!1},\!-\!e^v_{k\!+\!1})\vert/2  \\
			&=O\left(C_1 \left(\Vert e^v_{k+1} \Vert_{W^m_\theta} + \Vert e^v_{k+1} \Vert^2_{W^m_\theta}\right)  \Vert \psi_k  \Vert_{W^m_\theta}\right) ,\\
						\vert \, \mathrm{II} \,\vert &\leq L_1 \Vert \cpath(t_k) - c_k \Vert_{W^m_\theta} \Vert \psi_k \Vert_{W^m_\theta} \Vert u_{k+1} \Vert^2_{W^m_\theta} + C_2 \Vert c_k -c_{k+1} \Vert _{W^m_\theta}\Vert \psi_k \Vert_{W^m_\theta} \Vert u_{k+1} \Vert^2_{W^m_\theta} \\
			&=O\left(\left(L_1\Vert e_k  \Vert_{W^m_\theta} + C_2 \tau(1+\|e_{k+1}^v\|_{W^m_\theta}) \right)\Vert \psi_k  \Vert_{W^m_\theta} \right), \\
			\vert \, \mathrm{III} \,\vert &=\vert
			{G_{,2}}[c_{k\!-\!1},c_{k}] (\psi_k) (\!-\!e^v_{k},2u_{k}\!+\!e^v_{k})\!+\!{G_{,2}}[c_{k\!-\!1},c_{k}] (\psi_k) (\!-\!e^v_{k},2u_{k}\!+\!e^v_{k})
			\vert/2  \\
			&=O\left(C_1 \left(\Vert e^v_{k} \Vert_{W^m_\theta} + \Vert e^v_{k} \Vert^2_{W^m_\theta}\right)  \Vert \psi_k  \Vert_{W^m_\theta}\right) ,\\
			\vert \, \mathrm{IV} \,\vert &\leq L_1 \Vert \cpath(t_k) - c_k \Vert_{W^m_\theta} \Vert \psi_k \Vert_{W^m_\theta} \Vert u_{k} \Vert^2_{W^m_\theta} + C_2 \Vert c_k -c_{k-1} \Vert _{W^m_\theta}\Vert \psi_k \Vert_{W^m_\theta} \Vert u_{k} \Vert^2_{W^m_\theta} \\
			&=O\left(\left( L_1\Vert e_k  \Vert_{W^m_\theta} + C_2 \tau(1+\|e_{k}^v\|_{W^m_\theta}) \right)\Vert \psi_k  \Vert_{W^m_\theta} \right), \\
						\mathrm{V} &= 2 G[c_k,c_k](\tfrac{e^v_{k+1}-e^v_{k}}\tau,\psi_k)+2(G[c_k,c_k]-G[\cpath(t_k),\cpath(t_k)])(\tfrac{u_{k+1}-u_{k}}\tau,\psi_k)\\
			&= 2 G[c_k,c_k](\tfrac{e^v_{k+1}-e^v_{k}}\tau,\psi_k)+O(C_1\|e_k\|_{W_\theta^m}\|\psi_k\|_{W_\theta^m}),\\
			\vert \, \mathrm{VI} \,\vert
			&\textstyle=2\Big|\int_0^1[G_{,2}[c_k,l_{k-1}^d(s)](v_k)-G_{,2}[c_k,l_{k}^d(s)](v_{k})](v_k,\psi_k)\\
			&\qquad\quad+G_{,2}[c_k,l_{k}^d(s)](v_{k}-v_{k+1})(v_k,\psi_k)\,\d s\Big|\\
			&\textstyle=2\tau\left|\int_0^1\int_0^1G_{,22}[c_k,tl_{k\!-\!1}^d(s)\!+\!(1\!-\!t)l_k^d(s)](v_k)(v_k\!+\!s(v_{k\!+\!1}\!-\!v_k))(v_k,\psi_k)\,\d t\,\d s\right|\\
			&\quad+O(C_1\|v_{k}-v_{k+1}\|_{W_\theta^m}\|\psi_k\|_{W_\theta^m})\\
			&=O([C_2\tau(1+\|e^v_k\|_{W_\theta^m})^2(1+\|e^v_k\|_{W_\theta^m}+\|e^v_{k+1}\|_{W_\theta^m})\\
			&\quad\qquad+C_1(\tau+\|e^v_k\|_{W_\theta^m}+\|e^v_{k+1}\|_{W_\theta^m})]\|\psi_k\|_{W_\theta^m}),\\
			\vert \, \mathrm{VII} \,\vert
			&\textstyle=2\tau\left|\int_0^1\int_0^1G_{,12}[l_k^d(t),l_k^d(s)](v_k)(v_k)(v_k,\psi_k)\,\d t\,\d s\right|\\
			&=O(C_2\tau(1+\|e_k^v\|_{W_\theta^m})^3\|\psi_k\|_{W_\theta^m}),\\
			\vert \, \mathrm{VIII} \,\vert
			&=\textstyle2\left|\int_0^1G_{,2}[c_k,l_k^d(s)](v_{k+1})(v_{k+1}-v_k,\psi_k)\,\d s\right|\\
			&=O(C_1(1+\|e_{k+1}^v\|_{W_\theta^m})(\tau+\|e_{k+1}^v\|_{W_\theta^m}+\|e_{k}^v\|_{W_\theta^m})),\\
			\vert \, \mathrm{IX} \,\vert
			&\textstyle=2 \big|\int_0^1(G_{,1}+G_{,2})[l_{k-1}^d(s),l_{k-1}^d(s)](v_k)(v_k,\psi_k)\\
			&\qquad-(G_{,1}+G_{,2})[l_{k-1}^c(s),l_{k-1}^c(s)](u_k)(v_k,\psi_k)\,\d s\big|\\
			&\textstyle=2 \big|\int_0^1((G_{,1}\!\!+\!G_{,2})[l_{k\!-\!1}^d(s),l_{k\!-\!1}^d(s)]\\
			&\qquad \qquad  -(G_{,1}\!\!+\!G_{,2})[l_{k\!-\!1}^c(s),l_{k\!-\!1}^c(s)])(u_k)(v_k,\psi_k)\,\d s\big|\\
			&\quad+O(C_1\|e_k^v\|_{W_\theta^m}\|\psi_k\|_{W_\theta^m})\\
			&=O\Big((L_1(\|e_k\|_{W_\theta^m}\!+\!\|e_{k\!+\!1}\|_{W_\theta^m}\!)(1\!+\!\|e_k^v\|_{W_\theta^m}\!)\|\psi_k\|_{W_\theta^m}\\
			&\qquad \; +C_1\|e_k^v\|_{W_\theta^m}\|\psi_k\|_{W_\theta^m}\Big),\\
			\vert \, \mathrm{X} \,\vert &\textstyle=O(C_1\|e^v_k\|_{W_\theta^m}).
		\end{align*}
		\endgroup
		
						\emph{Step 3 (stability).}
		From \eqref{eq:EXPdiff} together with the above estimates of the individual terms and the uniform coercivity of $G[c_k,c_k]$, 
		we obtain the error propagation equation
		\begin{multline*}
			\frac{e_{k+1}^v-e_k^v}\tau
			=O\Big(\bar C_0\!+\!\bar C_1\!+\!\tau\!+\!C_2\tau\!+\!(L_1\!+\!C_1\!+\!\tau C_2)\left(1\!+\! \Vert e_k^v  \Vert_{W^m_\theta} \!+\! \Vert e_{k\!+\!1}^v  \Vert_{W^m_\theta}\right)^2\\
			\cdot\left( \tau\!+\!\Vert e_k  \Vert_{W^m_\theta} \!+\! \Vert e_{k\!+\!1}  \Vert_{W^m_\theta}\!+\!\Vert e_k^v  \Vert_{W^m_\theta} \!+\! \Vert e_{k\!+\!1}^v  \Vert_{W^m_\theta}\right)\Big).
		\end{multline*}
				If $\|e_j^v\|_{W^m_\theta}\leq1$ for $j=0,\ldots,k+1$, which we will verify at the end of the proof, then the above implies
		\begin{align*} 
			\frac{e_{k+1}-e_k}\tau &= e_{k+1}^v,\\
			\frac{e_{k+1}^v-e_k^v}\tau &= 
			O\!\left(\bar C_0\!+\!\bar C_1\!+\!\tau\!+\!C_2\tau\!+\! (L_1\!+\!C_1\! + \!\tau C_2)\!\left(\!\tau\!\!+\!\!\sum_{i=k}^{k\!+\!1}\!
			\left(\|e_i\|_{W^m_\theta} \! + \! \|e_i^v\|_{W^m_\theta}\right)\!\right)\!\right).
		\end{align*}
				Abbreviating $\mathrm{Err}_j\coloneqq\|e_{j}\|_{W^m_\theta}+\|e_{j}^v\|_{W^m_\theta}$ and noting that $\tau\leq1$ by definition, one obtains
		that there exist constants $\alpha,\gamma>0$ such that for $j=1,\ldots,k$
		\begin{multline*}
			\left(1-(L_1+C_1+\tau C_2)\tau\alpha\right)\mathrm{Err}_{j+1}\\
			\leq \left(1+(L_1+C_1+\tau C_2)\tau\alpha\right)\mathrm{Err}_j
			+ \tau(\bar C_0+\bar C_1+(1+L_1+C_1+C_2)\tau)\gamma
		\end{multline*}
		and using that $1-(L_1+C_1+\tau C_2)\tau\alpha>\frac12$ for 
		$\tau<1/\max(2\sqrt{C_2\alpha},4(L_1+C_1)\alpha)$
		\begin{align*}
			\mathrm{Err}_{j+1} 
			&\leq \frac{1+(L_1+C_1+\tau C_2)\tau\alpha}{1-(L_1+C_1+\tau C_2)\tau\alpha}\mathrm{Err}_j 
			+ \frac{\tau(\bar C_0+\bar C_1+(1+L_1+C_1+C_2)\tau)\gamma}{1-(L_1+C_1+\tau C_2)\tau\alpha} \\
			&=  \left(1+C_S\tau\right) \mathrm{Err}_j + C_A \tau 
		\end{align*}
		with 
		\begin{align*}
			C_S &\coloneqq 2 \frac{(L_1+C_1+\tau C_2)\alpha}{1-(L_1\!+\!C_1\!+\!\tau C_2)\tau\alpha} \leq 4 (L_1\!+\!C_1\!+\!\tau C_2)\alpha, \\
			C_A &\coloneqq \frac{(\bar C_0\!+\!\bar C_1\!+\!(1\!+\!L_1\!+\!C_1\!+\!C_2)\tau)\gamma}{1\!-\!(L_1\!+\!C_1\!+\!\tau C_2)\tau\alpha}
			\leq 2 (\bar C_0+\bar C_1+(1+L_1+C_1+C_2)\tau)\gamma.
		\end{align*}
		Thus, for $\W=\WEps{}$ we estimate 
		\begin{align*} 
			C_S &\leq 4 \lambda \alpha \left(2 +\tfrac{\tau}{\epsilon}\right)\,,\ C_A \leq \left(4 \lambda \epsilon + 2\tau + 4 \lambda \tau + 2 \lambda  \tfrac{\tau}{\epsilon} \right)\gamma,
		\end{align*}
		whereas for $\W=\WEpsFree$ we get 
		\begin{align*} 
			C_S &\leq 4 \lambda \alpha (2 +\tau),\ C_A \leq 2(1+3\lambda) \gamma \tau.
		\end{align*}
		
		Furthermore, for the initial time step, one obtains
		\begin{align*}
			\Vert e_1^v \Vert_{W^m_\theta}  			&=\left\Vert \frac{c_1-c_0}{\tau} - \frac{c(\tau)-c(0)}{\tau} \right\Vert_{W^m_\theta}
			=\left\Vert \dot c(0) - \frac{c(\tau)-c(0)}{\tau}  \right\Vert_{W^m_\theta} =O(\tau).
		\end{align*}
		This together with 
		$\mathrm{Err}_{1} = \Vert e_1 \Vert_{W^m_\theta} + \Vert e_1^v \Vert_{W^m_\theta} \leq
		\Vert e_0 + \tau e_1^v \Vert_{W^m_\theta} + \Vert e_1^v \Vert_{W^m_\theta}$
		and  $e_0 =0$ implies $ \mathrm{Err}_{1} \leq C_I \tau$ for a constant $C_I>0$.
		
		Now, the discrete Gronwall lemma applies. Explicitly,  the sequence of estimates
		$$\mathrm{Err}_{j+1} \leq (1+C_S \tau) \mathrm{Err}_{j} + C_A\tau$$ for $j=1,\ldots,k$ implies by induction 
		\begin{align}\label{eqn:errorBound}
			\mathrm{Err}_{j} \leq \frac{C_A}{C_S} \left(e^{C_S (j-1)\tau} -1\right) + e^{C_S (j-1)\tau} \mathrm{Err}_{1}.
		\end{align}
		for $j=1,\ldots,k$. For $j=K$ this would be the sought result due to $\|\Exp^K_{c_A}(1,v)-\exp_{c_A}(v)\|_{W^m_\theta}\leq\mathrm{Err}_K$.
		This set of inequalities was derived contingent on the well-definedness of the $c_j$ and the conditions
		$$c_j\in N_{r/2}(\cpath),\quad
		\|c_j-c_{j-1}\|_{W^m_\theta}\leq r/2,\quad
		\|e_j^v\|_{W^m_\theta}\leq1
		\quad\text{ for }j=1,\ldots,k.$$
		It thus remains to verify that these conditions and thus the inequalities hold for $k=1,\ldots,K$ with $K$ large enough, which we will do by induction in $k$.
		
		For $k=1$ this holds true for $K$ large enough due to $c_1=c_A+\tau\dot\cpath(0)$ and $e^v_1=\dot\cpath(0)-\tfrac{\cpath(\tau)-\cpath(0)}{\tau}$.
		Next, assume the result to hold for $k$
		and assume without loss of generality that $\epsilon$ is small and $K=\frac1\tau$ large enough
		so that the right-hand side of \eqref{eqn:errorBound} is smaller than $\min(\frac12,\frac r4)$
		and so that $(U+1)\tau\leq\min(\tfrac{C\sqrt\epsilon}2,\tfrac r8)$, $\mathcal C(4(U+1)^2\tau^2+\tfrac{16(U+1)^4\tau^4}{\epsilon^2})\leq\tfrac r8$, and $\mathcal C(4(U+1)^2\tau+\tfrac{16(U+1)^4\tau^3}{\epsilon^2})+A\tau\leq\frac12$
		with $C,\mathcal C$ from \cref{thm:existenceExp2Sobolev} for the choice $\mathfrak K=N_r(\cpath)$ and $U\coloneqq\max_{t\in[0,1]}\|\dot\cpath(t)\|_{W^m_\theta}$, $A\coloneqq\max_{t\in[0,1]}\|\ddot\cpath(t)\|_{W^m_\theta}$.
		Note that this can be achieved by simply taking $K>\kappa/\epsilon$ for $\kappa$ large enough and independent of $\epsilon$
		(recall that in the case $\W=\WEpsFree$ we set $\epsilon=1$).
		By the induction hypothesis, $\mathrm{Err}_k=\|e_{k}\|_{W^m_\theta}+\|e_{k}^v\|_{W^m_\theta}\leq\min(\frac12,\frac r4)$, thus
		\begin{equation*}
			\|c_k-c_{k-1}\|_{W^m_\theta}
			=\tau\|v_k\|_{W^m_\theta}
			\leq\tau\|u_k\|_{W^m_\theta}+\tau\|e^v_k\|_{W^m_\theta}
			\leq(U+1)\tau
			\leq\min(\tfrac{C\sqrt\epsilon}2,\tfrac r8).
		\end{equation*}
		Therefore, $c_{k+1}=\Exp^2_{c_{k-1}}(1,2(c_k-c_{k-1}))$ is well-defined by \cref{thm:existenceExp2Sobolev} with
		\begin{align*}
			&\|c_{k+1}-2c_{k}+c_{k-1}\|_{W^m_\theta} = \|\Exp^2_{c_{k-1}}(1,2(c_k-c_{k-1}))-c_{k-1} - 2(c_{k}-c_{k-1})\|_{W^m_\theta}\\
			&\leq\mathcal C\left(4\|c_k-c_{k-1}\|_{W^m_\theta}^2+\tfrac{16\|c_k-c_{k-1}\|_{W^m_\theta}^4}{\epsilon^2}\right)
			\leq\mathcal C\left(4(U+1)^2\tau^2+\tfrac{16(U+1)^4\tau^4}{\epsilon^2}\right)
			\leq\tfrac r8.
		\end{align*}
		Consequently we obtain
		$$\|c_{k+1}-c_k\|_{W^m_\theta}\leq\|c_k-c_{k-1}\|_{W^m_\theta}+\|c_{k+1}-2c_k+c_{k-1}\|_{W^m_\theta}\leq\tfrac r8+\tfrac r8=\tfrac r4,$$
		as desired.
		Furthermore,
		\begin{align*}
			\|c_{k+1}-\cpath(t_{k})\|_{W^m_\theta}
			&\leq\|c_{k+1}-c_k\|_{W^m_\theta}+\|c_k-\cpath(t_k)\|_{W^m_\theta}\\
			&\leq\tfrac r4+\|e_k\|_{W^m_\theta}
			\leq\tfrac r4+\tfrac r4
			=\tfrac r2
		\end{align*}
		so that $c_{k+1}\in N_{r/2}(\cpath)$ as desired.
		Finally, using
		\begin{align*}
			e^v_{k+1}-e^v_{k} = \tfrac{e_{k+1}-e_k}{\tau} - \tfrac{e_{k}-e_{k-1}}{\tau}
			= \tfrac{c_{k+1}- 2c_k + c_{k-1}}{\tau} - \tfrac{c(t_{k+1}-2c(t_k) + c(t_{k-1})}{\tau}
		\end{align*}
		one obtains
		\begin{align*}
			\|e^v_{k+1}-e^v_{k}\|_{W^m_\theta}
			& =\left\|\tfrac{c_{k+1}-2c_k+c_{k-1}}\tau-\tfrac{\cpath(t_{k+1})-2\cpath(t_k)+\cpath(t_{k-1})}\tau\right\|_{W^m_\theta}\\
			&\leq\mathcal C\left(4(U+1)^2\tau+\tfrac{16(U+1)^4\tau^3}{\epsilon^2}\right)+A\tau
			\leq\tfrac12,
		\end{align*}
		which together with $\|e^v_{k}\|_{W^m_\theta}\leq\frac12$ yields $\|e^v_{k+1}\|_{W^m_\theta}\leq1$, as desired, thereby closing the proof by induction.
	\end{proof}
	
			\subsection{Proof of consistency of the covariant difference quotients} \label{sec:appCovDeriv}
	
	\begin{proof}[Proof of \cref{lemma:consistencyCurveApproxDir} ]
				We consider $t\mapsto \w(t)= w\circ\cpath(t)$ to be the vector field along $\cpath(t) =c+ tv$.
		By \eqref{eq:ELeqInverseTranspCurves} and \eqref{eq:CovPlus} we have 
		$\Covtdir{\tau}{v}  w (c) = \tau^{-2} ( \yz(\tau)- c -\tau \w(0))$.
		Introducing $\Feps: \R \times W^{m}_\theta \times W^{m}_\theta \to (W^{m}_\theta)'\times(W^{m}_\theta)'$ with
		\begin{align}\label{eq:FepsCovDiff}
			\Feps[\tau, \yz, \yc](r) = 
			\begin{pmatrix} 
				\W_{,2}\myArg{\yz}{\yc}(r) + \W_{,1}\myArg{\yc}{c + \tau v}(r) \\
				\W_{,2}\myArg{c}{\yc}(r) + \W_{,1}[\yc,c + \tau v+ \tau \w(\tau)](r)
			\end{pmatrix}
		\end{align}
		for $r\in W^{m}_\theta$, condition \eqref{eq:ELeqInverseTranspCurves}
		can be rewritten as $\Feps[\tau, \yz(\tau), \yc(\tau)](r) = 0$ for all $r\in W^{m}_\theta$.
		We have
		\begin{align}
			\yc(\tau) &=  c + \tfrac{1}{2} \Log^2_c(c + \tau v +\tau \w(\tau)), \\
			\yz(\tau) &= \Exp^{2}_{c + \tau v } (1, 2(\yc(\tau) - c - \tau v  )). \label{eq:ExpLog}
		\end{align}
		Hence, \cref{thm:existenceExp2Sobolev} guarantees for $\|v\|_{W_\theta^m}$ and $\tau/\sqrt\epsilon$ small enough (depending on $\mathfrak K$ and $\sup_{t\in[0,1]}\|\w(t)\|_{W_\theta^m}$)
		the well-definedness of $\Covtdir{\tau}{v}$.
		
		In what follows, we skip the tangent vector $r$ to which $\Feps$ is applied, remembering that in all derivatives of $\W$ the first differentiation is in direction $r$.
		To compare the discrete covariant difference quotient with the continuous covariant derivative, we compute a Taylor expansion of $\yz$. For this, we differentiate $\Feps$ with respect to $\tau$ and obtain
		\begin{align}\label{eq:dTauFequalsZeroeps}
			0= \frac{\mathrm{d}}{\mathrm{d} \tau} \Feps[\tau, \yz(\tau), \yc(\tau)] =  \partial_\tau \Feps[\tau, \yz(\tau), \yc(\tau)] +  \partial_{(\yz,\yc)} \Feps[\tau, \yz(\tau), \yc(\tau)] 
			\begin{pmatrix} \dot \yz(\tau)\\ \dot \yc(\tau) \end{pmatrix},
		\end{align}
		where the dot expresses differentiation with respect to $\tau$. 
		Inserting \eqref{eq:FepsCovDiff} into \eqref{eq:dTauFequalsZeroeps} we obtain the block operator equation 
		\begin{align}  \nonumber
			&\begin{pmatrix} 
				\W_{,21}\myArg{\yz}{\yc} & \W_{,22}\myArg{\yz}{\yc} + \W_{,11}\myArg{\yc}{c+ \tau v} \\
				0 & \W_{,22}\myArg{c}{\yc} + \W_{,11}\myArg{\yc}{c + \tau v+ \tau \w}
			\end{pmatrix}
			\begin{pmatrix} \dot \yz\\ \dot \yc \end{pmatrix} \\
			&= - 
			\begin{pmatrix} 
				\W_{,12}\myArg{\yc}{c+\tau v}(v)  \\
				\W_{,12}\myArg{\yc}{c+\tau v+ \tau \w} (v+ \w + \tau \dot \w)
			\end{pmatrix}
			, \label{eq:IFTderivativeCurve}
		\end{align}
		where we abbreviated $\yc = \yc(\tau)$, $\yz = \yz(\tau)$ and $\w = \w(\tau)$.
		Below we use this equation to Taylor expand $\yc$ and $\yz$ around $\tau=0$,
		where $(\yz(0), \yc(0))=(c, c)$.
		We will use the reformulation $\W[\hat c,\check c]=G[\hat c,\check c](\check c-\hat c,\check c-\hat c)$ from \eqref{eq:defGeps} or \eqref{eq:defGepsFree}.
		Taking into accout that  $2G[c,c] = \W_{,11}\myArg{c}{c} = \W_{,22}\myArg{c}{c} = - \W_{,12}\myArg{c}{c}$ we can evaluate the operator equation \eqref{eq:IFTderivativeCurve} at $\tau=0$ and achieve
		\begin{align} \label{eq:IFTderivativeZeroCurve}
			\begin{pmatrix} 
				-2G[c,c] & 4G[c,c] \\
				0 & 4G[c,c]
			\end{pmatrix}
			\begin{pmatrix} \dot \yz(0)\\ \dot \yc(0) \end{pmatrix}
			= 
			\begin{pmatrix} 
				2G[c,c] \,v \\
				2G[c,c] \,v + 2G[c,c] \,\w(0)
			\end{pmatrix}
			,
		\end{align}
		which leads to $\dot \yc(0)=\tfrac12 (v+\w(0))$ and 
		$\dot \yz(0)
		= \w(0)$.
		Next, we differentiate \eqref{eq:IFTderivativeCurve} once more in $\tau$ and obtain
		\begin{align}  \label{eq:IFTsecondDerivativeCurve}
			&\begin{pmatrix} 
				\W_{,21}\myArg{\yz}{\yc} & \W_{,22}\myArg{\yz}{\yc} + \W_{,11}\myArg{\yc}{c +\tau v} \\
				0 & \W_{,22}\myArg{c}{\yc} + \W_{,11}\myArg{\yc}{c + \tau v+ \tau \w}
			\end{pmatrix}
			\begin{pmatrix} \ddot \yz\\ \ddot \yc \end{pmatrix}
			= 
			\begin{pmatrix} 
				R_1(\tau) \\
				R_2(\tau) 
			\end{pmatrix} ,
		\end{align}
		where  $R_1$ and $R_2$ are functionals mapping
		$\tau$
		into $(W^{m}_\theta)'$ and act on $r \in W^{m}_\theta$ with
		{
			\allowdisplaybreaks
			\begin{align*}
				R_1(\tau)(r) =& - \W_{,211}\myArg{\yz}{\yc}(r,\dot \yz,\dot \yz) -  \W_{,212}\myArg{\yz}{\yc}(r,\dot \yz,\dot \yc)- \W_{,221}\myArg{\yz}{\yc}(r,\dot \yc,\dot \yz) \\
				&- \W_{,222}\myArg{\yz}{\yc}(r,\dot \yc,\dot \yc)
				- \W_{,111}\myArg{\yc}{c + \tau v}(r,\dot \yc,\dot \yc) - \W_{,112}\myArg{\yc}{c +\tau v}(r,\dot \yc,v) \\
				&- \W_{,121}\myArg{\yc}{c +\tau v}(r,v,\dot \yc) - \W_{,122}\myArg{\yc}{c +\tau v}(r,v,v)\\
				=& - \W_{,211}\myArg{\yz}{\yc}(r,\dot \yz,\dot \yz) -  2\W_{,212}\myArg{\yz}{\yc}(r,\dot \yz,\dot \yc) \\
				&- \W_{,222}\myArg{\yz}{\yc}(r,\dot \yc,\dot \yc)
				- \W_{,111}\myArg{\yc}{c + \tau v}(r,\dot \yc,\dot \yc) \\
				&- 2\W_{,112}\myArg{\yc}{c +\tau v}(r,\dot \yc,v)
				- \W_{,122}\myArg{\yc}{c + \tau v}(r,v,v), \\
				R_2(\tau)(r) =& - \W_{,222}\myArg{c}{\yc}(r,\dot \yc,\dot \yc)  - \W_{,111}\myArg{\yc}{c + \tau (v+\w)}(r,\dot \yc,\dot \yc) \\
				&- \W_{,112}\myArg{\yc}{c + \tau (v+\w)}(r,\dot \yc,v+\w+\tau \dot \w) \\
				&- \W_{,121}\myArg{\yc}{c + \tau (v+\w)}(r,v+\w+\tau \dot \w,\dot \yc) \\
				&- \W_{,122}\myArg{\yc}{c + \tau (v+\w)}(r,v+\w+\tau \dot \w,v+\w+\tau \dot \w)\\
				& - \W_{,12}\myArg{\yc}{c + \tau (v+\w)} (r, 2 \dot \w + \tau \ddot \w) \\
				=& - \W_{,222}\myArg{c}{\yc}(r,\dot \yc,\dot \yc)  - \W_{,111}\myArg{\yc}{c + \tau (v+\w)}(r,\dot \yc,\dot \yc) \\
				&- 2\W_{,112}\myArg{\yc}{c + \tau (v+\w)}(r,\dot \yc, v+\w+\tau \dot \w)\\
				&  - \W_{,122}\myArg{\yc}{c +\tau (v+\w)}(r, v+\w+\tau \dot \w,v+\w+\tau \dot \w) \\
				&- \W_{,12}\myArg{\yc}{c +\tau (v+\w)} (r, 2 \dot \w + \tau \ddot \w)  .
			\end{align*}
		}%
		Evaluating \eqref{eq:IFTsecondDerivativeCurve} at $\tau=0$ gives
		\begin{equation*}  			\begin{pmatrix}
				-2G[c,c] &4G[c,c] \\
				0 & 4 G[c,c]
			\end{pmatrix}
			\begin{pmatrix} \ddot \yz(0)\\ \ddot \yc(0) \end{pmatrix}
			=
			\begin{pmatrix} 
				R_1(0) \\
				R_2(0) 
			\end{pmatrix},
		\end{equation*}
		whose solution (using $\dot \yc(0)=\tfrac12 (v+\w(0))$ and $\dot \yz(0) = \w(0)$ from above) reads
		{
			\allowdisplaybreaks
			\begin{align} 				G[c,c] (\ddot \yc(0),r) =  &\tfrac{R_2(0)(r)}4, \notag \\
				G[c,c](\ddot \yz(0),r)  = &\tfrac12 R_2(0)(r) - \tfrac12 R_1(0)(r) \notag\\
				=& \tfrac12\Big(
				[-\W_{,121}-\W_{,122}]\myArg{c}{c}(r,v+\w(0),v+\w(0)) \notag\\
				&\quad+\W_{,212}\myArg{c}{c}(r,\w(0),v+\w(0))
				+\W_{,112}\myArg{c}{c}(r,v+\w(0),v)  \notag\\
				&\quad+\W_{,122}\myArg{c}{c}(r,v,v)
				+\W_{,211}\myArg{c}{c}(r,\w(0),\w(0))
				\Big)\notag\\
				&- \W_{,12}\myArg{c}{c} (r, \dot \w(0)) \notag\\
				=&\tfrac12\Big(
				\quad[-\W_{,121}-\W_{,122}+\W_{,212}+\W_{,211}]\myArg{c}{c}(r,\w(0),\w(0))\notag\\
				&\quad+[-\W_{,121}-\W_{,122}+\W_{,112}+\W_{,122}]\myArg{c}{c}(r,v,v)\notag\\
				&\quad+[-\W_{,121}-\W_{,112}-2\W_{,122}+\W_{,212}+\W_{,112}]\myArg{c}{c}(r,\w(0),v)
				\Big)\notag\\
				&- \W_{,12}\myArg{c}{c} (r, \dot \w(0)) \notag\\
				=&-\tfrac12[\W_{,211}+\W_{,122}]\myArg{c}{c}(r,v,\w(0)) - \W_{,12}\myArg{c}{c} (r, \dot \w(0))  .\label{eq:metricZConsistencyC}
			\end{align}
		}%
		Note that $\W_{,12}[c,c]=\W_{,21}[c,c]$
		implies $(-\W_{,121}-\W_{,122}+\W_{,212}+\W_{,211})\myArg{c}{c} = 0$, which was used three times in the last step. To rewrite \eqref{eq:metricZConsistencyC} in terms of $G$
		we express the derivatives of $\W$ in terms of $G$,
		\begin{align*}
			\W_{,1}[c,\tilde c](\phi) &= G_{,1}[c,\tilde c](\phi)(\tilde c-c,\tilde c-c) - 2 G[c,\tilde c](\phi,\tilde c-c),\\
			\W_{,12}[c,\tilde c](\phi,\chi) &=  G_{,12}[c,\tilde c](\phi,\chi)(\tilde c -c,\tilde c-c)
			+ 	2G_{,1}[c,\tilde c](\phi)(\chi,\tilde c-c) \\
			& \quad -2G_{,2}[c,\tilde c](\chi)(\phi,\tilde c-c) - 2 G[c,\tilde c](\phi,\chi),\\
			\W_{,122}[c,\tilde c](\phi,\chi, \psi) &=  G_{,122}[c,\tilde c](\phi,\chi, \psi)(\tilde c -c,\tilde c-c)  \\
			& \quad +2G_{,12}[c,\tilde c](\phi,\chi)(\psi,\tilde c-c)
			+2G_{,12}[c,\tilde c](\phi,\psi)( \chi,\tilde c-c)  \\
			& \quad + 	2G_{,1}[c,\tilde c](\phi)(\chi,\psi)
			- 2G_{,22}[c,\tilde c](\chi,\psi)(\phi,\tilde c-c)  \\
			& \quad -2G_{,2}[c,\tilde c](\chi)(\phi,\psi) - 2 G_{,2}[c,\tilde c](\psi)(\phi,\chi),\\
			\W_{,122}[c, c](\phi,\chi, \psi) &= 	2G_{,1}[c, c](\phi)(\chi,\psi)
			-2G_{,2}[c, c](\chi)(\phi,\psi) - 2 G_{,2}[c, c](\psi)(\phi,\chi)  .
		\end{align*}
		Analogous calculations lead to
		\begin{align*}
			\W_{,211}[c, c](\phi,\chi, \psi) &= 	2G_{,2}[c, c](\phi)(\chi,\psi)
			-2G_{,1}[c, c](\chi)(\phi,\psi) - 2 G_{,1}[c, c](\psi)(\phi,\chi)  .
		\end{align*}
		Inserting both in \eqref{eq:metricZConsistencyC} leads to
		\begin{align}\label{eq:gepsForm}
			G[c,c](\ddot \yz(0),r)
			= & -({G_{,1}}[c,c] + {G_{,2}}[c,c])(r)(v, \w(0)) \\
			& + ({G_{,1}}[c,c] + {G_{,2}}[c,c])(v)(r, \w(0)) \notag \\
			& + ({G_{,1}}[c,c] + {G_{,2}}[c,c])(\w(0))(r, v)
			+ 2 G [c, c] (r, \dot \w(0))  , \notag
		\end{align}
		which can be solved for $\ddot\yz(0)$.
		Analogously it follows that $\yz$ has as many derivatives as $\w$, which depend multilinearly on $v$ and $\w$ and its derivatives and which are uniformly bounded for $c\in\mathfrak K$ depending on $w$
		(recall that $G$ is smooth).

		On the continuous side, using the definitions \eqref{eq:cov2} and \eqref{eq:ChristoffelOperator} of covariant derivative and Christoffel operator, we obtain
		\begin{align}\label{eq:CovDerivConta}
			\textstyle g_c(\covdir{u} w(c),r)\! &=
			g_c(\dot \w_u(0),r) \!+\! g_c(\Christoffel{c}{\w(0)}{u},r) \\
			&= \GMetric[c,c]( r,\dot \w_u(0) ) 			\!+\! \tfrac12 \tripleArgCommut{D_cg}{r}{\w(0)}{u} \nonumber \\
			&= \GMetric[c,c]( r,\dot \w_u(0) ) \!+\! \tfrac12 \tripleArgCommut{(\GMetric_{,1}[c,c]\!+\! \GMetric_{,2}[c,c])}{r}{\w(0)}{u} \nonumber \\
			&= \GMetric[c,c]( r,\dot \w_u(0) \!-\! \dot \w(0)) \!+\! (\GMetric[c,c] \!-\! G[c,c])(r , \dot \w(0)) \nonumber\\
			& \quad \!+\! \tfrac12\tripleArgCommut{(\GMetric_{,1}[c,c] \!+\! \GMetric_{,2}[c,c] \!-\!  G_{,1}[c,c] \!-\! G_{,2}[c,c])}{r}{\w(0)}{u}\nonumber \\
			&\quad \!+\!\tfrac{\tripleArgCommut{(G_{,1}[c,c] \!+\! G_{,2}[c,c])}{r}{\w(0)}{u-v}}2
			\!+\! \tfrac{(G[c,c] \!-\! g_c)(\ddot \yz(0), r) \!+\! g_c(\ddot \yz(0) , r)}2,\nonumber
		\end{align}
		where $\w_u(\tau) = w(c + \tau u)$, $\w(0) = \w_u(0) = w(c) $,
		$\GMetric$ is given by \eqref{eq:widehatGeps}, and we used the short-hand notation $\tripleArgCommut{B}\phi\chi\psi=-B(\phi)(\chi,\psi)+B(\chi)(\psi,\phi)+B(\psi)(\phi,\chi)$.
		Using the constants from \eqref{eqn:constants} (this time for $S=\{(c,c)\in\mathfrak K\times\mathfrak K\}$) and abbreviating the $C^0$- and $C^1$-norm of $w$ on $\mathfrak K$ by $\|w\|_{C^0}$ and $\|w\|_{C^1}$, respectively, this implies the existence of a constant $\gamma>0$ such that
		\begin{multline*}
			\left|\textstyle g_c(\covdir{u} w(c),r)-  \tfrac12 g_c(\ddot \yz(0),r)\right|
			\leq\big(\gamma\|w\|_{C^1}\|u-v\|_{W_\theta^m}+\bar C_0\|w\|_{C^1}\|v\|_{W_\theta^m}\\
			+\bar C_1\|w\|_{C^0}\|u\|_{W_\theta^m}+C_1\|w\|_{C^0}\|u-v\|_{W_\theta^m}+\bar C_0\|\ddot\yz\|_{W_\theta^m}\big)\Vert r \Vert_{W^m_\theta}.
								\end{multline*}
		Now \eqref{eq:gepsForm} implies $\|\ddot\yz(0)\|_{W_\theta^m}\leq1$ if $v$ is sufficiently small (depending on $\|w\|_{C^0}$, $\|w\|_{C^1}$, and $\mathfrak K$), thus
		\begin{equation}\label{eq:finalCovDeriv}\textstyle
			\ddot\yz(0)=2\covdir{u}w(c)+O\left((1+C_1)\|u-v\|_{W_\theta^m}+\bar C_0+\bar C_1\right).
		\end{equation}
		Next we consider the Taylor expansion of $\tau \mapsto \yz(\tau)$ at $\tau =0$, \ie
		\begin{align*}
			\yz(\tau) &= \yz(0) + \dot \yz(0) \tau + \tfrac12 \ddot \yz(0) \tau^2 + \tfrac16 \dddot \yz(0) \tau^3 + R(\tau)\\
			&\textstyle=c\!+\! w(c) \tau \!+\! \left( \covdir{u} w(c) \!+\! O\!\left((1\!+\!C_1)\|u-v\|_{W_\theta^m}\!+\!\bar C_0\!+\!\bar C_1\right) \right)\,\tau^2 \!+\! \tfrac16 \dddot \yz(0) \tau^3 \!+\! R(\tau)
		\end{align*}
		with the remainder term $R(\tau)=o(\tau^3)$ if $\w \in C^3([-1,1],W_\theta^m)$ and $R(\tau)=O(\tau^4)$ if $\w \in C^4([-1,1],W_\theta^m)$.
		This implies
		\begin{align*}\textstyle
			\covdir{u} w(c) &\textstyle= \frac{\yz(\tau)-c-\tau w(c)}{\tau^2} +O\!\left((1\!+\!C_1)\|u-v\|_{W_\theta^m}\!+\!\bar C_0\!+\!\bar C_1\!+\!\tau\right) \\
						&\textstyle=\Covtdir{\tau}{v}  w(c) + O\!\left((1\!+\!C_1)\|u-v\|_{W_\theta^m}\!+\!\bar C_0\!+\!\bar C_1\!+\!\tau\right),
		\end{align*}
		which in view of \eqref{eqn:constantsBounds} establishes first order consistency of the one-sided 		covariant difference quotient. 		Similarly, for the central covariant difference quotient, we obtain for 
		$\w \in C^{4}([-1,1],W_\theta^m)$
		\begin{align*}
			\Covtdir{\pm \tau}{v}  w(c)
			&=\frac12\left( \frac{\yz(\tau) -c -\tau w(c)}{\tau^2} + \frac{\yz(-\tau) -c +\tau w(c)}{\tau^2}\right) \\
			&= \frac{\yz(\tau) -c + \yz(-\tau) -c }{2\tau^2} \\
			&= \frac1{2\tau^2}\Big(w(c) \tau + \covdir{u} w(c)\, \tau^2 +\frac{1}{6}\dddot \yz(0) \tau^3 +  O(\tau^4)
			- w(c) \tau +\covdir{u} w(c)\, \tau^2 \\
			& \qquad -\frac{1}{6}\dddot \yz(0) \tau^3  + O(\tau^4) + O\left((1+C_1)\|u-v\|_{W_\theta^m}+\bar C_0+\bar C_1\right)\tau^{2} \Big) \\
			&= \covdir{u} w(c) + O\left((1+C_1)\|u-v\|_{W_\theta^m}+\bar C_0+\bar C_1+\tau^2\right),
		\end{align*}
		which in view of \eqref{eqn:constantsBounds}, establishes the desired consistency of the symmetric covariant difference quotient.
	\end{proof}
	
	\subsection{Proof of convergence of parallel transport} \label{sec:appPT}
	
	\begin{proof}[Proof of \cref{thm:PT}]
		We show convergence of the discrete scheme by showing consistency and stability of the discrete evolution operator $ \ParTp_{c_k,c_{k + 1}} $.
		
		\emph{Step 1 (consistency).} Let $t \in [0,1]$ and consider a single step discrete parallel transport of $\w(t)$ from $\cpath(t)$ to $\cpath(t+\tau)=\cpath(t) + \tau v$ for $v = \tfrac{\cpath(t + \tau) - \cpath(t)}{\tau}$.
		We exploit that $\ParTp_{\cpath(t),\cpath(t + \tau)}= \ParTp_{\cpath(t + \tau), \cpath(t)}^{-1}$ as $\W$ is symmetric. Hence, by \cref{lemma:consistencyCurveApproxDir} we have
		\begin{multline}\label{eq:PTconsistency}\textstyle
			\left\| \tfrac{1}{\tau}(\ParTp_{\cpath(t),\cpath(t + \tau)} \tau \w(t)   - \tau \w(t+ \tau)) \right\|_{W_\theta^m}
			=  \left\|\tau  \Covtdir{\tau}{-v} \w(t + \tau) \right\|_{W_\theta^m}\\\textstyle
			= \tau \left\| \covdir{-\dot \cpath(t+\tau)} \w(t+ \tau)\right\|_{W_\theta^m} + O\left(\tau^2 +\iota\tau \epsilon+ \tau \| v - \dot \cpath(t+\tau) \|_{W_\theta^m}\right)
			=  O(\tau^2+\iota\tau \epsilon)
		\end{multline}
		for $\tau\leq C\sqrt\epsilon$, where the last equality holds since $\cpath$ is smooth and $\w$ is parallel along $\cpath$ and where $\iota=1$ for $\W=\WEps$ and $\iota=0$ else.
				
		\emph{Step 2 (stability).}
		For $\tau\leq C\epsilon$ and $\|w\|_{W_\theta^m},\|\bar w\|_{W_\theta^m}\leq r\coloneqq2\max_{t}\|\w(t)\|_{W_\theta^m}$ we show the stability estimate
		\begin{equation}
			\Vert \tfrac{1}{\tau}\ParTp_{\cpath(t),\cpath(t + \tau)} \tau w - \tfrac{1}{\tau}\ParTp_{\cpath(t),\cpath(t + \tau) } \tau \bar{w} \Vert_{W_\theta^m} \leq(1+ \lambda\tau)  \Vert w - \bar{w}\Vert_{W_\theta^m} \label{eq:PTstability}
		\end{equation}
		with some $\lambda>0$.
		To this end, we note
		\begin{equation*}
			\tfrac{1}{\tau}\ParTp_{\cpath(t),\cpath(t + \tau)} \tau w = \tfrac{1}{\tau}(\mathrm{R} \circ \mathrm{Q} [\tau w] -\cpath(t+ \tau))
		\end{equation*}
		with $\mathrm{Q}[u] \coloneqq \tfrac{1}{2} \Log^2_{\cpath(t + \tau)}(\cpath(t) + u)+\tau v$ and $\mathrm{R}[x] \coloneqq \Exp^2_{\cpath(t)}(1,2x)$.
		(Note that by the symmetry of $\W$ one can compute the midpoint of the discrete parallelogram both using $\Log^2_{\cpath(t + \tau)}(\cpath(t) + \tau w)$ or equivalently using $\Log^2_{\cpath(t) + \tau w}(\cpath(t + \tau))$.)
		By \cref{thm:existenceExp2Sobolev}, for $\|u\|_{W_\theta^m},\|x\|_{W_\theta^m}\leq C\epsilon$ we have
		\begin{equation*}
			\partial_u \mathrm{Q}[u] = \tfrac{1}{2} \Id + O(\|u\|_{W_\theta^m})
			\quad\text{and}\quad
			\partial_x \mathrm{R}[x] = 2 \Id + O(\|x\|_{W_\theta^m}),
		\end{equation*}
		as well as $\mathrm Q[u]=O(\|u\|_{W_\theta^m})$.
		This implies $\partial_u\mathrm R\circ\mathrm Q[u]=\Id+O(\|u\|_{W_\theta^m})$ so that
		\begin{align*}
			\Vert \mathrm{R} \circ \mathrm{Q} [\tau w] - \mathrm{R} \circ \mathrm{Q} [\tau \bar{w}] \Vert \leq (1+\lambda\tau) \Vert \tau w - \tau \bar w \Vert
		\end{align*}
		(potentially decreasing $C$ depending on $\max_{t}\|\w(t)\|_{W_\theta^m}$) and thus \eqref{eq:PTstability}.
				
		\emph{Step 3 (convergence).} Now consider the discrete parallel transport $w_0 , \ldots , w_K$ with $w_0 = \w(0)$, assuming for the moment that it is well-defined for every time step and satisfies $\|w_k\|_{W_\theta^m}\leq r$. By \eqref{eq:PTconsistency} and \eqref{eq:PTstability} the error $e_{k+1} = \w((k\!+\!1)\tau) - w_{k+1}$ fulfils
		\begin{align}\label{eq:PTerrorpropagration}
			\Vert e_{k+1} \Vert_{W_\theta^m} &=\Vert \w(t_{k+1}) - \tfrac{1}{\tau}\ParTp_{c_k,c_{k+1}} \tau w_{k} \Vert_{W_\theta^m}   \\
			& \leq \Vert \w(t_{k+1})- \tfrac{1}{\tau}\ParTp_{c_k,c_{k+1}} \tau  \w(t_k) \Vert_{W_\theta^m} \nonumber \\
			& \qquad + \Vert \tfrac{1}{\tau}\ParTp_{c_k,c_{k+1}} \tau \w(t_k) - \tfrac{1}{\tau}\ParTp_{c_k,c_{k+1}} \tau w_{k} \Vert_{W_\theta^m} \nonumber \\
			& \leq \widehat{C}(\tau^2 + \iota\epsilon \tau) + (1 + \lambda\tau) \Vert e_k \Vert_{W_\theta^m}\nonumber
		\end{align}
		for some $\widehat{C}\!>\!0$.
		We show inductively that indeed it is well-defined with $\|w_k\|_{W_\theta^m}\!\leq\!r$ and
		\begin{align*}
			\Vert e_k \Vert_{W_\theta^m} \leq  \tfrac{\widehat{C}(\tau+\iota\epsilon)}{\lambda}(\exp(\lambda k\tau) -1),
		\end{align*}
		which yields the claim:
		The statement for $k=0$ is trivial.
		Now let it hold for $k\geq0$, then $w_{k+1}$ is well-defined by step 2, and
		\begin{align*}
			\Vert e_{k+1} \Vert_{W_\theta^m}& \leq \widehat{C}(\tau^2 + \iota\epsilon \tau) + (1 + \lambda\tau) \tfrac{\widehat{C}(\tau+\iota\epsilon)}{\lambda}(\exp(\lambda k\tau) -1) \\
			& =  \tfrac{\widehat{C}(\tau+\iota\epsilon)}{\lambda}[(1+\lambda\tau)\exp(\lambda k\tau) -1] \\
			& \leq \tfrac{\widehat{C}(\tau+\iota\epsilon)}{\lambda}[\exp(\lambda(k+1)\tau) -1],
		\end{align*}
		which additionally implies $\|w_{k+1}\|_{W_\theta^m}\leq r$ for $\tau,\epsilon$ small enough as can be tuned by the constant $\kappa$ from the theorem.
	\end{proof}
	
	\subsection{Proof of consistency of the discrete Riemann curvature approximation} \label{sec:appCT}
	\begin{proof}[Proof of \cref{thm:consRiemCurvTensor}]
		Setting $\iota=1$ for $\W=\WEps$ and $\iota=0$ else, we have
		\begin{align*}
			&\textstyle\left\|\left(\Covtdir{\tau}{v} \Covtdir{\tau^\beta}{w}  z - \covdir{v} \covdir{w} z \right)(c)  \right\|_{W^m_\theta}\\
			&\textstyle\leq  \left\|\Covtdir{\tau}{v} \left( \Covtdir{\tau^\beta}{w}   z -  \covdir{w}  z \right)(c)\right\|_{W^m_\theta}  +
			\left\|\left(\Covtdir{\tau}{v} \covdir{w}  z - \covdir{v} \covdir{w}  z \right)(c)\right\|_{W^m_\theta} \\
			&\textstyle \leq  \frac1{\tau^2} \left\| \ParTp_{c,c\!+\!\tau v}^{-1} \!\left(\!\tau \!\left(\! \Covtdir{\tau^\beta}{w}  z - \covdir{w}  z \!\right) (c\!+\!\tau v) \right)
			- \tau  \left(\! \Covtdir{\tau^\beta}{w}  z -  \covdir{w} z \!\right) (c) \right\|_{W^m_\theta}
			\!\!+  O(\tau\!+\! \iota\epsilon_\out) \\
			&\textstyle\leq   \frac1{\tau} (2+\lambda\tau)  \max_{\sigma \in \{0,1\}}\left\| \left( \Covtdir{\tau^\beta}{w}  z -  \covdir{w} z \right) (c\!+\!\tau \sigma v) \right\|_{W^m_\theta} \!\!+ O(\tau\!+\!\iota\epsilon_\out)\\
			&= O\left(\tau^{\beta-1}+ \tfrac{\iota\epsilon_\inn}\tau + \tau+ \iota\epsilon_\out\right),
		\end{align*}
		where we used the consistency of the covariant difference quotient \cref{lemma:consistencyCurveApproxDir} in the second and last step and the stability \eqref{eq:PTstability} of discrete parallel transport (with $\bar w=0$) in the third.
		Analogously we obtain $\big(\Covtdir{\tau}{w} \Covtdir{\tau^\beta}{v}  z - \covdir{w} \covdir{v} z \big)(c)=O(\tau^{\beta-1}+\tau+\iota(\epsilon_\out+\epsilon_\inn/\tau))$ so that the claim follows for the approximation via one-sided covariant difference quotients. 
		In the case of central covariant difference quotients, we proceed similarly and estimate
		\begin{align*}
			&\textstyle\left\|\left(\Covtdir{\pm\tau}{v} \Covtdir{\pm\tau^\beta}{w}  z - \covdir{v} \covdir{w} z \right)(c)  \right\|_{W^m_\theta}\\
			&\textstyle\leq  \left\|\Covtdir{\pm\tau}{v} \left( \Covtdir{\pm\tau^\beta}{w}   z -  \covdir{w}  z \right)(c)\right\|_{W^m_\theta}  +
			\left\|\left(\Covtdir{\pm\tau}{v} \covdir{w}  z - \covdir{v} \covdir{w}  z \right)(c)\right\|_{W^m_\theta} \\
			&\textstyle\leq   \frac2{\tau} (1+\lambda\tau)  \max_{\sigma \in \{-1,1\}}\left\| \left( \Covtdir{\pm\tau^\beta}{w}  z -  \covdir{w} z \right) (c\!+\!\tau \sigma v) \right\|_{W^m_\theta} \!\!+ O(\tau^2\!+\!\iota\epsilon_\out)\\
			&= O\left(\tau^{2\beta-1}+ \tfrac{\iota\epsilon_\inn}\tau + \tau^2+ \iota\epsilon_\out\right),
		\end{align*}
		which together with $\tau/\sqrt{\epsilon_\out},\tau/\sqrt{\epsilon_\inn}\leq C$ in the end yields the second estimate.
	\end{proof}
	
		\subsection{Maple code to verify integrals}\label{sec:mapleCode}
	We provide Maple Code to verify the explicit integrals given in \eqref{eqn:GalerkinApproxSobolevMetric}. 
			\begin{lstlisting}[language=Maple, basicstyle=\small]
			with(LinearAlgebra):
			# signs of quantities from (*@\eqref{eqn:abbreviations}@*)
			assume(r>0,p>0,q<r*p,q>-r*p);
			# upper bound on |c'| from (*@\eqref{eqn:L}@*)
			L := (1-t)*r+t*p;
			# integrands (only t-dependent terms) of the 0th to 2nd order part of W from (*@\eqref{eqn:timeIntegrals}@*)
			w0  := L;
			w1  := L/((1-t)^2*r^2+2*(1-t)*t*q+t^2*p^2);
			w2a := L/((1-t)^2*r^2+2*(1-t)*t*q+t^2*p^2)^2;
			w2b := L*(rho*(1-t)^2+sigma*t^2+2*tau*t*(1-t)) /((1-t)^2*r^2+2*(1-t)*t*q+t^2*p^2)^3;
			w2c := L*(rho*(1-t)^2+sigma*t^2+2*tau*t*(1-t))^2 /((1-t)^2*r^2+2*(1-t)*t*q+t^2*p^2)^4;
			# abbreviations from (*@\eqref{eqn:abbreviationsII}@*) and (*@\eqref{eqn:timeIntegralsII}@*)
			Phi1 := Matrix([1,(V-1)/(1-v^2)]);
			Phi2 := Matrix([1,(V-1+(1/v^2-1)/3)/(1-v^2)^2]);
			Xi1 := Matrix([[3+2*v,1],[3,1-2*v]])/8/v/(1+v);
			Xi2 := Matrix([[8*v^3+10*v^2-5,2*v^2+4*v-1,2*v-1],[15*v^2,-12*v^3+3*v^2,6*v^4-6*v^3+3*v^2]])/48/v^3/(1+v);
			Theta1 := Matrix([[sigma*r^3+rho*p^3],[r*p*((sigma+2*tau)*r+(rho+2*tau)*p)]])/(r*p)^4;
			Theta2 := Matrix([[sigma^2*r^5+rho^2*p^5], [r*p*(sigma*(sigma+4*tau)*r^3+rho*(rho+4*tau)*p^3)],[2*(r*p)^2*((rho*sigma+2*tau^2)*(r+p)+2*tau*(sigma*r+rho*p))]])/(r*p)^6;
			u := sqrt(r^2*p^2-q^2);
			v := q/r/p;
			V := (arctan((r^2-q)/u)+arctan((p^2-q)/u))/u*q;
			# formulas for integrals of w0,w1,w2a,w2b,w2c, \ie coefficients from (*@\eqref{eqn:barW}@*)
			integralW0 := (r+p)/2;
			integralW1 := ((1/v-1)*(r+p)*V+(r-p)*log(r/p))/(r^2+p^2-2*q);
			integralW2a := ((r+p)/q*V+1/r+1/p)/(r*p+q)/2;
			integralW2b := Multiply(Phi1,Multiply(Xi1,Theta1))[1,1];
			integralW2c := Multiply(Phi2,Multiply(Xi2,Theta2))[1,1];
			# check whether formulas indeed coincide with the integrals (\ie their difference is 0)
			simplify(integralW0-int(w0,t=0..1));
			simplify(integralW1-int(w1,t=0..1));
			simplify(integralW2a-int(w2a,t=0..1));
			simplify(integralW2b-int(w2b,t=0..1));
			simplify(integralW2c-int(w2c,t=0..1));
		\end{lstlisting}

	\subsection{Explicit calculation of ground truth curvature} \label{subsec:explicit}
	
\newcommand{\colvec}[2]{\begin{pmatrix} #1 \\[2pt] #2 \end{pmatrix}}
\newcommand{\tildeg}[3]{\tilde{\text{g}}_{#1}\left(#2, #3\right)}
\newcommand{\parDif}[2]{\frac{\partial #1}{\partial #2}}
\newcommand{\parparDif}[3]{\frac{\partial^{2} #1}{\partial #2 \partial #3}}
\newcommand{\dd}[2]{\frac{\textnormal{d}#1}{\textnormal{d}#2}}
\newcommand{\Rie}[3]{\riemann_{#1}\left(#2, #3\right)}

\newcommand{\m}{\mathfrak{m}}
\newcommand{\M}{\mathcal{M}}
\newcommand{\TpM}[1]{\mathrm{T}_{#1}\M}
\newcommand{\secCurvature}[3]{\kappa_{#1}\left(#2 , #3\right)}
\newcommand{\eucNorm}[1]{\left| #1 \right|}
\newcommand{\dTheta}{\partial_{\theta}}

\newcommand{\FT}{\mathcal{F}}

For numerical validation of our discrete curvature approximation we here show how the exact (sectional) curvature can be computed explicitly at special Sobolev curves $c$, exemplarily for $m=2$.
Without loss of generality we may restrict to constant speed parameterized $c$, since applying the same reparameterization to all Sobolev curves leaves metric and curvature properties invariant. Moreover, we only consider the constant speed one since other cases would only lead to a further constant in the respective terms.

Curvature involves the first and second derivative of the metric.
From \cref{def:SobolevMetricCurves} and \eqref{eq:SobolevMetricTwo} it is straightforward to derive
% \begin{equation*}
% \Dg{c}{\nu}{\xi}{\zeta}=
% %\int_{\Sone}\sum_{j=0}^2(1-2j)a_j\frac{c'\cdot\nu'}{|c'|^{2j+1}}\partial_\theta^j\xi\cdot\partial_\theta^j\zeta
% %
% \int_{\Sone}
% a_0\frac{c'\cdot\nu'}{|c'|}\xi\cdot\zeta
% -a_1\frac{c'\cdot\nu'}{|c'|^3}\xi'\cdot\zeta'
% -a_2\frac{c'\cdot\nu'}{|c'|^3}\left(\frac{\xi'}{|c'|}\right)'\cdot\left(\frac{\zeta'}{|c'|}\right)'
% -\frac{a_2}{|c'|}\left(\frac{c'\cdot\nu'}{|c'|^3}\xi'\right)'\cdot\left(\frac{\zeta'}{|c'|}\right)'
% -\frac{a_2}{|c'|}\left(\frac{\xi'}{|c'|}\right)'\cdot\left(\frac{c'\cdot\nu'}{|c'|^3}\zeta'\right)'
% \,\d\theta
% \end{equation*}
\begin{align*}
\Dg{c}{\nu}{\xi}{\zeta}=
\int_{\Sone}
&a_0\frac{c'\cdot\nu'}{|c'|}\xi\cdot\zeta
-a_1\frac{c'\cdot\nu'}{|c'|^3}\xi'\cdot\zeta'
-3a_2\frac{c'\cdot\nu'}{|c'|^5}\xi''\cdot\zeta''\\
&+a_2\frac{c' \cdot c''}{|c'|^7}\left(2(\nu'\cdot c')'-7\tfrac{(c'\cdot c'' )(c'\cdot\nu')}{|c'|^2}\right)\xi'\cdot\zeta'\\
&-\frac{a_2}{|c'|^5}\left((c' \cdot \nu')'- 5 \tfrac{\left(c' \cdot c'' \right) \left(c' \cdot \nu' \right)}{|c'|^2}\right)( \xi' \cdot \zeta')'
\,\d\theta
\end{align*}
and hence for unit speed parameterization $|c'|=1$ (thus $c'\cdot c''=0$)
\begin{align*}
\g{c}{\xi}{\zeta}
&=\int_{\Sone}a_0\xi \cdot \zeta+a_1\xi' \cdot \zeta'+a_2\xi'' \cdot \zeta''\,\d\theta,\\
\Dg{c}{\nu}{\xi}{\zeta}
&=\int_{\Sone}c'\cdot\nu'[a_0\xi\cdot\zeta-a_1\xi'\cdot\zeta'-3a_2\xi''\cdot\zeta'']-a_2(c' \cdot \nu')'( \xi' \cdot \zeta')'\,\d\theta,\\
D_c^2g_c(\nu,\eta)(\xi,\zeta)
&=\int_{\Sone}a_0(P-Q)\xi\cdot\zeta-a_1(P-3Q)\xi'\cdot\zeta'-a_2(3P-15Q)\xi''\cdot\zeta''\\
&\qquad+2a_2(\nu'\cdot c')'(\eta'\cdot c')'\xi'\cdot\zeta'
-a_2[P'-5Q']( \xi' \cdot \zeta')'\,\d\theta
\end{align*}
with the abbreviations
\begin{equation*}
P=\nu'\cdot\eta',
\quad
Q=(c'\cdot\nu')(c'\cdot\eta').
\end{equation*}

Via Fourier series this leads to an explicit expression for the Christoffel operator.

\begin{lemma}[Christoffel operator]\label{thm:Christoffel}
Let $c\in\immersion^2(\Sone,\R^d)$ be unit speed parameterized and $\xi,\zeta\in W^{2, 2}(\Sone,\R^d)$. Then
\begin{gather*}
\Gamma_{c}(\xi,\zeta)
=\FT^{-1}\left(k\mapsto\tfrac{\FT(F(c,\xi,\zeta)/2)(k)}{a_0+a_1k^2+a_2k^4}\right)
\qquad\text{with}\\
\begin{aligned}
F(c,\xi,\zeta)=
&\quad a_0(c'\cdot\zeta')\xi'+a_1((c'\cdot\zeta')\xi')'-3a_2((c'\cdot\zeta')\xi'')''-a_2((c'\cdot\zeta')''\xi')'\\
&+a_0(c'\cdot\xi')\zeta'+a_1((c'\cdot\xi')\zeta')'-3a_2((c'\cdot\xi')\zeta'')''-a_2((c'\cdot\xi')''\zeta')'\\
&+a_0((\xi\cdot\zeta)c')'-a_1((\xi'\cdot\zeta')c')'-3a_2((\xi''\cdot\zeta'')c')'+a_2((\xi'\cdot\zeta')''c')'
\end{aligned}
\end{gather*}
and $\FT(f)(k) =\frac{1}{\sqrt{2\pi}} \int_{\Sone} f(\theta) e^{-ik\arg\theta} \,\d\theta$, $k\in\Z$, the Fourier series transform.
\end{lemma}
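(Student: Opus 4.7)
The plan is to take the defining equation \eqref{eq:ChristoffelOperator} of $\Gamma_c(\xi,\zeta)$ (specialized to $v=\xi$, $w=\zeta$), express both sides in the form $\int_{\Sone}(\,\cdot\,)\cdot z\,\d\theta$ for arbitrary test field $z\in W^{2,2}(\Sone,\R^d)$ via integration by parts, and then invert the resulting fourth-order differential operator on the left-hand side by Fourier series.

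First I would substitute the unit-speed formula for $D_c g_c(\nu)(\xi,\zeta)$ derived immediately before the lemma into the right-hand side
$\tfrac12[D_c g_c(\zeta)(\xi,z) - D_c g_c(z)(\xi,\zeta) + D_c g_c(\xi)(\zeta,z)]$
and integrate by parts (using periodicity of $\Sone$) to transfer every derivative off $z$. The three summands split naturally: $D_c g_c(\zeta)(\xi,z)$ has $z$ appearing only in the inner slots and produces the four terms of the first line of $F$, all carrying the scalar factor $c'\cdot\zeta'$; the symmetric term $D_c g_c(\xi)(\zeta,z)$ yields the second line with $c'\cdot\xi'$; and $-D_c g_c(z)(\xi,\zeta)$, whose outer $\nu$-slot is $z$, produces the third line, where $c'$ is the vector factor and the scalar is built solely from $\xi,\zeta$. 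In each case the mixed cross term $-a_2(c'\cdot\nu')'(\xi'\cdot\zeta')'$ of $D_cg$ requires two integrations by parts to clear all derivatives from $z$, yielding the summand with the double-primed scalar factor in the corresponding line of $F$. Collecting every contribution produces the identity $\tfrac12[\,\cdots\,] = \int_{\Sone}\tfrac{F(c,\xi,\zeta)}{2}\cdot z\,\d\theta$.

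On the left-hand side, integrating $g_c(\Gamma_c(\xi,\zeta),z)=\int_{\Sone}a_0\,\Gamma_c\cdot z + a_1\,\Gamma_c'\cdot z' + a_2\,\Gamma_c''\cdot z''\,\d\theta$ by parts yields $g_c(\Gamma_c(\xi,\zeta),z)=\int_{\Sone}(A\,\Gamma_c(\xi,\zeta))\cdot z\,\d\theta$ with the positive, self-adjoint, constant-coefficient operator $A\coloneqq a_0 - a_1\partial_\theta^2 + a_2\partial_\theta^4$. Since $z$ was arbitrary, identification yields $A\,\Gamma_c(\xi,\zeta)=F(c,\xi,\zeta)/2$ in the distributional sense. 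The Fourier symbol of $A$ is $a_0+a_1k^2+a_2k^4\geq a_0>0$, so $A$ is a topological isomorphism between $W^{s+4,2}(\Sone,\R^d)$ and $W^{s,2}(\Sone,\R^d)$ for every $s\in\R$, with inverse acting on Fourier coefficients simply by division by the symbol. Applying this inversion to $F/2$ delivers the claimed representation of $\Gamma_c(\xi,\zeta)$.

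The main obstacle is purely bookkeeping: tracking signs, derivative counts and combinatorics over roughly a dozen subterms so that every contribution lands in the correct summand of $F$. No substantive analytical difficulty arises, since all operations take place on the compact manifold $\Sone$ and involve only the standard elliptic inversion of a constant-coefficient operator with strictly positive symbol.
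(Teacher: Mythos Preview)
Your proposal is correct and follows essentially the same route as the paper. The paper likewise integrates by parts to obtain $\int_{\Sone}F(c,\xi,\zeta)\cdot z\,\d\theta$ on the right, and then passes to Fourier coefficients via Parseval's identity on both sides (rather than first writing the left side as $\int_{\Sone}(A\Gamma_c)\cdot z\,\d\theta$ with $A=a_0-a_1\partial_\theta^2+a_2\partial_\theta^4$ and then inverting $A$); the two formulations are equivalent since for unit speed $g_c$ is exactly the Sobolev inner product diagonalized by $\FT$ with symbol $a_0+a_1k^2+a_2k^4$.
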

\begin{proof}
From the previous expression for $D_cg_c$ one readily finds for $z\in W^{2,2}(\Sone,\R^d)$
\begin{equation*}
\Dg{c}{\zeta}{\xi}{z}+\Dg{c}{\xi}{\zeta}{z}-\Dg{c}{z}{\xi}{\zeta}=\int_{\Sone}F(c,\xi,\zeta)\cdot z\,\d\theta,
\end{equation*}
using (up to two) integration by parts, from which one can see $F(c,\xi,\zeta)\in W^{-2,2}(\Sone,\R^d)$.
By Parseval's identity and \eqref{eq:ChristoffelOperator} we thus have
\begin{multline*}
\sum_{k\in\Z}\FT[F(c,\xi,\zeta)](k)\cdot\FT[z](k)
=\int_{\Sone}F(c,\xi,\zeta)\cdot z\,\d\theta
=2\g{c}{\Christoffel{c}{\xi}{\zeta}}{z}\\
% =\int_{\Sone}a_0\Christoffel{c}{\xi}{\zeta}\cdot z+a_1\Christoffel{c}{\xi}{\zeta}' \cdot z'+a_2\Christoffel{c}{\xi}{\zeta}'' \cdot z''\,\d\theta
=2\sum_{k\in\Z}\FT[\Christoffel{c}{\xi}{\zeta}](k)\FT[z](k)(a_0+a_1k^2+a_2k^4).
\end{multline*}
The arbitrariness of $z$ now leads to the desired result.
\end{proof}

Furthermore, one can express the (numerator of the sectional) curvature solely in terms of the Christoffel operator and metric derivatives.

\begin{lemma}[Numerator of sectional curvature]\label{lemma:NumeratorSectionalCurvature}
Let $c\in\immersion^m(\Sone,\R^d)$ and $v, w, z, \nu \in W^{m, 2}(\Sone, \R^d)$, then
\begin{align*} \label{eq:NumeratorSectionalCurvature}
\g{c}{v}{\Rie{c}{v}{w}z}
&=\g{c}{v}{D_c\Christoffel{c}{z}{w}(v)}-\g{c}{v}{D_c\Christoffel{c}{z}{v}(w)} \\
&\qquad+\tfrac{\Dg{c}{\Christoffel{c}{z}{w}}{v}{v}}{2}
+\tfrac{\Dg{c}{v}{\Christoffel{c}{z}{v}}{w}}{2} \\
&\qquad-\tfrac{\Dg{c}{w}{\Christoffel{c}{z}{v}}{v}}{2}
-\tfrac{\Dg{c}{\Christoffel{c}{z}{v}}{w}{v}}{2}
\end{align*}
\begin{align*}
\text{with}\qquad
\g{c}{z}{D_c\Christoffel{c}{v}{w}(\nu)}
&=\tfrac{D_c^2g_c(\nu,w)(v,z)+D_c^2g_c(\nu,v)(w,z)-D_c^2g_c(\nu,z)(v,w)}2\\
&\qquad-\Dg{c}{\nu}{\Christoffel{c}{v}{w}}{z}.
\end{align*}
\end{lemma}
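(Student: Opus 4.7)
The plan is to derive the formula from the standard Riemannian identity $\riemann_c(v,w)z = \nabla_v \nabla_w z - \nabla_w \nabla_v z - \nabla_{[v,w]}z$ applied to the three constant (footpoint-independent) vector fields $v, w, z$. Since $[v,w]=0$ in this case and $\nabla_X Y = \Christoffel{c}{X}{Y}$ whenever $Y$ is constant, the product rule applied to $\nabla_v(\Christoffel{\cdot}{w}{z})$ yields
\begin{equation*}
\riemann_c(v,w)z = D_c\Christoffel{c}{w}{z}(v) - D_c\Christoffel{c}{v}{z}(w) + \Christoffel{c}{v}{\Christoffel{c}{w}{z}} - \Christoffel{c}{w}{\Christoffel{c}{v}{z}}.
\end{equation*}
The Christoffel operator is symmetric, $\Christoffel{c}{a}{b} = \Christoffel{c}{b}{a}$, as one sees at once from \eqref{eq:ChristoffelOperator} and the symmetry of $g_c$ (and hence of $D_c g_c(\nu)$) in its last two arguments; this permits rewriting the inner arguments in the form $(z,w)$ and $(z,v)$ as they appear in the claim.

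Next I would pair the above identity with $v$ through the metric. The first two terms transcribe directly into the first two summands on the right-hand side. For $\g{c}{v}{\Christoffel{c}{v}{\Christoffel{c}{z}{w}}}$ I apply \eqref{eq:ChristoffelOperator} with the two slots of the outer $\Christoffel{c}{}{}$ being $v$ and $\Christoffel{c}{z}{w}$, paired against $v$: the symmetry of $D_c g_c(v)$ in its last two slots collapses two of the three resulting summands, leaving only $\tfrac{1}{2}\Dg{c}{\Christoffel{c}{z}{w}}{v}{v}$, which is precisely the third term of the claim. The analogous computation for $\g{c}{v}{\Christoffel{c}{w}{\Christoffel{c}{z}{v}}}$ admits no such cancellation (the outer arguments $w$ and $v$ are distinct) and produces, after using the symmetry of $D_c g_c(v)$ in its last two arguments once more, the remaining three summands with the asserted signs.

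The auxiliary identity for $\g{c}{z}{D_c\Christoffel{c}{v}{w}(\nu)}$ is obtained by differentiating \eqref{eq:ChristoffelOperator} in direction $\nu$: the product rule on the left yields $\Dg{c}{\nu}{\Christoffel{c}{v}{w}}{z} + \g{c}{D_c\Christoffel{c}{v}{w}(\nu)}{z}$, while the right-hand side becomes the three second-derivative terms $\tfrac12[D_c^2 g_c(\nu,w)(v,z) - D_c^2 g_c(\nu,z)(v,w) + D_c^2 g_c(\nu,v)(w,z)]$. Solving for the term involving $D_c\Christoffel{}{}{}$ and exploiting the symmetry of $g_c$ in its last two arguments gives the stated expression.

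No step is delicate; the proof is purely algebraic. The only point worth verifying is that the formal Riemannian-geometric calculation is legitimate in the present infinite-dimensional setting, but this is guaranteed by \cref{rem:smoothStrongMetric}: since $g_c$ is a smooth strong Riemannian metric on the Hilbert manifold $\immersion^m$, the implicitly defined $\Christoffel{c}{\cdot}{\cdot}$ and its derivative $D_c\Christoffel{c}{\cdot}{\cdot}(\cdot)$ exist and depend smoothly on $c$, so every differentiation and application of the defining identity above is rigorously justified.
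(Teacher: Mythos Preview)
Your proposal is correct and follows essentially the same approach as the paper: both extend $z$ to a constant vector field, use $\riemann_c(v,w)z = D_c\Christoffel{c}{z}{w}(v) + \Christoffel{c}{\Christoffel{c}{z}{w}}{v} - D_c\Christoffel{c}{z}{v}(w) - \Christoffel{c}{\Christoffel{c}{z}{v}}{w}$, pair with $v$ via the metric, and then evaluate the nested Christoffel terms using \eqref{eq:ChristoffelOperator} (with the same cancellation you describe in the symmetric case). The auxiliary identity is likewise obtained in both by differentiating \eqref{eq:ChristoffelOperator} in the footpoint.
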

\begin{proof}
The expression for $\g{c}{z}{D_c\Christoffel{c}{v}{w}(\nu)}$ follows from differentiating \eqref{eq:ChristoffelOperator} with respect to $c$.
As for the Riemannian curvature, extending $z$ to a constant vector field on $\immersion(\Sone, \mathbb{R}^d)$, \eqref{eqn:RiemannCurvature} yields
	\begin{align*}
		\Rie{c}{v}{w}z
		&=\covt\left( \Christoffel{\mathbf{c}(t,0)}{z}{w} \right)(0)
		-  \covs\left(\Christoffel{\mathbf{c}(0,s)}{z}{v}\right)(0)\\
    &=  D_c\Christoffel{c}{z}{w}(v)	+ \Christoffel{c}{\Christoffel{c}{z}{w}}{v}
		-\! D_c\Christoffel{c}{z}{v}(w)-\! \Christoffel{c}{\Christoffel{c}{z}{v}}{w}.
	\end{align*}
	Inserting
	\begin{alignat*}{2}
		2 \, \g{c}{v}{\Christoffel{c}{\Christoffel{c}{z}{w}}{v}}
		&= 	\Dg{c}{v}{\Christoffel{c}{z}{w}}{v} &&- \Dg{c}{v}{\Christoffel{c}{z}{w}}{v} \\
		& &&+ \Dg{c}{\Christoffel{c}{z}{w}}{v}{v}
		\\&=  \Dg{c}{\Christoffel{c}{z}{w}}{v}{v} &&
	\end{alignat*}
	and
	\begin{align*}
		2 \, \g{c}{v}{ \Christoffel{c}{\Christoffel{c}{z}{v}}{w} }
		= 	\Dg{c}{w}{\Christoffel{c}{z}{v}}{v} &- \Dg{c}{v}{\Christoffel{c}{z}{v}}{w} \\
		&+ \Dg{c}{\Christoffel{c}{z}{v}}{w}{v}
	\end{align*}
	leads to the desired formula.
\end{proof}

With the above expressions it is straightforward to evaluate by hand (or computer algebra) the sectional curvature at $c(\theta)=(\cos\theta,\sin\theta)^T$ (which is unit speed parameterized and has a finite Fourier series transform)
in any directions $v,w\in W^{2,2}(\Sone,\R^2)$ with finitely many Fourier modes.

\begin{example}[Explicit sectional curvature]  \label{example:SectionalCurvature} %\label{example:ChristoffelUnweighted} 
Let $a_0 = a_1 = a_2 = 1$ and consider the curve $c(\theta)=(\cos\theta,\sin\theta)^T$, and $v(\theta)=(\cos\theta,0)^T$, $w(\theta)=(0,\cos\theta)^T$.
Then \cref{thm:Christoffel} yields
\begin{align*}
F(c,v,w)(\theta) &=
	\colvec{
		30 \cos(\theta)^2 \sin(\theta)  - 11 \sin(\theta)^3
	}{
			- 33 \cos(\theta) \sin(\theta)^2 + 8 \cos(\theta)^3
	}
	=
	\colvec{
		-\frac{3}{4} \sin(\theta) - \frac{41}{4} \sin(3\theta)
	}{
		- \frac{9}{4} \cos(\theta) +\frac{41}{4} \cos(3\theta)
	},\\
\Christoffel{c}{v}{w}(\theta) &= \colvec{
			-\frac{1}{8} \sin(\theta) + \frac{41}{728} \sin(3 \theta)
		}{
			-\frac{3}{8} \cos(\theta)+\frac{41}{728} \cos(3 \theta)
		}.
\end{align*}
Analogously,
\begin{align*}
F(c,w,w)(\theta) &=
		\colvec{
			9 \sin \left(\theta\right)^{2} \cos \left(\theta\right)
		}{
		54 \sin \left(\theta\right) \cos \left(\theta\right)^{2}-19 \sin \left(\theta\right)^{3}
		}
		,\\
		\Christoffel{c}{w}{w} &=
		\colvec{
			\frac{3 \cos \left(\theta\right)}{8}-\frac{9 \cos \left(3 \theta\right)}{728}
		}{
			-\frac{\sin \left(\theta\right)}{8}+\frac{73 \sin \left(3 \theta\right)}{728}
		}
		.
\end{align*}
	We can now compute every term in \cref{lemma:NumeratorSectionalCurvature} (each is an integral of a trigonometric polynomial) and obtain
	$\g{c}{v}{\Rie{c}{v}{w}w} = \frac{-31 \pi}{13}$.
	Furthermore, we compute $\g{c}{v}{v} = \g{c}{w}{w} = 3 \pi$ and $\g{c}{v}{w} = 0$.
	Consequently,
	\begin{align*}
		\secCurvature{c}{v}{w} = \frac{\g{c}{v}{\Rie{c}{v}{w}w}}{\g{c}{v}{v} \g{c}{w}{w} - \g{c}{v}{w}^2}
		%= \frac{-31\pi}{13\left(9\pi^2\right)}
		= \frac{-31}{117\pi}.
	\end{align*}
\end{example}

Likewise, other weights can be used, as shown in the following example.
\begin{example}[Explicit Christoffel operator for weighted Sobolev metric] \label{example:ChristoffelWeighted}
	Let $a_0 = 10^{-4}$, $a_1 = 1$, $a_2 = 10^{-2}$, $c(\theta) := \colvec{\cos(\theta)}{\sin(\theta)}$, $v(\theta) := \colvec{-\frac{\cos(\theta)}{2}}{\sin(\theta)}$, and $w(\theta) := \colvec{\cos(\theta)}{\frac{-\sin(\theta)}{2}}$.
	We obtain
	\begin{align*}
			{F}(c,v,w)(\theta) &=
		\colvec{
			\frac{50901}{160000} \cos(\theta) - \frac{116081}{160000} \cos(3\theta)
		}{ 
			-\frac{113883}{160000} \sin(\theta) + \frac{87313}{160000} \sin(3 \theta)
		}, \text{ thus} \\
		\Christoffel{c}{v}{w}(\theta) 
		&=
		\frac{
			\colvec{ 
					1664479667 \cos(\theta) - 390844727 \cos(3 \theta) 
					}{
					- 3724012061 \sin(\theta) + 293982871 \sin(3\theta)
					}			
		}
		{10569794144}
		.
	\end{align*}	
	Furthermore, if we consider $w$ as constant vector field and $\w = w \circ \mathbf{c}$ as constant vector field along the path $\mathbf{c}(t) = c + t v$ then we have
	$$\frac{D}{dt}\w(0) = \frac{D_v}{dt} w(c) = \Gamma_{c}(w, v) = \Gamma_{c}(v, w).$$  
\end{example}

	\section*{Acknowledgments}%
	This work was funded by the Deutsche Forschungsgemeinschaft (DFG, German Research Foundation) under Germany's Excellence Strategy 
	EXC  2047 -- 390685813, Hausdorff Center for Mathematics at the University of Bonn,
	EXC 2044 -- 390685587, Mathematics M\"unster: Dynamics--Geometry--Structure,
	and by the DFG under CRC 1450 InSight -- 431460824.

	\bibliographystyle{amsplain}

	\bibliography{bibtex/all.bib,bibtex/own.bib,bibtex/library.bib,bibtex/references.bib,bibtex/sgp2019references.bib,bibtex/allRuWi12_siam.bib,bibtex/LiteratureDiscussion.bib}
	
\end{document}